\newtheorem{thm}{Theorem}[section]
\newtheorem{prop}[thm]{Proposition}
\newtheorem{lem}[thm]{Lemma}
\newtheorem{cor}[thm]{Corollary}
\newtheorem*{thm.indivisibility}{Theorem \ref{thm.indivisibility}}
\newtheorem{fact}[thm]{Fact}
\newtheorem*{CharBRDSDAP}{Simple Characterization of  big Ramsey degrees}
\theoremstyle{remark}
\newtheorem{rem}[thm]{Remark}
\theoremstyle{definition}
\newtheorem{defn}[thm]{Definition}
\newtheorem{convention}[thm]{Convention}
\newtheorem{notation}[thm]{Notation}
\newtheorem{example}[thm]{Example}
\newtheorem{question}[thm]{Question}
\newtheorem{problem}[thm]{Problem}
\theoremstyle{remark}
\newcommand{\al}{\alpha}
\newcommand{\om}{\omega}
\newcommand{\sse}{\subseteq}
\newcommand{\contains}{\supseteq}
\newcommand{\forces}{\Vdash}
\DeclareMathOperator{\ran}{ran}
\DeclareMathOperator{\Sim}{Sim}
\DeclareMathOperator{\Ext}{Ext}
\DeclareMathOperator{\type}{tp}
\DeclareMathOperator{\cl}{cl}
\newcommand{\re}{\!\restriction\!}
\DeclareMathOperator{\Emb}{Emb}
\DeclareMathOperator{\Forb}{Forb}
\newcommand{\bC}{\mathbb{C}}
\newcommand{\bD}{\mathbb{D}}
\newcommand{\bJ}{\mathbf{J}}
\newcommand{\bK}{\mathbf{K}}
\newcommand{\bM}{\mathbf{M}}
\newcommand{\bP}{\mathbb{P}}
\newcommand{\bQ}{\mathbb{Q}}
\newcommand{\bT}{\mathbb{T}}
\newcommand{\bS}{\mathbb{S}}
\newcommand{\bU}{\mathbb{U}}
\newcommand{\K}{\mathrm{K}}
\newcommand{\A}{\mathrm{A}}
\newcommand{\B}{\mathrm{B}}
\newcommand{\M}{\mathrm{M}}
\newcommand{\N}{\mathrm{N}}
\newcommand{\bA}{\mathbf{A}}
\newcommand{\bB}{\mathbf{B}}
\newcommand{\bF}{\mathbf{F}}
\newcommand{\bG}{\mathbf{G}}
\newcommand{\bfA}{\mathbf{A}}
\newcommand{\bfB}{\mathbf{B}}
\newcommand{\bfC}{\mathbf{C}}
\newcommand{\bfD}{\mathbf{D}}
\newcommand{\bfE}{\mathbf{E}}
\newcommand{\bfM}{\mathbf{M}}
\newcommand{\bfN}{\mathbf{N}}
\newcommand{\bfO}{\mathbf{O}}
\newcommand{\wsim}{\stackrel{w}{\sim}}
\newcommand{\plussim}{\stackrel{+}{\sim}}
\newcommand{\ra}{\rightarrow}
\newcommand{\Lra}{\Longrightarrow}
\newcommand{\lgl}{\langle}
\newcommand{\rgl}{\rangle}
\newcommand{\rl}{\!\downarrow\!}
\newcommand{\Erdos}{Erd{\H{o}}s}
\newcommand{\Fraisse}{Fra{\"{i}}ss{\'{e}}}
\newcommand{\Hubicka}{Hubi{\v{c}}ka}
\newcommand{\Lauchli}{L{\"{a}}uchli}
\newcommand{\Nesetril}{Ne{\v{s}}et{\v{r}}il}
\newcommand{\Rodl}{R{\"{o}}dl}
\newcommand{\cmark}{\ding{51}}%
\newcommand{\xmark}{\ding{55}}%
\newcommand{\EEAP}{SDAP}
\newcommand{\EEAPnonacronym}{Substructure  Disjoint Amalgamation Property}
\newcommand{\SFAP}{SFAP}
\newcommand{\SFAPnonacronym}{Substructure  Free  Amalgamation Property}
\newcommand{\BEEAP}{BSDAP}
\newcommand{\TwoEEAP}{$2$-SDAP}
\newcommand{\noprint}[1]{\relax}
\title[Simply characterized big Ramsey structures]{Fra{\"{i}}ss{\'{e}} classes with simply characterized\\ big Ramsey structures}
\author{R. Coulson}
\address{United State Military Academy, West Point\\
Department of Mathematical Sciences, Thayer Hall 233, West Point, USA}
\email{Rebecca.Coulson@westpoint.edu}
\urladdr{\url{https://urldefense.com/v3/__https://www.westpoint.edu/mathematical-sciences/profile/rebecca_coulson__;!!NCZxaNi9jForCP_SxBKJCA!E2nu0Rcsfw15sLu6DrXLF-lHQGsf2b2Bh3Muun-jup9REtFQIVMZEMLjZ8j0vRkB-kAM$ }}
\author{N. Dobrinen}
\address{University of Denver\\
Department of Mathematics, 2390 S. York St., Denver, CO USA}
\email{natasha.dobrinen@du.edu}
  \urladdr{\url{http://cs.du.edu/~ndobrine}}
  \author{R. Patel}
  \address{
  African Institute for Mathematical Sciences\\
    M'bour-Thi\`{e}s, Senegal
    }
\email{rpatel@aims-senegal.org}
\thanks{The second author is grateful for support from   National Science Foundation Grant DMS-1901753, which also supported research visits to the University of Denver by the first and third authors.
She also is grateful for support
from Menachem Magidor for hosting her visit to
 The Hebrew University of Jerusalem  in December 2019, during which some of the ideas in this paper were formed. 	
The third author's work on this paper was supported by the National Science Foundation under Grant No. DMS-1928930 while she was in residence at the Mathematical Sciences Research Institute in Berkeley, California, during the Fall 2020 semester.
}
\subjclass[2010]{05D10, 05C55,  05C15, 05C05,  03C15, 03E75}
\keywords{Ramsey theory, \Fraisse\ structure,  canonical partitions, big Ramsey degrees, trees}
\begin{document}

\maketitle

\tableofcontents

\begin{abstract}
We formulate
a
 property strengthening the Disjoint  Amalgamation Property and
  prove that
 every \Fraisse\  structure
 in a finite relational language with relation symbols of arity at most two having
this  property
has finite big Ramsey degrees which have a  simple characterization.
It follows that
any such \Fraisse\ structure
admits
 a big  Ramsey structure.
Furthermore, we prove indivisibility for
 every \Fraisse\ structure in an arbitrary finite relational language
satisfying
this property.
 This work offers a
 streamlined  and unifying approach to  Ramsey theory on some seemingly disparate classes of \Fraisse\ structures.
Novelties    include
a new  formulation of  coding trees in terms of 1-types over initial segments of the \Fraisse\ structure, and  a direct
characterization of  the  degrees  without appeal to the standard method of ``envelopes''.
\end{abstract}

%%%%%%%%%%%%%%%%%%%%%%%%
%%%%%%%%%%%%%%%%%%%%%%%%
%%%%%%%%%%%%%%%%%%%%%%%%
%%%%%%%%%%%%%%%%%%%%%%%%

\section{Introduction}\label{sec.intro}

In recent years,
the
Ramsey theory
of
infinite structures has seen quite an expansion.
This area seeks to understand which infinite structures satisfy some analogue of the infinite
Ramsey theorem for the natural numbers.

\begin{thm}[Ramsey, \cite{Ramsey30}]\label{thm.RamseyInfinite}
Given integers $k,r\ge 1$ and a coloring
of the $k$-element subsets of the natural numbers into $r$ colors,
there is an infinite set of natural numbers, $N$, such that all $k$-element subsets of $N$ have the same color.
\end{thm}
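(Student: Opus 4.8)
The plan is to prove this by induction on the exponent $k$, the idea being to reduce an arbitrary coloring of $k$-element sets to one that depends only on the least element of a set, and then to apply the pigeonhole principle one last time. The base case $k=1$ is exactly the pigeonhole principle: a coloring of $\bN$ (identified with its singletons) into the finitely many colors $1,\dots,r$ must have an infinite monochromatic class, since $\bN$ is infinite.

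For the inductive step, assume the statement for $k-1$ and fix a coloring $c$ of the $k$-element subsets of $\bN$ into colors $1,\dots,r$. I would build recursively a strictly increasing sequence $a_0<a_1<\cdots$ of natural numbers together with a decreasing chain of infinite sets $\bN = A_{-1}\supseteq A_0\supseteq A_1\supseteq\cdots$. Having chosen the infinite set $A_{i-1}$, set $a_i=\min A_{i-1}$ and consider the coloring $s\mapsto c(\{a_i\}\cup s)$ of the $(k-1)$-element subsets of $A_{i-1}\setminus\{a_i\}$; by the induction hypothesis applied inside the infinite set $A_{i-1}\setminus\{a_i\}$, there is an infinite $A_i\subseteq A_{i-1}\setminus\{a_i\}$ on which this coloring is constant with some value $d_i$. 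The point of the construction is that whenever $i_0<i_1<\cdots<i_{k-1}$, each of $a_{i_1},\dots,a_{i_{k-1}}$ lies in $A_{i_0}$ (being of the form $\min A_{j-1}$ for some $j\ge i_0+1$, hence in $A_{i_0}$) and differs from $a_{i_0}$, so $c(\{a_{i_0},a_{i_1},\dots,a_{i_{k-1}}\})=d_{i_0}$; that is, the color of a $k$-subset of $\{a_i:i\in\bN\}$ depends only on its least element.

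Finally, apply the pigeonhole principle to the map $i\mapsto d_i$ to get an infinite $I\subseteq\bN$ and a single color $d$ with $d_i=d$ for all $i\in I$; then $N=\{a_i:i\in I\}$ is an infinite set all of whose $k$-element subsets have color $d$. The only point requiring care is the bookkeeping in the recursion — one must verify at each stage that $A_i$ is still infinite so that the induction hypothesis remains applicable — but this is routine and there is no real obstacle; the argument is classical. (An alternative and even shorter route, which I would mention but not pursue, is to fix a nonprincipal ultrafilter $\mathcal{U}$ on $\bN$ and use it to ``average out'' coordinates, then greedily choose a set each of whose initial segments sits inside the relevant $\mathcal{U}$-large set of good extensions.)
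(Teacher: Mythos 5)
Your proof is correct: it is the standard induction on $k$ (pigeonhole at $k=1$; in the inductive step a recursively constructed sequence $a_0<a_1<\cdots$ with nested infinite sets so that the color of a $k$-set depends only on its least element, followed by one more application of the pigeonhole principle), and the verification that $a_{i_1},\dots,a_{i_{k-1}}\in A_{i_0}$ is exactly the point that makes it work. The paper itself gives no proof — it only cites Ramsey's 1930 paper — so there is nothing to compare against; your argument is the classical one and is complete.
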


For infinite structures, exact analogues of  Ramsey's theorem usually fail, even when the class of finite substructures has the Ramsey property.
This is due to some unseen structure  which persists in  every
infinite substructure isomorphic to the original, but  which dissolves
when considering Ramsey properties of
classes of finite substructures.
This was first seen in
Sierpi\'{n}ski's use of a well-ordering on the rationals to construct a coloring of  unordered pairs of rationals
with two colors such that both colors persist in any subcopy of the rationals.
The interplay between the well-ordering and the rational order forms
 additional structure   which is in some sense essential, as it  persists upon taking any subset forming another dense linear order
 without endpoints.
The quest to characterize and quantify  the  often hidden but essential  structure for infinite structures, more generally, is the area of
 {\em big Ramsey degrees}.

Given an infinite structure
$\bfM$, we say that
 $\bfM$ has {\em finite big Ramsey degrees} if for each finite substructure
$\bfA$ of $\bfM$, there is an integer $T$
such that the following  holds:
For any coloring of
the copies of $\bfA$ in $\bfM$
into finitely many colors, there is a
 substructure $\bfM'$ of $\bfM$
such that $\bfM'$ is isomorphic to $\bfM$, and
 the copies of $\bfA$ in $\bfM'$ take no more than $T$ colors.
When a $T$ having this property exists, the least  such value
is called the {\em big Ramsey degree} of $\bfA$ in
$\bfM$,
denoted
$T(\bfA, \bM)$.
In particular, if
the big Ramsey degree of $\bfA$ in $\bfM$ is one,
then any finite coloring of the copies of
$\bfA$ in $\bfM$ is
constant
on some subcopy of $\bfM$.

While the area of big Ramsey degrees on infinite structures traces back to Sierpi\'{n}ski's result  that
the big Ramsey degree for unordered pairs of rationals is at least two, and progress on the rationals and other binary relational structures was made in  the decades since,
the question of which infinite structures have finite big Ramsey degrees
attracted
extended interest due to
the flurry of
results in
 \cite{Laflamme/NVT/Sauer10},
\cite{Laflamme/Sauer/Vuksanovic06}, \cite{NVT08},
and \cite{Sauer06}
in tandem with
the publication
of
 \cite{Kechris/Pestov/Todorcevic05}, in which
 Kechris, Pestov, and Todorcevic
asked for an analogue of their
correspondence between
 the Ramsey property  of \Fraisse\ classes and  extreme amenability to the setting of
 big Ramsey degrees for \Fraisse\ limits.
 This was
 addressed
 by Zucker in \cite{Zucker19},
 where he proved a
 connection
between
 \Fraisse\
 limits
 with finite big Ramsey degrees and completion flows in topological dynamics.
Zucker's results apply to {\em big Ramsey structures},
expansions of  \Fraisse\
limits
in which the big Ramsey degrees of the \Fraisse\
limits
can be exactly characterized using the additional structure induced by the expanded language.
This additional structure
involves a well-ordering, and
characterizes
the  essential structure which persists  in every infinite subcopy of the \Fraisse\
limit.
It is this essential structure  we seek to understand
 in the study of big Ramsey degrees.

In this paper, we
describe
an amalgamation property,
called the \EEAPnonacronym\ (\EEAP),
forming a strengthened version of  disjoint  amalgamation.
We then
 characterize the exact big Ramsey degrees for all \Fraisse\
 limits
in
finite relational languages
with relation symbols of arity at most two
whose ages have
 \EEAP\ and which have
  two additional properties,
 which  we call
the Diagonal Coding Tree Property and the Extension Property.
 Our characterization, together with results of
Zucker in \cite{Zucker19}, imply that  \Fraisse\ limits having these three properties admit big Ramsey structures, and their automorphism groups have
metrizable universal completion flows.

\begin{thm}\label{thm.main}
Let $\mathcal{K}$ be a  \Fraisse\  class
in a finite relational language
with relation symbols of arity at most two
such that $\mathcal{K}$ satisfies
\EEAP, and such that the \Fraisse\ limit  {\rm{Flim}}$(\mathcal{K})$ of $\mathcal{K}$ has
the Diagonal Coding Tree Property and the Extension Property.
Then
{\rm{Flim}}$(\mathcal{K})$
has finite big Ramsey degrees and, moreover,
admits  a big Ramsey structure.
Hence, the topological group
{\rm{Aut(Flim}}$(\mathcal{K}))$
 has a metrizable universal completion flow, which is unique up to isomorphism.
\end{thm}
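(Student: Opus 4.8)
The plan is to prove Theorem~\ref{thm.main} in three main stages, following the now-standard architecture of big Ramsey arguments (coding trees $+$ a Milliken-style tree theorem $+$ a characterization of the degrees) but carried out in the new framework of $1$-types over initial segments.

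\textbf{Stage 1: Coding trees from $1$-types.}
First I would fix an enumeration $\langle v_n : n<\om\rangle$ of the universe of $\bfM=\mathrm{Flim}(\mathcal{K})$ and, for each $n$, form the tree $\mathbb{T}$ whose level-$n$ nodes are the complete $1$-types over the substructure $\bfM\re\{v_0,\dots,v_{n-1}\}$ that are realized in $\bfM$, ordered by restriction. Each $v_m$ determines a distinguished \emph{coding node} at level $m$, namely its own $1$-type over the earlier points. The key point to verify here is that $\mathbb{T}$ is a finitely branching tree in which the coding nodes are cofinal, and that copies of any finite $\bfA\le\bfM$ correspond exactly to finite antichains of coding nodes satisfying a ``similarity'' constraint; $\EEAP$ (disjoint amalgamation strengthened) is what guarantees enough branching to run the pigeonhole, and the Extension Property is what lets every suitable finite configuration in the tree be extended to a coding node, so that tree-copies really do realize all copies of $\bfA$ in $\bfM$. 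This reindexing is essentially bookkeeping; I expect it to be routine but notation-heavy.

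\textbf{Stage 2: The Ramsey theorem for the coding tree.}
The heart of the proof is a Milliken-type theorem: for any finite coloring of the ``diagonal'' antichains (or strong subtrees) of $\mathbb{T}$ of a fixed shape, there is a subtree $\mathbb{S}\subseteq\mathbb{T}$, still isomorphic to $\mathbb{T}$ in the relevant sense and still containing a copy of $\bfM$ on its coding nodes, on which the coloring depends only on the shape. I would prove this by a forcing/fusion argument: set up a notion of forcing whose conditions are finite approximations to the desired subtree, prove the relevant pigeonhole for a single level extension (this is where the Diagonal Coding Tree Property is used — it ensures the ``critical'' new nodes can be chosen diagonally so that at each step at most one coding node is adjoined), and then fuse $\om$-many generics to build $\mathbb{S}$. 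This step is the main obstacle: getting the level-by-level pigeonhole to interact correctly with the coding nodes, and making sure the fusion produces a subtree on which a full copy of $\bfM$ still lives, is the technically delicate part, and it is where all three hypotheses ($\EEAP$, Diagonal Coding Tree Property, Extension Property) must be used simultaneously.

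\textbf{Stage 3: From the tree theorem to big Ramsey degrees and the big Ramsey structure.}
Given Stage~2, I would argue as follows. Fix $\bfA\le\bfM$. There are only finitely many ``diagonal similarity types'' of finite subtrees of $\mathbb{T}$ whose coding nodes code a copy of $\bfA$; call this number $T(\bfA)$. Given a coloring of the copies of $\bfA$ in $\bfM$, pull it back to a coloring of these finite subtrees, apply Stage~2 to get $\mathbb{S}$, and take $\bfM'$ to be the copy of $\bfM$ sitting on the coding nodes of $\mathbb{S}$; then the copies of $\bfA$ in $\bfM'$ take at most $T(\bfA)$ colors. This gives finite big Ramsey degrees, bounded by the number of similarity types — and crucially, because the argument is done directly on coding nodes rather than through envelopes, one reads off an explicit, ``simple'' description of these types. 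To obtain a big Ramsey structure in the sense of Zucker, I would expand $\bfM$ by the linear order and additional relations recording, for each finite tuple, which similarity type its coding nodes realize; the Stage~2 theorem shows this expansion has the property that every subcopy of $\bfM$ contains a subcopy realizing exactly the ``diagonal'' expansions, which is precisely the definition of a big Ramsey structure. Finally, invoking Zucker's theorem from \cite{Zucker19}, the existence of a big Ramsey structure yields that $\mathrm{Aut}(\mathrm{Flim}(\mathcal{K}))$ has a metrizable universal completion flow, unique up to isomorphism. The work in this last stage is mostly organizational, the one subtlety being to check that the expanded language genuinely witnesses Zucker's axioms (in particular that the expansion is itself the limit of a reasonable class).
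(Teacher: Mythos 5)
Your three-stage architecture matches the paper's (coding trees of $1$-types, a forced Milliken-type theorem for configurations in the tree, then counting similarity types and invoking Zucker), but two of your steps, as described, would fail. First, in the coding tree of $1$-types the coding nodes are dense and can be comparable (for instance, in the tree for the Rado graph every coding node extends $c_0$), so copies of a finite $\bfA$ in the structure represented by \emph{all} coding nodes of a subtree $\mathbb{S}$ are coded by arbitrary finite sets of coding nodes, not by antichains. Your Stage 1 claim that copies of $\bfA$ correspond exactly to antichains of coding nodes is therefore false in the ambient tree, and your Stage 3 conclusion that the copies of $\bfA$ in the structure on the coding nodes of $\mathbb{S}$ take at most $T(\bfA)$ colors does not follow: the forcing theorem only homogenizes colorings of antichain similarity types (the set-up of Theorem \ref{thm.matrixHL} requires $\tilde{C}$ to be an antichain). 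The paper closes exactly this gap with Lemma \ref{lem.bD}, which constructs, inside any diagonal coding tree, an infinite \emph{antichain} $\bD$ of coding nodes with controlled passing types that still represents a copy of $\bK$; the final subcopy witnessing the upper bound is $\bK\re\bD$, not the structure on all coding nodes of $\mathbb{S}$. That construction uses \EEAP\ in an essential way and is not bookkeeping.

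Second, obtaining a big Ramsey structure needs strictly more than the upper-bound tree theorem. Zucker's Theorem \ref{thm.Zucker7.1} requires, for each $n$, an \emph{unavoidable} coloring with exactly $R_n$ colors, coherently refining ($\gamma_m\ll\gamma_n$); unavoidability is precisely the statement that every similarity type of diagonal antichains persists in every subcopy of $\bK$, i.e., exact lower bounds and canonical partitions. You attribute this to Stage 2 ("the Stage 2 theorem shows this expansion has the property that every subcopy of $\bfM$ contains a subcopy realizing exactly the diagonal expansions"), but the Milliken-type theorem yields only upper bounds and says nothing about which types survive in an arbitrary subcopy. The paper proves persistence separately (Theorem \ref{thm.persistence}), by a largeness/passing-type-preserving-map argument in the style of Laflamme--Sauer--Vuksanovic carried out inside the fixed antichain $\bD$ (and this is where clause (2b) of the Extension Property is used), and then verifies Zucker's hypotheses by translating copy-colorings into embedding-colorings and checking the strong refinement condition (Theorem \ref{thm.apply}). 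Without a persistence argument your proposed expansion is not shown to satisfy Definition \ref{defn.bRs}, so the big Ramsey structure and the universal completion flow conclusions do not yet follow.
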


We say that a \Fraisse\
structure $\bK$
satisfies  the \EEAPnonacronym$^+$ (\EEAP$^+$)  whenever
the age of $\bK$ has \EEAP\
and $\bK$ has
the Diagonal Coding Tree Property and the Extension Property.

An immediate consequence  of
our proofs
is that a \Fraisse\ limit $\bK$ with \EEAP$^+$
and
with relations of any arity
is {\em indivisible}, by which we mean that every one-element
substructure of $\bK$ has big Ramsey degree equal to one.
In the case when $\bK$ has exactly one substructure of size one
(up to isomorphism),
as happens for instance when the language of $\bK$ has no unary relation symbols and there are no ``loops''
in $\bK$, this definition reduces to the usual one for indivisibility of structures like the Rado graph and the Henson graphs
(see \cite{Sauer06}, \cite{Komjath/Rodl86}, and  \cite{El-Zahar/Sauer89}).

\begin{thm}\label{thm.indivisibility}
Suppose  $\mathcal{K}$ is a  \Fraisse\ class
in a finite relational language
with relation symbols  in any arity such that
its  \Fraisse\ limit  satisfies
\EEAP$^+$.
Then $\bK$ is indivisible.
\end{thm}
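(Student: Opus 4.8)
The plan is to obtain indivisibility as the one-element instance of the coding-tree machinery behind Theorem~\ref{thm.main}, observing that the bound on the arity of relation symbols is needed only for configurations consisting of more than one coding node. Up to isomorphism there are only finitely many one-element substructures of $\bK$, the language being finite, so fix one of them, $\bfA$, together with a coloring $c$ of the copies of $\bfA$ in $\bK$ into finitely many colors; it suffices to produce a subcopy of $\bK$ in which these copies are monochromatic. The copies of $\bfA$ in $\bK$ are precisely the vertices $v$ for which the substructure of $\bK$ on $\{v\}$ is isomorphic to $\bfA$, that is, the vertices realizing the atomic type determined by $\bfA$ (recording which unary relations and ``loops'' hold at the vertex). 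Using the Diagonal Coding Tree Property, fix an enumeration $\bK=\{v_n:n\in\omega\}$ and a diagonal coding tree $\mathbb{T}$ coding $\bK$, with coding nodes $\langle c_n : n\in\omega\rangle$ where $c_n$ represents $v_n$; then the copies of $\bfA$ in $\bK$ correspond to the coding nodes of $\mathbb{T}$ representing vertices of the type of $\bfA$, and $c$ induces a finite coloring of these.

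Next, apply the Ramsey theorem for diagonal coding trees (Theorem~\ref{thm.mainRamsey}) in the case of the one-element structure $\bfA$. This is the step requiring care, and the main obstacle: Theorem~\ref{thm.mainRamsey} as stated assumes relation symbols of arity at most two, but one checks that this hypothesis enters its proof only when comparing or amalgamating two or more coding nodes. For a single coding node the argument reduces to a Halpern--\Lauchli-style tree pigeonhole, applied together with \EEAP\ and the Extension Property, none of which constrains the arity, and which together also guarantee that the resulting diagonal coding subtree still codes a copy of $\bK$. Hence, for relation symbols of arbitrary finite arity, we obtain a diagonal coding subtree $S\le\mathbb{T}$, coding a copy $\bK'$ of $\bK$, such that every coding node of $S$ representing a vertex of the type of $\bfA$ receives a single color $i$.

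Finally, every copy of $\bfA$ in $\bK'$ is a vertex of $\bK'$ realizing the type of $\bfA$, hence is represented by a coding node of $S$ of the type of $\bfA$, and therefore has $c$-color $i$. Thus the copies of $\bfA$ in $\bK'$ are monochromatic, so $T(\bfA,\bK)=1$. As $\bfA$ ranged over the finitely many one-element substructures of $\bK$, the structure $\bK$ is indivisible; note that, as in the proof of Theorem~\ref{thm.main}, no recourse to envelopes is needed here, since the monochromatic object produced is already an isomorphic copy of $\bK$.
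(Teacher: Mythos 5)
Your proposal is correct and follows essentially the same route as the paper: the paper's proof of Theorem \ref{thm.indivisibility} likewise reduces indivisibility to the single-coding-node instance of the coding-tree pigeonhole, taking $A=D=r_0(T)$ to be empty, $X^*$ the least coding node representing a copy of the singleton, $B$ its length-one initial segment, and obtaining $S\in[D,T]$ on which the induced coloring of $\Ext_S(B;X^*)$ is constant; since $S$ codes a copy of $\bK$ and every coding node of $S$ of the relevant type lies in $\Ext_S(B;X^*)$, monochromaticity follows. The only soft spot is your citation step: the result you invoke (the Ramsey theorem for colorings of antichains in diagonal coding trees, Theorem \ref{thm.onecolorpertreetype} here) is indeed stated only for arity at most two, and your claim that this hypothesis ``enters the proof only when comparing or amalgamating two or more coding nodes'' is asserted rather than verified. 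In the paper no such inspection is needed, because the underlying level-set extension theorem proved by forcing, Theorem \ref{thm.matrixHL}, carries no arity restriction at all (the arity-two hypothesis appears only in the later exact-degree results, e.g.\ Theorems \ref{thm.onecolorpertreetype} and \ref{thm.bounds}); so the singleton case is a direct application of a theorem whose hypotheses are met, which is the clean way to repair your appeal to a theorem outside its stated hypotheses.
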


Further, we are able
to show that
the  age of any \Fraisse\ structure
 with relations of arity at most two
 satisfying \EEAP$^+$ has ordered expansion with the Ramsey property.
 (See Theorem \ref{thm.SESAPimpliesORP}.)

Theorem \ref{thm.main}  follows from Theorems
\ref{thm.onecolorpertype},
 \ref{thm.bounds},
and  \ref{thm.apply}
of this paper.
Upper bounds
for big Ramsey degrees of \Fraisse\ limits with \EEAP$^+$
are found in Theorem \ref{thm.onecolorpertype}.
These bounds are then proved to be exact in Theorem
 \ref{thm.bounds},
where the big Ramsey degrees are characterized
by
so-called similarity types of diagonal antichains.
This characterization is presented at the end of this
introduction.

For \Fraisse\ classes in relational languages with relation symbols of arity at most two,
Theorem \ref{thm.apply} shows that the characterization
in Theorem  \ref{thm.bounds}  implies that the conditions of a theorem of Zucker in \cite{Zucker19},
guaranteeing existence of big Ramsey structures, hold for any \Fraisse\ limit satisfying \EEAP$^+$.
Moreover,
the
big Ramsey structures
can be  obtained
 by simply adding two
 new relation symbols to the language,
 similarly to the
 construction of
 big Ramsey structures
 for the rationals and the Rado graph
 in \cite{Zucker19}. Thus,
 in Theorem \ref{thm.BRS},
  we obtain
a simple characterization of big Ramsey structures for \Fraisse\ structures with \EEAP$^+$.

Theorem \ref{thm.main} provides new classes of examples
 of big Ramsey structures while
 recovering results in
 \cite{DevlinThesis},
\cite{HoweThesis},
 \cite{Laflamme/NVT/Sauer10}, and
 \cite{Laflamme/Sauer/Vuksanovic06}
and extending special cases of the results in \cite{Zucker20} to obtain exact big Ramsey degrees.
Theorem \ref{thm.indivisibility} provides new  classes of examples of indivisible \Fraisse\ structures, in particular for ordered structures,
while recovering results in
\cite{El-Zahar/Sauer89},
\cite{Komjath/Rodl86},  and
\cite{El-Zahar/Sauer94} and some of the results in
 \cite{Sauer20}.
We point out that  Sauer's work in \cite{Sauer20}
provides the  full picture on indivisibility for
 free amalgamation classes.
Thus for
free amalgamation classes, Theorem \ref{thm.indivisibility} recovers certain cases of Sauer's results,
while for
\Fraisse\ classes without free amalgamation but with \Fraisse\
limits
satisfying \EEAP$^+$, our  results  provide new examples  of indivisible \Fraisse\ structures.

 We now discuss
 several theorems which  follow from  Theorem \ref{thm.main}
or   \ref{thm.indivisibility},
 as well as  new
examples
 obtained  from our results.
A fuller description is provided in
Section \ref{sec.EEAPClasses}.

We show that \EEAP$^+$ holds for  disjoint
 amalgamation classes which are ``unrestricted''
(see Definition \ref{defn.unconst}).
 Particular instances  of
 Theorem  \ref{thm.main}
include
 classes of
  structures with finitely many unary and binary relations such as
graphs and
tournaments, as well as
their ordered versions.
Our examples encompass those
unconstrained
binary relational structures considered in \cite{Laflamme/Sauer/Vuksanovic06}
as well as their ordered expansions.
We also  show that \EEAP$^+$ holds for \Fraisse\
limits
of  free amalgamation classes which  forbid $3$-irreducible substructures, namely, substructures in which any three distinct elements appear  in a tuple of which some relation holds, as well as their ordered versions.
Hence, Theorem \ref{thm.indivisibility} implies that all unrestricted classes,
all free amalgamation classes which  forbid $3$-irreducible substructures, and their ordered expansions have \Fraisse\ limits which are indivisible.
See Theorem
 \ref{thm.supercool} for more details.

Certain
\Fraisse\ structures
derived from the rational linear order
satisfy
 \EEAP$^+$ and hence, by Theorem \ref{thm.main}, admit  big Ramsey structures.
Theorem \ref{thm.LOEqRels} shows in particular that
$\bQ_n$, the rational
linear order with a partition into $n$ dense
pieces,
admits a big Ramsey structure.
Theorem \ref{thm.LOEqRels} also shows
that  the structure
$\bQ_{\bQ}$ admits a big Ramsey structure,
answering  a question raised by Zucker at the 2018 Banff Workshop on {\em Unifying Themes in Ramsey Theory}.
This is the dense linear order without endpoints with an equivalence relation such that  all equivalence classes are convex copies of the rationals.
More generally,  Theorem \ref{thm.LOEqRels} applies to
members of
a natural hierarchy of finitely many convexly ordered equivalence relations, where each  successive  equivalence relation coarsens the previous one;
these also admit big Ramsey structures.
\Fraisse\ structures with finitely
many independent linear orders satisfy a slightly weaker property
than \EEAP$^+$,
which we call the
{\em Bounded \EEAP$^+$}
and discuss in Section \ref{sec.infdiml}. We expect that
 the methods in this paper can be
 adjusted to
 find canonical partitions  for these structures,
 but the characterization  will be
 more complex.

We call structures handled by Theorem \ref{thm.LOEqRels}
{\em $\bQ$-like}, as they have enough rigidity, similarly to $\bQ$, for
\EEAP$^+$ to hold and hence, for the proof methods in this paper  to apply.
Known results  in this  genre of $\bQ$-like structures  which
our methods recover include
Devlin's  characterization of  the  big Ramsey degrees of the rationals \cite{DevlinThesis}
 as well as
 results
 of Laflamme, Nguyen Van Th\'{e}, and Sauer  in
 \cite{Laflamme/NVT/Sauer10}
 characterizing the big Ramsey degrees of
 the $\bQ_n$.

While many of the known big Ramsey degree results use sophisticated versions of Milliken's Ramsey theorem for trees \cite{Milliken79},
and while proofs using the method of forcing to produce new pigeonhole principles in ZFC have appeared in \cite{DobrinenRado19}, \cite{DobrinenH_k19}, \cite{DobrinenJML20}, and  \cite{Zucker20},
there are two  novelties to our approach in this paper which produce a clarity about  big Ramsey degrees.
Given a \Fraisse\ class $\mathcal{K}$,
we fix an  enumerated \Fraisse\ limit of $\mathcal{K}$, which we denote by $\bK$.
By {\em enumerated \Fraisse\ limit}, we mean that the universe of $\bK$ is ordered via the natural numbers.
The first novelty of our approach is that  we
work with trees of   quantifier-free $1$-types
(see Definition
\ref{defn.treecodeK})
and develop  forcing arguments
directly on them to prove
upper bounds for the big Ramsey degrees.
It was suggested to the second author  by Sauer during the 2018 BIRS Workshop,
{\em Unifying Themes in Ramsey Theory}, to try moving the forcing methods from \cite{DobrinenH_k19}  and \cite{DobrinenJML20} to forcing directly on the structures.
Using trees of quantifier-free $1$-types seems to come as close as possible to fulfilling this request, as the $1$-types  allow one to see the essential hidden structure (the interplay of a well-ordering of the universe with first instances where $1$-types disagree), whereas working only on   the \Fraisse\ structures, with no reference to $1$-types, obscures this central feature of big Ramsey degrees from view.
We will be calling such trees {\em coding trees}, as there will be special nodes, called {\em coding nodes},
 representing the vertices of
 $\bK$:
 The $n$-th coding node will be the quantifier-free $1$-type of the  $n$-th vertex of $\bK$ over the
 substructure of $\bK$
 induced on the  first $n-1$ vertices of $\bK$.
 (The $0$-th coding node is the quantifier-free $1$-type of the $0$-th vertex over the empty
 set.)
A second  novelty of our approach is that we find the exact big Ramsey degrees directly from the trees of $1$-types, without appeal to the standard method of ``envelopes''.
This means that
the upper bounds which we  find via forcing arguments
 are shown to be exact.

Using trees
of
quantifier-free
$1$-types
(partially ordered by inclusion)
allows us to prove   a  characterization of  big Ramsey degrees for \Fraisse\ classes with \EEAP$^+$ which is a simple extension of the so-called ``Devlin types'' for the rationals in \cite{DevlinThesis},
and of the  characterization of the big Ramsey degrees of the  Rado graph achieved by Laflamme, Sauer, and Vuksanovic in \cite{Laflamme/Sauer/Vuksanovic06}.
Here, we present  the characterization for  structures  without unary relations.
The full  characterization is given in Theorem \ref{thm.bounds}.

\begin{CharBRDSDAP}
Let $\mathcal{L}$   be a  language
consisting of
finitely many  relation symbols, each of arity
two.
Suppose $\mathcal{K}$ is a \Fraisse\ class in $\mathcal{L}$
such that the \Fraisse\ limit
$\bK$
of $\mathcal{K}$ satisfies
\EEAP$^+$.
Fix a structure  $\bfA\in\mathcal{K}$.
 Let
 $(\bfA,<)$ denote
  $\bfA$ together with a fixed enumeration
  $\lgl\mathrm{a}_i:i<n\rgl$
 of the universe
 of $\bfA$.
We say that a tree $T$ is a
{\em diagonal tree coding $(\bfA,<)$}
if  the following hold:
\begin{enumerate}
\item
$T$ is a finite tree with $n$ terminal nodes and
 branching degree two.
\item
$T$ has  at most one branching node  in
any given
level,
and
no two distinct nodes from among the branching nodes and terminal nodes have the same length.
Hence, $T$ has $2n-1$ many levels.
\item
Let $\lgl \mathrm{d}_i:i<n\rgl$ enumerate the terminal nodes in $T$ in order of increasing length.
Let $\bfD$ be the $\mathcal{L}$-structure induced on the set $\{\mathrm{d}_i:i<n\}$ by the increasing
bijection from   $\lgl\mathrm{a}_i:i<n\rgl$ to $\lgl \mathrm{d}_i:i<n\rgl$, so that $\bfD \cong \bfA$.
Let $\tau_i$ denote the quantifier-free
$1$-type of $\mathrm{d}_i$ over
$\bfD_i$,
the substructure of
$\bfD$ on vertices $\{\mathrm{d}_m:m<i\}$.
Given  $i<j<k<n$,
if $\mathrm{d}_j$ and $\mathrm{d}_k$
both
extend
some node
in $T$
that is at the same level as $\mathrm{d}_i$,
then $\mathrm{d}_j$ and $\mathrm{d}_k$
 have the same quantifier-free $1$-types over
$\mathbf{D}_i$.
That is,
$\tau_j\re \mathbf{D}_i
=\tau_k\re \mathbf{D}_i$.
\end{enumerate}
Let $\mathcal{D}(\bfA,<)$ denote the number of distinct diagonal trees coding $(\bfA,<)$;
let $\mathcal{OA}$ denote a set
consisting of
one representative from each isomorphism class of ordered copies of $\bfA$.
Then
 $$
 T(\bfA,\bK)
=
\sum_{(\bfA,<)\in\mathcal{OA}}\mathcal{D}(\bfA,<)
$$
\end{CharBRDSDAP}

If $\mathcal{L}$ also has unary relation symbols,
in the case that
$\mathcal{K}$ is a free amalgamation class, the simple characterization above holds when modified to  diagonal coding trees with the same number of roots as unary relations.
In the case that $\mathcal{K}$ contains a transitive relation,
then the  above characterization still holds.

 We see  our main contribution as providing a clear and   unified analysis of
a wide class of \Fraisse\ structures with relations of
arity at most two for which the big Ramsey degrees have a simple characterization.
 \vskip.1in

\it Acknowledgements. \rm
The second author thanks
Norbert Sauer for discussions at the  2018 Banff Workshop on
{\it Unifying Themes in Ramsey Theory},
where he suggested trying
  to move  the forcing  directly on the structures.
She also thanks
 Menachem Magidor
 for hosting her
  at the Hebrew University of Jerusalem in December 2019, and  for fruitful discussions on big Ramsey degrees during that time.
She thanks Itay Kaplan for  discussions on big Ramsey degrees and  higher arity relational structures during that visit, and Jan \Hubicka\ for helpful conversations.  The third author thanks Nathanael Ackerman, Cameron Freer and Lynn Scow for extensive and clarifying discussions.
All three authors thank Jan \Hubicka\ and Mat\v{e}j Kone\v{c}n\'{y} for pointing out  a mistake in an earlier version.

%%%%%%%%%%%%%%%%%%%%%%
%%%%%%%%%%%%%%%%%%%%%%
%%%%%%%%%%%%%%%%%%%%%%
%%%%%%%%%%%%%%%%%%%%%%
%%%%%%%%%%%%%%%%%%%%%%
%%%%%%%%%%%%%%%%%%%%%%

\section{Amalgamation properties implying  big Ramsey structures}\label{sec.Structures}

 Theorem \ref{thm.main}
shows that
for finite
 relational languages
of arity at most two,
\Fraisse\ structures
satisfying the
\EEAPnonacronym$^+$
(\EEAP$^+$)
have simply
characterized exact big Ramsey degrees, from which the existence of
 big Ramsey structures follows.
Theorem \ref{thm.indivisibility}  shows that for
arbitrary
finite relational languages,  \Fraisse\ structures
satisfying the \EEAP$^+$ are indivisible.
The inspiration for this
property comes from
a
strengthening of the free amalgamation property,
which we  call the \SFAPnonacronym\ (\SFAP).
We originally found that any binary relational  \Fraisse\
structure
 with an age satisfying  \SFAP\
has finite
 big Ramsey degrees that are characterized in a manner  similar to  the characterizations,
  in \cite{Laflamme/Sauer/Vuksanovic06},
 of big Ramsey degrees
  for the
  Rado graph and other
 unconstrained
  binary relational structures
  with
  disjoint
  amalgamation.
  \SFAP\
  is satisfied by
  the ages of
  all
  unconstrained
  relational
structures having free amalgamation,
  as well as
by \Fraisse\ classes
  with forbidden
  irreducible and
  $3$-irreducible substructures.
 The \EEAPnonacronym\ (\EEAP)
  is a natural
 extension
 of \SFAP\
  to a broader collection of \Fraisse\ classes with
 disjoint amalgamation.
 When the \Fraisse\ limit of an age with \EEAP\ has certain additional properties, which we call the Diagonal Coding Tree Property and the Extension Property (defined in Sections \ref{sec.sct} and \ref{sec.FRT}), we say that it has \EEAP$^+$.  The property \EEAP$^+$
ensures a simple characterization of exact big Ramsey degrees and big Ramsey structures
for \Fraisse\ structures with relations of arity at most two.

In Subsection \ref{subsec.Fcrs}
we review  the basics of \Fraisse\ theory,
the Ramsey property,
 big Ramsey degrees and big Ramsey structures.
More general background  on \Fraisse\ theory can be found in \Fraisse's original paper  \cite{Fraisse54},   as well as
\cite{HodgesBK97}.
 The
 properties
 \SFAP,
 \EEAP\ and  \EEAP$^+$
 are presented in
 Subsection \ref{subsec.EEAP}.

 %%%%%%%%%%%%%%%%%%%
 %%%%%%%%%%%%%%%%%%%
 %%%%%%%%%%%%%%%%%%%

\subsection{
\Fraisse\ theory, big Ramsey degrees, and big Ramsey  structures}\label{subsec.Fcrs}

All relations in this paper will be finitary,
and all languages will consist of finitely many
relation symbols (and no constant or function symbols).
We use the set-theoretic notation $\om$ to denote the set of natural numbers, $\{0,1,2,\dots\}$
and treat $n \in \om$ as the set $\{i\in\om:i<n\}$.

Let  $\mathcal{L}=\{R_i:i< I\}$ be a finite
language
 where each $R_i$ is a relation symbol with associated
arity $n_i\in \om$.
An {\em $\mathcal{L}$-structure} is an object
\begin{equation}
\bfM=\lgl \M, R_0^{\bfM},\dots, R_{I-1}^{\bfM}\rgl
\end{equation}
where
$\M$ is a nonempty set, called the \emph{universe} of $\bfM$,
and each $R^{\bfM}_i\sse \M^{n_i}$.
Finite structures will typically be denoted by $\bfA,\bfB$, etc.,
and  their universes  by $\A,\B$, etc.  Infinite structures will typically be denoted by $\bJ, \bK$ and their universes
by $\mathrm{J}, \K$. We will call the elements of the universe of a structure {\em vertices}.

An {\em embedding} between
$\mathcal{L}$-structures
$\bfM$ and $\bfN$
is an injection $\iota:\M\ra \N$ such that for each $i<I$
and for all $a_0, \ldots , \, a_{n_i - 1} \in \M$,
\begin{equation}
R_i^{\bfM}(a_0,\dots,a_{n_i - 1})\Longleftrightarrow
R_i^{\bfN}(\iota(a_0),\dots,\iota(a_{n_i - 1})).
\end{equation}
A surjective embedding is an \emph{isomorphism}, and an isomorphism from
$\bfM$ to
itself
$\bfM$
is an {\em automorphism}.
The set of embeddings of $\bfM$ into $\bfN$ is denoted $\text{Emb}(\bfM, \bfN)$, and
the set of automorphisms
of $\bfM$ is denoted $\text{Aut}(\bfM)$.
When
$\M \subseteq \N$ and the inclusion map is an embedding, we say
$\bfM$ is a {\em substructure} of
$\bfN$.
When there exists an embedding $\iota$ from $\bfM$ to $\bfN$, the substructure of $\bfN$ having universe $\iota[\M]$ is
called a {\em copy} of $\bfM$ in $\bfN$, and it is a {\em subcopy} of $\bfN$ if $\bfM$ is isomorphic to $\bfN$. The {\em age} of $\bfM$,
written Age($\bfM$),
is the class of all finite $\mathcal{L}$-structures that embed into $\bfM$.
We write $\bfM \le \bfN$
when there is an embedding of
$\bfM$ into $\bfN$, and $\bfM\cong\bfN$ when there is an isomorphism from
$\bfM$ to $\bfN$.

A class $\mathcal{K}$ of finite structures
in
a
finite
relational language
 is called a {\em \Fraisse\ class}  if it is
nonempty,  closed under isomorphisms,
 hereditary, and satisfies the joint embedding and amalgamation properties.
The class $\mathcal{K}$ is  {\em hereditary} if whenever $\bfB\in\mathcal{K}$ and  $\bfA\le\bfB$, then also $\bfA\in\mathcal{K}$.
The class $\mathcal{K}$ satisfies the {\em joint embedding property} if for any $\bfA,\bfB\in\mathcal{K}$,
there is a $\bfC\in\mathcal{K}$ such that $\bfA\le\bfC$ and $\bfB\le\bfC$. The class
 $\mathcal{K}$ satisfies the {\em amalgamation property} if for any embeddings
$f:\bfA\ra\bfB$ and $g:\bfA\ra\bfC$, with $\bfA,\bfB,\bfC\in\mathcal{K}$,
there is a $\bfD\in\mathcal{K}$ and  there are embeddings $r:\bB\ra\bfD$ and $s:\bfC\ra\bfD$ such that
$r\circ f = s\circ g$.
 Note that in a finite relational language, there are only countably many finite structures up to isomorphism.

An $\mathcal{L}$-structure $\bK$ is called {\em ultrahomogeneous} if
every isomorphism between finite substructures of $\bK$  can be extended to an
automorphism of $\bK$.
We call a
countably infinite, ultrahomogeneous structure
a {\em \Fraisse\ structure}.
\Fraisse\ showed \cite{Fraisse54} that the age of a \Fraisse\ structure is a \Fraisse\ class, and that conversely,
given
a \Fraisse\ class $\mathcal{K}$, there is, up to isomorphism, a unique
\Fraisse\ structure
whose age is $\mathcal{K}$.
Such a
\Fraisse\
structure is called the {\em \Fraisse\ limit} of $\mathcal{K}$
or the {\em generic} structure for $\mathcal{K}$.

Throughout this paper, $\bK$ will denote the \Fraisse\ limit of a \Fraisse\ class $\mathcal{K}$.
We will sometimes write Flim$(\mathcal{K})$ for $\bK$.
We will assume  that $\bK$ has universe $\om$, and call such a structure
an {\em enumerated \Fraisse\ structure}.
 For $m<\om$, we let $\bK_m$
 denote the substructure of $\bK$ with universe $m = \{0, 1, \ldots , m -1\}$.

The following amalgamation property will be assumed in
this paper:
A \Fraisse\ class $\mathcal{K}$ satisfies the  {\em Disjoint Amalgamation Property}
if,
given
embeddings
$f:\bfA\ra\bfB$ and $g:\bfA\ra\bfC$, with $\bfA,\bfB,\bfC\in\mathcal{K}$,
there is an amalgam  $\bfD\in\mathcal{K}$ with  embeddings $r:\bB\ra\bfD$ and $s:\bfC\ra\bfD$ such that
$r\circ f = s\circ g$ and  moreover,
$r[\mathrm{B}]\cap s[\mathrm{C}]=r\circ f[\mathrm{A}]=s\circ g[\mathrm{A}]$.
The disjoint amalgamation property is also called  the {\em strong amalgamation property}.
It
is
 equivalent  to the {\em strong embedding property},
 which
 requires that
 for any $\bfA\in\mathcal{K}$, $v\in\mathrm{A}$, and embedding $\varphi:(\bfA-v) \ra\bK$,
there are infinitely many different extensions of $\varphi$ to embeddings of $\bfA$ into $\bK$.  (See \cite{CameronBK90}.)

A \Fraisse\ class has the {\em Free Amalgamation Property}
if  it satisfies
the Disjoint Amalgamation Property
and moreover,
the amalgam $\bfD$ can be chosen so that no tuple satisfying a relation in $\bfD$ includes elements of both
$r[\mathrm{B}] \setminus r\circ f[\mathrm{A}]$ and $s[\mathrm{C}] \setminus s\circ g[\mathrm{A}]$; in other words, $\bfD$ has
no additional relations on its universe other than those inherited from $\bfB$ and $\bfC$.

For languages $\mathcal{L}_0$ and $\mathcal{L}_1$
 such that $\mathcal{L}_0\cap\mathcal{L}_1=\emptyset$, and given
\Fraisse\ classes
$\mathcal{K}_0$ and $\mathcal{K}_1$ in $\mathcal{L}_0$ and $\mathcal{L}_1$, respectively,
 the {\em free superposition} of $\mathcal{K}_0$ and $\mathcal{K}_1$ is the \Fraisse\ class consisting of
all
 finite $(\mathcal{L}_0\cup\mathcal{L}_1)$-structures $\bfA$
 such that
  the $\mathcal{L}_i$-reduct of $\bfA$ is in $\mathcal{K}_i$,  for each $i<2$.
(See also \cite{Bodirsky15} and \cite{Hubicka_CS20}.)  Note that the free superposition of $\mathcal{K}_0$ and $\mathcal{K}_1$ has free  amalgamation if and only if each $\mathcal{K}_i$ has free amalgamation; and similarly for disjoint amalgamation.

Given a \Fraisse\ class $\mathcal{K}$ and substructures
$\bfM,\bfN$
of $\bK$  (finite or infinite)
 with $\bfM\le\bfN$,
we use
 ${\bfN\choose\bfM}$
to denote the set of all substructures of
$\bfN$ which are isomorphic to
$\bfM$. Given
$\bfM\le\bfN\le\bfO$,
substructures of $\bK$, we write
$$
\bfO\ra(\bfN)_{\ell}^{\bfM}
$$
to denote that for each coloring of
${\bfO\choose \bfM}$
into $\ell$ colors, there is an
 $\bfN' \in {\bfO\choose\bfN}$
 such that
${\bfN'\choose\bfM}$
is  {\em monochromatic}, meaning that
 all members of
 ${\bfN'\choose\bfM}$
 have the same color.

\begin{defn}\label{defn.RP}
A \Fraisse\ class  $\mathcal{K}$ has the {\em Ramsey property} if  for any two structures $\bfA\le\bfB$ in $\mathcal{K}$ and any  $\ell \ge 2$,
there is a $\bfC\in\mathcal{K}$ with $\bfB\le\bfC$ such that
$\bfC\ra (\bB)^{\bfA}_\ell$.
\end{defn}

Equivalently, $\mathcal{K}$ has the Ramsey property if for any two structures $\bfA\le \bfB$ in $\mathcal{K}$,
\begin{equation}\label{eq.RP}
\forall \ell\ge 2,\ \ {\bK}\ra ({\bfB})^{\bfA}_{\ell}.
\end{equation}
This equivalent formulation makes comparison with big Ramsey degrees, below, quite  clear.

\begin{defn}[\cite{Kechris/Pestov/Todorcevic05}]\label{defn.bRd}
Given a \Fraisse\ class $\mathcal{K}$ and its \Fraisse\ limit $\bK$,
for any $\bfA\in\mathcal{K}$,
write
\begin{equation}\label{eq.bRd}
\forall \ell\ge 1,\ \ {\bK}\ra ({\bK})^{\bfA}_{\ell,T}
\end{equation}
when there is an
 integer $T\ge 1$ such that
for any integer $\ell \ge 1$,
given any coloring of ${\bK\choose \bfA}$ into
$\ell$ colors,
there is a
 substructure $\bK'$ of $\bK$, isomorphic to $\bK$,  such that ${\bK'\choose \bfA}$ takes no more than $T$ colors.
We say that
 $\bK$ has {\em finite big Ramsey degrees} if for each
 $\bfA \in \mathcal{K}$,
there is an integer $T\ge 1$
such that
equation
(\ref{eq.bRd}) holds.
For a given finite $\bfA\le\bK$, when
such a $T$ exists,
we  let
$T(\bfA,\bK)$  denote the least
one, and call this number the {\em big Ramsey degree} of $\bfA$ in $\bK$.
\end{defn}

Comparing equations (\ref{eq.RP}) and
 (\ref{eq.bRd}),
 we see
that the
 difference between the Ramsey property and having finite big Ramsey degrees  is that
 the former finds a substructure of $\bK$ isomorphic to the {\em finite} structure $\bfB$ in which all copies of $\bfA$ have the {\em same} color, while
  the latter finds an {\em infinite} substructure of $\bK$ which is  isomorphic to $\bK$ in which the copies of $\bfA$ take {\em few} colors.
  It is only when  $T(\bfA,\bK)=1$ that
  there is  a subcopy of $\bK$ in which all copies of $\bfA$ have the same color.

It is normally the case  that for structures $\bfA$ with universe of size greater than one, $T(\bfA,\bK)$ is at least two, if it exists at all.
The fundamental reason
for this stems from Sierpi\'{n}ksi's example that $T(2,\bQ)\ge 2$:
The enumeration of the universe $\om$ of $\bK$ plays against the relations in the structure to preserve more than one color in every subcopy of $\bK$.

On the other hand, many
classes $\mathcal{K}$
are known to have singleton structures (that is, structures with universe consisting of one element)
with big Ramsey degree one; when this holds
for every (isomorphism type of) singleton structure in $\mathcal{K}$,
we say that  $\bK$ is {\em indivisible}.

\begin{defn} \label{defn.indiv}
A \Fraisse\ structure $\bK$ is {\em indivisible} if for every singleton
substructure $\bA$ of $\bK$, $T(\bfA,\bK)$ exists and equals one.
\end{defn}

Note that when
there is only one quantifier-free 1-type over the
empty set satisfied by elements of $\bK$, so that $\bK$ has exactly one singleton substructure up to isomorphism,
indivisibility amounts to saying
that for any partition of the universe of $\bK$ into finitely many pieces, there is a subcopy of $\bK$ contained in one of the pieces.
Indivisibility
has been proved for  many structures,  including the triangle-free Henson graph in
\cite{Komjath/Rodl86},  the $k$-clique-free Henson graphs for all $k\ge 4$ in  \cite{El-Zahar/Sauer89},
more general binary relational free amalgamation structures in
\cite{Sauer03}, and for
$k$-uniform hypergraphs,
$k \ge 3$, that omit
finite substructures
in which all unordered
triples of vertices are contained in
at least one $k$-edge in
\cite{El-Zahar/Sauer94}.
For a  much broader discussion of \Fraisse\ structures and indivisibility, the reader is referred to
Nguyen Van Th\'{e}'s Habilitation \cite{NVTHabil}.
The  recent paper \cite{Sauer20} of Sauer
 characterizes  indivisibility for
\Fraisse\ limits with free amalgamation in finite relational languages: He proves that such a structure is indivisible
 if and only if it is
what he calls
 ``rank linear''.
 Our Theorem \ref{thm.indivisibility}
 overlaps his result,  not fully recovering it, but proving new results for structures without free amalgamation.

A proof that $\bK$ has finite big Ramsey degrees amounts to showing that the numbers $T(\bfA, \bK)$ exist by finding upper bounds for them.
When  a method for producing the numbers
 $T(\bfA,\bK)$
 is given,
 we will say that the {\em exact big Ramsey degrees}
 have been {\em characterized}.
In all known cases where exact big Ramsey degrees have been characterized, this has been done by finding {\em  canonical partitions}
for the finite substructures of $\bK$.

\begin{defn}[Canonical Partition]\label{defn.cp}
Let $\mathcal{K}$ be a \Fraisse\ class with \Fraisse\ limit $\bK$, and let
$\bA\in\mathcal{K}$ be given.
A partition
$\{P_i:i<n\}$
 of ${\bK\choose \bA}$ is a  {\em canonical partition}
if the following hold:
\begin{enumerate}
\item
For every subcopy
$\bJ$ of $\bK$
and each $i < n$,
 $P_i\cap {\bJ\choose\bA}$ is non-empty.
 This property is called {\em persistence}.
\item
For each finite coloring $\gamma$ of ${\bK\choose \bA}$ there is a subcopy
$\bJ$ of $\bK$
such that
for each $i<n$, all members of $P_i\cap {\bJ\choose\bA}$
are assigned the same color by $\gamma$.
 \end{enumerate}
\end{defn}

\begin{rem}\label{rem.embvscopy}
In many papers on big Ramsey degrees, including the foundational results in \cite{DevlinThesis}, \cite{Sauer06}, and \cite{Laflamme/Sauer/Vuksanovic06},
authors color {\em copies} of a given  $\bfA\in\mathcal{K}$ inside $\bK$, working with Definition \ref{defn.bRd}.
In some papers, especially those with very direct ties to topological dynamics of automorphism groups  as in \cite{Zucker19} and \cite{Zucker20},
the
authors
color
{\em embeddings}  of $\bfA$ into $\bK$.
The relationship between
these approaches
is  simple:
A structure  $\bfA\in\mathcal{K}$ has
big
Ramsey degree $T$
for copies if and only if $\bfA$ has
big
Ramsey degree $T \cdot |$Aut$(\bfA)|$
for embeddings.
Thus, one can use whichever formulation most suits the context.
Furthermore,  we show
in Theorem \ref{thm.apply}
that there is a simple way of recovering Zucker's criterion   for existence of big Ramsey structures (which uses colorings of embeddings; see Theorem 7.1 in \cite{Zucker19})
  from
 our
canonical partitions for colorings of copies of a structure.
\end{rem}

The majority of  results on  big Ramsey degrees have been proved  using some auxiliary structure, usually trees, and recently sequences of parameter words  (see \cite{Hubicka_CS20}),  to characterize
 the persistent superstructures which code
 the finite structure
 $\bfA$.
 The exception is  the recent use of category-theoretic approaches (see for instance \cite{Barbosa20},
  \cite{Masulovic18}, and \cite{Masulovic_RBS20}).
 These superstructures  fade away
 in the case of finite structures with the Ramsey property.
An example of how this works can be seen in Theorem \ref{thm.SESAPimpliesORP}, where we recover the ordered Ramsey property  for ages of  \Fraisse\
structures
with \EEAP$^+$ from
 their big Ramsey degrees.
 However, for
 big Ramsey degrees of
  \Fraisse\ limits,
 these superstructures
possess some    essential features which persist,
leading to big Ramsey degrees
greater than one.
The following notion of Zucker  deals with such superstructures via expanded languages.

Let $\mathcal{L}$ be a relational language, $\M$ a set, $\bfN$ an $\mathcal{L}$-structure, and $\iota : \M \ra \N$ an injection.
Write $\bfN \cdot \iota$ for the unique $\mathcal{L}$-structure having underlying set $\M$ such that $\iota$ is an embedding of
$\bfN \cdot \iota$ into $\bfN$.

\begin{defn}[Zucker, \cite{Zucker19}]\label{defn.bRs}
Let $\bK$ be a \Fraisse\ structure in a
relational
language  $\mathcal{L}$ with $\mathcal{K}=$ Age$(\bK)$.
We say that $\bK$ {\em admits a big Ramsey structure} if there is a
relational
language $\mathcal{L}^* \contains\mathcal{L}$ and an $\mathcal{L}^* $-structure $\bK^*$ so that the following hold:
\begin{enumerate}
\item
The reduct of $\bK^*$ to the language $\mathcal{L}$ equals $\bK$.
\item
Each $\bfA\in\mathcal{K}$ has finitely many
expansions to an $\mathcal{L}^*$-structure
$\bfA^*\in$ Age$(\bK^*)$;
denote the set of such expansions by $\bK^*(\bfA)$.
\item
For each $\bfA \in \mathcal{K}$,
$T(\bfA, \bK) \cdot |\text{Aut}(\bfA)| = |\bK^*(\bfA)|$
\item\label{witnessing}
For each $\bfA \in \mathcal{K}$, the function
$\gamma:\Emb(\bfA,\bK)\ra\bK^*(\bfA)$
given by $\gamma(\iota)=\bK^*\cdot \iota$
witnesses the fact that
$$
T(\bfA, \bK) \cdot |\text{Aut}(\bfA)| \ge |\bK^*(\bfA)|,
$$
in the following sense:
For every subcopy $\bK'$ of $\bK$, the image of the restriction of $\gamma$ to
$\text{Emb}(\bfA, \bK')$ has size  $|\bK^*(\bfA)|$.
\end{enumerate}
Such a structure $\bK^*$ is called
a {\em big Ramsey structure} for $\bK$.
\end{defn}

Note that the definition of a big Ramsey structure for $\bK$ presupposes that $\bK$ has finite big Ramsey degrees.  The big Ramsey structure $\bK^*$, when it exists, is  a device for storing information about all the big Ramsey degrees in
$\bK$ together in a uniform way.

While the study of big Ramsey degrees has been progressing for many decades,
a recent   compelling
motivation for finding big Ramsey structures  is the following theorem.

\begin{thm}[Zucker, \cite{Zucker19}]\label{thm.Zucker}
Let $\bK$ be a \Fraisse\ structure which admits a big Ramsey structure, and let $G=$ {\rm Aut}$(\bK)$.
Then the topological group $G$ has a metrizable universal completion flow, which is unique up to isomorphism.
\end{thm}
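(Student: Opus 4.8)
We sketch the argument, due to Zucker \cite{Zucker19}, in the terminology of this paper. Fix the enumerated \Fraisse\ structure $\bK$ on universe $\om$ together with a big Ramsey structure $\bK^*$ for it in an expanded language $\mathcal{L}^*\contains\mathcal{L}$. The plan is to build a concrete $G$-flow out of $\bK^*$, where $G:=\mathrm{Aut}(\bK)$, and show it is the universal completion flow. Let $X^*$ be the collection of all $\mathcal{L}^*$-structures on the set $\om$ whose $\mathcal{L}$-reduct is exactly $\bK$ and all of whose finite substructures embed (as $\mathcal{L}^*$-structures) into $\bK^*$. Identifying a structure with its characteristic function on the countable set of pairs (relation symbol, tuple of matching arity), $X^*$ is a closed subset of a countable product of two-point discrete spaces, hence compact and metrizable; and $G$ acts continuously on $X^*$ by permuting coordinates, since an automorphism of $\bK$ preserves both the $\mathcal{L}$-reduct and the age. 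First, then, I would set $X:=\overline{G\cdot\bK^*}$, the orbit closure of $\bK^*$, with distinguished point $\bK^*$; this is a metrizable $G$-ambit.

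Next I would verify that $(X,\bK^*)$ is a completion flow and is universal. For each $\bfA\in\mathcal{K}$, the map $\gamma\colon\Emb(\bfA,\bK)\to\bK^*(\bfA)$ of clause (4) of Definition \ref{defn.bRs}, $\gamma(\iota)=\bK^*\cdot\iota$, extends to a continuous $G$-map on $X$ (evaluate a limit expansion on the range of $\iota$); by clause (2) the induced partition of $\Emb(\bfA,\bK)$ is finite, and by clause (3) it has exactly $T(\bfA,\bK)\cdot|\mathrm{Aut}(\bfA)|$ classes, so that $X$ simultaneously records all the big Ramsey degrees of $\bK$ --- this is what makes $X$ a completion flow. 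For universality, recall that every $G$-ambit is a quotient of the greatest ambit $S(G)$, and that a completion flow is one whose governing structure is captured by finitely many colorings of embeddings; given any completion flow $(Y,y_0)$, each such coloring is, by the defining property of $\bK^*$, a function of the corresponding $\gamma$, so the orbit map $g\mapsto g\cdot y_0$ factors through $g\mapsto g\cdot\bK^*$, and passing to orbit closures yields a base-point-preserving $G$-map $(X,\bK^*)\to(Y,y_0)$. Uniqueness is then formal: if $X$ and $X'$ are both universal completion flows, the two $G$-maps between them compose to base-point-preserving $G$-endomorphisms, which are the identity on the dense orbit of the base point and hence the identity.

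I expect the main obstacle to be the universality step --- specifically, showing that these factor maps are well defined, i.e.\ that two elements of $S(G)$ which move $\bK^*$ to the same point of $X$ also move the base point of any other completion flow identically. Equivalently, one must show that $\bK^*$ discards no information relevant to any finitely reconciled coloring of embeddings. This is exactly where the persistence clause (4) of Definition \ref{defn.bRs} enters: it forces $\gamma$ to attain all $|\bK^*(\bfA)|$ of its values on every subcopy of $\bK$, so that $\gamma$ cannot be further collapsed within $X$, and it is this non-collapsibility that makes $X$ maximal among completion flows rather than merely one example of one. By contrast, the compactness, metrizability, and continuity of the $G$-action on $X$ are immediate from the countability of $\mathcal{L}^*$, and form the routine part of the argument.
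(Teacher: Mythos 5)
This statement is not proved in the paper at all: Theorem \ref{thm.Zucker} is imported verbatim from Zucker \cite{Zucker19} and used as a black box, so there is no internal proof to compare your outline against; the only meaningful comparison is with Zucker's own argument. Your first paragraph does match his construction: the flow is the orbit closure $X=\overline{G\cdot\bK^*}$ of the big Ramsey structure in the logic space of $\mathcal{L}^*$-expansions of $\bK$ on $\om$, which is compact, metrizable (the language being countable), and carries a continuous $G$-action, and your uniqueness paragraph is the standard dense-orbit argument for ambits. Up to that point the sketch is faithful.

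The genuine gap is that you never state what a completion flow actually is, and the paraphrase you substitute (``a flow whose governing structure is captured by finitely many colorings of embeddings'') is not Zucker's definition; in \cite{Zucker19} a completion flow is an ambit $(X,x_0)$ whose distinguished point remains a point of dense orbit under the extended action of the left completion $\widehat{G}\cong\Emb(\bK,\bK)$ (a ``completion point''). Without that definition neither of the two substantive steps can be checked. First, the verification that $(X,\bK^*)$ is a completion flow is exactly where clause (\ref{witnessing}) of Definition \ref{defn.bRs} is used: persistence shows that for every self-embedding $\eta$ of $\bK$ the pulled-back expansion $\bK^*\cdot\eta$ still realizes all of $\bK^*(\bfA)$ on every subcopy, hence still has dense orbit; you instead assign persistence to the universality step and let the completion-flow property ride on clauses (2) and (3), which do not suffice. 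Second, in the universality step your claim that any coloring arising from a completion flow $(Y,y_0)$ ``is a function of the corresponding $\gamma$'' is not true on the nose: exactness of the big Ramsey degrees only yields that an arbitrary finite coloring of $\Emb(\bfA,\bK)$ becomes coarser than $\gamma$ after passing to a suitable subcopy of $\bK$. Converting that into a well-defined base-point-preserving $G$-map $(X,\bK^*)\to(Y,y_0)$ is the hard part of Zucker's proof: one must pass through $\widehat{G}$, use that $y_0$ is a completion point so that $\eta\cdot y_0$ still generates $Y$ for the self-embedding $\eta$ picking out the subcopy, and use the coherence condition $\gamma_m\ll\gamma_n$ (as in Theorem \ref{thm.Zucker7.1}) to make the finite-level reductions compatible as $\bfA$ grows. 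As written, your factor map is not shown to be well defined, so the universality argument has a real hole rather than being merely terse.
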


This theorem
 answered a question in \cite{Kechris/Pestov/Todorcevic05}
 which asked for an analogue, in the context of finite big Ramsey degrees, of the
 Kechris-Pestov-Todorcevic correspondence between the Ramsey property for a \Fraisse\ class and extreme amenability of
 the automorphism group of its \Fraisse\ limit;
 Zucker's theorem provides a similar connection
 between finite big Ramsey degrees and universal completion flows.
The notion of big Ramsey degree in \cite{Zucker19}
involves colorings of embeddings of structures instead of just colorings of  substructures.
As
described
in Remark \ref{rem.embvscopy},
 this poses no problem  when applying  our results  on big Ramsey degrees,
 which involve coloring copies of a structure,
 to Theorem  \ref{thm.Zucker}.

%%%%%%%%%%%%%%%%%%%%%%%%
%%%%%%%%%%%%%%%%%%%%%%%%
%%%%%%%%%%%%%%%%%%%%%%%%

\subsection{The \EEAPnonacronym$^+$}\label{subsec.EEAP}

Recall that given
a \Fraisse\ class $\mathcal{K}$
in a finite relational language $\mathcal{L}$,
we let $\bK$ denote
an enumerated
\Fraisse\ limit of $\mathcal{K}$
with underlying set $\om$.
All results will hold regardless of which
enumeration
is chosen.
We make the following conventions and assumptions,
which will hold in the rest of this paper.

All types will be quantifier-free $1$-types, over a finite parameter set, that are realizable in $\bK$.
 With one exception, all such types will be complete; the exception is the case of ``passing types'', defined in  Section \ref{sec.sct}, which may be partial.
Complete types  will be denoted simply ``tp''.

We will assume that
for any relation symbol $R$ in $\mathcal{L}$, $R^{\bK}(\bar{a})$ can hold only for tuples
$\bar{a}$ of {\em distinct} elements of
$\om$.
In particular, we assume our structures have
 no loops.
We further assume that all relations
in $\bK$
 are  {\em non-trivial}:
 This means that
 for each relation symbol $R$ in $\mathcal{L}$, there exists a $k$-tuple $\bar{a}$ of (distinct) elements of $\om$ such that
 $R^{\bK}(\bar{a})$ holds, and a
$k$-tuple
  $\bar{b}$ of (distinct) elements of $\om$ such that $\neg R^{\bK}(\bar{b})$ holds.
 Since $\mathcal{K}$ has disjoint amalgamation by assumption, non-triviality will imply that there are
  infinitely many $k$-tuples from $\om$
 that satisfy $R^{\bK}$,
  and infinitely many that do not.
 We will further hold to  the convention that if
 $\mathcal{L}$ has any unary relation
 symbols,
 then
 letting
  $R_0,\dots, R_{n-1}$ list them,
  we have that $n\ge 2$ and for each $\bfA\in\mathcal{K}$,
 for each $a\in\mathrm{A}$,
  $R_i^{\bfA}(a)$
 holds for exactly one $i<n$.
By possibly adding new unary relation
symbols
to the language, any \Fraisse\ class with
unary relations can be assumed to meet this convention.
Finally, we assume that there is at least one non-unary relation symbol in $\mathcal{L}$.  This poses no real restriction,
as whenever a finite language has only unary relation symbols, any disjoint amalgamation class in that language will have a \Fraisse\ limit
that consists
of finitely many disjoint copies of $\om$, with vertices in a given copy all realizing the same quantifier-free 1-type over the empty set.
In this case, finitely many applications of Ramsey's Theorem will prove the existence of finite big Ramsey degrees.

We now present the Substructure Free Amalgamation Property.
This property also
provides the intuition behind the more general  amalgamation property \EEAP\  (Definition \ref{defn.EEAP_new}),
laying the foundation for the main ideas of this paper.

\begin{defn}[\SFAP]\label{defn.SFAP}
A \Fraisse\ class $\mathcal{K}$ has the
{\em  \SFAPnonacronym\ (\SFAP)} if $\mathcal{K}$ has free amalgamation,
and given  $\bfA,\bfB,\bfC,\bfD\in\mathcal{K}$, the following holds:
Suppose
\begin{enumerate}
\item[(1)]
$\bfA$  is a substructure of $\bfC$, where
 $\bfC$ extends  $\bfA$ by two vertices,
say $\mathrm{C}\setminus\mathrm{A}=\{v,w\}$;

\item[(2)]
 $\bfA$  is a substructure of $\bfB$ and
 $\sigma$ and $\tau$  are
 $1$-types over $\bfB$  with   $\sigma\re\bfA=\type(v/\bfA)$ and $\tau\re\bfA=\type(w/\bfA)$; and
\item[(3)]
$\bfB$ is a substructure of $\bfD$ which extends
 $\bfB$ by one vertex, say $v'$, such that $\type(v'/\bfB)=\sigma$.

\end{enumerate}
  Then there is
an   $\bfE\in\mathcal{K}$ extending  $\bfD$ by one vertex, say $w'$, such that
  $\type(w'/\bfB)=\tau$, $\bfE\re (
  \mathrm{A}\cup\{v',w'\})\cong \bfC$,
  and $\bfE$ adds no other relations over $\mathrm{D}$.
\end{defn}
The  definition of \SFAP\ can be stated using embeddings rather than substructures
in the standard way.
We remark that
requiring $\bfC$ in (1) to have only two more vertices than $\bfA$ is
sufficient for all our uses of the property in proofs of big Ramsey degrees,
and hence we have not formulated the property for $\bfC$ of arbitrary finite size.

\begin{rem}\label{rem.simple}
\SFAP\ is equivalent to
free amalgamation along with
 a model-theoretic property that may be termed {\em free 3-amalgamation}, a special case of the {\em disjoint 3-amalgamation} property defined
 in \cite{Kruckman19}: In the definition of disjoint $n$-amalgamation in Section 3 of \cite{Kruckman19}, take $n=3$ and impose the further condition that the ``solution'' or 3-amalgam disallows any relations (in any realization of the solution) that were not already stipulated in the initial 3-amalgamation ``problem''.
Kruckman shows
in \cite{Kruckman19} that if the age of a \Fraisse\ limit $\bK$ has disjoint amalgamation and disjoint 3-amalgamation, then $\bK$ exhibits a model-theoretic tameness property called  \emph{simplicity}.
\end{rem}

\SFAP\
ensures that  a finite substructure of a given
enumerated \Fraisse\ structure can be extended as desired without any requirements on its configuration  inside the larger structure.
\SFAP\
precludes any need for  the so-called ``witnessing properties'' which were necessary for the proofs of finite big Ramsey degrees
for
constrained binary
 free amalgamation classes,
as in
the  $k$-clique-free Henson graphs in \cite{DobrinenJML20} and \cite{DobrinenH_k19}, and the recent  more general extensions in  \cite{Zucker20}.
Free amalgamation classes with forbidden $3$-irreducible substructures satisfy \SFAP, as
shown in Proposition \ref{prop.FA}.

The next amalgamation property extends \SFAP\ to disjoint amalgamation classes.
 In  the
 definition, we again use substructures rather than embeddings.

\begin{defn}[\EEAP]\label{defn.EEAP_new}
A \Fraisse\ class $\mathcal{K}$ has the
{\em  \EEAPnonacronym\ (\EEAP)} if $\mathcal{K}$
has disjoint amalgamation,
and the following holds:
Given   $\bfA, \bfC\in\mathcal{K}$, suppose that
 $\bfA$ is a substructure of $\bfC$, where   $\bfC$ extends  $\bfA$ by two vertices, say $v$ and $w$.
Then there exist  $\bfA',\bfC'\in\mathcal{K}$, where
$\bfA'$
contains a copy of $\bfA$ as a substructure
and
$\bfC'$ is a disjoint amalgamation of $\bfA'$ and $\bfC$ over $\bfA$, such that
letting   $v',w'$ denote the two vertices in
 $\mathrm{C}'\setminus \mathrm{A}'$ and
assuming (1) and (2), the conclusion holds:
 \begin{enumerate}
 \item[(1)]
Suppose
$\bfB\in\mathcal{K}$  is any structure
 containing $\bfA'$ as a substructure,
and let
 $\sigma$ and $\tau$  be
  $1$-types over $\bfB$  satisfying    $\sigma\re\bfA'=\type(v'/\bfA')$ and $\tau\re\bfA'=\type(w'/\bfA')$,
\item[(2)]
Suppose
$\bfD\in \mathcal{K}$  extends  $\bfB$
by one vertex, say $v''$, such that $\type(v''/\bfB)=\sigma$.
\end{enumerate}
Then
  there is
an  $\bfE\in\mathcal{K}$ extending   $\bfD$ by one vertex, say $w''$, such that
  $\type(w''/\bfB)=\tau$ and  $\bfE\re (
  \mathrm{A}\cup\{v'',w''\})\cong \bfC$.
\end{defn}

\begin{rem}\label{rem.freesup}
We note that
\SFAP\ implies \EEAP,
 taking $\bfA'=\bfA$ and $\bfC'=\bfC$
and because disjoint amalgamation is implied by free amalgamation.
Further, it
follows from
their definitions that \SFAP\ and \EEAP\ are each
preserved  under free superposition.
 \end{rem}

\begin{example}
The idea behind allowing for an
extension
$\bfA'$ of $\bfA$  in the definition of \EEAP\
 is most simply  demonstrated
 for
  the \Fraisse\ class
  $\mathcal{LO}$
  of finite linear orders.
Given  $\bfA,\bfC\in \mathcal{LO}$,
 suppose
 $v,w$ are the two vertices of $\mathrm{C}\setminus\mathrm{A}$ and
 suppose that
  $v<w$ holds in $\bfC$.
  We can require $\bfA'$ to  be some extension of $\bfA$ in $\mathcal{K}$
 containing some vertex $u$
  so  that
the formula $(x<u)$ is in
$\type(v/\bfA')$ and
$(u<x)$ is in
$\type(w/\bfA')$, where $x$ is a variable.
Then given any $1$-types  $\sigma,\tau$
extending $\type(v/\bfA'), \type(w/\bfA')$, respectively,
 over some structure $\bfB$ containing $\bfA'$ as a substructure,
any  two
vertices $v',w'$ satisfying $\sigma,\tau$
will automatically satisfy $v'<w'$, thus producing a copy of $\bfC$ extending $\bfA$.

In the case of  finitely many independent  linear orders,
we can similarly  produce an
$\bfA'$ which ensures that
{\em any} vertices  $v',w'$ satisfying such  $\sigma,\tau$  as above  produce a copy of
$\bfC$ extending $\bfA$.
In more general cases,  the use of $\bfA'$ only ensures that {\em there exist} such vertices $v',w'$.
\end{example}

\begin{rem} \label{rem.weak3dap}
Ivanov \cite{Ivanov99} and independently, Kechris and Rosendal \cite{KechrisRosendal07}, have formulated a weakening of the amalgamation property which is called
{\em almost amalgamation} in \cite{Ivanov99} and {\em weak amalgamation} in \cite{KechrisRosendal07}.  This property arises in the context of generic automorphisms of countable structures.  In the presence of disjoint amalgamation, \EEAP\  may be thought of as a ternary version of weak amalgamation (one of several possible such versions), and as a ``weak'' version of the disjoint 3-amalgamation property from \cite{Kruckman19} (again, one of several possible such weakenings).
\end{rem}

\begin{rem}\label{rem.observe}
We note that we could have used the definition of the free 3-amalgama\-tion property from Remark \ref{rem.simple}, and of
an appropriately formulated version of a
``weak'' disjoint 3-amalgamation property as in Remark \ref{rem.weak3dap}, to simplify some of the proofs in the next section, which gives examples of \Fraisse\ classes with \SFAP\ and \EEAP. We have chosen to use Definitions  \ref{defn.SFAP} and \ref{defn.EEAP_new} instead, as
they are the forms used in the proof of Theorem \ref{thm.matrixHL}.
\end{rem}

In Section \ref{sec.sct}
onwards
we will be working with
so-called
{\em coding trees}
of $1$-types, which represent subcopies of a given  \Fraisse\ limit $\bK$.
For
\Fraisse\ structures in languages
with
relation symbols
of arity greater than two,
a priori, these  trees may have unbounded branching.
However, for all classes with \SFAP\ and for all classes with \EEAP\ which we have investigated, one can construct subtrees with bounded branching which still represent $\bK$.
Accordingly, we
formulate
the following
strengthened version  of \EEAP,
which imposes conditions on the branching in a coding tree for $\bK$.
The notions regarding coding trees of $1$-types require some introduction, so we refer the reader to Sections
\ref{sec.sct} and \ref{sec.FRT}
for their definitions
rather than reproducing everything here.
We mention only that
diagonal trees are skew trees which have binary splitting (see Definition \ref{def.diagskew}).
The Diagonal Coding Tree Property
is
defined
in Section \ref{sec.sct} (Definition \ref{defn.DCTP})
and the Extension Property is defined in
Section \ref{sec.FRT} (Definition \ref{defn.ExtProp}).

\begin{defn}[\EEAP$^+$]\label{defn_EEAP_newplus}
A \Fraisse\
structure $\bK$
has the
{\em  \EEAPnonacronym$^+$ (\EEAP$^+$)} if
its age
$\mathcal{K}$ satisfies \EEAP,
and $\bK$ has
the Diagonal Coding Tree Property and the Extension Property.
\end{defn}

We note that while the Diagonal Coding Tree Property and Extension Property are defined in terms of an enumerated \Fraisse\ structure, they are independent of the chosen enumeration, and hence \EEAP$^+$ is a property of a \Fraisse\ structure itself.

We have already seen that \SFAP\ implies \EEAP.
It will be shown in Theorem
\ref{thm.SFAPimpliesDCT}
that \SFAP\ in fact implies \EEAP$^+$,
recalling our assumption throughout that the language contains a non-unary relation symbol.
A coding tree version of \EEAP$^+$ is presented in Definition \ref{def.EEAPCodingTree},
and is implied by
Definition \ref{defn_EEAP_newplus}.
This
coding tree version  will be used in the proofs in this paper.

The motivation behind    \EEAP$^+$
was  to distill
the essence of those \Fraisse\ classes for which
the forcing arguments in
Theorem \ref{thm.matrixHL}
 work.
As such, it yields big Ramsey degrees which have simple characterizations, similar to those of the rationals and the Rado graph.
It is known
that \EEAP$^+$, and even \EEAP,
are not necessary for obtaining  finite  big Ramsey degrees.
For instance, generic  $k$-clique-free graphs    \cite{DobrinenH_k19} and the generic partial order \cite{Hubicka_CS20} have been shown to have  finite big Ramsey degrees,
and their ages do not have \EEAP.
A catalogue of these
and other such results will be
presented
at the end of Section \ref{sec.EEAPClasses}.
The
focus of this
paper is
on  characterizing  the
big Ramsey degrees  of
 those
 \Fraisse\ classes for which
simple  forcing methods
suffice.

%%%%%%%%%%%%%%%%%%%%%
%%%%%%%%%%%%%%%%%%%%%
%%%%%%%%%%%%%%%%%%%%%
%%%%%%%%%%%%%%%%%%%%%
%%%%%%%%%%%%%%%%%%%%%
%%%%%%%%%%%%%%%%%%%%%

\section{Examples of \Fraisse\ classes satisfying \EEAP$^+$} \label{sec.EEAPClasses}

We now  investigate  \Fraisse\  classes
which  have \Fraisse\ structures satisfying  \EEAP$^+$.
Such
classes
seem to  fall roughly into two categories:  Free amalgamation classes
of relational structures
in which
 any forbidden substructures  are $3$-irreducible
 (Definition \ref{defn.3irred})
 or which are
 unrestricted
 (Definition \ref{defn.unconst}), as well as their ordered expansions;
 and
  disjoint amalgamation classes which are
 in some sense
 ``$\bQ$-like''.

In this section, we will  be  verifying that various collections of  \Fraisse\ classes
satisfy \EEAP.
 We will  show
later
that most of these \Fraisse\ classes  in fact
 have \Fraisse\ limits satisfying \EEAP$^+$,
after we have
developed the machinery of coding trees of
$1$-types
in Section \ref{sec.sct}.
At the end of this
section,
we provide
 a  catalogue of  \Fraisse\
 structures
 which have been investigated for big Ramsey degrees.
 The list is
 non-exhaustive, as research is ongoing, but it provides a view  of many of the main results currently known.

 First, we consider free amalgamation classes.
 The following definition appears in \cite{Conant17}, and occurs implicitly in work on indivisibility in \cite{El-Zahar/Sauer94}.

 \begin{defn}\label{defn.3irred}
Let $r \ge 2$, and let $\mathcal{L}$ be a finite relational language.  An $\mathcal{L}$-structure $\bF$ is {\em $r$-irreducible} if for any $r$ distinct elements $a_0, \ldots , a_{r-1}$ in $\mathrm{F}$ there is some $R \in \mathcal{L}$ and $k$-tuple $\bar{p}$ with entries from $\mathrm{F}$, where
$k \ge r$
is the arity of $R$, such that each $a_i$, $i < r$, is among the entries of $\bar{p}$, and $R^\bF(\bar{p})$ holds.  We  say $\bF$ is {\em irreducible} when $\bF$ is 2-irreducible.
 \end{defn}

Note that for $r > \ell \ge 2$, a structure that is $r$-irreducible need not be $\ell$-irreducible.  This is because for any structure $\mathbf{F}$
such that $|\mathrm{F}|< r$, it is vacuously the case that $\mathbf{F}$ is $r$-irreducible, but if $|\mathrm{F}| \ge \ell$, then $\mathbf{F}$ may not be
$\ell$-irreducible.

 Given a set $\mathcal{F}$ of  finite $\mathcal{L}$-structures,
 let $\Forb(\mathcal{F})$ denote the
 class of finite $\mathcal{L}$-structures $\bA$ such that no member of $\mathcal{F}$ embeds into $\bA$.
 It is a standard fact that a \Fraisse\ class $\mathcal{K}$ is a free amalgamation class if and only if $\mathcal{K} = \Forb(\mathcal{F})$ for some
 set $\mathcal{F}$ of finite irreducible $\mathcal{L}$-structures.  (See  \cite{Siniora/Solecki20}  for a proof).  When
 $\mathcal{F}$ is a set of finite $\mathcal{L}$-structures such that members of $\mathcal{F}$ are both irreducible and 3-irreducible,
 $\Forb(\mathcal{F})$ furthermore has \SFAP.

\begin{prop}\label{prop.FA}
Let $\mathcal{L}$ be a
finite relational language
and $\mathcal{F}$ a (finite or infinite)  collection of finite $\mathcal{L}$-structures which are
irreducible and
$3$-irreducible.
Then
$\Forb(\mathcal{F})$ satisfies \SFAP.
Hence
the \Fraisse\ limit of $\Forb(\mathcal{F})$ has
\EEAP$^+$.
\end{prop}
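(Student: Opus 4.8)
The plan is to verify the two clauses of \SFAP\ (Definition~\ref{defn.SFAP}) directly for $\mathcal{K}=\Forb(\mathcal{F})$, using that each member of $\mathcal{F}$ is irreducible and $3$-irreducible; the final sentence about \EEAP$^+$ will then follow from Theorem~\ref{thm.SFAPimpliesDCT} (once it is available), given our standing assumption that the language contains a non-unary relation symbol. First I would recall that $\Forb(\mathcal{F})$ is a free amalgamation class: this is the standard fact cited in the text, valid precisely because the members of $\mathcal{F}$ are irreducible, so it remains only to establish the free $3$-amalgamation condition in the definition of \SFAP.

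So suppose $\bfA,\bfB,\bfC,\bfD\in\Forb(\mathcal{F})$ are as in the hypotheses of Definition~\ref{defn.SFAP}: $\bfC=\bfA\cup\{v,w\}$, $\bfA\sse\bfB$, the $1$-types $\sigma,\tau$ over $\bfB$ restrict on $\bfA$ to $\type(v/\bfA),\type(w/\bfA)$, and $\bfD=\bfB\cup\{v'\}$ with $\type(v'/\bfB)=\sigma$. I would then \emph{define} the candidate $\bfE$ on underlying set $\mathrm{D}\cup\{w'\}$ by declaring the $1$-type of $w'$ over $\bfB$ to be $\tau$, declaring the relations among $\{v',w'\}\cup\mathrm{A}$ so as to make $\bfE\re(\mathrm{A}\cup\{v',w'\})$ isomorphic to $\bfC$ via the obvious map $v\mapsto v'$, $w\mapsto w'$, and adding \emph{no} other relations over $\mathrm{D}$ (in particular $w'$ has no relations with $\mathrm{B}\setminus\mathrm{A}$ beyond those dictated by $\tau$, which already lives over $\bfB$, so this is consistent). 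The only thing to check is that this $\bfE$ omits every $\bfF\in\mathcal{F}$.

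The key step is this omission check, and it is where $3$-irreducibility does the work. Suppose toward a contradiction that some $\bfF\in\mathcal{F}$ embeds into $\bfE$ with image $\mathrm{S}\sse\mathrm{D}\cup\{w'\}$. If $w'\notin\mathrm{S}$ then $\mathrm{S}\sse\mathrm{D}$ and $\bfD\in\Forb(\mathcal{F})$ gives a contradiction. So $w'\in\mathrm{S}$. Now consider whether $v'\in\mathrm{S}$. If $v'\notin\mathrm{S}$, then $\mathrm{S}\setminus\{w'\}\sse\mathrm{B}$, and since all relations in $\bfE$ involving $w'$ go into $\mathrm{A}\cup\{v'\}$, in fact every tuple from $\mathrm{S}$ satisfying a relation of $\bfE$ has all its non-$w'$ entries in $\mathrm{A}$; but $\type(w'/\bfB)=\tau=\type(w/\bfB')$ in the structure $\bfB\cup\{w\}$ obtained the same way from $\bfC$ — more directly, $\mathrm{S}\setminus\{w'\}\cup\{w\}$ then witnesses a copy of $\bfF$ inside the structure $\bfB$ amalgamated with $\bfC$ over $\bfA$ (a member of $\mathcal{K}$ by free amalgamation of $\mathcal{K}$ applied to $\bfB,\bfC$ over $\bfA$), contradiction. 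The remaining case $v',w'\in\mathrm{S}$: since $\bfF$ is $3$-irreducible, the three distinct elements $v',w'$ and any third element $s\in\mathrm{S}$ must all occur together in some tuple on which a relation of $\bfE$ holds; but by construction any relation of $\bfE$ whose tuple contains both $v'$ and $w'$ has all its remaining entries in $\mathrm{A}$, forcing $s\in\mathrm{A}$, hence $\mathrm{S}\sse\mathrm{A}\cup\{v',w'\}$; but then $\bfF$ embeds into $\bfE\re(\mathrm{A}\cup\{v',w'\})\cong\bfC\in\Forb(\mathcal{F})$, contradiction. (If $|\mathrm{S}|\le 2$ then $\bfF$ is irreducible with at most two elements, hence embeds into $\bfC$ as well, again contradicting $\bfC\in\Forb(\mathcal{F})$; if $|\mathrm{S}|\le 1$ it is immediate.) This exhausts the cases, so $\bfE\in\Forb(\mathcal{F})$, and $\bfE$ witnesses \SFAP.

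The main obstacle I anticipate is bookkeeping in the omission argument: one must be careful that $\tau$ being a type \emph{over $\bfB$} really does mean $w'$ acquires no relations with $\mathrm{B}\setminus\mathrm{A}$ that were not already present as a configuration inside a genuine member of $\mathcal{K}$, so that the $v'\notin\mathrm{S}$ case can be discharged by appealing to an honest element of $\Forb(\mathcal{F})$; phrasing this cleanly may require passing through the free amalgam of $\bfB$ with $\bfC$ over $\bfA$ and tracking which relations it contains. Everything else (reducing \EEAP$^+$ to \SFAP\ via Theorem~\ref{thm.SFAPimpliesDCT}, and the fact that $\Forb(\mathcal{F})$ is a free amalgamation class) is quotation of results already in place.
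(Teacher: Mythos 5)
Your construction of $\bfE$ and your use of $3$-irreducibility in the case $v',w'\in\mathrm{S}$ match the paper's argument, but the step where you discharge the case $w'\in\mathrm{S}$, $v'\notin\mathrm{S}$ has a genuine gap. First, the claim that ``all relations in $\bfE$ involving $w'$ go into $\mathrm{A}\cup\{v'\}$'' contradicts your own construction: $w'$ realizes $\tau$, a $1$-type over all of $\bfB$, so in general $w'$ is related to vertices of $\mathrm{B}\setminus\mathrm{A}$. Second, the fallback you propose --- sending $w'\mapsto w$ and viewing $\mathrm{S}\setminus\{w'\}\cup\{w\}$ inside the free amalgam of $\bfB$ and $\bfC$ over $\bfA$ --- does not work: in that free amalgam $w$ has no relations with $\mathrm{B}\setminus\mathrm{A}$, whereas $\tau$ may impose such relations on $w'$; since an embedding must preserve non-relations as well as relations, the copy of $\mathbf{F}$ living on $\mathrm{S}$ need not transfer to the free amalgam, and no contradiction is obtained.

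The missing idea is the realizability of $\tau$: by the paper's standing convention, all $1$-types under consideration are realizable in $\bK$, so $\bfB$ together with a realization of $\tau$ is (isomorphic to) a finite substructure of $\bK$ and hence lies in $\Forb(\mathcal{F})$. Since $\bfE\re(\mathrm{B}\cup\{w'\})$ is isomorphic to exactly that structure, it omits every member of $\mathcal{F}$, so $\mathrm{S}\sse\mathrm{B}\cup\{w'\}$ is impossible and $v'$ must lie in $\mathrm{S}$; this is precisely how the paper forces $v'$ into the image. With that repair your proof goes through and is essentially the paper's: your use of $3$-irreducibility to conclude $\mathrm{S}\sse\mathrm{A}\cup\{v',w'\}$ and then contradict $\bfC\in\Forb(\mathcal{F})$ is a mild reordering of the paper's final step, which instead locates some $b\in\mathrm{B}\setminus\mathrm{A}$ in the copy and contradicts the clause of the construction forbidding relations through $b$, $v'$, $w'$.
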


\begin{proof}
Since the structures in $\mathcal{F}$ are irreducible, $\Forb(\mathcal{F})$ is a free amalgamation class.

Fix  $\bfA,\bfB,\bfC\in \Forb(\mathcal{F})$  with  $\bfA$ a substructure of both  $\bfB$ and  $\bfC$
and
$\mathrm{C} \!\setminus\! \mathrm{A} = \{v, w\}$.
Let
$\sigma,\tau$
be
realizable
$1$-types over $\bfB$ with $\sigma\re\bfA=\type(v/\bfA)$
and  $\tau\re\bfA=\type(w/\bfA)$.
Suppose
 $\bfD\in \Forb(\mathcal{F})$ is
 a $1$-vertex extension of $\bfB$ realizing $\sigma$.
 Thus, $\mathrm{D}=\mathrm{B}\cup\{v'\}$ for some $v'$
 such that
  $\type(v'/\bfB)=\sigma$.

Extend $\bfD$ to an $\mathcal{L}$-structure $\bfE$
by one vertex $w'$ satisfying  $\type(w'/\bfB)=\tau$ such that  for each relation symbol $R\in\mathcal{L}$,
letting
$k$
denote the arity of $R$,
we have the following:
\begin{enumerate}
\item[(a)]
For each $k$-tuple
$\bar{p}$ with entries from $\mathrm{A}\cup \{v', w'\}$,
let $\bar{q}$ be the $k$-tuple with entries from $\mathrm{A}\cup\{v, w\}$
such that each occurrence of $v'$, $w'$ in $\bar{p}$ (if any) is replaced by
$v$, $w$, respectively,
 and all other entries remain the same.
Then we require that $R^\bfE(\bar{p})$ holds if and only if $R^\bfC(\bar{q})$ holds.
\item[(b)]
If $k\ge 3$, then for each $b\in\mathrm{B}\setminus\mathrm{A}$
and each $k$-tuple $\bar{p}$
with entries from $\mathrm{E}$ such that $b, v', w'$ are among the entries of $\bar{p}$, we require that
$\neg R^\bfE(\bar{p})$ holds.
\end{enumerate}
It follows from (a) that $\bfE\re(\mathrm{A}\cup\{v',w'\})\cong \bfC$.
It remains to show
 that $\bfE$ is a member of $\Forb(\mathcal{F})$.
To do so,
it suffices to show that
no
 $\mathbf{F}\in\mathcal{F}$
embeds
 into  $\bfE$.

Suppose toward a contradiction that
 some  $\mathbf{F}\in\mathcal{F}$
embeds into $\bfE$.
Let $\mathbf{F}'$ denote an
  embedded copy of $\mathbf{F}$, with universe
$\mathrm{F}'\sse\mathrm{E}$.
For what follows, it helps to recall that $\mathrm{E}=\mathrm{B}\cup\{v',w'\}$.
Since $\bfD$ is in $\Forb(\mathcal{F})$,
$\mathbf{F}$ does not embed into $\bfD$, so
$\mathrm{F}'$ cannot be contained in $\mathrm{D}$.
Hence $w'$ must be in $\mathrm{F}'$.
Likewise, since
$\tau$ is a realizable $1$-type over $\bfB$,
the substructure $\bfE\re(\mathrm{B}\cup\{w'\})$  is in $\Forb(\mathcal{F})$ and hence does not contain a copy of $\mathbf{F}$.
Therefore,  $v'$ must be in
$\mathrm{F}'$.
By (a), since $\bfC$ is in $\Forb(\mathcal{F})$, the substructure $\bfE \re ( \mathrm{A} \cup \{v', w' \}) $
does not contain a copy of $\mathbf{F}$. Hence there must be some $b \in \mathrm{B}\setminus\mathrm{A}$ such that
$b$ is in $\mathrm{F}'$.
Since $\mathbf{F}$ is  $3$-irreducible,
there must be  some
relation symbol $R\in\mathcal{L}$
with arity $k \ge 3$, and some $k$-tuple $\bar{p}$ with entries from $\mathrm{F}'$ and with $b, v', w'$ among its entries, such that $R^{\bF'}(\bar{p})$ holds.
However,
 (b) implies
 $\neg R^\bfE(\bar{b})$ holds,
contradicting that $\mathbf{F}'$ is a copy of $\mathbf{F}$ in $\bfE$.
Therefore, $\mathbf{F}$ does not embed into $\bfE$.
It follows that $\bfE$ is a member of
$\Forb(\mathcal{F})$.

We have established that $\Forb(\mathcal{F})$ has \SFAP. Theorem \ref{thm.SFAPimpliesDCT} implies that the \Fraisse\ limit of
$\Forb(\mathcal{F})$ has \EEAP$^+$.
\end{proof}

We now consider a
type of \Fraisse\ class that is a generalization, to arbitrary finite relational languages, of the \Fraisse\ classes in finite binary relational
languages that were considered in
 \cite{Laflamme/Sauer/Vuksanovic06}.

 \begin{defn}\label{defn.unconst}
Given a
relational
language $\mathcal{L}$, letting $n$ denote the highest arity of any relation symbol in $\mathcal{L}$, for each $1\le i\le n$, let $\mathcal{L}_i$ denote
the sublanguage consisting of
the relation symbols in $\mathcal{L}$ of arity $i$.
Let $\mathcal{C}_i$ be a
set of
structures in the language $\mathcal{L}_i$ with domain $\{0,\dots,i-1\}$
that is closed under isomorphism.
Following \cite{Laflamme/Sauer/Vuksanovic06}, we call
$ \mathcal{C}_i$ a {\em universal constraint set}.

Let  $\mathcal{U}_{\mathcal{C}_i}$ denote the class of all finite relational structures $\bfA$ in the language
$\mathcal{L}_i$  for which the following holds:
Every induced substructure of $\bfA$ of cardinality $i$ is isomorphic to one of the structures in $\mathcal{C}_i$.
Let
$\mathcal{C}:=\bigcup_{1\le i\le n}\mathcal{C}_i$
and let
$\mathcal{U}_{\mathcal{C}}$ denote the free superposition of the
classes
$\mathcal{U}_{\mathcal{C}_i}$, $1\le i\le n$.
We call such a class $\mathcal{U}_{\mathcal{C}}$ {\em unrestricted}.
  \end{defn}

It is straightforward to check that an unrestricted class $\mathcal{U}_{\mathcal{C}}$  is  a \Fraisse\ class with disjoint amalgamation.

In \cite{Laflamme/Sauer/Vuksanovic06},
Laflamme, Sauer, and Vuksanovic characterized the exact big Ramsey degrees for
the \Fraisse\ structures in finite binary relational languages
whose ages are
unrestricted.
We now show that
arbitrary
unrestricted
\Fraisse\ classes  satisfy \EEAP.

\begin{prop}\label{prop.LSVSFAP}
Let $\mathcal{U}_{\mathcal{C}}$ be an
unrestricted
\Fraisse\ class.
Then $\mathcal{U}_{\mathcal{C}}$ satisfies \EEAP, hence also its ordered expansion $\mathcal{U}^{<}_{\mathcal{C}}$ satisfies \EEAP.
Moreover, the
 \Fraisse\ limits of
 $\mathcal{U}_{\mathcal{C}}$ and
$\mathcal{U}^{<}_{\mathcal{C}}$
have \EEAP$^+$.
\end{prop}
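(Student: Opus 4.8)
The plan is to verify \EEAP\ for $\mathcal{U}_{\mathcal{C}}$ directly from Definition \ref{defn.EEAP_new} --- recall $\mathcal{U}_{\mathcal{C}}$ already has disjoint amalgamation --- using the simplest possible witnesses: take $\bfA'=\bfA$ and $\bfC'=\bfC$, so that the ``disjoint amalgam of $\bfA'$ and $\bfC$ over $\bfA$'' is $\bfC$ itself and $v',w'$ are the original vertices $v,w$. This choice is available precisely because an unrestricted class never needs to \emph{force} the behaviour of the new vertices the way $\mathcal{LO}$ does: all its constraints concern induced substructures of a fixed cardinality, and on a ``fresh'' such substructure any configuration permitted by $\mathcal{C}$ is allowed. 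So the content to be proved is: given $\bfB\supseteq\bfA$ in $\mathcal{U}_{\mathcal{C}}$, quantifier-free $1$-types $\sigma,\tau$ over $\bfB$ with $\sigma\re\bfA=\type(v/\bfA)$ and $\tau\re\bfA=\type(w/\bfA)$, and $\bfD=\bfB\cup\{v''\}$ with $\type(v''/\bfB)=\sigma$, to produce $\bfE\in\mathcal{U}_{\mathcal{C}}$ on $\mathrm{E}=\mathrm{B}\cup\{v'',w''\}$ with $\bfE\re\mathrm{D}=\bfD$, $\type(w''/\bfB)=\tau$, and $\bfE\re(\mathrm{A}\cup\{v'',w''\})\cong\bfC$ under $v''\mapsto v$, $w''\mapsto w$.

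I would first record the forced part of $\bfE$: relations on tuples inside $\mathrm{D}=\mathrm{B}\cup\{v''\}$ are copied from $\bfD$; relations on tuples inside $\mathrm{B}\cup\{w''\}$ are those dictated by $\tau$; relations on tuples inside $\mathrm{A}\cup\{v'',w''\}$ are copied from $\bfC$. A short check using $\sigma\re\bfA=\type(v/\bfA)$ and $\tau\re\bfA=\type(w/\bfA)$ shows these three prescriptions agree on overlaps, so they consistently determine a partial structure, and they already secure the three displayed requirements. Every injective tuple not yet covered must contain both $v''$ and $w''$ together with at least one vertex of $\mathrm{B}\setminus\mathrm{A}$; on those I have total freedom, since \EEAP\ --- unlike \SFAP\ --- imposes no ``no new relations over $\mathrm{D}$'' condition.

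The only genuine obstacle is to use this freedom so that $\bfE\in\mathcal{U}_{\mathcal{C}}$, that is, so that for every arity $i$ and every $S\subseteq\mathrm{E}$ with $|S|=i$, the $\mathcal{L}_i$-reduct of $\bfE\re S$ is isomorphic to a member of $\mathcal{C}_i$. I would split on $S$. If $w''\notin S$ then $S\subseteq\mathrm{D}$ and the reduct is inherited from $\bfD\in\mathcal{U}_{\mathcal{C}}$; if $v''\notin S$ then $S\subseteq\mathrm{B}\cup\{w''\}$ and $\bfE\re(\mathrm{B}\cup\{w''\})\in\mathcal{U}_{\mathcal{C}}$ since it realizes $\tau$ over $\bfB$; if $\{v'',w''\}\subseteq S$ and the other $i-2$ vertices all lie in $\mathrm{A}$, then $S\subseteq\mathrm{A}\cup\{v'',w''\}$ and the reduct agrees with $\bfC\in\mathcal{U}_{\mathcal{C}}$. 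The remaining case is $\{v'',w''\}\subseteq S$ with some $b\in S\cap(\mathrm{B}\setminus\mathrm{A})$; then \emph{every} ordering of $S$ is one of the uncovered tuples, so the entire arity-$i$ structure on $S$ is mine to choose, and --- this is the key point making higher arity work --- nothing constrains it, because $\sigma$ only pins down relations inside $\mathrm{B}\cup\{v''\}$ and $\tau$ only inside $\mathrm{B}\cup\{w''\}$, neither of which contains such an $S$. So I would simply fix, for each such $S$ and arity $i$, some $\mathbf{M}_S\in\mathcal{C}_i$ (which exists: otherwise $\mathcal{U}_{\mathcal{C}}$ has no structure of size $\ge i$ and this case is vacuous) and declare the arity-$i$ relations on $S$ via a bijection of $S$ onto $\mathbf{M}_S$. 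Distinct sets $S$ and distinct arities involve pairwise disjoint families of tuples, so all the choices are mutually consistent and consistent with the forced part, giving $\bfE\in\mathcal{U}_{\mathcal{C}}$. (For binary $\mathcal{L}$, the setting of \cite{Laflamme/Sauer/Vuksanovic06}, this last case is vacuous, since a two-element set cannot contain $v''$, $w''$ and a third vertex; that is why it did not surface in earlier work.)

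For the ordered expansion, $\mathcal{U}^{<}_{\mathcal{C}}$ is the free superposition of $\mathcal{U}_{\mathcal{C}}$ with the \Fraisse\ class $\mathcal{LO}$ of finite linear orders, which satisfies \EEAP\ (as illustrated in the example following Definition \ref{defn.EEAP_new}); since \EEAP\ is preserved under free superposition by Remark \ref{rem.freesup}, $\mathcal{U}^{<}_{\mathcal{C}}$ satisfies \EEAP\ as well. Finally, the \EEAP$^+$ claims need, beyond \EEAP, the Diagonal Coding Tree Property and the Extension Property for the two \Fraisse\ limits; exactly as in Proposition \ref{prop.FA} these follow once the coding-tree machinery of Sections \ref{sec.sct} and \ref{sec.FRT} is set up, and I would defer that verification to there.
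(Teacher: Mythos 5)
Your proposal is correct and takes essentially the same route as the paper's proof: with $\bfA'=\bfA$ and $\bfC'=\bfC$, the paper likewise builds $\bfE$ by prescribing $\type(w'/\bfB)=\tau$ and the type of $w'$ over $\mathrm{A}\cup\{v'\}$ via (a substitution into) $\bfC$, and then uses the remaining freedom on sets containing $v'$, $w'$ and a vertex of $\mathrm{B}\setminus\mathrm{A}$ to make each $i$-element subset's $\mathcal{L}_i$-reduct isomorphic to a member of $\mathcal{C}_i$. Your treatment of the ordered expansion via free superposition with $\mathcal{LO}$ and your deferral of the Diagonal Coding Tree Property and Extension Property to the coding-tree machinery also mirror the paper, which settles those points in the proposition itself and again in Proposition \ref{prop.UnrestrSDAPplus} and Lemma \ref{lem.orderedunrestr}.
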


\begin{proof}
Let  $\mathcal{L}$ be a finite relational language  with $n$ denoting the highest arity of any relation symbol in $\mathcal{L}$,
 and let  $\mathcal{C}=\bigcup_{1\le i\le n}\mathcal{C}_i$,
where each $\mathcal{C}_i$ is a universal constraint set.
Suppose $\bfA,\bfC\in \mathcal{U}_{\mathcal{C}}$ are given
such that $\bfC$ extends $\bfA$ by two vertices $v,w$.
Here, we simply let $\bfA'=\bfA$ and $\bfC'=\bfC$.
Suppose $\bfB\in \mathcal{U}_{\mathcal{C}}$  is any structure containing $\bfA$ as a substructure, and let $\sigma,\tau$ be $1$-types over $\bfB$ satisfying $\sigma\re \bfA=\type(v/\bfA)$ and $\tau\re\bfA=\type(w/\bfA)$.
Suppose further that $\bfD\in \mathcal{U}_{\mathcal{C}}$ extends $\bfB$ by one vertex, say $v'$, such that $\type(v'/\bfB)=\sigma$.

Let $\rho$ be the $1$-type of  $w$ over $\bfC \re( \mathrm{A} \cup \{v\})$.
Take  $\bfE$  to be any $\mathcal{L}$-structure extending $\bfD$ by one vertex, say $w'$, such that
the following hold:
$\type(w'/\bfB)=\tau$ and
$\type(w'/ (\bfD\re (A \cup \{v'\}) )$ is
the $1$-type obtained by substituting $v'$ for $v$ in $\rho$.
If $\mathcal{C}_1$ is non-empty,
then we simply
take a structure $\mathbf{Z}\in \mathcal{U}_{\mathcal{C}_1}$
and declare
$w'$ to satisfy the unary relation which the vertex in $\mathbf{Z}$ satisfies.
For  each subset $G\sse E$ of cardinality at most $n$  containing  $v'$ and $w'$ and at least one vertex of $B\setminus A$,
letting $i$ denote  the cardinality of $G$,
the
$\mathcal{L}_i$-reduct of the structure $\bfE\re G$
 is isomorphic to  a member of $\mathcal{C}_i$.
Then $\bfE$ is a member of  $\mathcal{U}_{\mathcal{C}}$, and  $\bfE\re (\mathrm{A}\cup\{v',w'\})\cong \bfC$.

Thus, $\mathcal{U}_{\mathcal{C}}$ satisfies \EEAP.
By Proposition \ref{prop.LO_n} below,  the \Fraisse\ class of finite linear orders satisfies \EEAP.
As  \EEAP\ is preserved under free superpositions (see Remark \ref{rem.freesup}),
the ordered expansion $\mathcal{U}^{<}_{\mathcal{C}}$  also satisfies \EEAP.

Let $\mathbf{U}_{\mathcal{C}}$ denote an enumerated \Fraisse\ limit of $\mathcal{U}_{\mathcal{C}}$.
The coding tree $\bS(\mathbf{U}_{\mathcal{C}})$
(see Definition \ref{defn.treecodeK})
has the property that  all nodes  of the same length  have the same branching degree.
It is simple to construct a diagonal coding tree inside $\bS(\mathbf{U}_{\mathcal{C}})$,
because the universal constraint set allows each node $s$ in any subtree  $T$
of  $\bS(\mathbf{U}_{\mathcal{C}})$
to be extended independently of the substructure represented by the coding nodes in $T$ of length less or equal to that of $s$.
Thus, the Diagonal Coding Tree property trivially holds.
Further,  (1)
of
Definition \ref{defn.ExtProp}  is trivially satisfied, and hence  $\mathbf{U}_{\mathcal{C}}$ has the Extension Property.
Thus, $\mathbf{U}_{\mathcal{C}}$ satisfies \EEAP$^+$.

For an enumerated \Fraisse\ limit $\mathbf{U}_{\mathcal{C}}^{<}$ of
 $\mathcal{U}^{<}_{\mathcal{C}}$,
a diagonal coding tree can be constructed inside $\bS(\mathbf{U}^{<}_{\mathcal{C}})$
similarly  to the construction in Lemma \ref{lem.SFAPplusoderimpliesDCT}.
Again,  (1)
of
Definition \ref{defn.ExtProp}  is trivially satisfied, so the Extension Property holds.
Thus,
 $\mathbf{U}_{\mathcal{C}}^{<}$ satisfies \EEAP$^+$.
\end{proof}

By Proposition \ref{prop.LO_n} below,  the \Fraisse\ class of finite linear orders satisfies \EEAP.
As \SFAP\ implies \EEAP, and \EEAP\ is preserved under free superpositions (see Remark \ref{rem.freesup}),
the ordered expansion $\mathcal{K}^<$ of any \Fraisse\ class $\mathcal{K}$ with \SFAP\  will also have \EEAP.
 In Theorem \ref{thm.SFAPimpliesDCT} and
 Lemma \ref{lem.SFAPplusoderimpliesDCT}
 we will show that
whenever a \Fraisse\ class $\mathcal{K}$ has \SFAP, the \Fraisse\ limits of $\mathcal{K}$ and $\mathcal{K}^<$ satisfy  \EEAP$^+$.
Applying Propositions \ref{prop.FA} and \ref{prop.LSVSFAP} and Theorem \ref{thm.main}, we then obtain  the following.

\begin{thm}\label{thm.supercool}
 Let $\mathcal{L}$ be a finite relational
 language,
$\mathcal{K}$ a \Fraisse\ class in language $\mathcal{L}$, and $\mathcal{K}^<$ the ordered expansion of $\mathcal{K}$.
If $\mathcal{K}$ is an unrestricted \Fraisse\ class, then $\mathcal{K}$
and  $\mathcal{K}^<$ have \EEAP.
 If $\mathcal{K} = \Forb(\mathcal{F})$ for some set $\mathcal{F}$ of finite
irreducible and
 $3$-irreducible $\mathcal{L}$-structures, then $\mathcal{K}$
 has \SFAP\ and
 $\mathcal{K}^<$ has \EEAP.
  All such classes have \Fraisse\ limits satisfying \EEAP$^+$, and hence are
 are indivisible.
Moreover,
the \Fraisse\ limits of
all
such classes with only
unary and binary relations
 admit big Ramsey structures, and their exact big Ramsey degrees have a simple characterization.
\end{thm}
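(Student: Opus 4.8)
Theorem \ref{thm.supercool} is a packaging result: it collects the verifications already done in Propositions \ref{prop.FA} and \ref{prop.LSVSFAP} and feeds them into Theorems \ref{thm.main} and \ref{thm.indivisibility}, together with the forward references Theorem \ref{thm.SFAPimpliesDCT} and Lemma \ref{lem.SFAPplusoderimpliesDCT} concerning ordered expansions. So the plan is not to prove anything genuinely new here, but to assemble the pieces in the right order and check that no hypothesis of the cited theorems is left unverified.

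The plan is to proceed case by case. \textbf{First}, suppose $\mathcal{K}$ is an unrestricted \Fraisse\ class $\mathcal{U}_{\mathcal{C}}$. By Proposition \ref{prop.LSVSFAP}, both $\mathcal{U}_{\mathcal{C}}$ and $\mathcal{U}^{<}_{\mathcal{C}}$ satisfy \EEAP, and their \Fraisse\ limits satisfy \EEAP$^+$ (the Diagonal Coding Tree Property holds because the universal constraint set lets each node of the coding tree be extended independently of the coding nodes below it, and the Extension Property is trivial since clause (1) of Definition \ref{defn.ExtProp} is vacuous). \textbf{Second}, suppose $\mathcal{K}=\Forb(\mathcal{F})$ for a set $\mathcal{F}$ of finite irreducible, $3$-irreducible structures. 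By Proposition \ref{prop.FA}, $\mathcal{K}$ has \SFAP; since \SFAP\ implies \EEAP\ (Remark \ref{rem.freesup}), and since the class $\mathcal{LO}$ of finite linear orders has \EEAP\ (Proposition \ref{prop.LO_n}) and \EEAP\ is preserved under free superposition (Remark \ref{rem.freesup}), the ordered expansion $\mathcal{K}^{<}$ also has \EEAP. Invoking Theorem \ref{thm.SFAPimpliesDCT} and Lemma \ref{lem.SFAPplusoderimpliesDCT}, the \Fraisse\ limits of $\mathcal{K}$ and of $\mathcal{K}^{<}$ satisfy \EEAP$^+$.

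\textbf{Third}, having established \EEAP$^+$ for the \Fraisse\ limits of all the classes in question, I apply Theorem \ref{thm.indivisibility} directly: since that theorem requires only \EEAP$^+$ and a finite relational language of arbitrary arity, every one of these \Fraisse\ limits is indivisible. \textbf{Fourth}, for the classes whose language consists only of unary and binary relation symbols, the arity hypothesis of Theorem \ref{thm.main} is met, so Theorem \ref{thm.main} yields finite big Ramsey degrees and a big Ramsey structure; the ``simple characterization'' clause is exactly the content of Theorem \ref{thm.bounds} (equivalently, the Simple Characterization of big Ramsey degrees stated in the introduction), which applies under \EEAP$^+$ in the binary case. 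One small bookkeeping point to make explicit: in the unrestricted case the language may a priori contain relation symbols of arity greater than two, in which case only the indivisibility conclusion (via Theorem \ref{thm.indivisibility}) is claimed, whereas the big-Ramsey-structure conclusion is asserted only ``for all such classes with only unary and binary relations'' — so the statement is self-consistent and no further work is needed.

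The only genuine content that could be viewed as an ``obstacle'' is making sure the ordered expansions are handled uniformly: one must observe that Propositions \ref{prop.FA} and \ref{prop.LSVSFAP} already cover \EEAP\ for $\mathcal{K}$ and $\mathcal{K}^{<}$, and that the passage from \EEAP\ to \EEAP$^+$ for ordered expansions is not immediate from the unordered case but is supplied by Lemma \ref{lem.SFAPplusoderimpliesDCT} (for the \SFAP\ classes) and by the explicit construction in the proof of Proposition \ref{prop.LSVSFAP} (for the unrestricted classes). Once that is in hand, the proof is a two-line citation: apply Theorems \ref{thm.main} and \ref{thm.indivisibility}. I therefore expect the write-up to be essentially a sequence of ``By Proposition/Theorem~X, \dots'' sentences with no computation.
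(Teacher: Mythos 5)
Your proposal is correct and matches the paper's own route: the theorem is obtained exactly by assembling Propositions \ref{prop.FA} and \ref{prop.LSVSFAP} (with \SFAP\ $\Rightarrow$ \EEAP, \EEAP\ for $\mathcal{LO}$, and preservation under free superposition for the ordered expansions), the Diagonal Coding Tree Property results in Theorem \ref{thm.SFAPimpliesDCT} and Lemma \ref{lem.SFAPplusoderimpliesDCT}, and then Theorems \ref{thm.indivisibility} and \ref{thm.main}. Your bookkeeping remark that the big Ramsey structure conclusion is asserted only for languages of arity at most two is also exactly how the paper handles it.
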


We now discuss previous
results
 recovered by Theorem \ref{thm.supercool}, as well as their  original proof methods.

 In
    \cite{Laflamme/Sauer/Vuksanovic06},
 Laflamme, Sauer, and Vuksanovic characterized  the
 exact big Ramsey degrees of the Rado graph, generic directed graph, and generic tournament.
 More generally,
 they characterized  exact big Ramsey degrees
for  the \Fraisse\ limit of  any
unrestricted \Fraisse\ class in a language
consisting of
  finitely many binary relations.
Their characterization is exactly recovered in our
Theorem \ref{thm.bounds}.

  Their proof utilized Milliken's theorem
  for strong trees \cite{Milliken79}
  and the method of envelopes, building
  on exact upper bound results for big Ramsey degrees of the Rado graph due to Sauer in \cite{Sauer06}.
Theorem \ref{thm.main}  recovers their characterization of the exact big Ramsey degrees for these  classes of structures,
and proves new results for their ordered expansions.
The indivisibility result in its full generality for unrestricted \Fraisse\ structures with relations in any arity, as well as  their ordered expansions,   is new.

 Theorem \ref{thm.supercool} also   extends a result of
El-Zahar and Sauer
 \cite{El-Zahar/Sauer94}, in which
they
proved indivisibility for
free amalgamation classes of
 $k$-uniform hypergraphs ($k\ge 3$)
with forbidden
$3$-irreducible substructures.
As these structures have only one isomorphism type of singleton substructure, their result says
that  for any $k\ge 3$ and any collection $\mathcal{F}$ of
irreducible,
$3$-irreducible $k$-uniform hypergraphs,
vertices  in  $\Forb(\mathcal{F})$ have big Ramsey degree
one.

We mention that for each $n\ge 2$, the \Fraisse\ class of finite
 $n$-partite graphs is easily seen to satisfy \SFAP.
 John Howe proved in his PhD thesis \cite{HoweThesis} that the generic  bipartite  graph has finite big Ramsey degrees; his  methods use an adjustment of  Milliken's theorem.
 Finite  big Ramsey degrees for $n$-partite graphs for  all $n\ge 2$    follow from  the more recent work of  Zucker   in \cite{Zucker20};
  his methods use a flexible version of coding trees and envelopes, but  lower bounds are not attempted in that
  paper.

Next we consider disjoint amalgamation classes which are ``$\bQ$-like''  in
 that their resemblance to
 linear orders
  makes them  in
some sense  rigid enough  to satisfy \EEAP.
Starting with
 the rationals  as a linear order $(\bQ,<)$,
 we shall show that the  \Fraisse\ class of finite   linear orders  satisfies
 \EEAP, and that $(\bQ, <)$ satisfies
 \EEAP$^+$.
 Further,
 the rational linear order with a vertex partition into finitely many dense pieces
 satisfies
 \EEAP$^+$.
We obtain a hierarchy  of
linear orders with
nested convexly ordered equivalence relations
that each satisfy
\EEAP$^+$.

Given $n\ge 1$, let $\mathcal{LO}_n$ denote  the \Fraisse\ class   of finite
structures
with  $n$-many independent linear orders. The
language
for $\mathcal{LO}_n$ is $\{<_i:i<n\}$, with each $<_i$ a binary relation symbol.
In  standard  notation,
$\mathcal{LO}$ denotes $\mathcal{LO}_1$.

\begin{prop}\label{prop.LO_n}
The \Fraisse\ limit of
$\mathcal{LO}$,
namely the rational linear order,
  satisfies \EEAP$^+$.
For each   $n\ge 2$, $\mathcal{LO}_n$  satisfies \EEAP.
\end{prop}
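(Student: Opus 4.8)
The plan is to prove \EEAP\ directly for $\mathcal{LO}$ and for each $\mathcal{LO}_n$, and then to upgrade the $\mathcal{LO}$ case to \EEAP$^+$ by inspecting the (very simple) coding tree of $1$-types of the rational order, or equivalently by appealing to the forthcoming Lemma~\ref{lem.SFAPplusoderimpliesDCT}, following the pattern of Propositions~\ref{prop.FA} and \ref{prop.LSVSFAP}. Disjoint amalgamation of $\mathcal{LO}$, and coordinatewise of $\mathcal{LO}_n$, is standard, so the content is the \EEAP\ clause of Definition~\ref{defn.EEAP_new}. The key observation throughout is that the quantifier-free $1$-type of a point over a finite linear order is exactly a choice of one of its gaps.

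For $\mathcal{LO}$: given $\bfA\sse\bfC$ with $\mathrm{C}\setminus\mathrm{A}=\{v,w\}$ and, say, $v<w$ in $\bfC$, there are two cases. If $v$ and $w$ lie in distinct gaps of $\bfA$, set $\bfA'=\bfA$ and $\bfC'=\bfC$; since the gap of $v$ lies entirely below the gap of $w$, any two points realizing $1$-types over an extension $\bfB\supseteq\bfA$ that restrict to $\type(v/\bfA)$ and $\type(w/\bfA)$ automatically occur in these gaps and in the correct order, so a copy of $\bfC$ results. If $v$ and $w$ lie in a common gap of $\bfA$, let $\bfA'$ be $\bfA$ with one new point $u$ adjoined in that gap, and let $\bfC'$ be the disjoint amalgam of $\bfA'$ and $\bfC$ over $\bfA$ in which $v<u<w$; thus $\type(v/\bfA')$ puts $v$ below $u$ and $\type(w/\bfA')$ puts $w$ above $u$, both still inside the $\mathrm{A}$-gap. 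Then for any $\bfB\supseteq\bfA'$ and any $1$-types $\sigma,\tau$ over $\bfB$ with $\sigma\re\bfA'=\type(v/\bfA')$ and $\tau\re\bfA'=\type(w/\bfA')$, and any $\bfD=\bfB\cup\{v''\}$ realizing $\sigma$, extending $\bfD$ by the unique point $w''$ realizing $\tau$ yields $\bfE\in\mathcal{LO}$ with $v''<u<w''$, hence $v''<w''$, and with $v'',w''$ in the $\mathrm{A}$-gap of $v,w$; so $\bfE\re(\mathrm{A}\cup\{v'',w''\})\cong\bfC$. This gives \EEAP\ for $\mathcal{LO}$.

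For $\mathcal{LO}_n$ the same argument is run in each coordinate at once. Let $I$ be the set of $i<n$ for which $v$ and $w$ lie in a common $<_i$-gap of $\bfA$; take $\bfA'$ to be $\bfA$ with one new point $u_i$ for each $i\in I$, placed in the relevant $<_i$-gap and placed arbitrarily (but consistently) in the other orders, and take $\bfC'$ so that for each $i\in I$ the $<_i$-order on $\{v,u_i,w\}$ agrees with that of $\bfC$. Then in each coordinate $i\in I$ the point $u_i$ separates any admissible realizations $v'',w''$, while for $i\notin I$ they already fall in the correct gaps, so the same one-point extension produces $\bfE$ with $\bfE\re(\mathrm{A}\cup\{v'',w''\})\cong\bfC$. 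Hence $\mathcal{LO}_n$ has \EEAP\ for every $n\ge 1$.

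It remains to note that $(\bQ,<)$ has \EEAP$^+$, i.e.\ the Diagonal Coding Tree Property and the Extension Property. The coding tree $\bS(\bQ,<)$ is as simple as possible: its nodes at level $m$ are the $m+1$ gaps of $\bK_m$, and passing to level $m+1$ splits exactly the single gap into which vertex $m$ is enumerated, every other node having a unique successor. A diagonal coding tree is obtained inside $\bS(\bQ,<)$ by the usual stretching --- restricting to a cofinal set of levels on which branching and coding nodes occur singly and at mutually distinct lengths --- and the genericity this requires is automatic, since at any node of any subtree the next vertex may be placed in any gap; for the same reason every node extends to a coding node realizing any prescribed passing type, which is the Extension Property. (Alternatively, $\mathcal{LO}$ is a degenerate instance --- the ordered expansion of the vacuously \SFAP\ class of finite pure sets --- of Lemma~\ref{lem.SFAPplusoderimpliesDCT}.) There is no serious obstacle here; the only care needed is to carry out this stretching and scheduling in exact accordance with the definitions of coding tree, diagonality, passing type, and the Extension Property from Sections~\ref{sec.sct} and \ref{sec.FRT}, all the requisite freedom being supplied by linear orders.
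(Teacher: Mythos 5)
Your argument is correct and takes essentially the same route as the paper: the paper's proof also forms $\bfA'$ by adjoining a vertex $a'$ placed between $v$ and $w$ in every order $<_i$ simultaneously (no case split, and a single new point rather than one per coordinate), so that transitivity forces the relative order of any realizations of $\sigma$ and $\tau$, and it likewise defers the Diagonal Coding Tree Property and the Extension Property for $(\bQ,<)$ to the later coding-tree lemmas. One small caution: your parenthetical alternative via Lemma \ref{lem.SFAPplusoderimpliesDCT} does not literally apply, since that lemma (and the paper's standing convention) requires at least one non-unary relation symbol in the base language, so $\mathcal{LO}$ is not the ordered expansion of an \SFAP\ class in the paper's sense; your direct inspection of $\bS(\bQ)$, which is also what the paper does, is what carries the \EEAP$^+$ claim.
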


\begin{proof}
Fixing  $n\ge 1$,
suppose   $\bfA$ and $\bfC$   are in $\mathcal{LO}_n$ with
  $\bfA$   a substructure
of $\bfC$ and
$\mathrm{C}\setminus \mathrm{A}=\{v,w\}$.
Let $\bfC'$ be the extension of $\bfC$ by one
 vertex, $a'$, satisfying the following:
 For each $i<n$,
 if $v<_i w$ in $\bfC$, then
 $v<_i a'$ and $a'<_i w$ are in $\bfC'$;
 otherwise, $w<_i a'$ and $a'<_i v$ are  in $\bfC'$.
Define  $\bfA' $ to be the induced  substructure $\bfC'\re (\mathrm{A}\cup\{a'\})$ of $\bfC'$.

Suppose  that $\bfB$ is a finite linear order containing
 $\bfA'$ as a substructure, and let
$\sigma$ and $\tau$ be
$1$-types over $\bfB$ with the property that
$\sigma\re\bfA'=\type(v/\bfA')$ and $\tau\re\bfA'=\type(w/\bfA')$.
Suppose that $\bfD$  is a one-vertex extension of $\bfB$ by the vertex $v'$  so that $\type(v'/\bfB)=\sigma$ holds.
Now
let $\bfE$ be an extension of $\bfD$ by one vertex $w'$ satisfying  $\type(w'/\bfB)=\tau$.
For each $i<n$, $v<_i w$ holds  in $\bfC'$
 if and only if
$x<_i a'$ is in $\sigma$ and
 $a'<_i x$ is in $\tau$.
 (The opposite,
  $w<_i v$, holds  in $\bfC'$
 if and only if
$a'<_i x$ is in $\sigma$ and
 $x<_i a'$ is in $\tau$.)
 It follows that $v'<_i w'$ holds  in $\bfE$ if and only if $v<_i w$  holds in $\bfC$.
Therefore, we automatically obtain $\bfE\re(\mathrm{A}\cup\{v',w'\})\cong \bfC$.
Thus,  \EEAP\ holds.

 Lemma  \ref{lem.Q_Qbiskew} will show that
 $\mathcal{LO}$ satisfies the
 Diagonal Coding Tree Property, and the Extension Property will trivially hold.
 Hence, $\mathcal{LO}$ will satisfy
 \EEAP$^+$.
\end{proof}

Next,  we consider \Fraisse\ classes
of structures with a linear order and
a finite vertex partition.
Following the notation in \cite{Laflamme/NVT/Sauer10},
for each $n\ge 2$, let  $\mathcal{P}_n$  denote the
\Fraisse\ class
with language $\{<,P_1,\dots,P_n\}$, where  $<$ is a
binary relation symbol and each $P_i$ a unary relation symbol, such that in any structure in $\mathcal{P}_n$,
$< $ is interpreted as a linear order and the interpretations of the $P_i$ partition the vertices.
The \Fraisse\ limit of $\mathcal{P}_n$, denoted by
$\bQ_n$,  is
the
rational linear order with a partition of its underlying set into $n$ definable pieces, each of which is dense in $\bQ$.

\begin{prop}\label{bQn}
For each $n\ge 1$,
the \Fraisse\ limit $\bQ_n$ of
the \Fraisse\ class $\mathcal{P}_n$ satisfies \EEAP$^+$.
\end{prop}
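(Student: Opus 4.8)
The plan is to check, in turn, the three conditions defining \EEAP$^+$ (Definition \ref{defn_EEAP_newplus}) for $\bQ_n$: that $\mathcal{P}_n$ has \EEAP, that $\bQ_n$ has the Diagonal Coding Tree Property, and that $\bQ_n$ has the Extension Property. The case $n=1$ is degenerate, since then $P_1$ holds of every vertex and $\bQ_1$ is just the rational order $\bQ$ carrying a redundant unary predicate; thus \EEAP$^+$ for $\bQ_1$ is precisely \EEAP$^+$ for $\bQ$, which is Proposition \ref{prop.LO_n}. Hence I would assume $n\ge 2$ from here on.

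For \EEAP, I would observe that $\mathcal{P}_n$ is the free superposition of the \Fraisse\ class $\mathcal{LO}$ of finite linear orders with the \Fraisse\ class $\mathcal{E}_n$ of finite sets partitioned into $n$ named blocks. Now $\mathcal{E}_n$ has \EEAP\ trivially: given $\bfA$ a substructure of $\bfC$ with $\mathrm{C}\setminus\mathrm{A}=\{v,w\}$, one takes $\bfA'=\bfA$ and $\bfC'=\bfC$, and since the block of a vertex is unconstrained by anything else, for any structure $\bfB\supseteq\bfA$ in $\mathcal{E}_n$, any $1$-types $\sigma,\tau$ over $\bfB$ extending $\type(v/\bfA),\type(w/\bfA)$, and any one-vertex extension $\bfD$ of $\bfB$ realizing $\sigma$, one extends further to an $\bfE$ realizing $\tau$, and then $\bfE\re(\mathrm{A}\cup\{v'',w''\})\cong\bfC$ because $\sigma,\tau$ already pin down the blocks of the two new vertices. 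Since $\mathcal{LO}$ has \EEAP\ by Proposition \ref{prop.LO_n} and \EEAP\ is preserved under free superposition (Remark \ref{rem.freesup}), $\mathcal{P}_n$ has \EEAP. Equivalently, one can argue directly by mimicking the proof of Proposition \ref{prop.LO_n}: insert a separating vertex $a'$ strictly between $v$ and $w$ to build $\bfC'$ and set $\bfA'=\bfC'\re(\mathrm{A}\cup\{a'\})$; the copy of $a'$ sitting inside any $\bfB\supseteq\bfA'$ then forces the new vertices onto the correct sides, while the required blocks are already recorded in $\sigma$ and $\tau$.

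For the Diagonal Coding Tree Property, I would examine the coding tree $\bS(\bQ_n)$ of quantifier-free $1$-types (Definition \ref{defn.treecodeK}). A $1$-type of a new vertex over $\bK_\ell$ records one of the $n$ blocks together with a cut of the linearly ordered set $\bK_\ell$; the block label is constant along each branch and plays no role in the branching, so $\bS(\bQ_n)$ consists of $n$ roots, one per block, beneath each of which the branching pattern is exactly that of the coding tree $\bS(\bQ)$. I would therefore run $n$ interleaved copies of the construction establishing the Diagonal Coding Tree Property for $\mathcal{LO}$ (Lemma \ref{lem.Q_Qbiskew}, in the style of Lemma \ref{lem.SFAPplusoderimpliesDCT}), arranging the splitting nodes and coding nodes so that the whole forest is skew with binary splitting and so that the coding nodes are distributed among the $n$ blocks densely; because blocks impose no constraints, the resulting diagonal coding tree codes $\bQ_n$. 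Finally, the Extension Property (Definition \ref{defn.ExtProp}) holds here for the same trivial reason it holds for $\mathcal{LO}$ and for unrestricted classes: every finite block-labelled linear order extends, so its nontrivial clause is vacuous. Assembling these three facts, $\bQ_n$ satisfies \EEAP$^+$.

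The main obstacle is the Diagonal Coding Tree Property. Once the corresponding statement for $\bQ$ (Lemma \ref{lem.Q_Qbiskew}) is in hand, the block structure only adds bookkeeping, but one must still verify that the skew, binary-splitting forest constructed inside $\bS(\bQ_n)$ genuinely codes $\bQ_n$, i.e.\ that its coding nodes induce a dense linear order without endpoints in which each of the $n$ blocks is dense, and that the diagonalization respects the consistency requirements the linear order imposes on the cuts occurring along a branch. This is the only step with real content, and it is essentially reduced to the linear-order case handled by Lemma \ref{lem.Q_Qbiskew}.
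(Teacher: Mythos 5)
Your treatment of \EEAP\ is fine and essentially matches the paper's: the paper argues exactly as in Proposition \ref{prop.LO_n}, inserting a separating vertex $a'$ to form $\bfC'$ and $\bfA'$ and observing that the unary predicates are pinned down by $\sigma$ and $\tau$; your free-superposition route (via Remark \ref{rem.freesup} and the trivially \EEAP\ class of $n$-block partitions) is a harmless variant. The assertion that the Extension Property holds trivially is also what the paper does.

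The gap is in the Diagonal Coding Tree Property, which you yourself identify as the only step with content and then dispose of by ``interleaving'' and ``bookkeeping.'' You propose to build the diagonal coding subtree inside $\bS(\bQ_n)$, the $n$-rooted tree of full $1$-types. But the paper explicitly notes (remark following Convention \ref{conv.Gamma_ts}) that when the language has unary relation symbols and the structure has a transitive relation --- precisely the case of $\bQ_n$ with $n\ge 2$ --- $\bS$ does \emph{not} contain a diagonal coding subtree, and for this reason the construction is carried out in the unary-colored coding tree $\bU(\bQ_n)$ (Lemma \ref{lem.DCTLO}, which is also the lemma the paper's proof of this proposition cites; Lemma \ref{lem.Q_Qbiskew} concerns $\bQ_{\bQ}$ and $\mathcal{COE}_{n,p}$). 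The obstruction your sketch never engages with is condition (2) of Definition \ref{defn.sct}: at length $\ell^T_m+1$, i.e.\ immediately above the $m$-th coding node of $T$, the nodes of $T$ must realize \emph{all} $1$-types over the structure coded so far --- in particular both sides of the newly coded vertex in every one of the $n$ blocks --- while diagonality forbids any splitting of $T$ at the coding-node level and allows only one binary splitting node per level. In $\bS(\bQ_n)$ the only branching nodes at a given level are the $n$ ``mirror'' nodes sharing the cut of the current vertex, all of the same length, so one would have to pre-split around the future coding vertex in every block at pairwise distinct earlier levels before the coding node is even selected; it is exactly this careful staging (choose the splitting node, then choose the coding node among its successors, take $\prec$-leftmost extensions to secure condition (3)) that Lemma \ref{lem.DCTLO} performs inside the single-rooted, binary, skew tree $\bU(\bQ_n)$, where the unary information is carried only by the coding nodes. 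As written, your argument either attempts a construction the paper rules out or, at best, omits the construction that constitutes the proof; it needs to be redone in $\bU(\bQ_n)$ along the lines of Lemma \ref{lem.DCTLO}.
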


\begin{proof}
The proof is almost exactly the same as that for the rationals.
Fixing  $n\ge 1$,
suppose   $\bfA$ and $\bfC$   are in $\mathcal{P}_n$ with
  $\bfA$   a substructure
of $\bfC$ and
$\mathrm{C}\setminus \mathrm{A}=\{v,w\}$.
Let $\bfC'$ be the extension of $\bfC$ by one
 vertex, $a'$,  such that
 $v<w$ in $\bfC$ if and only if
 $v< a'$ and $a'< w$ in $\bfC'$;
 (otherwise, $w<v$ and
 $w<a'$ and $a'<v$ hold  in $\bfC'$).
Let  $\bfA'=\bfC'\re (\mathrm{A}\cup\{a'\})$.

Given any $\bfB,\sigma, \tau,\bfD,v''$ as in  (2) and (3)  of Part (B) of Definition \ref{defn.EEAP_new},
any extension of $\bfD$ by one vertex $w''$ to a structure $\bfE$ with $\type(w''/\bfB)=\tau$ automatically has $v''<w''$ holding in $\bfE$ if and only if $v'<a'<w'$ holds in $\bfA'$.
Since each $P_i$ is a unary relation,
 $P_i(x)$ is in $\sigma$ if and only if $P_i(v')$ holds.
 Thus,  it follows that $P_i(v'')$ holds  in $\bfE$ for that $i$ such that $P_i(v)$ holds in $\bfA$.
 Likewise for $w''$.
 Therefore, $\bfE\re(\mathrm{A}\cup\{v'',w''\})\cong \bfC$.
 Thus,  \EEAP\ holds.

The Extension Property trivially holds for $\bQ_n$.
\EEAP$^+$ follows from the fact that the  coding tree of $1$-types for $\bQ_n$ is a skew tree with splitting degree two, from  which
the  construction of a tree satisfying \EEAP$^+$ will easily follow (see Lemma \ref{lem.DCTLO}).
\end{proof}

Next, we consider  \Fraisse\ classes with
a linear order and
finitely many
convexly ordered
equivalence relations:
An equivalence relation on a linearly ordered set is {\em convexly ordered} if each of its
equivalence classes is an interval with respect to the linear order.

Given  the
language
$\mathcal{L}=\{<,E\}$,
where $<$ and $E$ are binary relation symbols,
let $\mathcal{COE}$ denote the \Fraisse\ class
of
{\em convexly ordered equivalence relations},
$\mathcal{L}$-structures in which $<$ is interpreted as a linear order and $E$ as an equivalence relation that is convex with respect to that order.
The \Fraisse\ limit of  $\mathcal{COE}$, denoted by
$\bQ_{\bQ}$, is the dense linear order without endpoints with an equivalence relation
that has
infinitely many equivalence classes, each
an interval of order-type
$\bQ$, and with an induced order on the set of equivalence classes that is also of order-type $\bQ$.
One can  think of $\bQ_\bQ$ as $\bQ$  copies of $\bQ$ with the lexicographic order.
This  structure was
described
by Kechris, Pestov, and Todorcevic in  \cite{Kechris/Pestov/Todorcevic05}, where they proved that its automorphism group is extremely amenable;
 from
 the main result of
  \cite{Kechris/Pestov/Todorcevic05},
  it then follows that
$\mathcal{COE}$ has the Ramsey property.
This generated interest in  the question of
whether $\bQ_{\bQ}$ has finite big Ramsey degrees or big Ramsey structures.

Let $\mathcal{COE}_2$ denote the \Fraisse\ class in language $\{<,E_0,E_1\}$, where $<$,
$E_0$ and $E_1$
are binary relation symbols,
such that in any structure in $\mathcal{COE}_2$, $<$ is interpreted as a linear order, $E_0$ and $E_1$ as convexly ordered equivalence
relations, and with the additional property that the interpretation of $E_1$ is a coarsening of that of $E_0$; that is, for any
$\bfA$ in $\mathcal{COE}_2$, $a\, E_0^\bfA\, b$ implies $a\, E_1^\bfA\, b$.
Then Flim$(\mathcal{COE}_2)$ is
 $\bQ_{\bQ_{\bQ}}$, that is $\bQ$ copies of $\bQ_{\bQ}$; we shall  denote this as
  $(\bQ_{\bQ})_2$.
  One can see that this recursive construction gives rise to a hierarchy of dense linear orders without endpoints with finitely many convexly ordered  equivalence relations, where each successive equivalence relation coarsens the previous one.
In general,   let $\mathcal{COE}_{n}$ denote the \Fraisse\ class
in the  language $\{<,E_0,\dots, E_{n-1}\}$
where $<$ is interpreted as a linear order and
each $E_i$  $(i<n)$ is interpreted as a
convexly ordered
equivalence relation, and such that
 for each $i<n-2$, the interpretation of $E_{i+1}$ coarsens that of $E_i$.
Let
$(\bQ_{\bQ})_{n}$ denote
the \Fraisse\ limit of  $\mathcal{COE}_{n}$.

More generally, we may consider \Fraisse\ classes
that are a
blend
of the $\mathcal{COE}_n$ and $\mathcal{P}_p$, having
finitely many linear orders,
finitely many convexly ordered equivalence relations, and a partition into finitely many pieces (each of which, in the \Fraisse\ limit, will be dense).
 Let $\mathcal{L}_{m,n,p}$ denote the  language consisting of
 finitely many binary relation symbols, $<_0,\dots, <_{m-1}$, finitely many binary relation symbols $E_0,\dots, E_{n-1}$,
and finitely many unary relation symbols $P_0,\dots,P_{p-1}$.
 A \Fraisse\ class $\mathcal{K}$ in language
 $\mathcal{L}_{m,n,p}$  is a member of
 $\mathcal{LOE}_{m,n,p}$
 if each $<_i$, $i<m$, is interpreted as a linear order,
each $E_j$, $j<n$, is interpreted as a
convexly ordered
equivalence relation with respect to exactly one of the linear orders $<_{i_j}$, for some $i_j<\ell$,
and the interpretations of the $P_k$, $k<p$, induce a vertex partition into at most $p$ pieces.
Let $\mathcal{LOE}$ be the union over all triples $(m,n,p)$
of $\mathcal{LOE}_{m,n,p}$.
Let $\mathcal{COE}_{n,p}$
be
the  \Fraisse\ class
in
$\mathcal{LOE}_{1,n,p}$ for which  the reduct to the language $\{<_0,E_0,\dots, E_{n-1}\}$ is a member of $\mathcal{COE}_n$.

\begin{prop}\label{prop.loe}
Each
\Fraisse\ class  in  $\mathcal{LOE}$ satisfies  \EEAP.
Moreover, for any $n,p$,
the \Fraisse\ limit of
 $\mathcal{COE}_{n,p}$
satisfies \EEAP$^+$.
\end{prop}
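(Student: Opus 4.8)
The plan is to verify \EEAP\ for an arbitrary \Fraisse\ class $\mathcal{K}$ lying in some $\mathcal{LOE}_{m,n,p}$ by the ``reference vertex'' device already used in Propositions \ref{prop.LO_n} and \ref{bQn}, and then to upgrade this to \EEAP$^+$ for $\mathrm{Flim}(\mathcal{COE}_{n,p})$ by adapting the coding-tree arguments behind Lemmas \ref{lem.Q_Qbiskew} and \ref{lem.DCTLO}. Disjoint amalgamation for classes in $\mathcal{LOE}$ is routine: given $\bfA\le\bfB$ and $\bfA\le\bfC$, one places each vertex of $\mathrm{C}\setminus\mathrm{A}$ into each linear order, into each $E_j$-class, and into each part so as to respect convexity and without identifying it with any vertex of $\mathrm{B}\setminus\mathrm{A}$; so the work is in the amalgamation-type condition of Definition \ref{defn.EEAP_new}. (Alternatively, one may note that $\mathcal{LOE}_{m,n,p}$ is a free superposition of single-linear-order blocks together with the $p$-partition class and appeal to preservation of \EEAP\ under free superposition, Remark \ref{rem.freesup}; but the direct argument below is cleaner and self-contained.)

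For \EEAP, given $\bfA\le\bfC$ with $\mathrm{C}\setminus\mathrm{A}=\{v,w\}$, I would take $\bfC'$ to be the extension of $\bfC$ by one new vertex $a'$ chosen so that (i) for each $i<m$, $a'$ lies strictly $<_i$-between $v$ and $w$; (ii) for each $j<n$, writing $<_{i_j}$ for the order with respect to which $E_j$ is convex, $a'$ is put into the common $E_j$-class of $v$ and $w$ when $v\,E_j^{\bfC}\,w$, and otherwise into a fresh $E_j$-class whose $<_{i_j}$-interval lies strictly between the $E_j$-classes of $v$ and of $w$; and (iii) $a'$ is assigned to the part of $v$. The key point is that these stipulations are mutually consistent and describe a genuine member of $\mathcal{K}$: the linear orders are independent, so (i) is harmless; since $a'$ sits $<_{i_j}$-between $v$ and $w$, adjoining $a'$ to their common $E_j$-class (when they share one) keeps it a $<_{i_j}$-interval, while inserting a fresh $E_j$-class for $a'$ between their two classes (when they do not) leaves every $E_j$-class convex; and in the $\mathcal{COE}_n$-fragment, where the $E_j$ are nested, one arranges that the threshold $j^{*}=\min\{j:v\,E_j\,w\}$ coincides with the threshold past which $a'$ joins the common class, keeping the chain of $E_j$-classes of $a'$ nested. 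Setting $\bfA'=\bfC'\re(\mathrm{A}\cup\{a'\})$, we get $\bfA'\supseteq\bfA$ and $\bfC'$ a disjoint amalgam of $\bfA'$ and $\bfC$ over $\bfA$, as Definition \ref{defn.EEAP_new} requires.

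For the conclusion of \EEAP, with $v',w'$ the two vertices of $\mathrm{C}'\setminus\mathrm{A}'$ (so $\{v',w'\}=\{v,w\}$) and $\bfB\supseteq\bfA'$, $\sigma,\tau$, $\bfD=\bfB\cup\{v''\}$ as in the definition, I would form $\bfE=\bfD\cup\{w''\}$ with $w''$ realizing $\tau$ over $\bfB$. Because $a'\in\bfA'\subseteq\bfB$, the relations of $v''$ and of $w''$ to $a'$ are prescribed by $\sigma$ and $\tau$ and agree with those of $v,w$ to $a'$ in $\bfC'$; hence $a'$ separates $v''$ from $w''$ in each $<_i$ exactly as it separates $v$ from $w$, so $v''<_i w''$ iff $v<_i w$, and convexity of $E_j^{\bfE}$ with respect to $<_{i_j}$, together with the relations of $v'',w''$ to $a'$, forces $v''\,E_j\,w''$ iff $v\,E_j\,w$, while the parts of $v'',w''$ are read off from $\sigma,\tau$. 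Thus $\bfE\in\mathcal{K}$, $\type(w''/\bfB)=\tau$, and $\bfE\re(\mathrm{A}\cup\{v'',w''\})\cong\bfC$, which is exactly the conclusion of \EEAP.

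Finally, for \EEAP$^+$ of $\mathrm{Flim}(\mathcal{COE}_{n,p})$ it remains, by Definition \ref{defn_EEAP_newplus}, to check the Extension Property and the Diagonal Coding Tree Property. The Extension Property holds trivially, as in Propositions \ref{prop.LO_n} and \ref{bQn}: no configuration is forbidden, so condition (1) of Definition \ref{defn.ExtProp} is vacuous. For the Diagonal Coding Tree Property, the coding tree of $1$-types $\bS(\mathrm{Flim}(\mathcal{COE}_{n,p}))$ is a skew tree with binary splitting, just as for $\bQ$ and $\bQ_n$ --- the only real choice at a node being ``below/above'' in the single linear order, with all equivalence-class and unary data along a branch determined by that choice and the earlier coding nodes --- and running the construction of Lemma \ref{lem.Q_Qbiskew}, enriched by the treatment of unary predicates from Lemma \ref{lem.DCTLO}, produces a diagonal coding subtree inside it; hence $\mathrm{Flim}(\mathcal{COE}_{n,p})$ has \EEAP$^+$. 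The step I expect to require the most care is the consistency check for $a'$: placing it strictly between $v$ and $w$ in all $m$ linear orders at once while keeping every $E_j$ convex with respect to its own order, and, in the nested $\mathcal{COE}_n$-fragment, forcing the chain of $a'$'s $E_j$-classes to be nested and to switch from separating $v$ and $w$ to joining them exactly at the nesting threshold. Everything else follows the template of Propositions \ref{prop.LO_n} and \ref{bQn}.
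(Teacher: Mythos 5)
Your overall scheme for \EEAP\ is the paper's (an auxiliary ``reference'' vertex adjoined to $\bfC$, so that the order and equivalence data of $v'',w''$ can be read off $\sigma$ and $\tau$), but your choice of a \emph{single} new vertex $a'$ placed, for every $j$ with $\neg(v\,E_j\,w)$, in a \emph{fresh} $E_j$-class lying strictly between the $E_j$-classes of $v$ and of $w$ is not always realizable, and this is exactly the step you flag as delicate. If two relations $E_0,E_1$ are convex with respect to the same order $<_0$ and are not nested on $\bfC$, the two gaps can be disjoint: take $v<_0 u<_0 w$ with $E_0$-classes $\{v,u\},\{w\}$ and $E_1$-classes $\{v\},\{u,w\}$; the $E_0$-gap lies strictly between $u$ and $w$ while the $E_1$-gap lies strictly between $v$ and $u$, so no single $a'$ can occupy both. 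The definition of $\mathcal{LOE}_{m,n,p}$ imposes no nesting among relations attached to a common order, and your nesting argument is offered only for the $\mathcal{COE}$ fragment, so the first claim of the proposition is not covered as written (unless one separately proves that non-nested configurations cannot occur in a \Fraisse\ class of $\mathcal{LOE}$, which you do not do). The paper sidesteps this entirely: it adjoins one vertex per linear order and one per equivalence relation ($a'_k$, $k<m+n$), and in the inequivalent case puts $a'_{m+j}$ into $v$'s $E_j$-class, so that $x\,E_j\,a'_{m+j}$ lies in $\sigma$ and its negation in $\tau$, and the conclusion follows from transitivity alone, with no convexity bookkeeping. (Your construction would also be repaired by placing the single $a'$ order-adjacent to $v$ and inside all of $v$'s classes, which again reduces everything to transitivity.)

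The second genuine problem is your claim that the Extension Property ``holds trivially'' via clause (1) of Definition \ref{defn.ExtProp}. That is false for $\mathcal{COE}_{n,p}$ with $n\ge 1$: in $\bS(\bQ_{\bQ})$, a splitting (coding) node representing a vertex equivalent to an earlier one has only the two immediate successors containing $x\,E\,v_i$ (with $x<v_i$ or $v_i<x$), so any two coding nodes above such a node are forced to be $E$-equivalent; hence if the splitting node of $r_{k+1}(A)$ is the meet of two inequivalent coding nodes, an arbitrary splitting node $s^*\supseteq s$ cannot in general be completed to a $+$-similarity copy extendible to a copy of $A$, and clause (1) fails. This is precisely why the paper proves clause (2) instead (Lemma \ref{lem.EPCOE}), with $q=n+1$ and $\psi$ recording the finest $E_j$ tying the splitting node to an earlier vertex; that $\psi$ is then genuinely used in the persistence proof (Case I of Theorem \ref{thm.persistence}). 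Relatedly, the coding tree for $(\bQ_{\bQ})_n$ is not binary with only a below/above choice --- coding nodes split into as many as $2(n+1)$ immediate successors --- so your description of it misstates where the difficulty of the Diagonal Coding Tree Property lies, although deferring that property to Lemma \ref{lem.Q_Qbiskew}, as you do, is exactly what the paper's proof does as well. The disjoint amalgamation remark and the handling of the unary predicates are fine.
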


\begin{proof}
Suppose   $\bfA$ and $\bfC$   are in $\mathcal{K}$ with
  $\bfA$   a substructure
of $\bfC$ and
$\mathrm{C}\setminus \mathrm{A}=\{v,w\}$.
The unary relations are handled exactly as they were in Proposition \ref{bQn}, so we  need to check that \EEAP\ holds for the binary relations.

Let $\bfC'$ be an  extension of $\bfC$
by   vertices $a'_k$ ($k<m+n$) satisfying the following:
For each $i<m$,
$v<_i w$ if and only if $v<_i a'_i$ and $a'_i<_i w$ in $\bfC'$.
Given  $j<n$,  if
$v\, E_j\, w$  holds  in $\bfC$, then require that $a'_{m+j}$ satisfies  $v\, E_j\, a'_{m+j}$ and $w\, E_j\, a'_{m+j}$   in $\bfC'$.
If $v \not \hskip-.06in  E_j\, w$  holds in $\bfC$,
then require that
$a'_{m+j}$ satisfies
 $v\, E_j\, a'_{m+j}$ and $w \not \hskip-.06in E_j\, a'_{m+j}$ in $\bfC'$.
Let $\bfA'=\bfC'\re(\mathrm{A}\cup\{a'_k:k<m+n\})$.

Suppose  that $\bfB\in\mathcal{K}$ contains
 $\bfA'$ as a substructure, and let
$\sigma$ and $\tau$ be  consistent realizable $1$-types over $\bfB$ with the property that   $\sigma\re\bfA'=\type(v/\bfA')$ and $\tau\re\bfA'=\type(w/\bfA')$.
Suppose that $\bfD$  is a one-vertex extension of $\bfB$ by the vertex $v'$  satisfying $\type(v'/\bfB)=\sigma$.
Now
let $\bfE$ be an extension of $\bfD$ by one vertex $w'$ satisfying  $\type(w'/\bfB)=\tau$.
The same argument as in the proof of Proposition \ref{prop.LO_n}
ensures that  for each $i<m$, $v'<_i w'$ in $\bfE$ if and only if $v<_i w$ in $\bfC$.

Fix $j<n$.
 If
$v\, E_j\, w$  in $\bfC$,  then  as $v\, E_j\, a'_{m+j}$ and $w\, E_j\, a'_{m+j}$  hold in $\bfC'$,  the formula
 $x\, E_j\, a'_{m+j}$  is in both $\sigma$ and $\tau$.
Since $v'$ satisfies $\sigma$ and $w'$ satisfies $\tau$,
it follows that $v\, E_j\, w$  in $\bfE$.
On the other hand, if
 $v \not \hskip-.06in E_j\, w$ holds  in $\bfC$,
 then  the formula
  $x\, E_j\, a'_{m+j}$ is in $\sigma$  and $x \not \hskip-.06in E_j\, a'_{m+j}$ is in $\tau$.
  Again, since
 $v'$ satisfies $\sigma$ and $w'$ satisfies $\tau$,
it follows that $v  \not \hskip-.06in E_j\, w$  in $\bfE$.
Thus,
$\bfE\re(\mathrm{A}\cup\{v',w'\})\cong \bfC$.
Hence \EEAP\ holds.

Given any $n,p$,
we will show
that the \Fraisse\ limit of $\mathcal{COE}_{n,p}$ satisfies
 the Diagonal Coding Tree Property
 in
Lemma \ref{lem.Q_Qbiskew}
and that
the Extension Property holds in
Lemma \ref{lem.EPCOE},
ensuring  that  \EEAP$^+$ holds.
\end{proof}

This brings us to our second collection of  big Ramsey structures.

\begin{thm}\label{thm.LOEqRels}
The following \Fraisse\ structures
 satisfy \EEAP$^+$.
Hence
they
admit big Ramsey structures.
\begin{enumerate}
\item
The rationals, $\bQ$.
\item
$\bQ_n$, for each $n\ge 1$.
\item
$\bQ_{\bQ}$, and more generally,
$(\bQ_{\bQ})_n$ for each $n\ge 2$.
\item
The \Fraisse\ limit of any \Fraisse\ class
in $\mathcal{COE}_{n,p}$, for any $n,p\ge 1$.
\end{enumerate}
\end{thm}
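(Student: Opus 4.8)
The plan is to obtain the statement as an immediate consequence of the amalgamation results already established in this section together with Theorem \ref{thm.main}; the proof is essentially bookkeeping. First I would match each item with the proposition that verifies \EEAP$^+$ for it: item (1) is the first assertion of Proposition \ref{prop.LO_n}; item (2) is Proposition \ref{bQn}; and item (4) is the ``Moreover'' clause of Proposition \ref{prop.loe}, applied with the given $n,p\ge 1$. For item (3), recall that by definition $(\bQ_{\bQ})_n=$ Flim$(\mathcal{COE}_n)$ and $\bQ_{\bQ}=(\bQ_{\bQ})_1=$ Flim$(\mathcal{COE}_1)$; since $\mathcal{COE}_n$ is the class $\mathcal{COE}_{n,p}$ in the case of no unary predicates, Proposition \ref{prop.loe} gives that each $(\bQ_{\bQ})_n$ satisfies \EEAP$^+$. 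This covers the whole list, so every structure in (1)--(4) has \EEAP$^+$.

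Next I would note that in each case the underlying \Fraisse\ class --- $\mathcal{LO}$, $\mathcal{P}_n$, $\mathcal{COE}_n$, or a class in $\mathcal{LOE}_{1,n,p}$ --- lives in a finite relational language all of whose relation symbols have arity at most two (the order symbols $<,<_i$ and the equivalence symbols $E_j$ are binary; the partition symbols $P_k$ are unary). Hence the hypotheses of Theorem \ref{thm.main} are met in each case, and that theorem yields at once that every one of these \Fraisse\ limits admits a big Ramsey structure --- and, as a further consequence, that its automorphism group has a metrizable universal completion flow, unique up to isomorphism. This completes the deduction.

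The substance of the argument lies entirely in the inputs rather than in this assembly. The genuinely technical step, carried out in the lemmas invoked by Propositions \ref{prop.LO_n}, \ref{bQn}, and \ref{prop.loe} --- in particular Lemmas \ref{lem.Q_Qbiskew}, \ref{lem.DCTLO}, and \ref{lem.EPCOE} --- is the verification of the Diagonal Coding Tree Property and the Extension Property for these $\bQ$-like structures. I expect the main obstacle there, once the coding-tree machinery of Sections \ref{sec.sct} and \ref{sec.FRT} is available, to be showing that the coding tree of quantifier-free $1$-types of each such structure can be thinned to a skew, binary-splitting (diagonal) coding tree that still codes a copy of the entire \Fraisse\ limit. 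This is where the rigidity coming from the linear order (and the convexity of the equivalence relations) is essential: it forces a splitting node to genuinely determine the relevant binary relations rather than merely leaving them free, so that a single branching at each level suffices. Granting those lemmas, the present theorem is exactly the short argument above.
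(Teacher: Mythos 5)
Your proposal is correct and matches the paper's proof, which likewise deduces the theorem directly from Propositions \ref{prop.LO_n}, \ref{bQn}, and \ref{prop.loe} (the latter covering both $(\bQ_{\bQ})_n$ and the $\mathcal{COE}_{n,p}$ limits) together with Theorem \ref{thm.main}. Your added remarks correctly locate the real technical content in the cited lemmas verifying the Diagonal Coding Tree Property and the Extension Property.
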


\begin{proof}
This follows from Propositions \ref{prop.LO_n},
\ref{bQn} and
\ref{prop.loe},
and Theorem \ref{thm.main}.
\end{proof}

We now discuss previous results which are recovered in Theorem \ref{thm.LOEqRels}, and results which  are new.

Part (1) of Theorem \ref{thm.LOEqRels} recovers
the following previously known results:
Upper bounds for  finite big Ramsey degrees of the rationals  were  found  by Laver \cite{LavUnp}
using Milliken's theorem.
The big Ramsey degrees were characterized and computed by Devlin in \cite{DevlinThesis}.
Zucker  interpreted
Devlin's characterization into a big Ramsey structure, from which he then constructed the universal completion flow  of the rationals
 in \cite{Zucker19}.

Exact big Ramsey degrees of the structures $\bQ_n$ were characterized  and calculated by Laflamme, Nguyen Van Th\'{e}, and Sauer in \cite{Laflamme/NVT/Sauer10}, using a colored level set version Milliken Theorem which they proved specifically for their application.
The work in this paper using coding trees of $1$-types   provides a new way to view and recover  their characterization of the big Ramsey degrees.
From their work on $\bQ_2$,
Laflamme, Nguyen Van Th\'{e}, and Sauer further
calculated the big Ramsey degrees of the circular
directed graph $\mathbf{S}(2)$
in \cite{Laflamme/NVT/Sauer10}.
Exact Ramsey degrees of $\mathbf{S}(n)$
for all $n\ge 3$  were recently calculated by  Barbosa in \cite{Barbosa20}
using category theory methods.
These structures $\mathbf{S}(n)$ have ages which do not satisfy \EEAP.

Part (3) of Theorem \ref{thm.LOEqRels} answers a question
 posed
 by Zucker during the open problem session
at the 2018 BIRS Workshop on {\em Unifying Themes in Ramsey Theory}:
He asked whether $\bQ_{\bQ}$ has finite big Ramsey degrees and whether it  admits a big Ramsey structure.
At that meeting,
proofs that $\bQ_{\bQ}$
 has finite big Ramsey degrees were found
  by  \Hubicka\ using unary functions and strong trees,
by Zucker using similar methods,
 and by Dobrinen
 using an approach that involved
developing a topological Ramsey space  with strong trees as bases, where each node in the given base is replaced with a strong tree.
None  of these proofs have been   published, nor were those upper bounds shown to be exact.
Independently, Howe also proved upper bounds for big Ramsey degrees in $\bQ_{\bQ}$ \cite{HoweThesis}.
The result  in this paper via \EEAP$^+$ and coding trees of $1$-types
characterizes exact big Ramsey degrees and
proves  that $\bQ_{\bQ}$ admits a big Ramsey structure, and
moreover, shows how it
 fits into a  broader scheme of structures which have easily described big Ramsey degrees.

Part (4) of Theorem \ref{thm.LOEqRels}  in its full generality is new.

We now turn to \Fraisse\ classes that do not fall within the purview of this paper.
The sorts of   \Fraisse\ classes which we know do not satisfy  \EEAP\ are those
with some forbidden irreducible  substructure
which is not $3$-irreducible.
 For instance, the  ages of the $k$-clique-free Henson graphs, most metric spaces, and the generic partial order  do not satisfy \EEAP.
We present two concrete examples of \Fraisse\ classes failing \EEAP\ to give an idea of how failure arises from interplays of two vertices.

\begin{example}[\SFAP\ fails for triangle-free graphs]\label{ex.trianglefree}
  Let $\mathcal{G}_3$ denote
  the \Fraisse\ class of finite triangle-free graphs.
  Let $\bfA$ be the graph with  two  vertices $\{a_0,a_1\}$ forming a  non-edge,
   and let
  $\bfC$ be the graph with   vertices $\{a_0,a_1,v,w\}$ with   exactly  one  edge,
   $v\, E\, w$.
 Suppose
  $\bfB$ has vertices  $\{a_0,a_1,b\}$, where $b\not\in\{v,w\}$.
  Let $\sigma=\{\neg E(x,a_0)\wedge \neg E(x,a_1) \wedge E(x,b)\}$
  and
  $\tau=\{\neg E(x,a)\wedge E(x,a_1) \wedge E(x,b)\}$.
  Then $\sigma\re\bfA=\type(v/\bfA)$,
 $\tau\re\bfA=\type(w/\bfA)$, and $\sigma\ne\tau$.

Suppose $\bfE\in\mathcal{G}_3$ is a graph
satisfying
the conclusion of  Definition \ref{defn.SFAP}.
To simplify notation, suppose that $\bfE$ has
universe
$\mathrm{E}=\{a_0,a_1,b,v,w\}$, with the obvious inclusion maps being the amalgamation maps.
Then
$\type(v/\bfB)=\sigma$,  $\type(w/\bfB)=\tau$,
and $\bfE\re\{a_0,a_1,v,w\}\cong \bfC$,
 so  each pair in
$\{b,v,w\}$ has an edge in $\bfE$.
But this implies that  $\bfE$ has a triangle, contradicting
$\bfE\in\mathcal{G}_3$ .
Therefore, \SFAP\ fails for $\mathcal{G}_3$.
\end{example}

The failure of  \EEAP\ for partial orders can be proved similarly, by taking $\bfC$ to have two vertices not in $\bfA$ which are unrelated to each other, and constructing $\bfB$, $\sigma$, $\tau$ so that any extension $\bfE$ satisfying $\sigma$ and $\tau$ induces a relation between any $v'$, $w'$ satisfying $\sigma$, $\tau$ respectively,
in such a way
that transitivity forces there to be a relation between $v'$ and $w'$.

  We now give an example where \SFAP\ fails in a structure
  with  a relation of arity higher than two.

\begin{example}[\SFAP\ fails for
$3$-hypergraphs forbidding
the irreducible 3-hypergraph
on four vertices with three hyper-edges]\label{ex.hyperI}
Suppose our language has one ternary relation symbol $R$.
Let
$\mathbf{I}$  denote a ``pyramid'', the structure on four vertices with exactly three hyper-edges;
that is, say $\mathrm{I}=\{i,j,k,\ell\}$ and
$\mathbf{I}$ consists of the relation
$
\{R^\mathbf{I}(i,j,k), R^\mathbf{I}(i,j,\ell),R^\mathbf{I}(i,k,\ell)\}.
$
Then every two vertices in $\mathrm{I}$ are in some relation in $\mathbf{I}$, so $\mathbf{I}$ is irreducible.
However, the triple $\{j,k,\ell\}$ is not contained in any relation in $\mathbf{I}$.

The free amalgamation class Forb$(\{\mathbf{I}\})$ does not satisfy
\SFAP:
Let $\bfA$ be the singleton $\{a\}$,
with $R^\bfA = \emptyset$,
and let $\bfC$ have universe $\{a,c_0,c_1\}$ with
$R^\bfC = \{(a,c_0,c_1)\}$.
Let $\bfB$ have universe $\{a,b\}$, and let $\sigma$ and $\tau$ both be the $1$-types $\{R(x,a,b)\}$ over $\bfB$.
Suppose that $\bfE\in$ Forb$(\{\mathbf{I}\})$
satisfies the conclusion of
Definition \ref{defn.EEAP_new}.
Then $\bfE$ has universe $\{a,b,c_0,c_1\}$ and
 $R^\bfE = \{(a,c_0,c_1), (c_0,a,b), (c_1,a,b)\}$.
 Hence $\bfE$ contains a copy of $\mathbf{I}$, contradicting that $\bfE\in$ Forb$(\{\mathbf{I}\})$.
\end{example}

\begin{rem}
The same argument shows  that
\SFAP\ fails for
any  free amalgamation class Forb$(\mathcal{F})$
 where some $\mathbf{F}\in\mathcal{F}$
 is not 3-irreducible.
\end{rem}

We now present a catalogue of many (though not all) of the known results regarding indivisibility,
 finite  big Ramsey degrees  (upper bounds), and characterizations  of exact big Ramsey degrees
(canonical partitions).
A blank box means the property  has not yet been proved or disproved.
All previously known results for  \Fraisse\ classes
in languages with relations of arity at most two
with \Fraisse\ limits satisfying \EEAP$^+$ are recovered by our Theorem \ref{thm.bounds}.
New results in this paper are indicated by the number of the theorem from which they follow.

In all cases where exact big Ramsey degrees have been characterized, this has been achieved via finding canonical partitions.
Moreover,
for structures in languages with relations of arity at most two,
these canonical partitions have been found in terms of similarity types of antichains
in
trees of $1$-types, either explicitly or implicitly.
Once one has such canonical partitions, the existence of a big Ramsey structure follows from
Theorem \ref{thm.apply} in conjunction with Zucker's Theorem 7.1 in \cite{Zucker19}.
Thus, we do not include a column for existence of big Ramsey structures.
\vfill\eject

\textbf{Key}
\vspace{2mm}

    \begin{tabular}{|l|}
    \hline
     $\bullet$
     DA: Disjoint Amalgamation\\
     $\bullet$ FA: Free Amalgamation\\
     $\bullet$ SDAP:
     strongest of \SFAP, \EEAP$^+$, or \EEAP\ known to hold\\
$\bullet$ IND: Indivisibility\\
     $\bullet$ FBRD: Finite big Ramsey degrees\\
     $\bullet$ CP: Exact big Ramsey degrees characterized via Canonical Partitions\\ \\
   \cmark \ Yes \qquad \xmark \ No \quad $\bigstar$ \ In some cases, not in all \\
    \hline
    \end{tabular}

%\ \ \vskip.1in

%\eject

\begin{enumerate}[leftmargin=-.01cm]
    \item $\mathbb{Q}$-like structures
    \begin{center}
        \begin{tabular}{|p{3.45cm}|c|c|c|c|c|c|c|}
        \hline
        {\bf \Fraisse\ limit} & {\bf DA} & {\bf FA}  & {\bf SDAP}&
\bf{IND} &\bf{FBRD} & \bf{CP}\\
        \hline
        $\mathbb{Q}$ with no relations & \cmark & \cmark  &
         SDAP$$
& Pigeonhole
& \cite{Ramsey30} & \cite{Ramsey30}\\
        \hline
        $(\mathbb{Q},<)$ & \cmark & \xmark  & SDAP$^+$
& Folklore
& \cite{LavUnp} & \cite{DevlinThesis}\\
        \hline
        $\mathbb{Q}_n$ & \cmark & \xmark & SDAP$^+$
&  Folklore
& \cite{Laflamme/NVT/Sauer10} & \cite{Laflamme/NVT/Sauer10}\\
        \hline
        $\mathbf{S}(2)$& \cmark & \xmark & \xmark
&  \xmark
& \cite{Laflamme/NVT/Sauer10}& \cite{Laflamme/NVT/Sauer10}\\
        \hline
        $\mathbf{S}(3)$,$\mathbf{S}(4),\cdots$& \cmark & \xmark & \xmark
& \xmark
& \cite{Barbosa20} & \cite{Barbosa20}\\
 % \hline
% $\mathbb{Q}_{\mathbb{Q}}$  \NC{?? FBRD is hard to attribute - maybe just delete}
% & \cmark & \xmark  & SDAP$^+$
%& Folklore
%& \cite{HoweThesis} & [Thm \ref{thm.bounds}]\\
        \hline
        $\mathbb{Q}_{\mathbb{Q}}$, $\mathbb{Q}_{\mathbb{Q}_{\mathbb{Q}}}, \cdots$ & \cmark & \xmark  & SDAP$^+$
& [Thm \ref{thm.indivisibility}]
& [Thm \ref{thm.onecolorpertype}] & [Thm \ref{thm.bounds}]\\

        \hline
 \Fraisse\ limit of $\mathcal{COE}_{n,p}$
 & \cmark & \xmark  & SDAP$^+$
&  [Thm \ref{thm.indivisibility}]
& [Thm \ref{thm.onecolorpertype}] & [Thm \ref{thm.bounds}]\\
        \hline

       Main reducts of $(\bQ,<)$ & \cmark & \xmark &  SDAP$^+$
& Folklore
& \cite{Masulovic18} &  \\
      \hline
        Generic structures with two or more independent  linear relations & \cmark & \xmark & \EEAP\
& Folklore
&\cite{Hubicka_CS20} & \\
        \hline
        \end{tabular}
    \end{center}
   \vspace{4mm}

    \item Unconstrained  relational structures and their ordered expansions
        \begin{center}
        \begin{tabular}{|p{3.6cm}|c|c|c|c|c|c|c|}
                \hline
        {\bf \Fraisse\ limit} & {\bf DA} & {\bf FA} & {\bf SDAP}
&{\bf IND} & {\bf FBRD} & {\bf CP} \\
        \hline
            Rado graph & \cmark & \cmark  & SFAP
& Folklore
& \cite{Sauer06} & \cite{Sauer06}\\

            \hline
       Generic directed graph & \cmark & \cmark & SFAP
&   \cite{El-Zahar/Sauer93}
 & \cite{Laflamme/Sauer/Vuksanovic06} & \cite{Laflamme/Sauer/Vuksanovic06}\\
             \hline
             Generic tournament & \cmark & \xmark  & SDAP$^+$
&   \cite{El-Zahar/Sauer93}
& \cite{Laflamme/Sauer/Vuksanovic06} & \cite{Laflamme/Sauer/Vuksanovic06}\\
             \hline
             Generic unrestricted structures in a finite binary relational language
             %Finitely many binary relations with a finite universal constraint set
             & \cmark & %\xmark 
	$\bigstar$                        
             & SDAP$^+$
&  \cite{Laflamme/Sauer/Vuksanovic06}
 & \cite{Laflamme/Sauer/Vuksanovic06} & \cite{Laflamme/Sauer/Vuksanovic06} \\
             \hline
    %         Finite or countably infinite unions of $K_{\omega}$
             %, and their complements
       %      & \cmark & \xmark  & \cmark &  & \\
          %  \hline
Ordered expansions of any of the above structures
             & \cmark & \xmark  & SDAP$^+$
& [Thm \ref{thm.indivisibility}]
& [Thm \ref{thm.onecolorpertype}] & [Thm \ref{thm.bounds}]  \\
           %  \hline

         %   Generic two-graph & \cmark & %\cmark  & \xmark
%& \NC{Did M.
% prove FBRD?}
%& \cite{Masulovic18} & \\
 \hline

            Generic $3$-uniform hypergraph & \cmark & \cmark & SFAP
&  \cite{El-Zahar/Sauer94}
& \cite{Hubicka_et4_19withproofs} &  \\
            \hline
            Generic $k$-uniform hypergraph for $k > 3$ & \cmark & \cmark  & SFAP
&  \cite{El-Zahar/Sauer94}
&  \cite{Hubicka_et4_20} &
           \\
             \hline

Generic unrestricted structures  with relations in any arity, and their ordered expansions
             & \cmark & %\xmark  
$\bigstar$                 
             & SDAP$^+$
& [Thm \ref{thm.indivisibility}]
& &  \\
             \hline
            %  Binary relational structures considered in [Zucker] & \cmark & \cmark & \cmark & \cmark \\
             %\hline
        \end{tabular}
    \end{center}
    \vspace{4mm}

\eject

    \item Constrained  structures with relations of arity at most two
        \begin{center}
        \begin{tabular}{|p{4.2cm}|c|c|c|c|c|c|c|}
                \hline
        {\bf \Fraisse\ limit} & {\bf DA} & {\bf FA} & {\bf SDAP} &  {\bf IND} &{\bf FBRD} & \bf{ CP} \\
        \hline
      Generic bipartite & \cmark & \cmark  & SFAP
&  Folklore
& \cite{HoweThesis} & [Thm \ref{thm.bounds}] \\
            \hline
             Generic $n$-partite for $n\geq 3$ & \cmark & \cmark & SFAP
&  Folklore
& \cite{Zucker20} & [Thm \ref{thm.bounds}]\\
            \hline
        Generic $K_3$-free graphs & \cmark & \cmark & \xmark
& \cite{Komjath/Rodl86}
& \cite{DobrinenJML20} & \cite{Balko7}\\
        \hline
        Generic $K_n$-free graphs for finite $n>3$ & \cmark & \cmark & \xmark
& \cite{El-Zahar/Sauer89}
& \cite{DobrinenH_k19} & \cite{Balko7}\\
        \hline
 \Fraisse\ limits with free amalgamation that are ``rank linear''
  & \cmark & \cmark & %\xmark
  $\bigstar$
& \cite{Sauer03}
 & \cite{Zucker20} & \cite{Balko7}\\
        \hline
        
\Fraisse\ limit of Forb$(\mathcal{F})$,
where
$\mathcal{F}$ is a finite  set of finite irreducible
structures

& \cmark & \cmark & %\xmark
$\bigstar$
& %\xmark
$\bigstar$
 & \cite{Zucker20} & \cite{Balko7}\\
        \hline
         Generic poset & \cmark & \xmark & \xmark
&  Folklore
& \cite{Hubicka_CS20} &  \\
        \hline
        \end{tabular}
    \end{center}
    \vspace{4mm}

   \item Constrained %higher 
arbitrary  
   arity relational structures
   \vspace{4mm}
      \begin{center}
        \begin{tabular}{|p{4.3cm}|c|c|c|c|c|c|c|}
                \hline
        {\bf \Fraisse\ limit} & {\bf DA} & {\bf FA} & {\bf SDAP} & {\bf IND} & \bf{FBRD} & \bf{CP} \\
       \hline
        Generic $k$-hypergraph omitting a finite set of  finite $3$-irreducible  $k$-hypergraphs
for $k \ge 3$
         & \cmark & \cmark & %\xmark
SFAP                 
& \cite{El-Zahar/Sauer94}  & &  \\
        \hline
       \Fraisse\ limit of        $\Forb(\mathcal{F})$    where   all $F \in \mathcal{F}$ are irreducible and $3$-irreducible
      & \cmark & \cmark & SFAP
& %\cite{Sauer20}
\cite{El-Zahar/Sauer94}, [Thm \ref{thm.indivisibility}]
& & \\
             \hline
            \Fraisse\ limit of
            $\Forb(\mathcal{F})^<$,
   where all $F \in \mathcal{F}$ are
   irreducible and
   $3$-irreducible
   & \cmark & \xmark & SDAP$^+$
& [Thm \ref{thm.indivisibility}]
& &\\
             \hline

\Fraisse\ limits with free amalgamation that are ``rank linear''
   & \cmark & \cmark  & %\xmark
 $\bigstar$  
& \cite{Sauer20}
& &  \\
             \hline

        \end{tabular}
    \end{center}
\end{enumerate}
\ \vskip.1in

\begin{rem}
Sauer's result in \cite{Sauer20} is the state of the art for  indivisibility  of
 free amalgamation classes:  He proved that a
\Fraisse\ structure with free amalgamation in a finite relational language
is indivisible if and only if it is
what he calls ``rank linear''.
For the definition of rank linear, see Definition 1.3 in \cite{Sauer20}.
\end{rem}

\begin{rem}
Results on indivisibility and big Ramsey degrees of metric spaces appear in
\cite{DLPS07},
\cite{DLPS08},
 \cite{Hubicka_CS20},
\cite{Masulovic18}, \cite{Masulovic_RBS20},
\cite{NVTThesis},
\cite{NVT08}, \cite{NVTMem10}, and \cite{NVTSauer09}.
In his PhD thesis \cite{NVTThesis},
Nguyen Van Th\'{e} proved
results on indivisibility of Urysohn spaces
which were later published in \cite{NVTMem10}, including
that all Urysohn spaces with distance set $S$ of size four are indivisible (except for $S=\{1,2,3,4\}$).
A characterization of those
countable ultrametric spaces which are homogeneous and indivisible
was proved by
Delhomm\'{e}, Laflamme, Pouzet, and Sauer in
 \cite{DLPS08}.
Nguyen Van Th\'{e} showed finite big Ramsey degrees for finite $S$-submetric spaces of ultrametric $S$-spaces
in
\cite{NVT08}, with $S$ finite and nonnegative.
In
\cite{NVTSauer09},
Nguyen Van Th\'{e} and Sauer   proved that for each integer $m\ge 1$, the countable homogenous metric space with distances in $\{1,\dots, m\}$ is indivisible.
Sauer established indivisibility
 of Urysohn $S$-metric spaces with S finite  in \cite{Sauer12}.
Ma\v{s}ulovi\'{c}
proved finite big Ramsey degrees for
 Urysohn $S$-metric spaces,
 where $S$ is a finite distance set with no internal jumps and a property called ``compactness'' in that paper, meaning that the distances are not too far apart.
Recently, Hubi\v{c}ka  extended this to all  Urysohn $S$-metric spaces where
$S$ is tight in addition to finite and nonnegative \cite{Hubicka_CS20}.
As \EEAP\ fails for non-trivial metric spaces (for the same reason it fails for the triangle-free graphs and partial orders),  we mention no details here.
\end{rem}

%%%%%%%%%%%%%%%%%%%%%%
%%%%%%%%%%%%%%%%%%%%%%
%%%%%%%%%%%%%%%%%%%%%%
%%%%%%%%%%%%%%%%%%%%%%
%%%%%%%%%%%%%%%%%%%%%%
%%%%%%%%%%%%%%%%%%%%%%

\section{Coding trees of $1$-types for \Fraisse\ structures}\label{sec.sct}

Fix throughout a \Fraisse\ class $\mathcal{K}$ in a finite  relational language $\mathcal{L}$.
Recall that  $\bK$  denotes
an {\em enumerated \Fraisse\ limit}
for $\mathcal{K}$,  meaning that
 $\bK$ has universe $\om$.
In order to  avoid confusion, we shall usually use $v_n$ instead of just $n$
to denote the $n$-th member of the universe  of $\bK$, and we shall call this  the {\em $n$-th  vertex} of $\bK$.
For $n < \om$,  we write $\bK_n$, and sometimes $\bK\re n$, to denote the
substructure of $\bK$ on the set of vertices $\{v_i:i<n\}$.
We call $\bK_n$ an {\em initial segment} of $\bK$.
Note that $\bK_0$ is the empty structure.

In Subsection \ref{subsec.3.1},
we present
a general  construction of
 trees of complete $1$-types over initial segments of $\bK$,
which we call {\em coding trees}.
Graphics of coding trees
are then presented for various  prototypical \Fraisse\ classes satisfying \EEAP$^+$.
In Subsection \ref{subsec.3.2},
we define
{\em passing types}, extending  the notion
of  {\em passing number}
due to Laflamme, Sauer, and Vuksanovic in \cite{Laflamme/Sauer/Vuksanovic06},  which has been central to all prior results on big Ramsey degrees   for binary relational structures.
Then we
extend the notion
from \cite{Laflamme/Sauer/Vuksanovic06}
of {\em similarity type} for binary relational structures
to
structures with relations of any arity.
In Subsection \ref{subsec.3.3}, we introduce
 {\em diagonal}  coding trees.
These  will be
 key
to obtaining
 precise big Ramsey degree results without appeal to
 the method of
 envelopes.
We
define the \emph{Diagonal Coding Tree Property}, one of the conditions for \EEAP$^+$ to hold, and
then
present the coding tree version of \EEAP$^+$ (Definition \ref{def.EEAPCodingTree}),
 from which the formulation of \EEAP\ in Definition \ref{defn.EEAP_new} was extracted.
 We show how to construct diagonal  subtrees    representing copies of various \Fraisse\ structures,
 completing  the proofs  of
 the Diagonal Coding Tree Property
  in Theorem \ref{thm.supercool} and Propositions
 \ref{prop.LO_n},
\ref{bQn}, and
\ref{prop.loe}.
%, and \ref{prop.5Reducts}.

%%%%%%%%%%%%%%%%
%%%%%%%%%%%%%%%%

\subsection{Coding trees of $1$-types}\label{subsec.3.1}

All types will be quantifier-free 1-types, with variable $x$, over some finite initial segment of $\bK$.
For $n \ge 1$, a type over $\bK_n$ must contain the formula
$\neg (x = v_i ) $ for each $i < n$.
Given a type $s$ over $\bK_n$, for any $i < n$, $s \re \bK_i$ denotes the restriction of $s$ to parameters from $\bK_i$.
Recall that the notation ``tp'' denotes a complete quantifer-free 1-type.

\begin{defn}[The  Coding  Tree of $1$-Types, $\bS(\bK)$]\label{defn.treecodeK}
The {\em coding  tree of $1$-types}
$\bS(\bK)$
for an enumerated \Fraisse\ structure $\bK$
 is the set of all complete
  $1$-types over initial segments of $\bK$
along with a function $c:\om\ra \bS(\bK)$ such that
$c(n)$ is the
$1$-type
of $v_n$
over $\bK_n$.
The tree-ordering is simply inclusion.
\end{defn}

We shall usually simply write $\bS$, rather than $\bS(\bK)$.
Note that we make no requirement at this point on $\bK$;
an enumerated \Fraisse\ limit of  any \Fraisse\ class (with no reference to its amalgamation or Ramsey properties)
naturally induces a  coding tree of $1$-types as above.
We say that $c(n)$ {\em represents} or {\em codes}  the vertex  $v_n$.
Instead of writing $c(n)$, we shall usually write $c_n$ for the $n$-th coding node  in $\bS$.

We let  $\bS(n)$
denote
 the collection
 of all   $1$-types  $\type(v_i/\bK_n)$, where
 $i\ge n$.
 Note that each $c(n)$ is a node in $\bS(n)$.
The set  $\bS(0)$ consists of
 the
 $1$-types over the empty structure $\bK_0$.
For $s\in \bS(n)$, the immediate successors of $s$   are  exactly those $t\in \bS(n+1)$ such that $s\sse t$.
For each $n<\om$, the set  $\bS(n)$ is finite, since the language
$\mathcal{L}$  consists of finitely many finitary relation symbols.

We say that each  node $s\in\bS(n)$ has {\em length}  $n+1$, and denote the length of $s$ by $|s|$.
Thus, all nodes in $\bS$ have length at least one.
While it is slightly unconventional to consider the roots
of $\bS$ as having length one,  this approach lines up with the  natural correspondence between  nodes in $\bS$ and certain sequences of partial $1$-types
that  we define
in the next paragraph.
The reader wishing for a  tree starting with a node of length zero  may consider adding the
empty set  to $\bS$, as this will have no
effect
on the results in this paper.
A {\em level set} is a subset $X\sse\bS$ such that all nodes in $X$ have the same length.

Let  $n<\om$ and   $s\in\bS(n)$ be given.
We let  $s(0)$  denote the set of formulas in $s$ involving
no parameters;
$s(0)$ is the unique member of $\bS(0)$ such that $s(0)\sse s$.
For  $1\le i\le n$,
we let
 $s(i)$ denote  the set of those formulas in $s\re \bK_i$
in which $v_{i-1}$ appears; in other words, the formulas in $s \re \bK_i$ that are not in  $s \re \bK_{i-1}$.
In this manner,  each $s\in \bS$ determines a unique sequence
$\lgl s(i):i< |s|\rgl$, where $\{s(i):i< |s|\}$ forms a partition of $s$.
For $j< |s|$, $\bigcup_{i\le j}s(i)$  is the  node in $\bS(j)$
such that $\bigcup_{i\le  j}s(i)\sse s$.
For $\ell\le |s|$,
we shall usually write $s \re \ell$ to denote $\bigcup_{i<\ell}s(i)$.

Given $s,t\in \bS$,
we define the {\em meet} of $s$ and $t$, denoted $s\wedge t$, to be
$s \re \bK_m$ for the maximum  $m\le \min(|s|,|t|)$ such that $s \re \bK_m=t \re \bK_m$.
It can be useful to think of  $s\in \bS$ as the sequence $\lgl s(0),\dots, s(|s|-1)\rgl$;
then $s\wedge t$ can be interpreted in the  usual way for trees of sequences.

It will be useful later  to have  specific notation for unary relations.
We will let $\Gamma$
denote $\bS(0)$,  the set of  complete
$1$-types
over the empty set
that are realized in $\bK$.
If $\mathcal{L}$ has no unary relation symbols, then $\Gamma$ will consist exactly of the ``trivial'' 1-type
which is satisfied by every element of $\bK$.
For $\gamma\in\Gamma$, we write
``$\gamma(v_n)$ holds in $\bK$'' when $\gamma$ is the 1-type of $v_n$ over the empty set; in practice,
it will be the unary relation symbols in $\gamma$ (if there are any) that will be of interest to us.

 \begin{rem}
 Our definition of $s(i)$
sets up   for the   definition of passing type
in Subsection \ref{subsec.3.2},
which  directly abstracts the notion of passing number used in  \cite{Sauer06} and
  \cite{Laflamme/Sauer/Vuksanovic06}, and in subsequent papers building on their ideas.
\end{rem}

\begin{rem}
In the case where all relation symbols in the language $\mathcal{L}$ have arity at most two,
  the  coding  tree  of $1$-types $\bS$  has bounded branching.
  If $\mathcal{L}$ has any relation symbol of arity three or greater, then  $\bS$ may
  have branching which increases as the levels increase.
If such a \Fraisse\ class satisfies \EEAP,  sometimes more work  still  must be  done  in order to guarantee
that its \Fraisse\ limit has
\EEAP$^+$.
\end{rem}

%%%%%%%%%%%%%%%%%%%
%%%%%%%%%%%%%%%%%%%

We now provide graphics for   coding trees of $1$-types which are prototypical for the \Fraisse\ classes which we proved in Section \ref{sec.EEAPClasses} to  have \EEAP.
We start with the
rational linear order,
since its coding tree of $1$-types is the simplest, and  also because  the rationals were the first \Fraisse\ structure for which big Ramsey degrees were characterized (Devlin, \cite{DevlinThesis}).

\begin{example}[The coding tree of $1$-types  $\bS(\bQ)$]\label{ex.Q}
Figure \ref{fig.Qtree}
shows the coding tree of $1$-types for  $(\bQ,<)$, the rationals as a linear order.
This is the \Fraisse\ limit of $\mathcal{LO}$, the class of finite linear orders.
We assume that the universe  of $\bQ$  is linearly ordered in order-type $\om$ as $\lgl v_n:n<\om\rgl$.
For each $n$, the coding node $c_n$
is the $1$-type of
vertex $v_n$  over the initial segment $\{v_i:i<n\}$ of $\bQ$.
(Recall that $x$ is the variable in all  of our $1$-types.)
Thus, the coding node $c_0$ is the empty $1$-type, and
 $c_1$   is the  $1$-type
$\{v_0<x\}$.
Thus,  the coding nodes $\{c_0,c_1\}$ represent the linear order $v_0<v_1$.
Likewise, the coding node $c_2$ is  the $1$-type
$\{x<v_0,x<v_1\}$
 over the linear order $v_0<v_1$.
Hence, $c_2$ represents the vertex $v_2$ satisfying
$v_2<v_0<v_1$.
The coding node $c_3$ is the $1$-type $\{v_0<x,x<v_1,v_2<x\}$, so  $c_3$ represents the vertex $v_3$ satisfying
$v_2<v_0<v_3<v_1$.
Below the  tree, we  picture the linear order on the vertices $v_0,\dots,v_5$ induced by the coding nodes.
As the tree grows in height, the linear order represented by the coding nodes  grows into  the countable dense linear order with no endpoints.

Notice that only the coding nodes branch.
This is because
of the rigidity of the rationals:
Given  a non-coding node $s$ on the same level as a coding node $c_{n}$ (say $n\ge 1$),
$s$ is a  $1$-type which is satisfied by any vertex
which lies in some interval determined by the vertices $\{v_i:i< n\}$, and  $v_n$ is not in that interval.
Thus, the order between $v_n$ and any vertex satisfying $s$ is
predetermined,
so $s$ does not split.
Said another way,
letting $m$ denote the length of the meet of  $c_n$ and $s$,
$c_n$  and $s$ must disagree on the formula $x<v_{m}$;
hence,  $x<v_m$ is in $c_n$   if and only if  $v_m<x$ is in
  $s$.
In the case that the formula $x<v_m$ is in $c_n$,
then
it follows that  $v_n<v_m$.
On the other hand, any realization $v_i$ of the $1$-type $s$ must satisfy $v_m<v_i$.
Hence  every realization of $s$ by some vertex  $v_i$ must satisfy $v_n<v_i$.
Thus, there is only one immediate successor of $s$ in the tree of $1$-types.
The tree of $1$-types for $\bQ$
 eradicates the extraneous structure
which appears in the more traditional approach of using the full binary branching tree and Milliken's Theorem  to approach big Ramsey degrees of the rationals.
\end{example}

%%%%%%%%%%%%%%%
%%%%%%%%%%%%%%%

\begin{figure}
\begin{tikzpicture}[grow'=up,scale=.6]
\tikzstyle{level 1}=[sibling distance=4in]
\tikzstyle{level 2}=[sibling distance=2in]
\tikzstyle{level 3}=[sibling distance=1in]
\tikzstyle{level 4}=[sibling distance=0.5in]
\tikzstyle{level 5}=[sibling distance=0.2in]
\tikzstyle{level 6}=[sibling distance=0.1in]
\tikzstyle{level 7}=[sibling distance=0.07in]
\node [label=$c_0$] {} coordinate (t9)
child{ coordinate (t0) edge from parent[color=black,thick]
child{ coordinate (t00) edge from parent[color=black,thick]
%edge from parent node[right] {help}
child{ coordinate (t000) edge from parent[color=black,thick]
child{coordinate (t0000) edge from parent[color=black,thick]
child{coordinate (t00000) edge from parent[color=black,thick]
child{coordinate (t000000) edge from parent[color=black,thick]}
}%endparen for t00000
}%endparen for t0000
}%endparen for t000
child{ coordinate (t001) edge from parent[color=black,thick]
child{ coordinate (t0010) edge from parent[color=black,thick]
child{coordinate (t00100) edge from parent[color=black,thick]
child{coordinate (t001000) edge from parent[color=black,thick]}
child{coordinate (t001001) edge from parent[color=black,thick]}
}%endparen for t00100
}%endparen for t0010
}%closeparen for t001
}%endparen for t00
%child{coordinate (t01) edge from parent[color=white]}
}%endparen for t0
child{ coordinate (t1) edge from parent[color=black,thick]
child{ coordinate (t10) edge from parent[color=black,thick]
child{ coordinate (t100)
child{coordinate (t1000)
child{coordinate (t10000)
child{coordinate (t100000)}
}%endparen for t10000
}%endparen for t1000
child{coordinate (t1001)
child{coordinate (t10010)
child{coordinate (t100100)}
}%endparen for t10010
}%endparen for t1001
}%endparen for t100
}%endparen for t10
child{ coordinate (t11) [label={\small 1/16}]
child{coordinate (t110)
child{coordinate (t1100)
child{coordinate (t11000)
child{coordinate (t110000)}
}%endparen t11000
child{coordinate (t11001)
child{coordinate (t110010)}
}%endparen for t11001
}%endparen for t1100
}%endparen for t110
}%endparen for t11
}%endparen for t1
;
%line spaces make a difference!!!

\node[circle, fill=blue,inner sep=0pt, minimum size=5pt] at (t9) {};
\node[circle, fill=blue,inner sep=0pt, minimum size=5pt,label=0:$c_2$,label=250:$\scriptstyle{x<v_1}$] at (t00) {};
\node[circle, fill=blue,inner sep=0pt, minimum size=5pt,label=0:$c_5$,label=280:${\scriptstyle x<v_4}$] at (t00100) {};
\node[circle, fill=blue,inner sep=0pt, minimum size=5pt, label=$c_1$,label=250:$\scriptstyle{v_0<x}$] at (t1) {};
\node[circle, fill=blue,inner sep=0pt, minimum size=5pt, label = 0:$c_3$,label=240:$\scriptstyle{v_2<x}$] at (t100) {};
\node[circle, fill=blue,inner sep=0pt, minimum size=5pt,label = 0:$c_4$,label=300:$\scriptstyle{v_3<x}$] at (t1100) {};
\node[label=280:$\scriptstyle{x<v_0}$] at (t0) {};
\node[label=270:$\scriptstyle{v_1 < x}$] at (t11) {};
\node[label=270:$\scriptstyle{x<v_1}$] at (t10) {};
\node[label=300:$\scriptstyle{v_2 < x}$] at (t110) {};
\node[label=240:${\scriptstyle x<v_3}$] at (t1000) {};
\node[label=280:${\scriptstyle v_3 < x}$] at (t1001) {};
\node[label=240:${\scriptstyle x<v_4}$] at (t10000) {};
\node[label=300:${\scriptstyle x < v_4}$] at (t10010) {};
\node[label=240:${\scriptstyle v_5<x}$] at (t100000) {};
\node[label=300:${\scriptstyle v_5<x}$] at (t100100) {};
\node[label=260:${\scriptstyle x<v_4}$] at (t11000) {};
\node[label=280:${\scriptstyle v_4<x}$] at (t11001) {};
\node[label=260:${\scriptstyle v_5<x}$] at (t110000) {};
\node[label=280:${\scriptstyle v_5<x}$] at (t110010) {};
\node[label=260:${\scriptstyle x<v_2}$] at (t000) {};
\node[label=280:${\scriptstyle v_2<x}$] at (t001) {};
\node[label=260:${\scriptstyle x<v_3}$] at (t0000) {};
\node[label=280:${\scriptstyle x<v_3}$] at (t0010) {};
\node[label=260:${\scriptstyle x<v_4}$] at (t00000) {};
\node[label=260:${\scriptstyle x<v_5}$] at (t000000) {};
\node[label=260:${\scriptstyle x<v_5}$] at (t001000) {};
\node[label=280:${\scriptstyle v_5<x}$] at (t001001) {};
\node[label=280:${\scriptstyle v_5<x}$] at (t001001) {};

%the dots going up
\node[circle,inner sep=0pt, minimum size=5pt,label=90:$\vdots$] at (t000000) {};
\node[circle,inner sep=0pt, minimum size=5pt,label=90:$\vdots$] at (t001000) {};
\node[circle,inner sep=0pt, minimum size=5pt,label=90:$\vdots$] at (t001001) {};
\node[circle,inner sep=0pt, minimum size=5pt,label=90:$\vdots$] at (t100000) {};
\node[circle,inner sep=0pt, minimum size=5pt,label=90:$\vdots$] at (t100100) {};
\node[circle,inner sep=0pt, minimum size=5pt,label=90:$\vdots$] at (t100100) {};
\node[circle,inner sep=0pt, minimum size=5pt,label=90:$\vdots$] at (t110000) {};
\node[circle,inner sep=0pt, minimum size=5pt,label=90:$\vdots$] at (t110010) {};%#current

%empty circle nodes
\node[circle, fill=white,draw,inner sep=0pt, minimum size=4pt] at (t0) {};
\node[circle, fill=white,draw,inner sep=0pt, minimum size=4pt] at (t000) {};
\node[circle, fill=white,draw,inner sep=0pt, minimum size=4pt] at (t0000) {};
\node[circle, fill=white,draw,inner sep=0pt, minimum size=4pt] at (t00000) {};
\node[circle, fill=white,draw,inner sep=0pt, minimum size=4pt] at (t000000) {};
\node[circle, fill=white,draw,inner sep=0pt, minimum size=4pt] at (t001) {};
\node[circle, fill=white,draw,inner sep=0pt, minimum size=4pt] at (t0010) {};
\node[circle, fill=white,draw,inner sep=0pt, minimum size=4pt] at (t001000) {};
\node[circle, fill=white,draw,inner sep=0pt, minimum size=4pt] at (t001001) {};
\node[circle, fill=white,draw,inner sep=0pt, minimum size=4pt] at (t10) {};
\node[circle, fill=white,draw,inner sep=0pt, minimum size=4pt] at (t1000) {};
\node[circle, fill=white,draw,inner sep=0pt, minimum size=4pt] at (t10000) {};
\node[circle, fill=white,draw,inner sep=0pt, minimum size=4pt] at (t100000) {};
\node[circle, fill=white,draw,inner sep=0pt, minimum size=4pt] at (t1001) {};
\node[circle, fill=white,draw,inner sep=0pt, minimum size=4pt] at (t10010) {};
\node[circle, fill=white,draw,inner sep=0pt, minimum size=4pt] at (t100100) {};
\node[circle, fill=white,draw,inner sep=0pt, minimum size=4pt] at (t11) {};
\node[circle, fill=white,draw,inner sep=0pt, minimum size=4pt] at (t110) {};
\node[circle, fill=white,draw,inner sep=0pt, minimum size=4pt] at (t11000) {};
\node[circle, fill=white,draw,inner sep=0pt, minimum size=4pt] at (t110000) {};
\node[circle, fill=white,draw,inner sep=0pt, minimum size=4pt] at (t11001) {};
\node[circle, fill=white,draw,inner sep=0pt, minimum size=4pt] at (t110010) {};

%the nodes along the bottom
\node[circle, fill=blue,inner sep=0pt, minimum size=5pt, label=$v_2$,below=3cm of t00] (v2) {};
\node[circle, fill=blue,inner sep=0pt, minimum size=5pt,label=$v_5$,below =3.9cm of t001] (v5) {};
\node[circle, fill=blue,inner sep=0pt, minimum size=5pt,label=$v_0$,below =1.25cm of t9] (v0) {};
\node[circle, fill=blue,inner sep=0pt, minimum size=5pt,label=$v_3$,below =3.95cm of t100] (v3) {};
\node[circle, fill=blue,inner sep=0pt, minimum size=5pt,label=$v_1$,below =2.1cm of t1] (v1) {};
\node[circle, fill=blue,inner sep=0pt, minimum size=5pt,label=$v_4$,below =3cm of t11] (v4) {};
\end{tikzpicture}
\caption{Coding tree of $1$-types for $(\bQ,<)$ and the  linear order represented by its coding nodes.}\label{fig.Qtree}
\end{figure}

%%%%%%%%%%%%%%%%%%%
%%%%%%%%%%%%%%%%%%%

\begin{example}[The coding tree of $1$-types $\bS(\bQ_2)$]\label{ex.ctbQ_2}
Next, we consider coding trees of $1$-types for linear orders with equivalence relations with finitely many equivalence classes, each of which is dense in the linear order.
Figure \ref{fig.Q2tree} provides a graphic for   the coding tree of $1$-types for the structure
$\bQ_2$,  the rationals with an equivalence relation with two equivalence classes which are each dense in  the linear order.
This  is the \Fraisse\ limit of $\mathcal{P}_2$
discussed just before  Lemma \ref{bQn}.
We point out that
$c_0$ is the $1$-type $\{U_1(x)\}$,
$c_1$ is the $1$-type $\{U_0(x), x<v_0\}$,
$c_2$ is the $1$-type $\{U_0(x), v_0<x, v_1<x\}$, etc.

Note  that $\bS(\bQ_2)$ looks like two identical disjoint copies of a coding tree for $\bQ$.
This is because each of the two unary relations, representing the two equivalence classes, appears densely in the linear order.
The ordered structure $\bQ_2$ appears below the two trees as the vertices $v_0,v_1,\dots$.
Unlike Figure \ref{fig.Qtree} for $\bQ$, the  vertices in $\bQ_2$ do not line up below the coding nodes in the trees representing them, since $\bS(\bQ_2)$ has two roots.
However, if we modify our definition of coding tree of $1$-types to  have individual  coding nodes  $c_n$ represent the unary relations satisfied by $v_n$
(rather than $\bS$  having   $|\Gamma|$ many roots),
this has the effect of producing a one-rooted tree with
``$\gamma$-colored'' coding nodes appearing cofinally in the tree, for each $\gamma\in\Gamma$.
This approach then shows the linear order $\bQ_2$ lining up below the coding nodes,  recovers the characterization of the big Ramsey degrees in \cite{Laflamme/NVT/Sauer10}, and
 will aid us in proving \EEAP$^+$ for $\mathcal{P}_2$.
 (See Definition \ref{defn.ctU}
 for this variation of tree of $1$-types, which reproduces the approach in \cite{Laflamme/NVT/Sauer10}.)
The formulation given in Definition \ref{defn.treecodeK} makes clear the distinction  between  exact big Ramsey degrees for \Fraisse\ classes with \SFAP\  and those with \SFAP\ plus a linear order, and sets up correctly for proving  lower bounds in general.

Similarly, for any $n\ge 2$,  $\bS(\bQ_n)$ will have  $n$ roots, and  above each root, the $n$ trees will  be copies of each other.
\end{example}

%%%%%%%%%%%%%%%%%%%
%%%%%%%%%%%%%%%%%%%
%Q_2 tree
\begin{figure}
\begin{center}
\begin{minipage}{.2\textwidth}
\hspace*{-4cm}
\begin{tikzpicture}[grow'=up,scale=.3]
\tikzstyle{level 1}=[sibling distance=4in]
\tikzstyle{level 2}=[sibling distance=2in]
\tikzstyle{level 3}=[sibling distance=1in]
\tikzstyle{level 4}=[sibling distance=0.5in]
\tikzstyle{level 5}=[sibling distance=0.2in]
\tikzstyle{level 6}=[sibling distance=0.1in]
\tikzstyle{level 7}=[sibling distance=0.07in]

\node {} coordinate (t9)
child{ coordinate (t0) edge from parent[color=black,thick]
child{coordinate (t00) edge from parent[color=black,thick]
child{coordinate (t000) edge from parent[color=black,thick]
child{coordinate (t0000) edge from parent[color=black,thick]
child{coordinate (t00000) edge from parent[color=black,thick]}
}%endparen for t0000
child{coordinate (t0001) edge from parent[color=black,thick]
child{coordinate (t00010) edge from parent[color=black,thick]}
}%endparen for t0001
}%endparen for t000
}%endparen for t00
child{coordinate (t01) edge from parent[color=black,thick]
child{coordinate (t010) edge from parent[color=black,thick]
child{coordinate (t0100) edge from parent[color=black,thick]
child{coordinate (t01000) edge from parent[color=black,thick]}
}%endparen for t0100
}%endparen for t010
}%endparen for t01
}%endparen for t0
child{ coordinate (t1) edge from parent[color=black,thick]
child{coordinate (t10) edge from parent[color=black,thick]
child{coordinate (t100) edge from parent[color=black,thick]
child{coordinate (t1000) edge from parent[color=black,thick]
child{coordinate (t10000) edge from parent[color=black,thick]}
child{coordinate (t10001) edge from parent[color=black,thick]}
}%endparen for t1000
}%endparen for t100
child{coordinate (t101) edge from parent[color=black,thick]
child{coordinate (t1010) edge from parent[color=black,thick]
child{coordinate (t10100) edge from parent[color=black,thick]}
}%endparen for t1010
}%endparen for t101
}%endparen for t10
};%endparen for t1
%
%colored nodes in left tree%#current
\node[label=90:$\scriptstyle{c_1}$,label=10:\hspace*{3mm}$\scriptstyle{v_1<x}$,label=170:$\scriptstyle{x<v_1}$\hspace*{3mm},circle, fill=red,inner sep=0pt, minimum size=5pt] at (t0) {};
\node[label=90:$\scriptstyle{c_2}$,label=150:$\scriptstyle{x<v_2}\hspace{2mm}$,label=20:$\hspace{2mm}\scriptstyle{v_2<x}$,circle, fill=red,inner sep=0pt, minimum size=5pt] at (t10) {};
\node[label=0:$\scriptstyle{c_5}$,circle, fill=red,inner sep=0pt, minimum size=5pt] at (t00010) {};

%dots going up
\node[circle,inner sep=0pt, minimum size=5pt,label=90:$\vdots$] at (t00000) {};
\node[circle,inner sep=0pt, minimum size=5pt,label=90:$\vdots$] at (t00010) {};
\node[circle,inner sep=0pt, minimum size=5pt,label=90:$\vdots$] at (t01000) {};
\node[circle,inner sep=0pt, minimum size=5pt,label=90:$\vdots$] at (t10000) {};
\node[circle,inner sep=0pt, minimum size=5pt,label=90:$\vdots$] at (t10001) {};
\node[circle,inner sep=0pt, minimum size=5pt,label=90:$\vdots$] at (t10100) {};

%empty bubble nodes
\node[label=180:$\scriptstyle{x<v_0}\hspace{6mm}$,label=270:$\scriptstyle{U_0(x)}$,label=0:$\hspace{6mm}\scriptstyle{v_0<x}$,circle, fill=white,draw,inner sep=0pt, minimum size=4pt] at (t9) {};
\node[label=90:$\scriptstyle{x<v_2}\hspace{14mm}$,circle, fill=white,draw,inner sep=0pt, minimum size=4pt] at (t00) {};
\node[circle, fill=white,draw,inner sep=0pt, minimum size=4pt] at (t000) {};
\node[circle, fill=white,draw,inner sep=0pt, minimum size=4pt] at (t0000) {};
\node[circle, fill=white,draw,inner sep=0pt, minimum size=4pt] at (t00000) {};
\node[circle, fill=white,draw,inner sep=0pt, minimum size=4pt] at (t0001) {};
\node[label=90:$\hspace{11mm}\scriptstyle{x<v_2}$,circle, fill=white,draw,inner sep=0pt, minimum size=4pt] at (t01) {};
\node[circle, fill=white,draw,inner sep=0pt, minimum size=4pt] at (t010) {};
\node[circle, fill=white,draw,inner sep=0pt, minimum size=4pt] at (t0100) {};
\node[circle, fill=white,draw,inner sep=0pt, minimum size=4pt] at (t01000) {};
\node[label=20:$\scriptstyle{v_1<x}$,circle, fill=white,draw,inner sep=0pt, minimum size=4pt] at (t1) {};
\node[circle, fill=white,draw,inner sep=0pt, minimum size=4pt] at (t100) {};
\node[circle, fill=white,draw,inner sep=0pt, minimum size=4pt] at (t1000) {};
\node[circle, fill=white,draw,inner sep=0pt, minimum size=4pt] at (t10000) {};
\node[circle, fill=white,draw,inner sep=0pt, minimum size=4pt] at (t10001) {};
\node[circle, fill=white,draw,inner sep=0pt, minimum size=4pt] at (t101) {};
\node[circle, fill=white,draw,inner sep=0pt, minimum size=4pt] at (t1010) {};
\node[circle, fill=white,draw,inner sep=0pt, minimum size=4pt] at (t10100) {};
\end{tikzpicture}
\end{minipage}
\begin{minipage}{0.2\textwidth}
\hspace*{-0.5cm}
\begin{tikzpicture}[grow'=up,scale=.3]
\tikzstyle{level 1}=[sibling distance=4in]
\tikzstyle{level 2}=[sibling distance=2in]
\tikzstyle{level 3}=[sibling distance=1in]
\tikzstyle{level 4}=[sibling distance=0.5in]
\tikzstyle{level 5}=[sibling distance=0.2in]
\tikzstyle{level 6}=[sibling distance=0.1in]
\tikzstyle{level 7}=[sibling distance=0.07in]

\node {} coordinate (u9)
child{coordinate (u0) edge from parent[color=black,thick]
child{coordinate (u00) edge from parent[color=black,thick]
child{coordinate (u000) edge from parent[color=black,thick]
child{coordinate (u0000) edge from parent[color=black,thick]
child{coordinate (u00000) edge from parent[color=black,thick]}
}%endparen for u0000
child{coordinate (u0001) edge from parent[color=black,thick]
child{coordinate (u00010) edge from parent[color=black,thick]}
}%endparen for u0001
}%endparen for u000
}%endparen for u00
child{coordinate (u01) edge from parent[color=black,thick]
child{coordinate (u010) edge from parent[color=black,thick]
child{coordinate (u0100) edge from parent[color=black,thick]
child{coordinate (u01000) edge from parent[color=black,thick]}
}%endparen for u0100
}%endparen for u010
}%endparen for u01
}%endparen for u0
child{coordinate (u1) edge from parent[color=black,thick]
child{coordinate (u10) edge from parent[color=black,thick]
child{coordinate (u100) edge from parent[color=black,thick]
child{coordinate (u1000) edge from parent[color=black,thick]
child{coordinate (u10000) edge from parent[color=black,thick]}%endparen for u10000
child{coordinate (u10001) edge from parent[color=black,thick]}
}%endparen for u1000
}%endparen for u100
child{coordinate (u101) edge from parent[color=black,thick]
child{coordinate (u1010) edge from parent[color=black,thick]
child{coordinate (u10100) edge from parent[color=black,thick]}
}%endparen for u1010
}%endparen for u101
}%endparen for u10
};%endparen for u1

%blue nodes for the right tree
\node[label=90:$\scriptstyle{c_0}$, label=180:$\scriptstyle{x<v_0}\hspace{6mm}$,label=0:$\hspace{6mm}\scriptstyle{v_0<x}$,label=270:$\scriptstyle{U_1(x)}$,circle, fill=blue,inner sep=0pt, minimum size=5pt] at (u9) {};
\node[label=0:$\scriptstyle{c_3}$,circle, fill=blue,inner sep=0pt, minimum size=5pt] at (u000) {};
\node[label=180:$\scriptstyle{c_4}$,circle, fill=blue,inner sep=0pt, minimum size=5pt] at (u1000) {};

%dots going up for the right tree
\node[circle,inner sep=0pt, minimum size=5pt,label=90:$\vdots$] at (u00000) {};
\node[circle,inner sep=0pt, minimum size=5pt,label=90:$\vdots$] at (u00010) {};
\node[circle,inner sep=0pt, minimum size=5pt,label=90:$\vdots$] at (u01000) {};
\node[circle,inner sep=0pt, minimum size=5pt,label=90:$\vdots$] at (u10000) {};
\node[circle,inner sep=0pt, minimum size=5pt,label=90:$\vdots$] at (u10001) {};
\node[circle,inner sep=0pt, minimum size=5pt,label=90:$\vdots$] at (u10100) {};

%empty circle nodes
\node[label=150:$\scriptstyle{x<v_1}\hspace{5mm}$,label=30:$\hspace{5mm}\scriptstyle{v_1<x}$,circle, fill=white,draw,inner sep=0pt, minimum size=4pt] at (u0) {};
\node[label=120:$\scriptstyle{x<v_2}$,circle, fill=white,draw,inner sep=0pt, minimum size=4pt] at (u00) {};
\node[circle, fill=white,draw,inner sep=0pt, minimum size=4pt] at (u0000) {};
\node[circle, fill=white,draw,inner sep=0pt, minimum size=4pt] at (u00000) {};
\node[circle, fill=white,draw,inner sep=0pt, minimum size=4pt] at (u0001) {};
\node[circle, fill=white,draw,inner sep=0pt, minimum size=4pt] at (u00010) {};
\node[label=30:$\scriptstyle{x<v_2}$,circle, fill=white,draw,inner sep=0pt, minimum size=4pt] at (u01) {};
\node[circle, fill=white,draw,inner sep=0pt, minimum size=4pt] at (u010) {};
\node[circle, fill=white,draw,inner sep=0pt, minimum size=4pt] at (u0100) {};
\node[circle, fill=white,draw,inner sep=0pt, minimum size=4pt] at (u01000) {};
\node[label=30:$\scriptstyle{v_1<x}$,circle, fill=white,draw,inner sep=0pt, minimum size=4pt] at (u1) {};
\node[label=150:$\scriptstyle{x<v_2}\hspace{2mm}$,label=30:$\hspace{2mm}\scriptstyle{v_2<x}$,circle, fill=white,draw,inner sep=0pt, minimum size=4pt] at (u10) {};
\node[circle, fill=white,draw,inner sep=0pt, minimum size=4pt] at (u100) {};
\node[circle, fill=white,draw,inner sep=0pt, minimum size=4pt] at (u10000) {};
\node[circle, fill=white,draw,inner sep=0pt, minimum size=4pt] at (u10001) {};
\node[circle, fill=white,draw,inner sep=0pt, minimum size=4pt] at (u101) {};
\node[circle, fill=white,draw,inner sep=0pt, minimum size=4pt] at (u1010) {};
\node[circle, fill=white,draw,inner sep=0pt, minimum size=4pt] at (u10100) {};
\end{tikzpicture}
\end{minipage}

\end{center}
\begin{tikzpicture}
%the nodes along the bottom
\node[circle, fill=blue,inner sep=0pt, minimum size=5pt, label=$v_3$] (v3) {};
\node[circle, fill=red,inner sep=0pt, minimum size=5pt,label=$v_5$,right=1.6cm of v3] (v5) {};
\node[circle, fill=red,inner sep=0pt, minimum size=5pt,label=$v_1$,right=1.6cm of v5] (v1) {};
\node[circle, fill=blue,inner sep=0pt, minimum size=5pt, label=$v_0$,right=1.6cm of v1] (v0) {};
\node[circle, fill=blue,inner sep=0pt, minimum size=5pt,label=$v_4$,right=1.6cm of v0] (v4) {};
\node[circle, fill=red,inner sep=0pt, minimum size=5pt,label=$v_2$,right=1.6cm of v4] (v2) {};
\end{tikzpicture}
\caption{Coding tree of $1$-types for $(\mathbb{Q}_2,<)$ and the linear order represented by its coding nodes.}
\label{fig.Q2tree}
\end{figure}

%%%%%%%%%%%%%%%%%%%
%%%%%%%%%%%%%%%%%%%

\begin{example}[The coding tree of $1$-types  $\bS(\bQ_{\bQ})$]\label{ex.Q_Q}
Next, we present the tree of $1$-types  for the  \Fraisse\ structure $\bQ_{\bQ}$.
Recall that  this  is the \Fraisse\ limit of the class $\mathcal{CO}$ in the language $\mathcal{L}=\{<,E\}$, where $<$ is a linear order and $E$ is a convexly ordered
equivalence relation, meaning that all equivalence classes are intervals.

Figure \ref{fig.QQtree} shows  the first six levels of a coding tree of $1$-types, $\bS(\bQ_{\bQ})$.
The formulas which are in the $1$-types can be read from the graphic.  For instance,
$c_0$ is the empty type.
$c_1$ is the $1$-type $\{v_0<x, E(x,v_0)\}$,
so since the vertex $v_1$ satisfies this $1$-type, we have
$v_0<v_1$ and $E(v_0,v_1)$ holding.
Similarly,
$c_2$ is the $1$-type
$\{ x<v_0, \neg E(x,v_0), x<v_1, \neg E(x,v_1)\}$,
so $v_2$ satisfies  $v_2<v_0$, and $v_2$ is not equivalent to either of $v_0$ or $v_1$.
 $c_3$ is the $1$-type
$\{v_0<x,E(x,v_0), v_1<x, E(x,v_1), v_2<x,\neg E(x,v_2)\}$, and hence, we see that $v_1<v_3$ and $v_3$ is equivalent to $v_1$ and hence also to $v_0$.
Note that only coding nodes branch.
Moreover,  $c_n$ has splitting  degree two if $c_n$ represents a vertex $v_n$ which is equivalent to $v_i$ for some $i<n$;
 otherwise
$c_n$ has  splitting  degree four.
 For each non-coding node $s$ on the level of a coding node $c_n$, there is only one possible $1$-type extending $s$ over the initial structure on the first $n+1$ vertices of  $\bQ_{\bQ}$.
We will show later in Lemma \ref{lem.Q_Qbiskew} that there is a coding subtree representing $\bQ_{\bQ}$  ensuring that   \EEAP$^+$ holds.
The structures $\bQ_{\bQ_{\bQ}}$, etc.\ have coding trees of $1$-types which behave similarly.

In Figure \ref{fig.QQtree},
below the tree  $\bS(\bQ_{\bQ})$ is the linear order  on the vertices  $v_0,\dots, v_5$ represented  by the coding nodes $c_0,\dots,c_5$;  the lines between the vertices represent that they are in the same equivalence class.  Thus, $v_0,v_1,v_3$ are all in the same equivalence class, $v_2, v_4$ are in a different equivalence class, and $v_5$ is in yet another equivalence class.
\end{example}

%%%%%%%%%%%%%%%%%%%
%%%%%%%%%%%%%%%%%%%

%***Put here the figure for the tree $\bS$ for $\bQ_{\bQ}$
%\begin{figure}
\begin{sidewaysfigure}
%\hspace*{-1.8cm}
\vspace{13cm}
%\begin{tikzpicture}[grow'=up,scale=.45]
\begin{tikzpicture}[grow'=up,scale=.60]
\tikzstyle{level 1}=[sibling distance=3.3in]
\tikzstyle{level 2}=[sibling distance=2in]
\tikzstyle{level 3}=[sibling distance=1in]
\tikzstyle{level 4}=[sibling distance=0.5in]
\tikzstyle{level 5}=[sibling distance=0.2in]
\tikzstyle{level 6}=[sibling distance=0.1in]
\tikzstyle{level 7}=[sibling distance=0.07in]
\node [label=$c_0$] {} coordinate (t9)
child{ coordinate (t0) edge from parent[color=black,thick]
child{ coordinate (t00) edge from parent[color=red,thick]
child{coordinate (t000) edge from parent[color=black,thick]
child{coordinate (t0000) edge from parent[color=purple,thick]
child{coordinate (t00000) edge from parent[color=magenta,thick]
child{coordinate (t000000) edge from parent[color=black,thick]}
}%endparen t00000
}%endparen t0000
}%endparen t000
child{coordinate (t001) edge from parent[color=purple,thick]
child{coordinate (t0010) edge from parent[color=purple,thick]
child{coordinate (t00100) edge from parent[color=magenta,thick]
child{coordinate (t001000) edge from parent[color=black,thick]}
}%endparen t00100
child{coordinate (t00101) edge from parent[color=magenta,thick]
child{coordinate (t001010) edge from parent[color=black,thick]}
}%endparen t00101
}%endparen t0010
}%endparen t001
child{coordinate (t002) edge from parent[color=olive,thick]
child{coordinate (t0020) edge from parent[color=purple,thick]
child{coordinate (t00200)
edge from parent[color=magenta,thick]
child{coordinate (t002000)
edge from parent[color=black,thick]}
}%endparen t00200
}%endparen t0020
}%endparen t002
child{coordinate (t003) edge from parent[color=teal,thick]
child{coordinate (t0030) edge from parent[color=purple,thick]
child{coordinate (t00300) edge from parent[color=magenta,thick]
child{coordinate (t003000) edge from parent[color=black,thick]}
child{coordinate (t003001) edge from parent[color=black,thick]}
child{coordinate (t003002) edge from parent[color=black,thick]}
child{coordinate (t003003) edge from parent[color=black,thick]}
%child{coordinate (t003004) edge from parent[color=black,thick]}
}%endparen t00300
}%endparen t0030
}%endparen t003
}%endparen t00
}%endparen t0
child{ coordinate (t1) edge from parent[color=black,thick]
child{ coordinate (t10) edge from parent[color=cyan,thick]
child{coordinate (t100) edge from parent[color=black,thick]
child{coordinate (t1000) edge from parent[color=purple,thick]
child{coordinate (t10000) edge from parent[color=magenta,thick]
child{coordinate (t100000) edge from parent[color=black,thick]}
}%endparen t10000
}%endparen t1000
}%endparen t100
}%endparen t10
}%endparen t1
child{ coordinate (t2) edge from parent[color=black,thick]
child{ coordinate (t20) edge from parent[color=purple,thick]
child{coordinate (t200) edge from parent[color=red,thick]
child{coordinate (t2000) edge from parent[color=purple,thick]
child{coordinate (t20000) edge from parent[color=magenta,thick]
child{coordinate (t200000) edge from parent[color=black,thick]}
}%endparen t20000
}%endparen t2000
}%endparen t200
}%endparen t20
child{coordinate (t21) edge from parent[color=teal,thick]
child{coordinate (t210) edge from parent[color=olive,thick]
child{coordinate (t2100) edge from parent[color=magenta,thick]
child{coordinate (t21000) edge from parent[color=magenta,thick]
child{coordinate (t210000) edge from parent[color=black,thick]}
}%endparen t21000
}%endparen t2100
child{coordinate (t2101) edge from parent[color=purple,thick]
child{coordinate (t21010) edge from parent[color=magenta,thick]
child{coordinate (t210100) edge from parent[color=black,thick]}
}%endparen t21010
}%endparen t2101
}%endparen t210
}%endparen t21
}%endparen t2
child{ coordinate (t3) edge from parent[color=black,thick]
child{ coordinate (t30) edge from parent[color=orange,thick]
child{coordinate (t300) edge from parent[color=red,thick]
child{coordinate (t3000) edge from parent[color=blue,thick]
child{coordinate (t30000) edge from parent[color=magenta,thick]
child{coordinate (t300000) edge from parent[color=black,thick]}
}%endparen t30000
}%endparen t3000
}%endparen t300
}%endparen t30
}%endparen t3
;
%line spaces make a difference!!!

%relations on edges and node labels
\node[circle, fill=blue,inner sep=0pt, minimum size=5pt] at (t9) {};
\node[label=280:$\scriptstyle{\{x<v_0,x \cancel{E} v_0\}}$] at (t0) {};
\node[circle, fill=blue,inner sep=0pt, minimum size=5pt,label=0:$c_1$] at (t2) {};
\node[label=250:$\scriptstyle{\{v_0<x,x \cancel{E} v_0\}}$] at (t3) {};
\node[circle, fill=blue,inner sep=0pt, minimum size=5pt,label=0:$c_2$] at (t00) {};
\node[circle, fill=blue,inner sep=0pt, minimum size=5pt,label=0:$c_3$] at (t210) {};
\node[circle, fill=blue,inner sep=0pt, minimum size=5pt,label=0:$c_4$] at (t0010) {};
\node[circle, fill=blue,inner sep=0pt, minimum size=5pt,label=0:$c_5$] at (t00300) {};
\node[label=0:$\hspace{-1mm}\scriptstyle{\{x<v_0, x E v_0\}}$] at (t1) {};
\node[label=180:$\hspace{1mm}\scriptstyle{\{v_0<x, x E v_0\}}$] at (t2) {};
\node[label=200:${\color{red}\scriptstyle{\{x<v_1, x \cancel{E} v_1\}}}$] at (t00) {};
\node[label=200:${\color{cyan}\scriptstyle{\{x<v_1, x E v_1\}}}$] at (t10) {};
\node[label=250:${\color{purple}\scriptstyle{\{x<v_1, x E v_1\}}}\hspace{-5mm}$] at (t20) {};
\node[label=280:${\color{teal}\scriptstyle{\{v_1<x, x E v_1\}}}$] at (t21) {};
\node[label=280:${\color{orange}\scriptstyle{\{v_1<x, x \cancel{E} v_1\}}}$] at (t30) {};
\node[label=270:$\scriptstyle{\{x<v_2, x \cancel{E} v_2\}}$] at (t000) {};
%\node[label=260:$\scriptstyle{\{x<v_2}$,\newline $\scriptstyle{x E v_2}\}}$] at (t001) {};
\node[label=270:${\color{teal}\scriptscriptstyle{\{v_2<x, x \cancel{E} v_2\}}} \hspace{3mm}$] at (t003) {};
%\node at (t001) [align=left]{$\scriptstyle{x<v_2}$\\$\scriptstyle{x E v_2}$};
\node[label=250:$\scriptscriptstyle{\{v_2<x, x \cancel{E} v_2\}} \hspace{-2mm}$] at (t100) {};
\node[label=250:${\color{red}\scriptstyle{\{v_2<x, x \cancel{E} v_2\}}}$] at (t200) {};
\node[label=280:${\color{olive}\scriptstyle{\{v_2<x, x \cancel{E} v_2\}}}$] at (t210) {};
\node[label=280:${\color{red}\scriptstyle{\{v_2<x, x \cancel{E} v_2\}}}$] at (t300) {};
%\node[align=left,label=250:${\color{purple}{\scriptscriptstyle{\{x<v_2}$,\\ $\scriptscriptstyle{x E v_2\}}}}$] at (t0010) {};
\node[label=280:${\color{blue}\scriptstyle{\{v_3<x, x \cancel{E} v_3\}}}$] at (t3000) {};
\node[label=280:${\color{purple}\scriptstyle{\{v_3<x, x E v_3\}}}$] at (t2101) {};
\node[label=250:${\color{red}\scriptstyle{\{x<v_3, x E v_3\}}}$] at (t2100) {};
\node[label=280:${\color{magenta}\scriptstyle{\{v_4<x, x \cancel{E} v_4\}}}$] at (t00300) {};

%dots above
\node[circle,inner sep=0pt, minimum size=5pt,label=90:$\vdots$] at (t000000) {};
\node[circle,inner sep=0pt, minimum size=5pt,label=90:$\vdots$] at (t001000) {};
\node[circle,inner sep=0pt, minimum size=5pt,label=90:$\vdots$] at (t001010) {};
\node[circle,inner sep=0pt, minimum size=5pt,label=90:$\vdots$] at (t002000) {};
\node[circle,inner sep=0pt, minimum size=5pt,label=90:$\vdots$] at (t003000) {};
\node[circle,inner sep=0pt, minimum size=5pt,label=90:$\vdots$] at (t003001) {};
\node[circle,inner sep=0pt, minimum size=5pt,label=90:$\vdots$] at (t003002) {};
\node[circle,inner sep=0pt, minimum size=5pt,label=90:$\vdots$] at (t003003) {};
%\node[circle,inner sep=0pt, minimum size=5pt,label=90:$\vdots$] at (t003004) {};
\node[circle,inner sep=0pt, minimum size=5pt,label=90:$\vdots$] at (t100000) {};
\node[circle,inner sep=0pt, minimum size=5pt,label=90:$\vdots$] at (t200000) {};
\node[circle,inner sep=0pt, minimum size=5pt,label=90:$\vdots$] at (t210000) {};
\node[circle,inner sep=0pt, minimum size=5pt,label=90:$\vdots$] at (t210100) {};
\node[circle,inner sep=0pt, minimum size=5pt,label=90:$\vdots$] at (t300000) {};

%\node[circle,fill=white,draw,outer sep=0pt,inner sep=1.5pt minimum size=5pt, label=$v_4$,below=4cm of t001] (v4) {};
\node[circle, fill=blue,inner sep=0pt, minimum size=5pt, label=$v_4$,below=3.8cm of t001] (v4) {};
\node[circle, fill=blue,inner sep=0pt, minimum size=5pt, label=$v_2$,below=2.9cm of t00] (v2) {};
\node[circle, fill=blue,inner sep=0pt, minimum size=5pt, label=$v_5$,below=3.799cm of t003] (v5) {};
\node[circle, fill=blue,inner sep=0pt, minimum size=5pt, label=$v_0$,below=1.0cm of t9] (v0) {};
\node[circle, fill=blue,inner sep=0pt, minimum size=5pt, label=$v_1$,below=1.92cm of t2] (v1) {};
\node[circle, fill=blue,inner sep=0pt, minimum size=5pt, label=$v_3$,below=2.83cm of t21] (v3) {};

%open circles
\node[circle, fill=white,draw,inner sep=0pt, minimum size=4pt] at (t0) {};
\node[circle, fill=white,draw,inner sep=0pt, minimum size=4pt] at (t000) {};
\node[circle, fill=white,draw,inner sep=0pt, minimum size=4pt] at (t0000) {};
\node[circle, fill=white,draw,inner sep=0pt, minimum size=4pt] at (t00000) {};
\node[circle, fill=white,draw,inner sep=0pt, minimum size=4pt] at (t000000) {};
\node[circle, fill=white,draw,inner sep=0pt, minimum size=4pt] at (t001) {};
\node[circle, fill=white,draw,inner sep=0pt, minimum size=4pt] at (t00100) {};
\node[circle, fill=white,draw,inner sep=0pt, minimum size=4pt] at (t001000) {};
\node[circle, fill=white,draw,inner sep=0pt, minimum size=4pt] at (t00101) {};
\node[circle, fill=white,draw,inner sep=0pt, minimum size=4pt] at (t001010) {};
\node[circle, fill=white,draw,inner sep=0pt, minimum size=4pt] at (t002) {};
\node[circle, fill=white,draw,inner sep=0pt, minimum size=4pt] at (t0020) {};
\node[circle, fill=white,draw,inner sep=0pt, minimum size=4pt] at (t00200) {};
\node[circle, fill=white,draw,inner sep=0pt, minimum size=4pt] at (t002000) {};
\node[circle, fill=white,draw,inner sep=0pt, minimum size=4pt] at (t003) {};
\node[circle, fill=white,draw,inner sep=0pt, minimum size=4pt] at (t0030) {};
\node[circle, fill=white,draw,inner sep=0pt, minimum size=4pt] at (t003000) {};
\node[circle, fill=white,draw,inner sep=0pt, minimum size=4pt] at (t003001) {};
\node[circle, fill=white,draw,inner sep=0pt, minimum size=4pt] at (t003002) {};
\node[circle, fill=white,draw,inner sep=0pt, minimum size=4pt] at (t003003) {};
%\node[circle, fill=white,draw,inner sep=0pt, minimum size=4pt] at (t003004) {};
\node[circle, fill=white,draw,inner sep=0pt, minimum size=4pt] at (t1) {};
\node[circle, fill=white,draw,inner sep=0pt, minimum size=4pt] at (t10) {};
\node[circle, fill=white,draw,inner sep=0pt, minimum size=4pt] at (t100) {};
\node[circle, fill=white,draw,inner sep=0pt, minimum size=4pt] at (t1000) {};
\node[circle, fill=white,draw,inner sep=0pt, minimum size=4pt] at (t10000) {};
\node[circle, fill=white,draw,inner sep=0pt, minimum size=4pt] at (t100000) {};
\node[circle, fill=white,draw,inner sep=0pt, minimum size=4pt] at (t20) {};
\node[circle, fill=white,draw,inner sep=0pt, minimum size=4pt] at (t200) {};
\node[circle, fill=white,draw,inner sep=0pt, minimum size=4pt] at (t2000) {};
\node[circle, fill=white,draw,inner sep=0pt, minimum size=4pt] at (t20000) {};
\node[circle, fill=white,draw,inner sep=0pt, minimum size=4pt] at (t200000) {};
\node[circle, fill=white,draw,inner sep=0pt, minimum size=4pt] at (t21) {};
\node[circle, fill=white,draw,inner sep=0pt, minimum size=4pt] at (t2100) {};
\node[circle, fill=white,draw,inner sep=0pt, minimum size=4pt] at (t21000) {};
\node[circle, fill=white,draw,inner sep=0pt, minimum size=4pt] at (t210000) {};
\node[circle, fill=white,draw,inner sep=0pt, minimum size=4pt] at (t2101) {};
\node[circle, fill=white,draw,inner sep=0pt, minimum size=4pt] at (t21010) {};
\node[circle, fill=white,draw,inner sep=0pt, minimum size=4pt] at (t210100) {};
\node[circle, fill=white,draw,inner sep=0pt, minimum size=4pt] at (t3) {};
\node[circle, fill=white,draw,inner sep=0pt, minimum size=4pt] at (t30) {};
\node[circle, fill=white,draw,inner sep=0pt, minimum size=4pt] at (t300) {};
\node[circle, fill=white,draw,inner sep=0pt, minimum size=4pt] at (t3000) {};
\node[circle, fill=white,draw,inner sep=0pt, minimum size=4pt] at (t30000) {};
\node[circle, fill=white,draw,inner sep=0pt, minimum size=4pt] at (t300000) {};

\draw[magenta,thick] (v4)--(v2);
\draw[magenta,thick] (v0)--(v1);
\draw[magenta,thick] (v1)--(v3);
\end{tikzpicture}
\caption{Coding tree of $1$-types for $\bQ_{\bQ}$ and linear order with convex equivalence relations represented by its coding nodes.}
\label{fig.QQtree}
\end{sidewaysfigure}

%%%%%%%%%%%%%%%%%%%
%%%%%%%%%%%%%%%%%%%

Next, we present graphics for coding trees of $1$-types for some free amalgamation classes.
The tree of $1$-types for the Rado graph is simply a binary tree in which the coding nodes are dense and every node $s$  at the level of the $n$-th coding node splits into two immediate successors, representing the two possible extensions of $s$ to
the $1$-types
$s\cup\{E(x,v_n)\}$ and $s\cup\{\neg E(x,v_n)\}$.
This follows immediately from the Extension Property for the Rado graph.
As this is simple to visualize, and as a graphic has already appeared in \cite{DobrinenIfCoLog20}, we move on to bipartite graphs.

\begin{example}[The coding  tree of $1$-types
for the generic bipartite graph]\label{ex.BPG}
Figure \ref{fig.genbiptree} presents a  coding tree of $1$-types for the  generic
bipartite graph.
The unary relations $U_0$ and $U_1$, which keep track of which partition each vertex is in, are represented by ``red'' and ``blue'', respectively.
We have chosen to enumerate  this  structure so that
 odd indexed vertices are in one of the partitions, and even indexed vertices  are in the other,
for purely aesthetic reasons.
The edge relation is represented as extension to the right, and non-edge is represented by extending left.
On the left is the bipartite graph being represented by the coding nodes in the two-rooted tree of $1$-types.
For instance, $c_0$ is the $1$-type $\{U_0(x)\}$, so $v_0$ is a vertex in the  collection of ``red''  vertices.
For another example,
$c_3$ is the $1$-type $\{U_1(x), E(x,v_0), \neg E(x,v_1),E(x,v_2)\}$.
Thus, $v_3$ is in the collection of ``blue''  vertices and has edges exactly with $v_0$ and $v_2$.
By Proposition \ref{prop.FA},
for each $n\ge 2$,
the \Fraisse\ class of finite $n$-partite graphs satisfies \SFAP.
\end{example}

%%%%%%%%%%%%%%%%%%%
%%%%%%%%%%%%%%%%%%%

%***Put here the figure for the tree $\bS$ for the bipartite graph***
\begin{figure}
\begin{center}
\begin{minipage}{.2\textwidth}
\hspace*{-3.5cm}
\begin{tikzpicture}[grow'=up,scale=.4]
\tikzstyle{level 1}=[sibling distance=3in]
\tikzstyle{level 2}=[sibling distance=1.8in]
\tikzstyle{level 3}=[sibling distance=1in]
\tikzstyle{level 4}=[sibling distance=0.5in]
\tikzstyle{level 5}=[sibling distance=0.2in]
\tikzstyle{level 6}=[sibling distance=0.1in]
\tikzstyle{level 7}=[sibling distance=0.07in]

\node {} coordinate (t9)
child{ coordinate (t0) edge from parent[color=black,thick]
child{coordinate (t00) edge from parent[color=black,thick]
child{coordinate (t000) edge from parent[color=black,thick]
child{coordinate (t0000) edge from parent[color=black,thick]
child{coordinate (t00000) edge from parent[color=black,thick]}
child{coordinate (t00001) edge from parent[color=white,thick]}
}%endparen for t0000
child{coordinate (t0001) edge from parent[color=black,thick]
child{coordinate (t00010) edge from parent[color=black,thick]}
child{coordinate (t00011) edge from parent[color=white,thick]}
}%endparen for t0001
}%endparen for t000
child{coordinate (t001) edge from parent[color=white,thick]}
}%endparen for t00
child{coordinate (t01) edge from parent[color=black,thick]
child{coordinate (t010) edge from parent[color=black,thick]
child{coordinate (t0100) edge from parent[color=black,thick]
child{coordinate (t01000) edge from parent[color=black,thick]}
child{coordinate (t01001) edge from parent[color=white,thick]}
}%endparen for t0100
child{coordinate (t0101) edge from parent[color=black,thick]
child{coordinate (t01010) edge from parent[color=black,thick]}
child{coordinate (t01011) edge from parent[color=white,thick]}
}%endparen for t0101
}%endparen for t010
child{coordinate (t011) edge from parent[color=white,thick]}
}%endparen for t01
}%endparen for t0
child{coordinate (t1) edge from parent[color=white,thick]}
;

%Nodes in left tree
\node[label=180:$\scriptstyle{\cancel{E}(x,v_0)}\hspace{0.5cm}$,label=270:$\scriptstyle{U_0(x)}$,label=0:$c_0$,circle, fill=red,inner sep=0pt, minimum size=5pt] at (t9) {};
\node[label=0:$c_2$,circle, fill=red,inner sep=0pt, minimum size=5pt] at (t01) {};
\node[label=0:$c_4$,circle, fill=red,inner sep=0pt, minimum size=5pt] at (t0001) {};

%dots going up for left tree
\node[circle,inner sep=0pt, minimum size=5pt,label=90:$\vdots$] at (t00000) {};
\node[circle,inner sep=0pt, minimum size=5pt,label=90:$\vdots$] at (t00010) {};
\node[circle,inner sep=0pt, minimum size=5pt,label=90:$\vdots$] at (t01000) {};
\node[circle,inner sep=0pt, minimum size=5pt,label=90:$\vdots$] at (t01010) {};

%empty bubble notes for left tree
\node[label=20:$\hspace{0.5cm}\scriptstyle{E(x,v_1)}$,label=160:$\scriptstyle{\cancel{E}(x,v_1)}\hspace{0.5cm}$,circle, fill=white,draw,inner sep=0pt, minimum size=4pt] at (t0) {};
\node[circle, fill=white,draw,inner sep=0pt, minimum size=4pt] at (t00) {};
\node[circle, fill=white,draw,inner sep=0pt, minimum size=4pt] at (t000) {};
\node[circle, fill=white,draw,inner sep=0pt, minimum size=4pt] at (t0000) {};
\node[circle, fill=white,draw,inner sep=0pt, minimum size=4pt] at (t00000) {};
\node[circle, fill=white,draw,inner sep=0pt, minimum size=4pt] at (t00010) {};
\node[circle, fill=white,draw,inner sep=0pt, minimum size=4pt] at (t010) {};
\node[circle, fill=white,draw,inner sep=0pt, minimum size=4pt] at (t0100) {};
\node[circle, fill=white,draw,inner sep=0pt, minimum size=4pt] at (t01000) {};
\node[circle, fill=white,draw,inner sep=0pt, minimum size=4pt] at (t0101) {};
\node[circle, fill=white,draw,inner sep=0pt, minimum size=4pt] at (t01010) {};

%Nodes to the left
\node[label=0:$v_0$,circle, fill=red,inner sep=0pt, minimum size=5pt,left=5.8cm of t9] (v0) {};
\node[label=0:$v_1$,circle, fill=blue,inner sep=0pt, minimum size=5pt,left=2.8cm of t0] (v1) {};
\node[label=180:$v_2$,circle, fill=red,inner sep=0pt, minimum size=5pt,above=0.95cm of v0] (v2) {};
\node[label=0:$v_3$,circle, fill=blue,inner sep=0pt, minimum size=5pt,above=0.95cm of v1] (v3) {};
\node[label=180:$v_4$,circle, fill=red,inner sep=0pt, minimum size=5pt,above=1.0cm of v2] (v4) {};
\node[label=0:$v_5$,circle, fill=blue,inner sep=0pt, minimum size=5pt,above=1.05cm of v3] (v5) {};

%lines connecting dots to the left
\draw[black,thick] (v0) to (v1);
\draw[black,thick] (v0) to (v3);
\draw[black,thick] (v1) to (v2);
\draw[black,thick] (v2) to (v3);
\draw[black,thick] (v2) to (v5);
\draw[black,thick] (v3) to (v4);
\end{tikzpicture}
\end{minipage}
\begin{minipage}{.2\textwidth}
\hspace*{0.5cm}
\vspace*{0.1cm}
\begin{tikzpicture}[grow'=up,scale=.4]
\tikzstyle{level 1}=[sibling distance=3in]
\tikzstyle{level 2}=[sibling distance=1.8in]
\tikzstyle{level 3}=[sibling distance=1in]
\tikzstyle{level 4}=[sibling distance=0.5in]
\tikzstyle{level 5}=[sibling distance=0.2in]
\tikzstyle{level 6}=[sibling distance=0.1in]
\tikzstyle{level 7}=[sibling distance=0.07in]

\node {} coordinate (u9)
child{ coordinate (u0) edge from parent[color=black,thick]
child{coordinate (u00) edge from parent[color=black,thick]
child{coordinate (u000) edge from parent[color=black,thick]
child{coordinate (u0000) edge from parent[color=black,thick]
child{coordinate (u00000) edge from parent[color=black,thick]}
child{coordinate (u00001) edge from parent[color=black,thick]}
}%endparen for u0000
child{coordinate (u0001) edge from parent[color=white,thick]}
}%endparen for u000
child{coordinate (u001) edge from parent[color=black,thick]
child{coordinate (u0010) edge from parent[color=black,thick]
child{coordinate (u00100) edge from parent[color=black,thick]}
child{coordinate (u00101) edge from parent[color=black,thick]}
}%endparen for u0010
child{coordinate (u0011) edge from parent[color=white,thick]}
}%endparen for u001
}%endparen for u00
child{coordinate (u01) edge from parent[color=white,thick]}
}%endparen for u0
child{coordinate (u1) edge from parent[color=black,thick]
child{coordinate (u10) edge from parent[color=black,thick]
child{coordinate (u100) edge from parent[color=black,thick]
child{coordinate (u1000) edge from parent[color=black,thick]
child{coordinate (u10000) edge from parent[color=black,thick]}
child{coordinate (u10001) edge from parent[color=black,thick]}
}%endparen for u1000
child{coordinate (u1001) edge from parent[color=white,thick]}
}%endparen for u100
child{coordinate (u101) edge from parent[color=black,thick]
child{coordinate (u1010) edge from parent[color=black,thick]
child{coordinate (u10100) edge from parent[color=black,thick]}
child{coordinate (u10101) edge from parent[color=black,thick]}
}%endparen for u1010
child{coordinate (u1011) edge from parent[color=white,thick]}
}%endparen for u101
}%endparen for u10
child{coordinate (u11) edge from parent[color=white,thick]}
}%endparen for u1
;

%Colored nodes for right tree
\node[label=0:$c_1$,circle, fill=blue,inner sep=0pt, minimum size=5pt] at (u1) {};
\node[label=0:$c_3$,circle, fill=blue,inner sep=0pt, minimum size=5pt] at (u101) {};
\node[label=180:$c_5$,circle, fill=blue,inner sep=0pt, minimum size=5pt] at (u00100) {};

%dots going up for right tree
\node[circle,inner sep=0pt, minimum size=5pt,label=90:$\vdots$] at (u00000) {};
\node[circle,inner sep=0pt, minimum size=5pt,label=90:$\vdots$] at (u00001) {};
\node[circle,inner sep=0pt, minimum size=5pt,label=90:$\vdots$] at (u00100) {};
\node[circle,inner sep=0pt, minimum size=5pt,label=90:$\vdots$] at (u00101) {};
\node[circle,inner sep=0pt, minimum size=5pt,label=90:$\vdots$] at (u10000) {};
\node[circle,inner sep=0pt, minimum size=5pt,label=90:$\vdots$] at (u10001) {};
\node[circle,inner sep=0pt, minimum size=5pt,label=90:$\vdots$] at (u10100) {};
\node[circle,inner sep=0pt, minimum size=5pt,label=90:$\vdots$] at (u10101) {};

%empty bubble nodes
\node[label=270:$\scriptstyle{U_1(x)}$,label=20:$\hspace{1.3cm}\scriptstyle{E(x,v_0)}$,label=160:$\scriptstyle{\cancel{E}(x,v_0)}\hspace{1.3cm}$,circle, fill=white,draw,inner sep=0pt, minimum size=4pt] at (u9) {};
\node[label=160:$\scriptstyle{\cancel{E}(x,v_1)}\hspace{0.5cm}$,circle, fill=white,draw,inner sep=0pt, minimum size=4pt] at (u0) {};
\node[circle, fill=white,draw,inner sep=0pt, minimum size=4pt] at (u00) {};
\node[circle, fill=white,draw,inner sep=0pt, minimum size=4pt] at (u000) {};
\node[circle, fill=white,draw,inner sep=0pt, minimum size=4pt] at (u001) {};
\node[circle, fill=white,draw,inner sep=0pt, minimum size=4pt] at (u0000) {};
\node[circle, fill=white,draw,inner sep=0pt, minimum size=4pt] at (u00000) {};
\node[circle, fill=white,draw,inner sep=0pt, minimum size=4pt] at (u0010) {};
\node[circle, fill=white,draw,inner sep=0pt, minimum size=4pt] at (u00001) {};
\node[circle, fill=white,draw,inner sep=0pt, minimum size=4pt] at (u00101) {};
\node[label=-20:$\hspace{5mm}\scriptstyle{\cancel{E}(x,v_1)}$,circle, fill=white,draw,inner sep=0pt, minimum size=4pt] at (u10) {};
\node[circle, fill=white,draw,inner sep=0pt, minimum size=4pt] at (u100) {};
\node[circle, fill=white,draw,inner sep=0pt, minimum size=4pt] at (u1000) {};
\node[circle, fill=white,draw,inner sep=0pt, minimum size=4pt] at (u10000) {};
\node[circle, fill=white,draw,inner sep=0pt, minimum size=4pt] at (u10001) {};
\node[circle, fill=white,draw,inner sep=0pt, minimum size=4pt] at (u1010) {};
\node[circle, fill=white,draw,inner sep=0pt, minimum size=4pt] at (u10100) {};
\node[circle, fill=white,draw,inner sep=0pt, minimum size=4pt] at (u10101) {};
\end{tikzpicture}
\end{minipage}
\end{center}
\caption{Coding tree of $1$-types for the generic bipartite graph}\label{fig.genbiptree}
\end{figure}

%%%%%%%%%%%%%%%%%%%
%%%%%%%%%%%%%%%%%%%

Lastly, we consider free amalgamation classes with relations of higher arity.
The prototypical example of this is the generic $3$-uniform hypergraph, and discussing it should provide the reader with reasonable intuition about coding trees for higher arities.

\begin{example}[The coding tree of $1$-types for the generic $3$-uniform hypergraph]\label{ex.3reghypergraph}
Figure \ref{fig.3hyptree} presents
the coding tree of $1$-types for the generic $3$-uniform hypergraph.
This tree has the property that every node at the same level branches into the same number of immediate successors, as there are no forbidden substructures.
On the left of Figure \ref{fig.3hyptree} is a picture of the hypergraph being built, where $v_n$ is the vertex satisfying the $1$-type of the coding node $c_n$ over the initial segment of the structure restricted to  $\{v_i:i<n\}$.

Since hyperedges involve three vertices, $c_0$ and $c_1$ are both the empty $1$-types.
Technically, these nodes are the same, but we draw them distinctly in Figure 4.\ to aid the drawing of the hypergraph on the left.
Letting $R$ denote the $3$-hyperedge relation,
$c_1$ branches into  two  $1$-types  over $\{v_0,v_1\}$:
$\{\neg R(x,v_0,v_1)\}$ and $\{R(x,v_0,v_1)\}$.
Since $c_2=\{R(x,v_0,v_1)\}$, it follows that $R(v_0,v_1,v_2)$ holds in the hypergraph represented on the left of the tree; this hyperedge is represented by the oval containing these three vertices.

Both nodes on the level of $c_2$ branch into four immediate successors.
This is because
for each node $s$ at the level of $c_2$, the immediate successors of $s$ range over the possibilities of adding a new formula $R(x,\cdot,\cdot)$ or $\neg R(x,\cdot,\cdot)$ containing  the parameter $v_2$
and a choice of either $v_0$ or $v_1$ as the second parameter.
In particular,  the immediate successors of $c_2$ are the $1$-types consisting of  $\{R(x,v_0,v_1)\}$  unioned with  one of the following:
\begin{enumerate}
\item
$\{\neg R(x,v_0,v_2),\neg R(x,v_1,v_2)\}$;
\item
$\{\neg R(x,v_0,v_2),R(x,v_1,v_2)\}$;
\item
$\{ R(x,v_0,v_2),\neg R(x,v_1,v_2)\}$;
\item
$\{ R(x,v_0,v_2),R(x,v_1,v_2)\}$.
\end{enumerate}
Likewise, the immediate successors of the other node $s=\{\neg R(x,v_0,v_1)\}$ in level two of the tree consists of the extensions of $s$ by one of the four above cases.
In general, each node on the level of $c_n$ branches into $2^n$ many immediate successors.
This is because the new formulas in any immediate successor have the choice of $R(x,p,v_n)$ or its negation, where $p\in\{v_i:i<n\}$.
However,
the \Fraisse\ class of  finite $3$-uniform hypergraphs  satisfies \SFAP\ (by
Proposition \ref{prop.FA}), and
  Theorem \ref{thm.SFAPimpliesDCT} will  provide  a  skew subtree coding the generic $3$-hypergraph in which the branching degree is two (that is, a diagonal subtree).

The coding node $c_3$ is the $1$-type $\{\neg R(x,v_0,v_1),  R(x,v_0,v_2),\neg R(x,v_1,v_2)\}$.
Thus, the hypergraph being built on the left has the hyperedge $R(v_0,v_2,v_3)$.
The coding node
$c_4$ is the $1$-type  consisting of $R(x,v_0,v_1),
R(x,v_1,v_2),
R(x,v_2,v_3)$ along with
$\neg R(x,p_0,p_1)$ where $p_0,p_1$ are parameters in $\{v_0,\dots,v_3\}$.
This codes the new hyperedges
$R(v_0,v_1,v_4)$,  $R(v_1,v_2,v_4)$ and $R(v_2,v_3,v_4)$.
\end{example}

%%%%%%%%%%%%%%%%%%%
%%%%%%%%%%%%%%%%%%%

\begin{figure}
\hspace{2cm}
\begin{tikzpicture}[grow'=up,scale=.8]
\tikzstyle{level 1}=[sibling distance=3.3in]
\tikzstyle{level 2}=[sibling distance=2in]
\tikzstyle{level 3}=[sibling distance=.4in]
\tikzstyle{level 4}=[sibling distance=0.2in]
\tikzstyle{level 5}=[sibling distance=0.125in]
\node [label=0:$c_0$] {} coordinate (t9)
child{ coordinate (t0) edge from parent[color=black,thick]
child{ coordinate (t00) edge from parent[color=black,thick]
child{ coordinate (t000) edge from parent[color=magenta,thick]
child{coordinate (t0000) edge from parent[color=black,thick]}
child{coordinate (t0001) edge from parent[color=black,thick]}
child{coordinate (t0002) edge from parent[color=black,thick]}
child{coordinate (t0003) edge from parent[color=black,thick]}
child{coordinate (t0004) edge from parent[color=black,thick]}
child{coordinate (t0005) edge from parent[color=black,thick]}
child{coordinate (t0006) edge from parent[color=black,thick]}
child{coordinate (t0007) edge from parent[color=black,thick]}
}%endparen for t000
child{coordinate (t001) edge from parent[color=purple,thick]}
child{coordinate (t002) edge from parent[color=cyan,thick]}
child{coordinate (t003) edge from parent[color=orange,thick]}
}%endparen t00
child{ coordinate (t01) edge from parent[color=black,thick]
child{ coordinate (t010) edge from parent[color=magenta,thick]}
child{ coordinate (t011) edge from parent[color=purple,thick]
child{ coordinate (t0110) edge from parent[color=black,thick]}
child{ coordinate (t0111) edge from parent[color=black,thick]}
child{ coordinate (t0112) edge from parent[color=black,thick]}
child{ coordinate (t0113) edge from parent[color=black,thick]}
child{ coordinate (t0114) edge from parent[color=black,thick]}
child{ coordinate (t0115) edge from parent[color=black,thick]}
child{ coordinate (t0116) edge from parent[color=black,thick]}
child{ coordinate (t0117) edge from parent[color=black,thick]}
}
child{ coordinate (t012) edge from parent[color=cyan,thick]}
child{ coordinate (t013) edge from parent[color=orange,thick]}
}%endparen for t01
}%endparen for t0
;
%line spaces make a difference!!!

\node[circle, fill=blue,inner sep=0pt, minimum size=5pt] at (t9) {};
\node[circle, fill=blue,inner sep=0pt, minimum size=5pt,label=90:$c_1$,label=280:$\scriptstyle{\emptyset}$] at (t0) {};
\node[circle, fill=blue,inner sep=0pt, minimum size=5pt,label=0:$c_2$] at (t01) {};
\node[circle, fill=blue,inner sep=0pt, minimum size=5pt,label=0:$c_3$] at (t002) {};
\node[circle, fill=blue,inner sep=0pt, minimum size=5pt,label=60:$c_4$] at (t0111) {};
\node[label=0:$\scriptstyle{\{\neg R(x,v_0,v_1)\}}$] at (t00) {};
\node[label=180:$\scriptstyle{\{R(x,v_0,v_1)\}}$] at (t01) {};
\node[label=280:${\color{orange}\scriptstyle{R(x,v_1,v_2)\}}}$,label=0:${\color{orange}\scriptstyle{\{R(x,v_0,v_2),}}$] at (t013) {};

%open circles for additional nodes
\node[circle, fill=white,draw,inner sep=0pt, minimum size=4pt] at (t00) {};
\node[circle, fill=white,draw,inner sep=0pt, minimum size=4pt] at (t000) {};
\node[circle, fill=white,draw,inner sep=0pt, minimum size=4pt] at (t001) {};
\node[circle, fill=white,draw,inner sep=0pt, minimum size=4pt] at (t003) {};
\node[circle, fill=white,draw,inner sep=0pt, minimum size=4pt] at (t0000) {};
\node[circle, fill=white,draw,inner sep=0pt, minimum size=4pt] at (t0001) {};
\node[circle, fill=white,draw,inner sep=0pt, minimum size=4pt] at (t0002) {};
\node[circle, fill=white,draw,inner sep=0pt, minimum size=4pt] at (t0003) {};
\node[circle, fill=white,draw,inner sep=0pt, minimum size=4pt] at (t0004) {};
\node[circle, fill=white,draw,inner sep=0pt, minimum size=4pt] at (t0005) {};
\node[circle, fill=white,draw,inner sep=0pt, minimum size=4pt] at (t0006) {};
\node[circle, fill=white,draw,inner sep=0pt, minimum size=4pt] at (t0007) {};
\node[circle, fill=white,draw,inner sep=0pt, minimum size=4pt] at (t010) {};
\node[circle, fill=white,draw,inner sep=0pt, minimum size=4pt] at (t011) {};
\node[circle, fill=white,draw,inner sep=0pt, minimum size=4pt] at (t012) {};
\node[circle, fill=white,draw,inner sep=0pt, minimum size=4pt] at (t013) {};
\node[circle, fill=white,draw,inner sep=0pt, minimum size=4pt] at (t0110) {};
\node[circle, fill=white,draw,inner sep=0pt, minimum size=4pt] at (t0112) {};
\node[circle, fill=white,draw,inner sep=0pt, minimum size=4pt] at (t0113) {};
\node[circle, fill=white,draw,inner sep=0pt, minimum size=4pt] at (t0114) {};
\node[circle, fill=white,draw,inner sep=0pt, minimum size=4pt] at (t0115) {};
\node[circle, fill=white,draw,inner sep=0pt, minimum size=4pt] at (t0116) {};
\node[circle, fill=white,draw,inner sep=0pt, minimum size=4pt] at (t0117) {};

%making the collection of v nodes
\node[circle, fill=blue,inner sep=0pt, minimum size=5pt, label=$v_0$,left=7.2cm of t9] (v0) {};
\node[circle, fill=blue,inner sep=0pt, minimum size=5pt, label=$v_1$,left=7.2cm of t0] (v1) {};
\node[circle, fill=blue,inner sep=0pt, minimum size=5pt, label=$v_2$,left=9.25cm of t01] (v2) {};
\node[circle, fill=blue,inner sep=0pt, minimum size=5pt, label=$v_3$,left=3.66cm of t000] (v3) {};
\node[circle, fill=blue,inner sep=0pt, minimum size=5pt, label=$v_4$,left=7.98cm of t0112] (v4) {};

%vdots on the top nodes
\node[circle,inner sep=0pt, minimum size=5pt,label=90:$\vdots$] at (t0000) {};
\node[circle,inner sep=0pt, minimum size=5pt,label=90:$\vdots$] at (t0001) {};
\node[circle,inner sep=0pt, minimum size=5pt,label=90:$\vdots$] at (t0002) {};
\node[circle,inner sep=0pt, minimum size=5pt,label=90:$\vdots$] at (t0003) {};
\node[circle,inner sep=0pt, minimum size=5pt,label=90:$\vdots$] at (t0004) {};
\node[circle,inner sep=0pt, minimum size=5pt,label=90:$\vdots$] at (t0005) {};
\node[circle,inner sep=0pt, minimum size=5pt,label=90:$\vdots$] at (t0006) {};
\node[circle,inner sep=0pt, minimum size=5pt,label=90:$\vdots$] at (t0007) {};
\node[circle,inner sep=0pt, minimum size=5pt,label=90:$\vdots$] at (t001) {};
\node[circle,inner sep=0pt, minimum size=5pt,label=90:$\vdots$] at (t002) {};
\node[circle,inner sep=0pt, minimum size=5pt,label=90:$\vdots$] at (t003) {};
\node[circle,inner sep=0pt, minimum size=5pt,label=90:$\vdots$] at (t010) {};
\node[circle,inner sep=0pt, minimum size=5pt,label=90:$\vdots$] at (t012) {};
\node[circle,inner sep=0pt, minimum size=5pt,label=90:$\vdots$] at (t013) {};
\node[circle,inner sep=0pt, minimum size=5pt,label=90:$\vdots$] at (t0110) {};
\node[circle,inner sep=0pt, minimum size=5pt,label=90:$\vdots$] at (t0111) {};
\node[circle,inner sep=0pt, minimum size=5pt,label=90:$\vdots$] at (t0112) {};
{};
\node[circle,inner sep=0pt, minimum size=5pt,label=90:$\vdots$] at (t0113) {};
{};
\node[circle,inner sep=0pt, minimum size=5pt,label=90:$\vdots$] at (t0114) {};
{};
\node[circle,inner sep=0pt, minimum size=5pt,label=90:$\vdots$] at (t0115) {};
{};
\node[circle,inner sep=0pt, minimum size=5pt,label=90:$\vdots$] at (t0116) {};
{};
\node[circle,inner sep=0pt, minimum size=5pt,label=90:$\vdots$] at (t0117) {};

%making phantom nodes for the curves
\node[circle,fill=black,inner sep=0pt, minimum size=1pt,below=.1cm of v0] (u0) {};
\node[circle,fill=purple,inner sep=0pt, minimum size=1pt,below=.1cm of v1] (u1) {};
\node[circle,fill=magenta,inner sep=0pt, minimum size=1pt,above=.1cm of v2] (a2) {};
\node[circle,fill=black,inner sep=0pt, minimum size=1pt,above=.1cm of v1] (a1) {};
\node[circle,fill=blue,inner sep=0pt, minimum size=1pt,below=.1cm of v2] (u2) {};
\node[circle,fill=purple,inner sep=0pt, minimum size=1pt,right=.1cm of v2] (r2) {};
\node[circle,fill=cyan,inner sep=0pt, minimum size=1pt,above=.1cm of v3] (a3) {};
\node[circle,fill=purple,inner sep=0pt, minimum size=1pt,left=.1cm of v3] (l3) {};
\node[circle,fill=white,inner sep=0pt, minimum size=1pt,left=.1cm of v1] (l1) {};
\node[circle,fill=white,inner sep=0pt, minimum size=1pt,right=.1cm of v0] (r0) {};
\node[circle,fill=cyan,inner sep=0pt, minimum size=1pt,above=.1cm of r0] (ar0) {};
\node[circle,fill=black,inner sep=0pt, minimum size=1pt,above=.1cm of v4] (a4) {};
\node[circle,fill=magenta,inner sep=0pt, minimum size=1pt,left=.1cm of v4] (l4) {};
\node[circle,fill=black,inner sep=0pt, minimum size=1pt,below=.1cm of l4] (ul4) {};

%arcs
\draw[magenta,thick] (u0) to [out=180,in=180] (a2);
\draw[magenta,thick] (u0) to [in=0,out=0] (a2);

\draw[cyan,thick] (u0) to [in=180,out=180] (a3);
\draw[cyan,thick] (a3) to [in=180,out=0] (ar0);
\draw[cyan,thick] (ar0) to [in=-30,out=-30] (u0);

\draw[black,thick] (u0) to [in=180,out=180] (a1);
\draw[black,thick] (a4) to [in=0,out=0] (u0);
\draw[black,thick] (a1) to [in=300,out=0] (ul4);
\draw[black,thick] (ul4) to [in=180,out=100] (a4);

\draw[blue,thick] (u2) to [out=180,in=180] (a4);
\draw[blue,thick] (a4) to [in=0,out=0] (u2);

\draw[purple,thick] (a4) to [in=180,out=180] (u1);
\draw[purple,thick] (u1) to [in=100,out=20] (r2);
\draw[purple,thick] (r2) to [in=260,out=90] (l3);
\draw[purple,thick] (l3) to [in=0,out=90] (a4);
\end{tikzpicture}
\caption{Coding tree of $1$-types for the generic $3$-uniform hypergraph.}\label{fig.3hyptree}
\end{figure}

%%%%%%%%%%%%%%%%%%%
%%%%%%%%%%%%%%%%%%%

\subsection{Passing types and  similarity}\label{subsec.3.2}

As before, let
$\bK$ be an enumerated \Fraisse\ structure and $\bS:=\bS(\bK)$ be the corresponding coding  tree of $1$-types.
We begin by defining the notion of a subtree of $\bS$.
As is standard in Ramsey theory on infinite trees (see Chapter 6 of \cite{TodorcevicBK10}),
a subtree is not necessarily closed under initial segments,
but rather it is closed under
those portions of initial segments that have certain prescribed lengths.

\begin{defn}[Subtree]\label{defn.subtree}
Let $T$ be a subset of $\bS$, and let $L$ be the set of lengths of
coding nodes in $T$
and lengths of meets
of two incomparable nodes (not necessarily coding nodes) in $T$. Then $T$
is a {\em subtree} of $\bS$
if $T$ is closed under meets and
closed under initial segments with lengths in
$L$, by which we mean
that  whenever   $\ell\in L$ and $t\in T$ with $\ell\le |t|$, then $t\re \ell$ is also a member of $T$.
\end{defn}

We
now describe the
 natural correspondence from subtrees of $\bS$ to  substructures of $\bK$.
 The following notation will aid in the translation.

 \begin{notation}\label{notn.KreA}
Given a subtree $A\sse\bS$, let
 $\lgl c^A_n:n< N\rgl$ denote the  enumeration of
 the coding nodes of $A$  in order of increasing length, where $N\le \om$ is the number of coding nodes in $A$.
 Let
 \begin{equation}
 \mathrm{N}^A:=\{i\in \om:   \exists m\, (c_i=c^A_m)\},
 \end{equation}
 the set of indices $i$ such that $c_i$ is a coding node in $A$.
 For $n<N$,
 let
\begin{equation}
 \mathrm{N}^A_n:=\{i\in   \mathrm{N}^A:   \exists m<n\, (c_i=c^A_m)\},
\end{equation}
the set of indices  of the first  $n$
coding nodes  in $A$.
  Recall that $\om$ is the set of vertices for $\bK$,
 and that we  often  use   $v_i$ to denote $i$,  the $i$-th vertex of $\bK$.
Thus,  $\mathrm{N}^A$ is precisely the set of vertices of $\bK$
 represented by the coding nodes in $A$.
  Let $\bK\re A$ denote the
 substructure of $\bK$
on  universe  $ \mathrm{N}^A$.
 We call this the
 {\em substructure of $\bK$
 represented by the coding nodes in $A$}, or simply {\em  the substructure represented by $A$}.
\end{notation}

The next definition extends the notion   of {\em passing number}  developed in \cite{Sauer06} and \cite{Laflamme/Sauer/Vuksanovic06} to
code
binary relations
using
 pairs of nodes
in regular splitting trees.
 Here, we extend this notion to relations of any arity.

Recall from
the discussion after
Definition \ref{defn.treecodeK}
that for $s\in \bS$,
 $s(0)$  denotes the set of formulas in $s$
 without parameters; and for
for  $1\le i<|s|$,
 $s(i)$ denotes  the set of those formulas in $s\re \bK_i$
 in which $v_{i - 1}$ appears.

\begin{defn}[Passing Type]\label{defn.passingtype}
Given $s,t\in \bS$  with $|s|<|t|$,
we call
  $t(|s|)$
the {\em passing type of $t$  at $s$}.
We also call $t(|s|)$
the {\em passing type of $t$ at $c_n$},
where $n+1 = |s|$,
as $|c_n|=n+1$.

Let $A$ be a subtree of $\bS$, $t$ be a node in $\bS$,
and
 $c_n$ be a coding node in $\bS$ such that  $|c_n|<|t|$.
We write $t(c_n;A)$ to denote the set of those formulas in
$t(|c_n|)$ in which all  parameters  are  from among
$\{v_i: i\in  \mathrm{N}^A_m\cup\{n\}\}$,
where $m$ is least such that $|c^A_m|\ge |c_n|$.
We call $t(c_n;A)$ the {\em passing type of $t$ at $c_n$ over $A$}.

Given  a  coding node $c_n^A$ in $A$, we write
 $t(n;A)$ to denote  $t(c^A_n;A)$,
  and call this
the {\em passing type of $t$ at $n$ over $A$}.
\end{defn}

Note that passing types are  partial types which   do  not include any unary relation symbols.
Thus,  one can have realizations of the same passing type by elements which differ on the unary relations.
Further, note that the passing type of $t$ at $s$  only takes into consideration the length of $s$, not $s$ itself.
Writing the ``passing type of $t$ at $s$'' rather than ``passing type of $t$ at $|s|$'' continues the convention set forth in \cite{Sauer06}, \cite{Laflamme/Sauer/Vuksanovic06}, and continued in  all papers following  on these two.

\begin{rem}\label{rem.pnspecial case}
In the  case where the language $\mathcal{L}$ only has binary relation symbols,
passing type  reduces to  the concept of passing number,  first defined and  used in
\cite{Sauer06} and \cite{Laflamme/Sauer/Vuksanovic06} and later used  in \cite{DobrinenJML20}, \cite{DobrinenH_k19}, \cite{DobrinenRado19}, \cite{Zucker20}.
This is  because
for binary relational structures,
the tree $\bS$ has a bounded degree of  branching.
In the special case
of the Rado graph,
where the language has exactly one binary relation, say $E$,  the tree $\bS$ is regular $2$-branching and may be correlated with the tree of
finite
sequences of $0$'s and $1$'s;
then
the passing number  $0$ of $t$ at  $s$  corresponds to the passing type
generated by
$\{\neg R(x,v_{|s|})\}$, and
 the passing number  $1$ of $t$ at  $s$  corresponds to the passing type
 generated by
 $\{R(x,v_{|s|})\}$.

In the case of the rationals,
the  coding tree  of $1$-types $\bS$ for $\bQ$  provides a minimalistic  way to view the work of Devlin in
 \cite{DevlinThesis},
 as $\bS$ branches exactly at coding nodes and nowhere else.
 In our set-up, any antichain of coding nodes is automatically a so-called diagonal antichain,
as defined in Subsection \ref{subsec.3.3}.
 This differs from the  previous approaches to big Ramsey degrees of $\bQ$
 in \cite{LavUnp} and  \cite{DevlinThesis} (see also \cite{TodorcevicBK10}),
  which  use  the binary branching tree, Milliken's theorem, and the method of  envelopes.
\end{rem}

We will need to be able to compare structures represented by  different sets of coding nodes in $\bS$.
The next notion provides a way to do so.

Recall that $x$ is  the variable used in all $1$-types in $\bS$. Given subsets $X$ and $Y$ of $\om$ and map $f: X \to Y$, let
$f^*: X \cup \{x\} \to Y \cup \{x\}$ be the extension of $f$ given by $f^*(x) = x$.

\begin{defn}[Similarity of Passing Types over Subsets]\label{defn.prespt}
Let $A$ and $B$ be  subsets of $\bS$,  and let $m,n\in\om$
 be such that
 $\mathrm{N}^A\cap m$ has the same number of elements
  as $\mathrm{N}^B\cap n$, say $p$.
  Let $f$ be the increasing  bijection  from $\mathrm{N}^A_p$ to $\mathrm{N}^B_p$.
Suppose  $s,t\in\bS$  are such that $|c_m|<|s|$ and $|c_n|<|t|$.
We write
\begin{equation}
s(c_m;A)\sim t(c_n;B)
\end{equation}
when,
given any
relation symbol $R\in \mathcal{L}$ of arity $k$
and  $k$-tuple $(z_0,\dots, z_{k - 1})$, where all $z_i$ are from among $\{v_i:i\in \mathrm{N}^A_p\}\cup\{x\}$ and at least one $z_i$ is the variable $x$,
we have that
  $R(z_0,\dots, z_{k-1})$
   is in $s(c_m;A)$ if and only if
  $R(f^*(z_0),\dots,f^*(z_{k-1}))$ is in $t(c_n;B)$.
   When $s(c_m;A)\sim t(c_n;B)$ holds,
 we say that  the passing type of  $s$ at $c_m$ over $A$ is {\em similar} to the passing type of $t$ at $c_n$ over $B$.

If $A$ and $B$ each have  at least $n+1$ coding nodes,
then
for  $s,t\in\bS$ with
 $|c^A_n|<|s|$  and
 $|c^B_n|<|t|$,
define
\begin{equation}
s(n;A)\sim t(n;B)
\end{equation}
  to mean that $s(c^A_n;A)\sim t(c^B_n;B)$.
 When $s(n;A)\sim t(n;B)$,
 we say that {\em $s$ over $A$ and $t$ over $B$ have similar passing types at the $n$-th coding node},
 or that {\em the passing type of  $s$ at $n$ over $A$ is similar to the passing type of $t$ at $n$ over $B$}.
\end{defn}

It is clear that for fixed $n$, $\sim$ is an equivalence relation
on passing types over subsets of $\bS$.

The following fact is the essence of why we are interested in similarity of passing types:
They tell us exactly when two structures represented by coding nodes are isomorphic
 as substructures of the enumerated structure $\bK$; that is, when there exists an
 $\mathcal{L}$-isomorphism between the structures that preserves the order relation on their underlying sets
 inherited from $\om$.

\begin{fact}\label{fact.simsamestructure}
Let $A$ and $B$ be subsets of $\bS$  and $n<\om$ such that  $A$ and $B$ each have  $n+1$ many coding nodes.
Then
the substructures $\bK\re A$ and $\bK\re B$ are isomorphic, as ordered substructures of $\bK$,
if and only if
\begin{enumerate}
\item
For each $i\le n$, the $1$-types
$c^A_i$   and $c^B_i$ contain
the same parameter-free formulas;
and
\item
For all $i<j\le n$, $c^A_{j}(i;A)\sim c^B_j(i;B)$.
\end{enumerate}
\end{fact}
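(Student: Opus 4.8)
The plan is to prove both directions by directly unwinding the definitions, using the fact that the coding nodes $c^A_i$ and $c^B_i$ encode the complete quantifier-free $1$-types of the respective vertices over the initial segments of those substructures. First I would fix notation: let $\lgl a_i : i \le n\rgl$ and $\lgl b_i : i \le n\rgl$ enumerate the vertices of $\bK$ represented by the coding nodes $\lgl c^A_i\rgl$ and $\lgl c^B_i\rgl$ in order of increasing length, so that $a_i < a_j$ and $b_i < b_j$ whenever $i<j$ (as elements of $\om$). Let $g : \{a_i : i \le n\} \to \{b_i : i\le n\}$ be the unique increasing bijection, so $g(a_i) = b_i$. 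The claim is that $g$ is an $\mathcal{L}$-isomorphism between $\bK \re A$ and $\bK \re B$ if and only if (1) and (2) hold; note that any order-preserving isomorphism between these two $(n+1)$-element ordered substructures must be exactly $g$, so it suffices to check whether $g$ itself is an $\mathcal{L}$-isomorphism.

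The key observation is that whether $g$ is an isomorphism is equivalent to the assertion that for every relation symbol $R \in \mathcal{L}$ of arity $k$ and every $k$-tuple $\bar{p}$ from $\{a_i : i \le n\}$, we have $R^{\bK}(\bar p) \Leftrightarrow R^{\bK}(g(\bar p))$. I would then split such a tuple $\bar p$ according to its largest-index entry: if $\bar p$ involves only $a_0, \dots, a_j$ and $a_j$ actually appears in $\bar p$, then the relation $R^{\bK}(\bar p)$ is decided by the $1$-type of $a_j$ over $\bK$ restricted to $\{a_0,\dots,a_{j-1}\}$ — that is, by the passing type $c^A_j(i;A)$ for the appropriate $i$ — together with, when $j=0$, the parameter-free part of the $1$-type, which is condition (1). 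Concretely: tuples with no repeated entries and maximal entry $a_j$ with $j \ge 1$ correspond exactly to formulas lying in the partial type $c^A_j(|c^A_{j-1}|) = c^A_j(j-1;A)$, and the definition of $s(i;A) \sim t(i;B)$ (Definition \ref{defn.prespt}) says precisely that, under the increasing bijection $f$ from $\mathrm{N}^A_p$ to $\mathrm{N}^B_p$ — which is a restriction of $g$ up to the identification $v_i \leftrightarrow i$ — the same atomic formulas (with the variable $x$ interpreted as the new vertex) hold. Tuples all of whose entries are among $\{a_0\}$ (i.e.\ $j = 0$) are governed by the parameter-free formulas, which is condition (1). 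Recall that by the standing conventions no relation holds of a tuple with a repeated entry, so repeated-entry tuples contribute nothing and are handled trivially on both sides. Thus $g$ preserves all relations iff (1) holds (handling the $j=0$ case) and, for each $1 \le j \le n$, $c^A_j(j-1;A) \sim c^B_j(j-1;B)$. Since $s(i;A)$ for $i < j$ is determined by $s(j-1;A)$ (it is the restriction to parameters indexed by the first $i$ coding nodes), and conversely $s(j-1;A)$ is recovered as the union of the $s(i;A)$, the family of similarities $\{c^A_j(i;A) \sim c^B_j(i;B) : i < j \le n\}$ in condition (2) is equivalent to the family $\{c^A_j(j-1;A) \sim c^B_j(j-1;B) : 1 \le j \le n\}$; I would note this equivalence explicitly.

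Assembling these pieces gives both directions at once: if (1) and (2) hold, then every atomic relation is preserved by $g$, so $g$ is an isomorphism of ordered $\mathcal{L}$-structures; conversely, if $\bK \re A \cong \bK \re B$ as ordered substructures, the isomorphism is $g$, hence every atomic relation is preserved, and reading the preceding analysis backwards yields (1) and (2). The main obstacle I expect is purely bookkeeping: being careful about the indexing conventions — the off-by-one between $|c_n| = n+1$ and the parameter set $\bK_n$, the identification of the variable $x$ with the newly added vertex in a passing type, and matching the set $\mathrm{N}^A_p$ appearing in Definition \ref{defn.prespt} with the enumeration $\lgl a_i\rgl$ — and in verifying cleanly that "the passing type at $c_m$ over $A$" captures exactly the atomic formulas about the tuple whose top element is the vertex $a_j$, with all lower entries ranging over the earlier coding vertices. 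None of this is mathematically deep, but it requires stating the correspondence between tuples and formulas-in-passing-types precisely so that the similarity relation $\sim$ translates directly into relation-preservation under $g$.
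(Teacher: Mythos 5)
The paper states this Fact without proof, as an immediate consequence of the definitions, and your overall strategy is the right one: reduce to the unique increasing bijection $g$ (which is the only candidate isomorphism of ordered substructures) preserving all atomic relations, and then translate relation-preservation into similarity of passing types plus agreement of parameter-free formulas. However, your write-up contains a concrete false step. You assert that tuples with maximal entry $a_j$, $j\ge 1$, ``correspond exactly to formulas lying in $c^A_j(j-1;A)$'' and, on the grounds that each $c^A_j(i;A)$ is a restriction of $c^A_j(j-1;A)$, that condition (2) is equivalent to the smaller family $\{c^A_j(j-1;A)\sim c^B_j(j-1;B):1\le j\le n\}$. This misreads Definition \ref{defn.passingtype}: the passing type of $c^A_j$ at $c^A_i$ is the \emph{increment} $c^A_j(|c^A_i|)$, i.e.\ only those formulas of $c^A_j\re\bK_{|c^A_i|}$ in which the vertex represented by $c^A_i$ actually occurs, not the cumulative restriction of $c^A_j$ to that level. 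Consequently $c^A_j(j-1;A)$ mentions only $a_{j-1}$ and carries no information about relations between $a_j$ and $a_i$ for $i<j-1$. Already for graphs this kills the reduction: take $a_0<a_1<a_2$ with $a_2$ adjacent to $a_1$ only, and $b_0<b_1<b_2$ with $b_2$ adjacent to both $b_0$ and $b_1$ (and $a_0,a_1$, resp.\ $b_0,b_1$, non-adjacent); then your weakened family of similarities holds while $\bK\re A\not\cong\bK\re B$. So neither the claimed equivalence of relation-preservation with the $(j,j-1)$ similarities nor the claimed equivalence of that family with condition (2) is correct. The correct bookkeeping, which your first sentence in fact gestures at (``the passing type $c^A_j(i;A)$ for the appropriate $i$''), is to send each tuple with at least two distinct entries to the passing type indexed by its largest entry $a_j$ and its \emph{second-largest} entry $a_i$, and to use the full family in (2), one similarity for every pair $i<j$.

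A second, smaller slip: you invoke condition (1) only for $j=0$. A unary relation at $a_j$ with $j\ge 1$ corresponds to a tuple whose only entry is $a_j$; after substituting $x$ it has no parameters, so it lies in no passing type (passing types contain no parameter-free formulas) and is governed precisely by the parameter-free part of $c^A_j$. This is exactly why condition (1) quantifies over all $i\le n$, and your argument needs it at every index in both directions (given the paper's convention that unary relations partition the vertices). With these two repairs — indexing passing types by the second-largest entry of each tuple rather than collapsing to $i=j-1$, and using (1) at all levels — the definitional unwinding goes through as you outline.
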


We now extend the similarity relation on passing types over subsets of $\bS$ to a relation on subtrees of $\bS$ that preserves
tree structure. For this, we first define a
(strict)
linear order $\prec$ on $\bS$:
We may assume there is a linear ordering on the relation symbols
and negated relation symbols in $\mathcal{L}$,
with the convention that
all the
negated relation symbols  appear in the linear order before the  relation symbols.
(We make this convention to support the intuition that ``moving left'' from a node in a tree indicates that a relation does not hold, while ``moving right'' suggests that it does; the convention is not necessary for our results.)
 Extend the usual linear order $<$ on $\om$, the underlying set of $\bK$,
to the set $\{x\} \cup \om$ by setting $x<n$ for each $n \in \om$.
Let $(\{x\} \cup \om)^{<\om}$,  the set of finite sequences from $\{x\} \cup \om$, have the induced lexicographic order.
Then the induced lexicographic order on the set
$$
(\{ R : R\in\mathcal{L}\} \cup \{\neg R : R \in \mathcal{L}\}) \times (\{x\} \cup \om)^{<\om}
$$
is a linear order on the set of atomic and negated atomic formulas of $\mathcal{L}$ that have one free variable $x$ and parameters
from $\omega$.  Since any node of $\bS$ is completely determined by such atomic and negated atomic formulas, this
lexicographic order gives rise to a linear order on $\bS$, which we denote $\prec$.
 Observe that by definition of the lexicographic ordering, we have:
 If
 $s \subsetneq t$,
then $s\prec t$; and
for any incomparable   $s,t \in\bS$, if
 $|s\wedge t|=n$,
 then
$s\prec t$ if and only if $s\re(n+1)\prec t\re (n+1)$.
This order $\prec$
generalizes
the lexicographic order  for the case of binary relational structures in \cite{Sauer06}, \cite{Laflamme/Sauer/Vuksanovic06}, \cite{DobrinenJML20}, \cite{DobrinenH_k19}, and \cite{Zucker20}.

\begin{defn}[Similarity Map]\label{def.ssmap}
Let  $S$ and $T$  be meet-closed subsets
of $\bS$.
A function $f:S\ra T$ is a {\em similarity map} of $S$ to $T$ if for all nodes
$s, t \in \bS$,
the following hold:
\begin{enumerate}
\item
$f$ is a bijection which preserves $\prec$:
$s\prec t$ if and only if $f(s)\prec f(t)$.

\item
$f$ preserves meets, and hence splitting nodes:
$f(s\wedge t)=f(s)\wedge f(t)$.

\item
$f$ preserves relative lengths:
$|s|<|t|$ if and only if
$|f(s)|<|f(t)|$.

\item
$f$ preserves initial segments:
$s\sse t$ if and only if $f(s)\sse f(t)$.

\item
$f$ preserves  coding  nodes and their
parameter-free formulas:
  Given a coding node $c_n^S\in S$, $f(c_n^S)=c^T_n$;
moreover,
for  $\gamma\in\Gamma$,
$\gamma(v^S_n)$ holds in $\bK$
  if and only if
 $\gamma(v^T_n)$ holds in $\bK$,
 where $v^S_n$ and $v^T_n$ are the vertices of $\bK$ represented by coding nodes
 $c^S_n$ and $c^T_n$, respectively.

\item
$f$ {\em preserves relative passing types} at coding nodes:
$s(n;S)\sim f(s)(n;T)$, for each $n$ such that $|c^S_n|<|s|$.
\end{enumerate}

When there is a similarity map between $S$ and $T$, we say that $S$ and $T$  are {\em similar} and  we write $S\sim T$.
Given a subtree $S$ of $\bS$,
we let $\Sim(S)$ denote the collection of all subtrees  $T$of $\bS$ which are similar to $S$.
If $T'\sse T$ and $f$ is a  similarity map of $S$ to $T'$, then we say that  $f$ is a {\em similarity embedding} of $S$ into $T$.
\end{defn}

\begin{rem}
It follows from  (2)  that
 $s$ is a splitting node in $S$ if and only if  $f(s)$ is a splitting node in $T$.
 Moreover, if $s$ is a splitting node in $S$,
 then $s$ has the same number of immediate successors in $S$ as $f(s)$ has in $T$.
Similarity is an equivalence relation
on the subtrees of $\bS$,
since the
identity map is a similarity map, the
inverse of a  similarity map is a similarity map, and the composition of two similarity maps is a  similarity map.

Our notion of {\em similarity}   extends  the notion of {\em strong similarity} in \cite{Sauer06} and  \cite{Laflamme/Sauer/Vuksanovic06} for trees without coding nodes, and in \cite{DobrinenJML20} and \cite{DobrinenH_k19} for trees with coding nodes.
We drop the word {\em strong} to make the terminology more efficient, since there is only one notion of similarity  used in this paper.
\end{rem}

Given two
substructures  $\bF,\bG$ of $\bK$, we write $\bF\cong^{\om} \bG$
when there exists an $\mathcal{L}$-isomorphism between $\bF$ and $\bG$ that preserves
 the linear order on
their universes inherited from $\om$.
Note that for any subtrees
$S,T$ of $\bS$,
$S\sim T$ implies that $\bK\re S\cong^{\om}\bK\re T$.

%%%%%%%%%%%%%%%%%%%%%%
%%%%%%%%%%%%%%%%%%%%%%
%%%%%%%%%%%%%%%%%%%%%%

\subsection{Diagonal coding trees and \EEAP$^+$}\label{subsec.3.3}

Our approach to finding exact big Ramsey degrees for structures with unary and  binary relations
 starts with the kinds of trees  that will
actually produce the  exact degrees, upon taking a subcopy of $\bK$ represented by an antichain of coding nodes in such trees.
Namely, we will work with {\em diagonal coding trees}.
This
will  enable us to characterize the  big Ramsey degrees without the need to develop a  notion  of  envelope and pass through that intermediate step.
As this can be done  with  very little additional  work in the forcing arguments, this is our approach:
We will  work with  skew  subtrees of $\bS$   which have two-branching from the outset.
This leads to the first direct proof of exact big Ramsey degrees, without any appeal to envelopes.
The same methods  also enable us to prove that any \Fraisse\ structure satisfying  \EEAP$^+$, with  relations of any arity,  is indivisible.

The following modification of
 Definition \ref{defn.treecodeK}  of $\bS(\bK)$  will be useful especially for \Fraisse\ classes which have both non-trivial unary relations and a linear order or some  similar
relation, such as the betweenness relation.
Recall    that $\Gamma$ denotes the set of complete
$1$-types having only parameter-free formulas;
in particular, the only relation symbols that can occur in any $\gamma \in \Gamma$ will be unary.

\begin{defn}[The Unary-Colored  Coding Tree of $1$-Types,
$\bU(\bK)$]\label{defn.ctU}
Let $\mathcal{K}$ be a \Fraisse\ class in language $\mathcal{L}$ and $\bK$ an enumerated \Fraisse\ structure for $\mathcal{K}$.
For $n < \om$,
let $c_n$ denote the $1$-type
of $v_n$
over $\bK_n$
(exactly as in the definition of $\bS(\bK)$).
Let $\mathcal{L}^-$ denote
the collection of all relation symbols in $\mathcal{L}$ of arity greater than one,
and let $\bK^-$ denote the reduct of $\bK$ to $\mathcal{L}^-$
and $\bK_n^-$ the reduct of $\bK_n$ to $\mathcal{L}^-$.

For $n<\om$, define the {\em $n$-th level, $\bU(n)$}, to  be
the collection
of all $1$-types
$s$ over $\bK^-_n$
in the language $\mathcal{L}^-$
such that
for some $i\ge n$,
$v_i$ satisfies $s$.
Define $\bU$
 to be
$\bigcup_{n<\om}\bU(n)$.
The tree-ordering on $\bU$ is simply inclusion.
The {\em unary-colored coding tree of $1$-types}
is
the tree $\bU$ along with the  function $c:\om\ra \bU$ such that $c(n)=c_n$.
Thus,
$c_n$ is the $1$-type
(in the language $\mathcal{L}^-$) of $v_n$
 in $\bU(n)$  along with the additional ``unary color''
$\gamma\in\Gamma$ such that
$\gamma(v_n)$ holds in $\bK$.
 \end{defn}

Note that $\bK^-$ is not necessarily
a \Fraisse\ structure, as the collection of reducts of members of $\mathcal{K}$ to $\mathcal{L}^-$ need not be a
\Fraisse\ class.
This poses no problem to our uses of $\bU$ or to the results.

\begin{rem}\label{rem.bU}
 In the case that $\mathcal{K}$ has no  unary relations, $\bU$ is  the same as $\bS$.
Otherwise, the difference between $\bU$ and $\bS$
is that all non-coding nodes in $\bU$  are
complete  $1$-types over initial segments of $\bK^-$
in the language $\mathcal{L}^-$,
while all
nodes in $\bS$, coding or non-coding,
are complete $1$-types over initial segments of $\bK$
in the language $\mathcal{L}$.
In particular,  $\bS(0)$ equals $\Gamma$, while
$\bU(0)$
 has  exactly one node, $c_0$.

Definition
\ref{defn.passingtype} of passing type applies  to   $\bU$, as the notion of  passing type involves no unary relations.
Definition
 \ref{defn.prespt}
 of  similarity of passing types
 and Definition \ref{def.ssmap} of similarity maps both apply
to  $\bU$,
 since the notion of coding nodes is the same in both $\bS$ and $\bU$.
Working inside  $\bU$ instead of $\bS$ makes the upper bound arguments   for \Fraisse\ classes with both
a linear order
and unary relations simpler, lining up with the previous approach for big Ramsey degrees of $\bQ_n$ in \cite{Laflamme/NVT/Sauer10}.
This set-up will allow us to do one
uniform
forcing proof in the next section for all classes satisfying \EEAP$^+$.
For  classes with \SFAP, the exact  bound proofs will return to the $\bS$ setting.

Lastly, we point out that the tree $\bU$ extends the approach used by Zucker in \cite{Zucker20} for  certain  free amalgamation  classes
with binary and unary relations.
\end{rem}

The following definition of {\em diagonal}, motivated by Definition 3.2 in \cite{Laflamme/Sauer/Vuksanovic06},
can be found in \cite{DobrinenJML20} and \cite{DobrinenH_k19}.

\begin{defn}[Diagonal tree]\label{def.diagskew}
We call a subtree $T\sse \bS$
or $T\sse\bU$
{\em diagonal}
if each level of $T$ has at most one splitting node,
each splitting node in $T$ has degree two (exactly two immediate successors), and
coding node levels in $T$ have no splitting nodes.
\end{defn}

In  most currently known cases
with finite
big Ramsey degrees,
any {\em persistent} similarity type,
in the sense of Definition \ref{defn.cp},
 is  diagonal (though additional requirements are necessary for
 constrained
  free amalgamation classes
 as in \cite{DobrinenH_k19}, \cite{DobrinenJML20}, and \cite{Zucker20}).
This idea is seen for
unrestricted
structures with finitely many binary relations in \cite{Laflamme/Sauer/Vuksanovic06}, where
Laflamme, Sauer, and Vuksanovic prove, using Milliken's theorem and envelopes, that the persistent similarity types are  diagonal.
We will show below that almost all the structures considered in Section
\ref{sec.EEAPClasses}
have diagonal coding trees.

\begin{notation}\label{notn.cong<}
Given a diagonal subtree $T$
(of $\bS$ or $\bU$)
with coding nodes,
we let
$\lgl c^T_n: n<N\rgl$,
where $N\le\om$,
  denote the enumeration of the coding nodes in $T$ in order of increasing length.
Let $\ell^T_n$ denote  $|c^T_n|$, the {\em length} of $c^T_n$.
We shall call a node in $T$ a {\em critical node} if it is either a splitting node or a coding node in $T$.
Let
\begin{equation}
\widehat{T}=\{t\re n:t\in T\mathrm{\ and \ } n\le |t|\}.
\end{equation}
Given
$s\in T$ that is not a splitting node in $T$,
we let $s^+$ denote the immediate successor of $s$ in $\widehat{T}$.
Given any
$\ell$,
we let $T\re\ell$ denote the set of those nodes in $\widehat{T}$ with length $\ell$,
and we let
  $T\rl \ell$
  denote the
  union of the
  set of nodes in $T$ of length less than  $\ell$
  with the set $T\re\ell$.
\end{notation}

Extending
Notation \ref{notn.KreA} to subtrees $T$ of either $\bS$ or $\bU$, we write $\bK\re T$ to denote the substructure of
$\bK$ on $\mathrm{N}^T$, the set of vertices of $\bK$ represented by the coding nodes in $T$.

\begin{defn}[Diagonal Coding Subtree]\label{defn.sct}
A subtree $T\sse\bU$ is called a {\em diagonal coding subtree} if $T$ is diagonal and  satisfies the following properties:
  \begin{enumerate}
 \item
   $\bK\re T\cong\bK$.
  \item
  For each $n<\om$, the collection of $1$-types
  in
  $T\re (\ell^T_n+1)$ over $\bK\re (T\rl \ell^T_n)$
   is in
    one-to-one correspondence with the collection of
   $1$-types in $\bU(n+1)$.
   \item[(3)]
Given $m<n$
and letting
  $A:=T\rl(\ell^T_m-1)$,
if $c^T_n\contains c^T_m$
 then
 $$
  (c^{T}_n)^+(c^{T}_n; A)
  \sim
  (c^{T}_m)^+(c^{T}_m; A).
$$
\end{enumerate}
Likewise, a subtree $T\sse\bS$ is a {\em diagonal coding subtree} if the above hold with $\bU$ replaced by $\bS$.
\end{defn}

\begin{rem}
Requirement (3) aids in the proofs in the next section and can be  met by
the \Fraisse\ limit of
any \Fraisse\ class satisfying \EEAP.
Note that if $T\sse\bU$
(or $T\sse\bS$)
satisfies (3),
then any subtree $S$ of $T$ satisfying $S\sim T$ automatically satisfies (3).
\end{rem}

Now we are prepared to define the Diagonal Coding Tree Property, which is an assumption in Definition \ref{defn_EEAP_newplus}
of \EEAP$^+$.
We say that a tree  $T$ is  {\em perfect}  if $T$ has  no terminal nodes, and  each node in  $T$ has  at least  two  incomparable extensions in $T$.

Recall  our assumption  that any \Fraisse\ class $\mathcal{K}$ that we consider has
 at least one non-unary relation symbol in its language.
We make this assumption because
if $\mathcal{K}$  has  only unary relation symbols in its language, then $\bS$ is a disjoint union of finitely many infinite branches.
In this case,  finitely many applications of  Ramsey's Theorem  will yield finite big Ramsey degrees.

We point out that whenever $\mathcal{K}$ satisfies \SFAP, every node in  $\bS$ (and also in $\bU$) has at least two immediate successors.
However, there  are \Fraisse\ classes
in binary relational languages
 that satisfy \EEAP, and yet for which the trees $\bS$ and  $\bU$ are not perfect; for example, certain \Fraisse\ classes of ultrametric spaces.
In such cases,
Theorem  \ref{thm.matrixHL}
does not apply, as
the forcing posets
used in its  proof
are atomic.
Thus, one of the requirements for \EEAP$^+$ is that there is a perfect subtree of $\bU$ which codes a copy of $\bK$, whenever $\mathcal{L}$  has relation symbols of arity greater than one.
This is an ingredient in the next property.

\begin{defn}[Diagonal Coding Tree Property]\label{defn.DCTP}
A \Fraisse\ class $\mathcal{K}$ in language $\mathcal{L}$  satisfies the {\em Diagonal Coding Tree Property}
if  given any enumerated \Fraisse\ structure $\bK$
for $\mathcal{K}$,
there is a diagonal coding
subtree $T$ of either $\bS$ or $\bU$
such that $T$ is perfect.
\end{defn}

From here
through most of Section
\ref{sec.FRT},
we will simply work in $\bU$ to avoid duplicating arguments, noting that
for \Fraisse\ classes with \SFAP, or
without \SFAP\ but with \Fraisse\ limits having
\EEAP$^+$ and
in a language with no unary relation symbols,
the following can all be done inside $\bS$.

We now define the  space of coding subtrees of $\bU$ with which we shall be working.

\begin{defn}[The Space of  Diagonal Coding Trees of $1$-Types, $\mathcal{T}$]\label{def.subtree}
Let $\bK$ be any enumerated \Fraisse\  structure
and let $\bT$ be a fixed diagonal coding subtree of $\bU$.
Then the space of coding trees
 $\mathcal{T}(\bT)$ consists of all   subtrees $T$ of $\bT$ such that
 $T\sim\bT$.
Members of $\mathcal{T}(\bT)$ are called simply {\em coding trees}, where diagonal is understood to be  implied.
We shall usually simply write $\mathcal{T}$ when $\bT$ is clear
from context.
For $T\in \mathcal{T}$,  we write
$S\le T$ to mean that  $S$ is a  subtree of $T$ and $S$ is a member of $\mathcal{T}$.
\end{defn}

\begin{rem}
Given  $\bT$ satisfying   (1)--(3) in Definition \ref{defn.sct}, if $T\sse \bT$  satisfies $T\sim \bT$,  then $T$ also satisfies (1)--(3).
Any tree $T$ satisfying  (1) and (2)  has no terminal nodes and has coding nodes dense in $T$.
Condition (2) implies that
the \Fraisse\ structure $\bJ:=\bK\re T$  represented by $T$ has the following  property:
For
any $i-1<j<k$ in $\mathrm{J}$ satisfying
$\bJ\re (i\cup\{j\}) \cong \bJ\re (i\cup\{k\})$,
it holds
 that
$\type(j/\bK_i)=\type(k /\bK_i)$;
equivalently,  that whenever
two vertices in $\mathrm{J}$ are in the same orbit  over
$\bJ_i$ in $\bJ$, they
are in the same orbit over $\bK_i$
in $\bK$.
\end{rem}

The
first use
of diagonal subtrees of the infinite binary tree in characterizing exact big Ramsey degrees
 was
for
 the rationals in \cite{DevlinThesis}.
Diagonal subtrees of the infinite binary tree  turned out to be at the heart of
  characterizing the exact big Ramsey degrees of
the Rado graph  as well as of the generic directed graph and the  generic tournament in
\cite{Sauer06} and
\cite{Laflamme/Sauer/Vuksanovic06}.
More generally,
diagonal subtrees of  boundedly branching trees turned out to be central to the characterization of
big Ramsey degrees of unconstrained
 structures with finitely many binary relations
 in
\cite{Sauer06} and
\cite{Laflamme/Sauer/Vuksanovic06}.
More recently,
  characterizations of the  big Ramsey degrees
for    triangle-free graphs were found to
   involve
 diagonal  subtrees (\cite{DobrinenJML20},\cite{DobrinenH_3ExactDegrees20}), and similarly, for free amalgamation classes with finitely
 many binary relations and finitely many
 finite
 forbidden irreducible substructures  on three or more vertices (\cite{Balko7},\cite{DobrinenH_k19},\cite{Zucker20}).
 However, in these cases,
  properties additional to being diagonal are essential to characterizing their big Ramsey degrees;
  hence, their big Ramsey degrees do not have a ``simple'' characterization solely in terms of similarity types of antichains of coding nodes in diagonal coding trees.
We will prove that, similarly to the rationals and the Rado graph,
 all
unary and binary relational \Fraisse\  classes  with \Fraisse\ structure satisfying  \EEAP$^+$ have big Ramsey degrees which are  characterized  simply by similarity types of
antichains of coding nodes in diagonal  coding trees, along with the passing types of their coding nodes.

Recalling from Notation \ref{notn.cong<}
that
  $t\in T$ is called  a
{\em critical node}
if $t$ is either a splitting node or a coding node in $T$,
any two critical nodes in a diagonal coding tree
 have different lengths, and thus,
the levels of $T$ are designated by the lengths of the critical  nodes in $T$.
(This follows from the definition of {\em diagonal}.)
 If $\lgl d^T_m:m<\om\rgl$  enumerates  the critical nodes in $T$ in order of strictly increasing length,
then we let
$T(m)$ denote the
collection of those nodes in $T$ with length $|d^T_m|$, which we call the {\em $m$-th level} of $T$.

Given a substructure $\bJ$ of $\bK$,
we let $\bU\re\bJ$ denote the subtree of $\bU$
induced by the meet-closure of the coding nodes $\{c_n:n\in \mathrm{J}\}$.
 We call $\bU\re \bJ$ the {\em subtree of $\bU$ induced by $\bJ$}.
 If $\bJ=\bK\re T$ for some  $T\in\mathcal{T}$,
 then $\bU\re \bJ=T$, as $T$ being  diagonal ensures that the  coding nodes in $\bU\re \bJ$ are exactly those in  $T$.

We now  state the property truly  at the heart of this paper.
This  is   the
property  which makes  the forcing arguments in the next section simpler than the arguments for binary relational structures omitting some finite set of
irreducible substructures on three or more vertices.
As seen in Section
\ref{sec.EEAPClasses},
this property
 unifies a seemingly disparate collection of \Fraisse\ classes
 with finite big Ramsey degrees.

\begin{defn}[\EEAP$^+$, Coding Tree Version]\label{def.EEAPCodingTree}
A \Fraisse\ class $\mathcal{K}$ satisfies the
 {\em
Coding Tree Version of  \EEAP$^+$}
 if and only if   $\mathcal{K}$ satisfies
 the disjoint amalgamation property
 and,
 letting $\bK$ be any enumerated \Fraisse\ limit of $\mathcal{K}$, $\bK$ satisfies
 the Diagonal Coding Tree Property,
the Extension Property, and
the following
condition:

Let $T$ be any  diagonal coding  subtree   of $\bU(\bK)$  (or of $\bS(\bK)$), and let $\ell<\om$ be given.
Let
$i,j$ be any distinct integers such that
$\ell<\min(|c^T_i|,|c^T_j|)$,
and let
$\bfC$ denote  the substructure of $\bK$  represented by the coding nodes in $T\rl \ell$ along with
 $\{c^T_i,c^T_j\}$.
Then there are $m \ge\ell$
and
$s',t'\in T\re m$ such that
$s'\contains s$ and $t'\contains t$
 and,
assuming (1) and (2), the conclusion holds:
 \begin{enumerate}
\item[(1)]
Suppose $n\ge m$ and
$s'',t''\in T\re n$
 with $s''\contains s'$ and
$t''\contains t'$.

\item[(2)]
Suppose $c^T_{i'}\in T$ is any coding node extending $s''$.
\end{enumerate}
Then
there is a coding node $c^T_{j'}\in T$, with $j'>i'$,
such that
$c_{j'}\contains t''$ and
the substructure  of $\bK$  represented by the coding nodes in $T\rl \ell$ along with $\{c^T_{i'},c^T_{j'}\}$ is isomorphic to
$\bfC$.
\end{defn}

\begin{rem}
The structures
$\bK\re (T\rl \ell)$, $\bK\re (T\rl m)$, and $\bK\re (T\rl n)$ above
 play the roles  of $\bfA$, $\bfA'$, and $\bfB$, respectively, in Definition \ref{defn.EEAP_new}.
 The Extension Property  will be defined in
the next section, in Definition \ref{defn.ExtProp}.
 Suffice it to say here that the Extension Property is easily  satisfied by
 \Fraisse\ limits of
 all classes  satisfying  \SFAP, and
 of all classes
 satisfying  \EEAP\ that are considered in this paper. It may even turn out to follow from \EEAP, but we include it to be precise about what is being assumed in \EEAP$^+$.
\end{rem}

The final work in this subsection is to prove that most of the  \Fraisse\ classes with \EEAP\ discussed in Section
\ref{sec.EEAPClasses}
have
\Fraisse\ limits satisfying the Diagonal Coding Tree Property,
thus completing
another part of
the proofs
that these classes have \Fraisse\ limits
satisfying
\EEAP$^+$.
Verification that these \Fraisse\ limits satisfy the Extension Property will be carried out in  Section \ref{sec.FRT}.

We begin with  free amalgamation classes satisfying \SFAP.
For
\Fraisse\ limits of
these classes, we can always construct a diagonal coding subtree of $\bS$.
The following notation will be used in  the rest of this subsection.
Given  $j<\om$, sets  vertices $\{v_{m_i}:i<j\}$ and
$\{v_{n_i}:i<j\}$, and
$1$-types $s,t\in\bS$ such that $|s|>m_{j-1}$ and $|t|>n_{j-1}$, we will  write
\begin{equation}
s\re (\bK\re\{v_{m_i}:i<j\})\sim t\re (\bK\re\{v_{n_i}:i<j\})
\end{equation}
exactly when, for each
$i<j$,
$s(c_{m_i}; \{c_{m_k}:k<i\})\sim
t(c_{n_i}; \{c_{n_k}:k<i\})$.

\begin{thm}\label{thm.SFAPimpliesDCT}
\SFAP\ implies
 \EEAP$^+$.
\end{thm}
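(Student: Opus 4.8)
The plan is to show that whenever $\mathcal{K}$ has \SFAP, then (a) its age already has \EEAP\ (this is Remark \ref{rem.freesup}), (b) the \Fraisse\ limit $\bK$ satisfies the Diagonal Coding Tree Property, and (c) $\bK$ satisfies the Extension Property. Part (a) is done. Part (c) is deferred: the statement of the theorem is an "umbrella" result, and the Extension Property for \SFAP\ classes is handled in Section \ref{sec.FRT}; so in this section I will only carry out part (b), constructing an explicit perfect diagonal coding subtree of $\bS(\bK)$. Since the language has at least one non-unary relation symbol (our standing assumption) and \SFAP\ gives free amalgamation — hence every node of $\bS$ has at least two immediate successors — perfectness will come essentially for free; the work is in the diagonalization and in verifying conditions (1)--(3) of Definition \ref{defn.sct}.

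First I would fix an enumerated \Fraisse\ limit $\bK$ with universe $\om$ and build $T$ by recursion on levels, interleaving splitting nodes and coding nodes so that each level of $T$ contains exactly one critical node. At each stage I have a finite level set $X\subseteq\bS$ (the current "top nodes" of $T$), together with the coding nodes placed so far, coding an initial segment of a copy of $\bK$. To add the next coding node: pick one $s\in X$, extend all nodes of $X$ (keeping them non-splitting, i.e.\ following the unique passing-type extension where no choice is forced, and making free choices where choices exist) up to a common length $\ell$, and then designate the extension of $s$ at length $\ell$ to be the next coding node $c^T_n$. The key point is that \SFAP\ is exactly what lets me choose the new vertex's $1$-type over the already-placed structure independently of how that structure sits inside $\bK$: given the $1$-type I want to realize (dictated by the isomorphism type of the copy of $\bK$ I am building, via its enumeration), \SFAP\ — with $\bfA$ a small substructure, $\bfC$ extending it by the relevant two vertices — guarantees a one-vertex extension in $\mathcal{K}$ realizing it with no extra relations, and \Fraisse-genericity of $\bK$ lets me find such a vertex as a coding node above $s$. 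To add a splitting node: pick $s\in X$, extend everything to a common length, and at that length replace the extension of $s$ by its two immediate successors in $\bS$ (these exist since every node has $\ge 2$ immediate successors); all other nodes of $X$ get their unique non-splitting extension. Doing this with a bookkeeping that guarantees every node eventually both splits and carries coding nodes densely above it yields a diagonal, perfect subtree.

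Second I would verify $\bK\re T\cong\bK$ (condition (1) of Definition \ref{defn.sct}): because at the $n$-th coding-node stage I realize precisely the $1$-type of the $n$-th vertex of a generic structure over the preceding $n$ vertices, and \SFAP\ guarantees these realizations are always available, the structure $\bK\re T$ has the same age as $\bK$ and is generic, hence isomorphic to $\bK$ by \Fraisse's theorem. Condition (2) — that the $1$-types in $T\re(\ell^T_n+1)$ over $\bK\re(T\rl\ell^T_n)$ are in bijection with $\bU(n+1)$ — follows by arranging the construction so that at level $\ell^T_n$ I have already split enough (and not more than necessary) that the non-coding nodes realize exactly the remaining $1$-types; this is again a consequence of free amalgamation ensuring all such $1$-types are realizable, combined with the diagonal bookkeeping. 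Condition (3) is the "relative passing type" requirement $(c^T_n)^+(c^T_n;A)\sim(c^T_m)^+(c^T_m;A)$ for $m<n$ with $c^T_n\supseteq c^T_m$ and $A=T\rl(\ell^T_m-1)$; under \SFAP\ this can be forced directly, since when I place $c^T_n$ above (the immediate successor of) $c^T_m$ I may freely stipulate that the immediate successor $(c^T_n)^+$ carries the same passing type over $A$ as $(c^T_m)^+$ does — free amalgamation imposes no obstruction — and the Remark after Definition \ref{defn.sct} notes this passes to all similar subtrees.

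The main obstacle I anticipate is the simultaneous bookkeeping: I must interleave "split every node infinitely often," "put coding nodes densely above every node," "realize at the $n$-th coding stage the correct $1$-type from the fixed enumeration of $\bK$," and "keep the tree diagonal (one critical node per level, binary splitting, no splitting at coding levels)" — all at once. The standard move is to use a pairing function to schedule, at stage $k$, one obligation of each type, and to argue that the non-coding nodes can always be extended non-trivially (using the unique forced passing type) while the designated node does whatever the current obligation requires; the fact that \SFAP\ never blocks a desired one-vertex extension is precisely what makes each obligation satisfiable. The remaining routine points — that $\widehat{T}$ is as described, that $T$ is genuinely a subtree in the sense of Definition \ref{defn.subtree}, and that the resulting $T$ is perfect — I would state and leave to the reader, and I would close by invoking the (forthcoming) verification of the Extension Property for \SFAP\ classes to conclude that $\bK$ satisfies \EEAP$^+$.
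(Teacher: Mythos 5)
Your proposal is correct and follows essentially the same route as the paper: \SFAP\ gives \EEAP\ for free, the Diagonal Coding Tree Property is established by a level-by-level recursion in $\bS(\bK)$ in which \SFAP\ is used to realize the required passing types at each new coding node (the paper does this by maintaining an explicit $\prec$-preserving bijection between the current level of $\bT$ and $\bS(n)$, which is the precise form of your "split enough and not more than necessary" bookkeeping for condition (2)), perfectness comes from the standing assumption of a non-unary relation symbol, and the Extension Property is deferred to the later lemma (Lemma \ref{lem.SFAPEP}), exactly as in the paper. No gaps beyond routine detail.
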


\begin{proof}
Suppose $\mathcal{K}$ is a \Fraisse\ class satisfying \SFAP.
Then  $\mathcal{K}$ automatically also satisfies \EEAP.

Let  $\bK$ be any enumerated \Fraisse\ structure for $\mathcal{K}$, and let $\bS$ be the coding tree of $1$-types over finite initial segments of $\bK$.
Recall that  $c_n$ denotes the $n$-th coding node of $\bS$, that is, the  $1$-type of
the $n$-th vertex of $\bK$ over $\bK_n$.
If there are any  unary relations in the language $\mathcal{L}$ for $\mathcal{K}$, then $\bS(0)$ will have more than one node.
Recall our convention that the ``leftmost'' or $\prec$-least
node in $\bS(n)$  is the $1$-type over $\bK_n$ in which no relations of arity greater than one are satisfied.

We start constructing
a diagonal coding subtree
$\bT$ by letting the minimal level of $\bT$ equal $\bS(0)$.
Take a level set $X$ of $\bS$ satisfying
(a)
for each $t\in \bS(0)$,
the number of nodes in $X$ extending $t$
is the same as the number of nodes in $\bS(1)$ extending  $t$, and
(b)
 the subtree $U_0$ generated by the meet-closure  of $X$ is diagonal.
 We may assume, for convenience, that the $\prec$-order of the  splitting nodes in $U_0$   is the same as the ordering by their lengths.

Let $x_*$ denote the $\prec$-least member of $X$ extending $c_0$.
(If there are no  unary relation symbols in the language, then $x_*$ is the
``leftmost'' or $\prec$-least
node in $X$.)
Let $c^{\bT}_0$ denote the coding node of least length extending
 $x_*$.
Extend the rest of the nodes in $X$  to the length of $c^{\bT}_0$ and call this set of nodes, along with $c^{\bT}_0$,  $Y$;  define $\bT\re |c^{\bT}_0|=Y$.
Then  take one immediate successor in $\bS$  of each member of $Y$ so that  there is  a one-to-one correspondence between the $1$-types in $Y$ over   $\bK\re\{v^T_0\}$,
  where $v_0^{\bT}$ is the vertex in $\bK$ represented by
 $c^{\bT}_0$, and the $1$-types in $\bS(1)$:
Letting $p=|\bS(1)|$,
list the nodes in $\bS(1)$ and $Y$ in $\prec$-increasing order as
$\lgl s_i:i<p\rgl$ and $\lgl y_i:i<p\rgl$, respectively.
Take $z_i$  to be an
 immediate successor of $y_i$ in $\bS$  such that
   $z_i\re (\bK\re\{v_0^{\bT}\})\sim s_i$.
 Such  $z_i$ exist by \SFAP.
 Let $\bT\re (|c^{\bT}_0|+1|)=\{z_i:i<p\}$.
 This constructs $\bT$ up to  length  $|c^{\bT}_0|+1$.

The rest of $\bT$ is constructed similarly:
Suppose  $n\ge 1$ and $\bT$ has been constructed up to the immediate  successors of its $(n-1)$-st coding node, $c^{\bT}_{n-1}$.
Take $W$ to be the set of nodes in $\bT$ of length
 $|c^{\bT}_{n-1}|+1$.
This set $W$ has the same size as $\bS(n)$;
let $\varphi: W\ra \bS(n)$ be the $\prec$-preserving bijection.
Take a level set $X$ of nodes in $\bS$ extending $W$ so that (a) for each $w\in W$, the number of nodes in $X$ extending $w$ is the same as the number of nodes in $\bS(n+1)$ extending $\varphi(w)$, and (b)
the tree $U$ generated by the meet-closure of $X$ is diagonal.
Again, we may assume that the splitting nodes in $U$  increase in length as their $\prec$-order increases.

Note that $X$ and  $\bS(n+1)$ have the same cardinality.
Let $p=|\bS(n+1)|$ and enumerate $X$ in $\prec$-increasing order as $\lgl x_i:i<p\rgl$.
Let  $i_*$ be the index so that
$x_{i_*}$ is the $\prec$-least member of $X$ extending
$\varphi(c_{n})$.
Let $c^{\bT}_{n}$ denote the coding node in $\bS$ of shortest length
extending $x_{i_*}$.
For each  $i\in p\setminus \{i_{*}\}$, take one  $y_i\in \bS$ of length $|c^{\bT}_{n}|$ extending $x_i$.
Let $y_{i_*}=c^{\bT}_{n}$,
$Y=\{y_i:i < p\}$, and
 $\bT\re |c^{\bT}_{n}|=Y$.
Let $\lgl s_i:i<p\rgl$  enumerate the nodes in
$\bS(n+1)$ in   $\prec$-increasing order.
Then for each $i<p$,
let $z_i$ be an immediate successor of $y_i$  in $\bS$ satisfying
\begin{equation}
z_i\re (\bK\re\{v_m^{\bT}:m\le n\})\sim
s_i\re \bK_{n+1},
\end{equation}
where $v_m^{\bT}$ is the vertex of $\bK$ represented by the coding node $c_m^{\bT}$.
Again, such $z_i$ exist by \SFAP.
Let $\bT\re (|c^{\bT}_{n}|+1)=\{z_i:i<p\}$.

In this manner, we construct a subtree $\bT$ of $\bS$.
It is straightforward to check that this construction satisfies  (1) and (2) of Definition \ref{defn.sct}
of diagonal coding tree.
Using  \SFAP,  we may construct $\bT$ so that
 property (3) holds.
As long as the language for $\mathcal{K}$ contains at least one relation symbol, $\bT$ will be  a perfect  tree.
Thus,
any \Fraisse\ limit for
$\mathcal{K}$ satisfies the Diagonal Coding Tree Property.
We show in Lemma \ref{lem.SFAPEP} that any \Fraisse\ limit for $\mathcal{K}$ will also satisfy the Extension
Property, completing the proof that \Fraisse\ limits of \Fraisse\ classes with \SFAP\  have \EEAP$^+$.
\end{proof}

The same argument works
as in Theorem \ref{thm.SFAPimpliesDCT}
  for all
unrestricted structures, and hence we have the following proposition as the Extension Property trivially holds.
(See \cite{Laflamme/Sauer/Vuksanovic06} for unrestricted binary relational structures.)

\begin{prop}\label{prop.UnrestrSDAPplus}
If  $\mathcal{K}$ is an unrestricted \Fraisse\ class, then any enumerated \Fraisse\ structure for $\mathcal{K}$   satisfies \EEAP$^+$.
\end{prop}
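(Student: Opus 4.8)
The plan is to reduce the statement to constructions already carried out for classes with \SFAP, observing that the only use of \SFAP\ in those constructions is to guarantee that a node of the coding tree admits an immediate successor realizing a prescribed passing type, and that an unrestricted class enjoys an even stronger ``free extension'' property. First I would invoke Proposition \ref{prop.LSVSFAP} to record that $\mathcal{K}$ -- whether it is an unrestricted class $\mathcal{U}_{\mathcal{C}}$ or the ordered expansion $\mathcal{U}^{<}_{\mathcal{C}}$ of one -- satisfies \EEAP, and in particular has the disjoint amalgamation property. By Definition \ref{defn_EEAP_newplus} it then remains only to verify, for an arbitrary enumerated \Fraisse\ structure $\bK$ for $\mathcal{K}$, that $\bK$ has the Diagonal Coding Tree Property and the Extension Property.

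For the Diagonal Coding Tree Property I would rerun the inductive, level-by-level construction of a diagonal coding subtree $\bT$ of $\bS(\bK)$ (in the unordered case) or of $\bU(\bK)$ (in the ordered case) exactly as in the proof of Theorem \ref{thm.SFAPimpliesDCT}, making a single substitution: at the stage where that proof appeals to \SFAP\ to produce, for each node $y_i$ in the current level set, an immediate successor $z_i$ with $z_i\re(\bK\re\{v^{\bT}_m:m\le n\})\sim s_i\re\bK_{n+1}$, I would instead use that in $\mathcal{U}_{\mathcal{C}}$ the only constraints on a structure are that each of its induced substructures of cardinality $i$ lie in $\mathcal{C}_i$ for $1\le i\le n$. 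Consequently the quantifier-free $1$-type of a new vertex over $\bK_{n+1}$ may be chosen freely subject only to these ``local'' size-$i$ requirements, and in particular it may be chosen to realize any prescribed similarity type of passing type over $\bK\re\{v^{\bT}_m:m\le n\}$ together with any prescribed unary color from $\Gamma$; hence the desired $z_i$ exist. Since our standing assumption gives a relation symbol of arity at least two, every level of $\bT$ has a splitting node, so $\bT$ is perfect, and conditions (1)--(3) of Definition \ref{defn.sct} are checked precisely as in Theorem \ref{thm.SFAPimpliesDCT}. In the ordered case the branching at non-coding nodes must respect the position of the new vertex in the linear order, which requires the additional bookkeeping of Lemma \ref{lem.SFAPplusoderimpliesDCT}; rather than reproduce that argument I would simply cite it, noting that it too used \SFAP\ only through this same free-extension step, which the universal constraint sets supply.

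For the Extension Property I would observe that clause (1) of Definition \ref{defn.ExtProp} is vacuously satisfied by an unrestricted class: the universal constraint sets place no restriction on how a finite configuration sits inside a larger structure beyond the isomorphism types of its $i$-element induced substructures, so there is nothing further to witness. Thus $\bK$ has the Extension Property, and together with the preceding paragraph this shows $\bK$ satisfies \EEAP$^+$.

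The main obstacle I anticipate is purely expository rather than mathematical: making precise that the free-extension property of unrestricted classes delivers exactly the immediate successors demanded in the constructions of Theorem \ref{thm.SFAPimpliesDCT} and Lemma \ref{lem.SFAPplusoderimpliesDCT}, and that nothing else in those arguments used \SFAP\ essentially. Once this is verified, no new ideas are needed.
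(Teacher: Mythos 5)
Your proposal is correct and is essentially the paper's own argument: the paper justifies this proposition by noting that the construction of Theorem \ref{thm.SFAPimpliesDCT} goes through for unrestricted classes because the universal constraint sets allow each node to be extended freely (this is also how Proposition \ref{prop.LSVSFAP} argues \EEAP$^+$ for $\mathbf{U}_{\mathcal{C}}$), and that clause (1) of the Extension Property holds trivially. The only cosmetic differences are that your inclusion of the ordered expansion is not needed for this statement (it is handled separately, in Lemma \ref{lem.orderedunrestr}), and "trivially" rather than "vacuously" is the more accurate description of why the Extension Property holds.
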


The next lemma completes the proofs of  Propositions
\ref{prop.LO_n} and
\ref{bQn}.

\begin{lem}\label{lem.DCTLO}
There is a diagonal coding tree
representing  $\bQ_n$, for each $n\ge 1$
Hence,
these structures
have the Diagonal Coding Tree Property.
\end{lem}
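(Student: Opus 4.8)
The plan is to build, for an arbitrary enumerated \Fraisse\ limit $\bK$ of $\mathcal{P}_n$, a perfect diagonal coding subtree $T$ of $\bU:=\bU(\bK)$ by a recursion that alternates single splitting steps with single coding steps; this is in the spirit of the construction in the proof of Theorem \ref{thm.SFAPimpliesDCT}, adapted to the skew tree $\bU(\bQ_n)$. First I would record the shape of $\bU$: since $<$ is the only relation symbol of arity greater than one, $\bK^-$ is just a linear order, so $\bU$ coincides with the coding tree of $1$-types of $(\bQ,<)$ of Example \ref{ex.Q}, namely it is skew (at most one splitting node per level), every splitting node has degree two, splitting occurs exactly at the coding nodes, and each coding node $c_k$ carries a colour $\gamma\in\Gamma=\{P_1,\dots,P_n\}$ (the unique $\gamma$ with $\gamma(v_k)$). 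The one extra fact needed is that each colour is dense in $\bU$: above every node $s\in\bU$ and for every $\gamma\in\Gamma$ there is a coding node $c_k\contains s$ with $\gamma(v_k)$, which is immediate from ultrahomogeneity of $\bQ_n$ together with the density of every partition class.

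Next I would carry out the recursion through level sets, adding at round $r$ one splitting node $d^T_r$ of $T$ followed by one coding node $c^T_r$, each at a length strictly larger than every length used so far, so that the critical nodes of $T$ occur at lengths $|d^T_0|<|c^T_0|<|d^T_1|<|c^T_1|<\cdots$ and $T$ is diagonal. At a splitting step I take (say, the $\prec$-least) node $y$ of the current level set, run it up $\bU$ to the first coding node $d\contains y$, declare $d=d^T_r$ and keep both its immediate successors, and extend every other level-set node up to length $|d|+1$ through a single immediate successor, creating no new splitting of $T$. At a coding step I consult a bookkeeping list enumerating all ``extension demands'' for $\mathcal{P}_n$ (a finite set of already-coded vertices, a gap they determine, and a colour $\gamma$); the alternation of splitting and coding is arranged so that the level set always contains exactly one node in each gap determined by the vertices coded so far, I take the node $z$ in the demanded gap, run $z$ up $\bU$ to a coding node $c\contains z$ of colour $\gamma$ (one exists by density of $\gamma$, passing straight through intervening $\bU$-coding nodes on a fixed side), declare $c=c^T_r$, keep whichever immediate successor of $c$ restores the one-node-per-gap invariant, and extend the rest up to length $|c|+1$. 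Since every node of $T$ has a splitting node of $T$ above it, $T$ is perfect.

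Then I would verify the conditions of Definition \ref{defn.sct}. Condition (1): the bookkeeping meets every extension demand, so $\bK\re T$ is a countably infinite dense linear order without endpoints in which each partition class is dense, hence $\bK\re T\cong\bQ_n$. Condition (2): below length $\ell^T_n+1=|c^T_n|+1$ the tree $T$ has exactly the $n+1$ splitting nodes $d^T_0,\dots,d^T_n$ and the $n+1$ coding nodes $c^T_0,\dots,c^T_n$, and passing through a coding node does not increase width, so $|T\re(\ell^T_n+1)|=1+(n+1)=n+2=|\bU(n+1)|$; since $\bK\re(T\rl\ell^T_n)$ is a structure of $\mathcal{P}_n$ on $n+1$ vertices, it has exactly $n+2$ quantifier-free $1$-types over it, and by the one-node-per-gap invariant the nodes of $T\re(\ell^T_n+1)$ realize all of them, giving the required one-to-one correspondence with $\bU(n+1)$. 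Condition (3) holds by the remark following Definition \ref{defn.sct}, since $\mathcal{P}_n$ satisfies \EEAP\ by Proposition \ref{bQn}. As the construction uses only ultrahomogeneity of $\bQ_n$, it applies verbatim to every enumeration, so $\mathcal{P}_n$ --- equivalently $\bQ_n$ --- has the Diagonal Coding Tree Property.

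The main obstacle is not any isolated step but maintaining, simultaneously through both kinds of step, the single invariant that the level set has exactly one node in each gap determined by the currently coded vertices, while also discharging every extension demand: this invariant is what makes $T$ come out diagonal with the correct level widths (condition (2)) and at the same time forces $\bK\re T\cong\bQ_n$ (condition (1)). The fiddly point is checking that the successor kept at a coding step is consistent with the forced extensions of the sibling branch --- which uses that a non-coding node of $\bU$ at any level has its variable $x$ in a gap not containing the corresponding vertex, so that branch lies on a determined side of the newly coded vertex; everything else (perfectness, condition (3), independence of the enumeration) is routine.
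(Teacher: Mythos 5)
Your overall architecture (alternate one splitting step with one coding step, use bookkeeping to meet all density demands, verify (1)--(3) of Definition \ref{defn.sct}) is the right one, but the construction as specified breaks down at exactly the point you flag as the crux. Your splitting step splits the $\prec$-least node $y$ of the level set, while your coding step places the new vertex in whatever gap the bookkeeping demands; these two choices must be coupled, and in your description they are not. Immediately after a splitting step the level set contains two nodes (the two immediate successors of $d^T_r$) lying in the \emph{same} gap of the structure represented so far, so the invariant you invoke at the coding step (``exactly one node in each gap determined by the vertices coded so far'') is already false at that moment; the only way a single newly coded vertex can repair it is if that vertex lands in $y$'s gap, strictly between the two branches (which happens automatically when $c^T_r$ extends one successor of $d^T_r$, since the sibling lies on the far side of the vertex of $\bU$ coded by $d^T_r$). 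If the demanded gap is different from $y$'s gap, then after the coding step $y$'s gap still contains two nodes of $T$ and one of the two sub-gaps created by the new vertex contains none, no matter which immediate successor of $c^T_r$ you keep; condition (2) of Definition \ref{defn.sct} then fails at that level, and since the bookkeeping must visit every gap cofinally to get $\bK\re T\cong\bQ_n$, this happens infinitely often. The repair is precisely what the paper's construction does: the node that gets split is the node that will be extended to the next coding node (the node in the demanded gap), one of its two immediate successors is run up to $c^T_r$, and the sibling survives to occupy the other side of the new vertex.

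A second, smaller gap: condition (3) is not actually verified. The remark following Definition \ref{defn.sct} says only that (3) \emph{can be met} when the age has \EEAP\ (and is inherited by similar subtrees); it does not say every diagonal coding tree satisfies it. In your construction the successor of $c^T_r$ that is kept is ``whichever restores the invariant,'' which can be the $x<v^T_r$ side at some stages and the $v^T_r<x$ side at others; two nested coding nodes handled with opposite sides violate (3), since the passing types of their immediate successors at the respective coding nodes then disagree under the required correspondence. The paper builds (3) into the construction by always taking the $\prec$-leftmost extension of each coding node (equivalently, always placing the new vertex on a fixed side of the $\bU$-splitting vertex, so the surviving sibling is always on the same fixed side); you need this uniformity, or some argument replacing it, rather than an appeal to \EEAP\ alone.
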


\begin{proof}
We have already seen in Figure \ref{fig.Qtree} in Subsection \ref{subsec.3.1} that $\bS(\bQ)=\bU(\bQ)$ is a skew tree with binary splitting.
Similarly, for $n\ge 2$, $\bU(\bQ_n)$ is a skew tree with binary splitting.
In Figure \ref{fig.Q2tree}, we have seen that $\bS(\bQ_n)$ consists of $n$-many trees which are isomorphic copies; $\bU(\bQ_n)$ consolidates these into one tree with $n$ different ``unary-colored''
coding nodes.
Thus, to construct a diagonal coding subtree  $\bT$ of $\bU(\bQ_n)$, it only remains to choose splitting nodes for $\bT$ (which are coding nodes in $\bU$ but not in $\bT$) and then choose other coding nodes in $\bU$ to be inherited as the coding nodes in $\bT$, so as to satisfy requirements (2) and (3) of Definition \ref{defn.sct}, the definition of diagonal coding subtree.
The construction is a slight modification of the one given in \cite{Laflamme/NVT/Sauer10}, where they constructed diagonal antichains of (non-coding) trees for $\bQ_n$.

Take the only node in $\bU(0)$, $c_0$, to be the least splitting node in $\bT$.
 Let $\bT\re 1$ consists of
 the two immediate successors of $c_0$ in $\bU$, say $s_0\prec s_1$.
Then extend $s_0$ to the next coding node in $\bU$, and label this node $c^{\bT}_0$.
If $n\ge 2$, we also require that
$c^{\bT}_0$   satisfies the same unary relations as $c_0$ does.
Take any extension $t_1\contains s_1$ in $\bU$ of length
 $|c^{\bT}_0|$.
The set $\{t_0,t_1\}$ make up the nodes in $\bT$ at the level of its least coding node,  $c^{\bT}_0$.
Extend $c^{\bT}_0$ $\prec$-leftmost in $\bU$, call this node $u_0$.
There is only one immediate successor of $t_1$ in $\bU$, call it $u_1$.
Let $\bT\re (|c^{\bT}_0|+1)=\{u_0,u_1\}$.

In general, given $n\ge 1$ and $\bT$ constructed up to nodes of length $|c^{\bT}_{n-1}|+1$,
enumerate these nodes in $\prec$-increasing order as $\lgl t_i:i<n+2\rgl$.
Let $j$ denote the index of the node that will be extended to the next coding node, $c^{\bT}_n$.
This is the only node that needs to branch before the level of $c^{\bT}_n$.
Let $s$  be the shortest   splitting node in $\bU$
extending $t_{j}$.
Denote  its immediate successors by $s_0,s_1$, where
$s_0\prec s_1$.
Let $c^{\bT}_n$ be the coding node of least length in $\bS$ extending $s_0$; if $n\ge 2$, also require that $c^{\bT}_n$ satisfies the same unary relation as $c_n$.
Extend all the nodes $s_0$ and $t_i$, $i\in (n+2)\setminus\{j\}$ to nodes in $\bS$ of length
$|c^{\bT}_n|$.
These nodes along with $c^{\bT}_n$ construct $\bT\re |c^{\bT}_n|$.
Take the $\prec$-leftmost extension of $c^{\bT}_n$ to be its immediate successor in
$\bT$.
All other nodes in $\bT\re |c^{\bT}_n|$ have only one immediate successor in $\bS$, so there is no choice to be made.

This constructs a diagonal tree $\bT$  representing a copy of $\bQ_n$.
Note that taking the $\prec$-leftmost extension of each coding node has the effect that
all extensions of any  coding node  $c^{\bT}_n$ in $\bT$ include the formula $x<v^{\bT}_n$,
satisfying  (3) of the definition of  diagonal coding tree.
\end{proof}

Next, we consider  ordered \SFAP\ classes.

\begin{lem}\label{lem.SFAPplusoderimpliesDCT}
Suppose $\mathcal{K}$ is a \Fraisse\ class satisfying \SFAP\
 and let  $\mathcal{K}^{<}$
denote  the \Fraisse\ class
 of  ordered
 expansions of members of
 $\mathcal{K}$.
 Then
 the \Fraisse\ limit $\bK^<$ of
  $\mathcal{K}^{<}$ satisfies \EEAP$^+$.
\end{lem}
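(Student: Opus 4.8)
The plan is to adapt the construction in the proof of Theorem \ref{thm.SFAPimpliesDCT} by interleaving, into each level of the tree, the work needed to code the linear order correctly, in the spirit of Lemma \ref{lem.DCTLO}. Since $\mathcal{K}$ satisfies \SFAP, Remark \ref{rem.freesup} tells us that $\mathcal{K}^<$, being the free superposition of $\mathcal{K}$ with $\mathcal{LO}$, satisfies \EEAP. So by Definition \ref{defn_EEAP_newplus} it remains to verify the Diagonal Coding Tree Property and the Extension Property for $\bK^<$; the latter is deferred to Section \ref{sec.FRT} (Lemma \ref{lem.SFAPEP} handles the \SFAP\ case and the ordered case will be handled analogously there, since (1) of Definition \ref{defn.ExtProp} is easy to arrange when a dense linear order is present). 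Thus the substance of this lemma is the construction of a perfect diagonal coding subtree $\bT$ of $\bU(\bK^<)$ whose coding nodes represent a copy of $\bK^<$.

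First I would fix an enumerated \Fraisse\ structure $\bK^<$ and recall that $\bU(\bK^<)$ strips the unary relations into colors on coding nodes while retaining all relations of arity $\ge 2$, including $<$. The key observation, exactly as in the rationals, is that in $\bU(\bK^<)$ any non-coding node $s$ at the level of a coding node $c_n$ must already decide the order relation between $x$ and $v_n$, because $s$ and $c_n$ differ on the formula $x < v_m$ (or $v_m < x$) for $m = |s \wedge c_n|$; hence the $<$-coordinate contributes no extra branching beyond what the other relations force. So the branching behavior of $\bU(\bK^<)$ above a coding node is governed by $\mathcal{K}$ alone, and \SFAP\ for $\mathcal{K}$ lets us realize, by a single immediate successor of each surviving node, any prescribed similarity type of the $\mathcal{L}^-$-relations of arity $\ge 2$ other than $<$.

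The construction then proceeds level by level as in Theorem \ref{thm.SFAPimpliesDCT}, but with one modification when we pick the next coding node $c^{\bT}_n$: among the nodes at the current level of $\bT$, we first take the shortest splitting node $s$ of $\bU$ above the node designated to carry $c^{\bT}_n$, pass to its two immediate successors $s_0 \prec s_1$, and then let $c^{\bT}_n$ be the shortest coding node of $\bU$ above $s_0$, additionally requiring (when there are unary relations) that $c^{\bT}_n$ carry the same unary color as $c_n$. We then extend every other node at that level to the length of $c^{\bT}_n$, using \SFAP\ at the next step to choose immediate successors realizing the correct similarity types for the arity-$\ge 2$ relations; taking the $\prec$-leftmost (``$x < v^{\bT}_n$'') extension of $c^{\bT}_n$ arranges that all extensions of $c^{\bT}_n$ include $x < v^{\bT}_n$, which gives condition (3) of Definition \ref{defn.sct} for the $<$-relation and is compatible, via \SFAP, with condition (3) for the remaining relations. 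Condition (1) ($\bK \re \bT \cong \bK$) holds because the induced order on the coding nodes is dense without endpoints and all finite $\mathcal{K}$-configurations are realized (by \SFAP\ and the hereditary/amalgamation properties), and condition (2) is built into the level-by-level bookkeeping exactly as in Theorem \ref{thm.SFAPimpliesDCT}. Perfectness is immediate once the language has a relation of arity $\ge 2$, which is our standing assumption. The main obstacle — and the place requiring genuine care rather than routine bookkeeping — is checking that the single extra splitting node we insert per level does not conflict with the \SFAP-driven realization of the required passing-type similarities at the non-$<$ relations; this is where one uses that \SFAP\ lets one freely prescribe the arity-$\ge 2$ relations on a new vertex over any finite structure, so the order-coordinate constraint (which is the only thing the extra splitting node touches) can be satisfied independently, and the argument goes through much as in Lemma \ref{lem.DCTLO}. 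I would conclude by noting that the Extension Property follows from the analogue of Lemma \ref{lem.SFAPEP} proved in Section \ref{sec.FRT}, so $\bK^<$ satisfies \EEAP$^+$.
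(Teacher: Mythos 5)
Your reduction of \EEAP\ for $\mathcal{K}^<$ to free superposition and your decision to defer the Extension Property are consistent with the paper, but the heart of your construction rests on a false claim, and that claim is exactly where the real work of this lemma lies. You assert that in $\bU(\bK^<)$ any non-coding node $s$ at the level of a coding node $c_n$ must already decide the order of $x$ versus $v_n$, because $s$ and $c_n$ must differ on an order formula at their meet, so that ``the $<$-coordinate contributes no extra branching'' and the branching above a coding node ``is governed by $\mathcal{K}$ alone.'' That argument is valid only when $<$ is the sole relation (the rationals, Example \ref{ex.Q}); once $\mathcal{K}$ has its own relations, $s$ and $c_n$ may agree on every order formula and differ only on, say, an edge formula, in which case realizations of $s$ lie in the same order-interval as $v_n$ and their order with $v_n$ is genuinely undecided. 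The paper's proof makes this precise: a node $t$ at level $n$ of $\bU^<$ has twice as many immediate successors in $\bU^<(n+1)$ as its $\mathcal{L}$-reduct has in $\bU(n+1)$ exactly when $t$ occupies the same order-interval as $c^<_n$ (the condition $(*)_n(t)$ in the paper's proof), and the same number otherwise. So the order relation does contribute branching, and condition (2) of Definition \ref{defn.sct} forces your tree to realize both order-sides relative to the new coding vertex for precisely those nodes.

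Because of this, your construction omits its key step: after choosing the new coding node $c^{\bT}_n$ you ``extend every other node at that level'' and invoke \SFAP\ only for the relations coming from $\mathcal{K}$, treating the order formula between those nodes' extensions and $v^{\bT}_n$ as already settled --- but it is not, and \SFAP\ cannot supply it since it says nothing about $<$. The paper handles exactly this point by choosing each auxiliary node $y_i$ at the level of $c^{\bT^<}_n=c^<_{n_*}$ to pass through an intermediate vertex $v_m$ with $m<n_*$ already lying on the appropriate side of $v_{n_*}$, so that transitivity of $<$ forces the prescribed formula $(v_{n_*}<x)$ or $(x<v_{n_*})$, and only then uses \SFAP\ to realize the remaining passing-type requirements at the next level. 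Some such device (or an explicit genericity argument for $\mathcal{K}^<$ showing that a successor with the prescribed order-side and prescribed $\mathcal{K}$-relations is realized) is needed to verify conditions (2) and (3) of Definition \ref{defn.sct}; as written, your argument does not establish them, and your stated resolution of ``the main obstacle'' inherits the same false premise.
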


\begin{proof}
Let $\mathcal{L}$ denote the language for $\mathcal{K}$,
and let $\mathcal{L}^*$ be the expansion $\mathcal{L}\cup\{<\}$, the language of $\mathcal{K}^<$.
Let $\bK^{<}$ denote
an enumerated structure for $\mathcal{K}^{<}$,
and let $\bK$ denote the  reduct of  $\bK^{<}$ to $\mathcal{L}$; thus, $\bK$ is an enumerated \Fraisse\ structure for $\mathcal{K}$.
The universes of $\bK$ and $\bK^{<}$ are  $\om$, which  we shall denote as
 $\lgl v_n:n<\om\rgl$.
Let
$\bU$ denote  the coding tree of $1$-types induced by $\bK$, and
$\bU^{<}$ denote  the coding tree of $1$-types induced by $\bK^{<}$.
As usual, we let $c_n$ denote the $n$-th coding node in $\bK$, and we will let $c_n^<$ denote the $n$-th coding node in $\bK^<$.
(Normally, if $<$ is in the language of a \Fraisse\ class $\mathcal{K}$, then we will simply write $\bK$ for its enumerated \Fraisse\ structure and $\bU$ for its induced coding tree of $1$-types, but here it will aid the reader to consider the juxtaposition of $\bU$ and $\bU^<$.)
Notice that  $\mathcal{K}^<$ satisfies  \EEAP:
 This holds because
  \SFAP\ implies \EEAP,
 $\mathcal{LO}$ satisfies \EEAP, and
 \EEAP\ is preserved under free superposition.
So it only remains to show that there is a diagonal coding tree for $\bK^<$.

Note that since $\mathcal{L}$ has  at least one non-unary relation symbol and since
$\mathcal{K}$ satisfies \SFAP,
every node in
the tree $\bU$  has at least two immediate successors.
The branching of $\bU$ and $\bU^<$ are related in the following way:
Each node $t\in\bU(0)^<$ has twice as many immediate successors in $\bU(1)^<$
as its reduct to $\mathcal{L}$ has in $\bU(1)$.
In general, for $n\ge 1$,
given a node  $t\in \bU^<(n)$,
let  $s$  denote the collection of formulas in $t$ using only relation  symbols in   $\mathcal{L}$
 and note that  $s\in \bU(n)$.
  The number of immediate successors of $t$ in
  $ \bU^<(n+1)$ is related to the number of immediate successors of $s$ in   $\bU(n+1)$ as follows:
  Let $(*)_n(t)$ denote the following property:
  \begin{enumerate}
  \item[$(*)_n(t)$:]
  \begin{center}
   $\{m<n:(x<v_m)\in t\}=\{m<n:(x<v_m)\in c^<_n\}$
   \end{center}
\end{enumerate}
If $(*)_n(t)$ holds,
then $t$ has twice as many immediate successors in
 $\bU^<(n+1)$ as $s$ has in  $\bU(n)$, owing to the fact that
 each $1$-type in $\bU(n+1)$ extending $s$  can be augmented by either of $(x<v_n)$ or $(v_n<x)$ to form an extension of $t$ in $\bU^<(n+1)$.
 If $(*)_n(t)$ does not  hold,
 then any  vertex  $v_i$, $i>n$, satisfying  $t$
  lies in an interval of the $<$-linearly ordered set $\{v_m:m<n\}$, where neither of  the endpoints  are  $v_n$.
 Thus, the order between $v_i$ and $v_n$ is already  determined by $t$;
 hence $t$ has the same number of immediate successors in
  $\bU^<(n+1)$ as $s$ has in  $\bU(n)$.

A diagonal coding subtree $\bT^<$ of $\bU^<$  can be constructed similarly as in Theorem \ref{thm.SFAPimpliesDCT}
with the following modifications:
Suppose $\bT^<$ has been constructed up to  a
level  set $W$, where either  $n=0$ and $W=\bU^<(0):=\{c^<_0\}$, or else $n\ge 1$ and  $W$ is the set of immediate successors of the $(n-1)$-st coding node  of $\bT^<$.
This set $W$ has the same size as $\bU^<(n)$;
let $\varphi: W\ra \bU^<(n)$ be
the $\prec$-preserving bijection.
As $\mathcal{K}$ is a free
amalgamation
class,
we may assume that for any $s\in\bU^<(n)$,
if $t,u$ in
$\bU^<(n+1)$  are
 immediate successors   of $s$  with
$(x< v_n)\in t$ and $(v_n<x)\in u$,
then $t\prec u$.
Note that the two $\prec$-least extensions of $s$ either both contain $(x<v_n)$, or else both contain
$(v_n<x)$.
Moreover, we may assume that the $\prec$-least immediate successor of $s$
contains negations of all relations in $s$ with $v_n$ as
a parameter.

Take a level set $X$ of nodes in $\bU^<$ extending $W$ so that  the following hold:
(a)
for each $w\in W$, the number of nodes in $X$ extending $w$ is the same as the number of nodes in $\bU^<(n+1)$ extending $\varphi(w)$, and
(b)
the tree $U$ generated by the meet-closure  of $X$ is diagonal,
 where each splitting node in $U$ is extended by
 its  two $\prec$-least immediate successors in $\bU^<$, and
  all non-splitting nodes are extended by the $\prec$-least extension in
  $\bU^<$.
As in Theorem \ref{thm.SFAPimpliesDCT},
 we may assume that the splitting nodes in $U$  increase in length as their $\prec$-order increases, though this has no bearing on the  theorems in the next section.

Let $p:=|\bU^<(n+1)|$ and index the nodes in $\bU^<(n+1)$ in $\prec$-increasing order as $\lgl s_i:i<p\rgl$.
Note that $X$ has $p$-many nodes; index them in $\prec$-increasing order as $\lgl x_i:i<p\rgl$.
Let $x_{i_*}$ denote  the $\prec$-least member of $X$ extending
$\varphi(c^{<}_{n})$, and
extend $x_{i_*}$ to a coding node  in $\bU^<$ satisfying the same $\gamma\in\Gamma$ as $c_n$; label it $y_{i_*}$.
This node $y_{i_*}$ will be the $n$-th coding node, $c^{\bT^<}_{n}$, of  the diagonal coding subtree $\bT^<$ of $\bU^<$ which we are constructing.
For each  $i\in p\setminus \{i_*\}$,
 take one  $y_i\in \bU^<$ of length $|c^{\bT^<}_{n}|$ extending $x_i$
so that  $y_i$ is the
 $\prec$-least  extension of $x_i$, subject to  the following:
Let $n_*$ be the index such that
 $c^{\bT^<}_{n}=c^{<}_{n_*}$.
For $i<p$, if  $(v_{n}<x)$ is in $s_i$,
then we take $y_i$ so that for some
$m<n_*$
such that $v_n<v_m$,
$(v_m<x)$ is in $y_i$.
This has the effect that
if  $(v_{n}<x)$ is in $s_i$,
then any vertex $v_j$ represented by a coding node extending $s_i$ will satisfy $v_m<v_j$;
and since $v_n<v_m$, it will follow that $v_n<v_j$;
hence  $(v_{n_*}<x)$ is automatically in $y_i$.
Likewise, if
 $(x<v_{n})$ is in $s_i$,
then we take $y_i$ so that for some
$m<n_*$
such that $v_m<v_n$,
$(x<v_m)$ is in $y_i$.

Let $Y=\{y_i:i<p\}$ and
define  the  set of nodes  in
$\bT^<$ at the level of $c^{\bT^<}_{n}$ to be $Y$.
For each $i<p$,
let $z_i$ be an immediate successor of $y_i$  in $\bU^<$ satisfying
\begin{equation}
z_i\re (\bK\re\{v_j^{\bT^<}:j\le n\})\sim
s_i\re \bK_{n+1},
\end{equation}
where $v_j^{\bT^<}$ is the vertex
represented by $c_j^{\bT^<}$.
This is possible by \SFAP.
For the linear order, this was taken care of by \EEAP\ and
our choice of $y_i$.
Let $\bT^<\re (|c^{\bT}_{n}|+1)=\{z_i:i<p\}$.

In this manner, we construct a coding subtree $\bT^<$ of $\bU^<$ which is diagonal, representing a substructure of $\bK^<$ which is again isomorphic to
$\bK^<$.
By extending coding nodes in $\bT^<$ by their $\prec$-least extensions in $\bU^<T$,
we satisfy
 (3) of  the definition of diagonal coding tree .

 Hence,
 $\bK^<$
satisfies the Diagonal Coding Tree Property, and the Extension Property trivially holds.
 Therefore,
 $\bK^<$
 satisfies \EEAP$^+$.
\end{proof}

A similar construction produces a diagonal coding tree for any enumerated \Fraisse\ limit of the ordered expansion of an unrestricted \Fraisse\ class.  Thus, we have the following.

\begin{lem}\label{lem.orderedunrestr}
Suppose $\mathcal{K}$ is an unrestricted \Fraisse\ class
 and let  $\mathcal{K}^{<}$
denote  the \Fraisse\ class
 of  ordered
 expansions of members of
 $\mathcal{K}$.
 Then
 the \Fraisse\ limit $\bK^<$ of
  $\mathcal{K}^{<}$ satisfies \EEAP$^+$.
\end{lem}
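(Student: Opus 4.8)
The plan is to mirror the construction in Lemma~\ref{lem.SFAPplusoderimpliesDCT}, replacing the role of \SFAP\ by the ``independent extension'' feature of unrestricted classes, just as in the proof of Proposition~\ref{prop.LSVSFAP}. First I would fix an enumerated \Fraisse\ limit $\bK^<$ of $\mathcal{K}^<$, with underlying set $\om=\lgl v_n:n<\om\rgl$, and let $\bK$ be its reduct to the language $\mathcal{L}$ of $\mathcal{K}$; then $\bK$ is an enumerated \Fraisse\ limit of the unrestricted class $\mathcal{K}$. I would form $\bU=\bU(\bK)$ and $\bU^<=\bU^<(\bK^<)$, with coding nodes $c_n$ and $c_n^<$ respectively. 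As already noted in Proposition~\ref{prop.LSVSFAP}, the coding tree $\bS(\mathbf{U}_{\mathcal{C}})$ has the property that all nodes of the same length have the same branching degree; the linear-order expansion interacts with this exactly as in Lemma~\ref{lem.SFAPplusoderimpliesDCT}, so each node $t\in\bU^<(n)$ either has twice the number of immediate successors as its $\mathcal{L}$-reduct $s\in\bU(n)$ (when the analogue of property $(*)_n(t)$ holds, i.e.\ $t$ agrees with $c_n^<$ on which parameters $v_m$, $m<n$, sit below it in the order) or the same number (when $v_n$ lies outside the relevant $<$-interval). I would record this branching dichotomy as the first step.

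Second, I would carry out the level-by-level construction of a diagonal coding subtree $\bT^<\le\bU^<$ verbatim from Lemma~\ref{lem.SFAPplusoderimpliesDCT}, with the only change being the justification for the existence of the nodes $z_i$ realizing the prescribed similarity types at each level: instead of invoking \SFAP, I would invoke the fact that in an unrestricted class every node $s$ of a subtree can be extended independently of the substructure represented by the coding nodes below it, exactly as in the \EEAP\ verification for $\mathcal{U}_{\mathcal{C}}$ in Proposition~\ref{prop.LSVSFAP} (one simply declares the passing type of the new successor freely, and checks that every substructure of size at most $n$ among the new vertices plus $v',w'$-type data lies in the appropriate universal constraint set $\mathcal{C}_i$, which is automatic since the constraint set imposes no global restriction). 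The handling of the linear order — arranging that the $\prec$-least immediate successor of $s$ carries $(x<v_n)$ or $(v_n<x)$ consistently, and choosing the $y_i$ so that the $<$-relation between $v_n$ and any vertex extending $s_i$ is predetermined in the manner dictated by $s_i$ — is identical to that in Lemma~\ref{lem.SFAPplusoderimpliesDCT}, relying only on the fact that $\mathcal{LO}$ satisfies \EEAP\ (Proposition~\ref{prop.LO_n}) and that \EEAP\ is preserved under free superposition.

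Third, I would verify that $\bT^<$ satisfies conditions (1), (2), (3) of Definition~\ref{defn.sct}: (1) holds because $\bK\re\bT^<\cong\bK^<$ by ultrahomogeneity and the construction's faithfulness to all passing types; (2) is built into the bookkeeping via the $\prec$-preserving bijections $\varphi:W\to\bU^<(n)$ at each stage; and (3) holds because each coding node of $\bT^<$ is extended by its $\prec$-least extension in $\bU^<$, which forces the formula $x<v_n^{\bT^<}$ (or the appropriate $\gamma$-consistent choice) into every extension, giving the required similarity $(c^{\bT^<}_n)^+(c^{\bT^<}_n;A)\sim(c^{\bT^<}_m)^+(c^{\bT^<}_m;A)$ — again exactly as argued at the end of the proof of Lemma~\ref{lem.SFAPplusoderimpliesDCT} and Lemma~\ref{lem.DCTLO}. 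Since $\mathcal{L}$ contains a non-unary relation symbol (standing assumption), every node of $\bU^<$ has at least two immediate successors, so $\bT^<$ is perfect; hence $\bK^<$ has the Diagonal Coding Tree Property. Finally, the Extension Property holds trivially for unrestricted classes by the same observation used in Proposition~\ref{prop.LSVSFAP} (condition (1) of Definition~\ref{defn.ExtProp} is vacuously satisfied), so $\bK^<$ satisfies \EEAP$^+$.

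\textbf{Main obstacle.} The only real point requiring care — and the step I expect to be the main obstacle — is checking that the freely-declared successors $z_i$ at each level genuinely keep the growing structure inside $\mathcal{U}^<_{\mathcal{C}}$: one must confirm that when a new vertex is adjoined, every induced substructure of cardinality $i$ (for each arity $i$ present) still lies in $\mathcal{C}_i$, and simultaneously that the linear-order constraints from \EEAP\ are compatible with the free choices for the higher-arity relations. Because free superposition decouples the linear order from the $\mathcal{L}$-relations and because the constraint sets $\mathcal{C}_i$ are closed under isomorphism with no cross-arity interaction, this compatibility check goes through, but it is the place where the argument genuinely uses ``unrestricted'' rather than merely ``disjoint amalgamation.''
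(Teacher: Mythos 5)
Your proposal is correct and follows essentially the same route as the paper: the paper itself disposes of this lemma with a single sentence ("a similar construction produces a diagonal coding tree for any enumerated \Fraisse\ limit of the ordered expansion of an unrestricted \Fraisse\ class"), deferring to the construction of Lemma \ref{lem.SFAPplusoderimpliesDCT} with \SFAP\ replaced by the independent-extension feature of unrestricted classes, exactly as you do, and the Extension Property handled trivially as in Proposition \ref{prop.LSVSFAP}. Your write-up simply supplies the details (the branching dichotomy via $(*)_n(t)$, the verification of (1)--(3) of Definition \ref{defn.sct}, perfectness) that the paper leaves implicit, and your identification of the only delicate point --- that freely declared passing types stay inside $\mathcal{U}^<_{\mathcal{C}}$ because the constraint sets impose no cross-arity or global restrictions --- is the right one.
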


Next, we construct  diagonal coding  trees for the
linear order with a
convexly ordered equivalence relation and its iterates of coarsenings,
as well as for the ``mixed'' structures consisting of a single linear order, finitely many nested convexly ordered equivalence relations, and a
vertex
partition into finitely many definable dense pieces.
The main difference between these structures and the ones considered in the previous lemmas, is that now one has to be careful when choosing splitting nodes, as
not all splitting nodes can be extended to the same structures.
The following lemma completes the proof of
the Diagonal Coding Tree Property in
Proposition \ref{prop.loe}.

\begin{lem}\label{lem.Q_Qbiskew}
$(\bQ_{\bQ})_n$, for each $n\ge 1$,
has  the Diagonal Coding Tree Property
Moreover,
the \Fraisse\ limit of
any  class $\mathcal{K}$ in
$\mathcal{LOE}_{1,n,p}$
also has the Diagonal Coding Tree Property.
\end{lem}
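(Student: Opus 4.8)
The plan is to construct, for a fixed enumerated \Fraisse\ limit $\bK$ of $(\bQ_{\bQ})_n$ (resp.\ of a class $\mathcal{K}$ in $\mathcal{LOE}_{1,n,p}$), a perfect diagonal coding subtree $\bT$ of the unary-colored coding tree $\bU=\bU(\bK)$ satisfying conditions (1)--(3) of Definition \ref{defn.sct}. The construction mirrors Lemma \ref{lem.DCTLO} and Lemma \ref{lem.SFAPplusoderimpliesDCT}, but the key new feature is that in $\bU((\bQ_{\bQ})_n)$, different nodes have different branching degrees: a node $s$ on the level of a coding node $c_m$ splits according to whether the vertex represented extends $s$ into a new $E_i$-class or stays in an existing one (as described in Example \ref{ex.Q_Q}). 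So the first step is to record precisely, for each level, which $1$-types over $\bK_m^-$ arise and how they branch; this is essentially a bookkeeping lemma identifying $\bU(m+1)$ with the set of ways of choosing, for the new vertex $v_m$, its position relative to each nested equivalence relation and the linear order, together with its ``unary color'' $\gamma\in\Gamma$ coming from the partition $P_0,\dots,P_{p-1}$.

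Next I would build $\bT$ recursively, level by level, exactly as in Theorem \ref{thm.SFAPimpliesDCT}: having constructed $\bT$ up to the immediate successors $W$ of its $(m-1)$-st coding node, with a $\prec$-preserving bijection $\varphi:W\to\bU(m)$, I choose a level set $X$ above $W$ such that for each $w\in W$ the number of extensions of $w$ in $X$ equals the number of extensions of $\varphi(w)$ in $\bU(m+1)$, and such that the meet-closure of $X$ is diagonal with splitting nodes increasing in length with their $\prec$-order. Crucially, when I pick the splitting node realizing the branching above some $w$, I must choose it inside $\bU$ at a place where the needed number of immediate successors is actually available --- i.e.\ where the relevant new vertex genuinely starts a fresh class for each $E_i$ it needs to split on --- and I use \EEAP\ (Proposition \ref{prop.loe}) to guarantee that the extensions $z_i$ of the new level-nodes $y_i$ exist with $z_i\re(\bK\re\{v^{\bT}_j:j\le m\})\sim s_i\re\bK_{m+1}$. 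As in Lemma \ref{lem.DCTLO}, I extend each coding node $c^{\bT}_m$ by its $\prec$-leftmost successor, which has the effect that every extension contains the formula $(x<v^{\bT}_m)$ and, for each equivalence relation $E_i$, the formula recording whether the new vertices are $E_i$-equivalent to $v^{\bT}_m$; this will give condition (3) of Definition \ref{defn.sct}. Requiring that $c^{\bT}_m$ satisfies the same $\gamma\in\Gamma$ as $c_m$ handles the unary colors/partition.

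The main obstacle I expect is ensuring perfection together with the correct relative branching: in these ``$\bQ$-like'' trees, many nodes split into degree $2$ rather than $4$, and one must verify that the subtree $\bT$ one extracts still has every node with at least two incomparable extensions (perfection), while also realizing \emph{all} of $\bU(m+1)$ above the appropriate level-nodes --- there is a tension because a degree-$2$ split in $\bU$ cannot, by itself, witness a degree-$4$ branching needed for some node of $\bT$, so the split has to be sought further up the tree, and one must check this search always terminates. The existence of such splits is exactly what the Diagonal Coding Tree Property asserts, and it follows from the recursive structure of $(\bQ_{\bQ})_n$: above any node, one can always find a vertex beginning a brand-new $E_0$-class (hence a new $E_i$-class for all $i<n$), which yields the maximum branching degree $2n$; picking splitting nodes at such places, and only ``using'' as many immediate successors as needed, gives both perfection and the required correspondence with $\bU(m+1)$. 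Finally, condition (2) of Definition \ref{defn.sct} is verified from the construction since each coding-node level of $\bT$ was arranged to carry exactly the $1$-types of $\bU(m+1)$ over $\bK\re(\bT\rl\ell^{\bT}_m)$, and the Extension Property for these classes is deferred to Lemma \ref{lem.EPCOE} in Section \ref{sec.FRT}, so that \EEAP$^+$ for $(\bQ_{\bQ})_n$ and for $\mathrm{Flim}(\mathcal{K})$, $\mathcal{K}\in\mathcal{LOE}_{1,n,p}$, follows. The same argument, carried out inside $\bU$, handles the partition relations $P_0,\dots,P_{p-1}$ by simply fixing the unary color of each $c^{\bT}_m$ to match that of $c_m$, exactly as in the proof of Lemma \ref{lem.DCTLO}.
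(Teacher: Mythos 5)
Your overall scaffolding (recursive level-by-level construction realizing $\bU(m+1)$, diagonal meet-closure, $\prec$-leftmost extensions of coding nodes for condition (3) of Definition \ref{defn.sct}, matching the unary colors) is the same as the paper's, but the step where you resolve what you call the ``main obstacle'' is where the proof actually lives, and your resolution does not work. The claim that above any node one can find a vertex beginning a brand-new $E_0$-class, hence a new $E_i$-class for all $i<n$, is wrong on both counts: since $E_{i+1}$ coarsens $E_i$, a new $E_0$-class need not be new for the coarser relations (freshness at the coarsest level is what propagates downward), and, more importantly, above a node of $\bU$ that already contains a formula $E_j(x,v)$ for some earlier vertex $v$, every coding node represents a vertex in the $E_j$-class of $v$, so no vertex above it starts a new $E_j$-or-coarser class and the maximal branching degree (which is $2(n+1)$, not $2n$) is simply not available there. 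Moreover, even where a maximally branching coding node exists, you cannot ``use as many immediate successors as needed'': diagonality forces every splitting node of $\bT$ to keep exactly two immediate successors, so the fan-out of $t_*=\varphi(c_m)$ into up to $2(n+1)$ nodes must be achieved by a cascade of binary splits at distinct levels, and the order of that cascade is constrained. Once a branch commits to an $E_i$-equivalence, no coarser distinction can be realized above it; this is why the paper works ``from the outside in,'' keeping at the lower splitting nodes the two successors that are inequivalent at the relevant (coarsest remaining) level and deferring the equivalence/finer split to the last coding node before $c^\bT_m$. An arbitrary or inside-out order fails, and this ordering -- the actual content of the lemma -- is absent from your proposal.

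Relatedly, your appeal to \EEAP\ to ``guarantee that the extensions $z_i$ exist with $z_i\re(\bK\re\{v^{\bT}_j:j\le m\})\sim s_i\re\bK_{m+1}$'' transplants the \SFAP\ step of Theorem \ref{thm.SFAPimpliesDCT}, but it does not transfer to these $\bQ$-like structures: in $\bU((\bQ_{\bQ})_n)$ only the coding node at each level branches, and each non-coding node at a coding level has a unique immediate successor, so the passing types of the side nodes at the new coding node are not chosen at that stage at all -- they are forced by where the splitting nodes were placed earlier, and one must check (using transitivity of $<$ and of the $E_i$, together with convexity of the classes) that the forced types are exactly those needed to biject with $\bU(m+1)$. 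That verification succeeds precisely because of the outside-in placement of the splits; under your plan of a single maximal split plus an \EEAP\ fix-up there is nothing to make it come out right, and retaining more than two successors of one split would destroy diagonality.
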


\begin{proof}
We present the construction for $\bQ_{\bQ}$ and then
discuss the construction for the more general case.
Let
$\bU$ denote $\bU(\bQ_{\bQ})$.
It may aid the reader to recall  Figure \ref{fig.QQtree}, where a graphic is   presented for a particular enumeration of
$\bQ_{\bQ}$.

We  construct a subtree $\bT$ of
$\bU$
 which is diagonal and such that
for each $m$, the immediate successors of the nodes in
$\bT\re |c^{\bT}_m|$
have $1$-types over $\bQ_{\bQ}\re \{v^{\bT}_j:j\le m\}$ which
are in one-to-one correspondence (in $\prec$-order)
with the $1$-types in
$\bU(m+1)$.
The idea is relatively simple:
We work our way from the outside (non-equivalence) inward (equivalence) in the way we construct the splitting nodes in $\bT$.

Given $\bT\re |c^{\bT}_{m-1}|$,
let
$\varphi:\bU(m)\ra \bT\re |c^{\bT}_{m-1}|$
 be the $\prec$-preserving bijection, and
let $t_*$ denote the node
$\varphi(c_m)$ in $\bT\re |c^{\bT}_{m-1}|$.
This $t_*$ is the node which we need to extend to the next coding node.
Recall that only the coding nodes in
$\bU$
 have more than one immediate successor; so $t_*$ is the only node we need to extend to one or three splitting nodes before making the level  $\bT\re |c^{\bT}_m|$.

The simplest case is when the coding node $c_m$ has two immediate successors:
these contain $\{x<v_m, xE v_m\}$ and $\{v_m<x, xE v_m\}$, respectively.
First extend $t_*$ to a coding node
$c_i\in \bU$,
 and then take extensions $s_0,s_1$ of this coding node  so that
$\{x<v_i, x E v_i\}\sse s_0$ and
$ \{v_i<x, x E v_i\}\sse s_1$.
Extend $s_0$ to a coding node
$c_j\in \bU$,
and define $c^{\bT}_m=c_j$ and $v^{\bT}_m=v_j$.
Let $u_0$ be the extension of $c^{\bT}_m$ in $\bU$
which contains $\{x<v_j, xE v_j\}$.
Extend $s_1$ to a node $t_1\in \bU\re |c^{\bT}_m|$, and let $u_1$ be the immediate successor of $t_1$ in $\bU$.
Extend  all other nodes in $\bT\re |c^{\bT}_{m-1}|$ (besides $t_*$) to a node in $\bU$ of length  $|c^{\bT}_m|$, and let $\bT\re |c^{\bT}_m|$ consist of these nodes along with $t_0$ and $t_1$.
Let $\bT\re (|c^{\bT}_m|+1)$ consist of $u_0, u_1$, and one immediate successor of each of the nodes in $\bT\re |c^{\bT}_m|$.
By the transitivity of both relations $<$ and $E$,
we obtain that the $\prec$-preserving  bijection between
 $\bU(m+1)$ and $\bT\re (|c^{\bT}_m|+1)$
preserves passing types over $\bQ_{\bQ}\re \{v^{\bT}_k:k\le m\}$.

If the coding node $c_m$ has four immediate successors,
then these extensions consist  of all  choices from among $\{x<v_m,v_m<x\}$ and $\{xE v_m,x \hskip-.05in \not \hskip-.05in E v_m\}$.
We start on the outside with non-equivalence and work our way inside to equivalence.
First extend $t_*$ to a coding node $c_i\in \bU$ which has
four immediate successors, and let
$s_0$ denote the extension with
$\{x<v_i,x \hskip-.05in \not \hskip-.06in E v_i\}$
and
$s_3$ denote the extension with
$\{v_i<x,x \hskip-.05in \not \hskip-.04in E v_i\}$.
Again, extend $s_0$ to a coding node $c_j\in \bS$
 which has
four immediate successors, and let
$s_0$ denote the extension with
$\{x<v_j,x \hskip-.06in \not \hskip-.04in E v_j\}$
and
$s_1$ denote the extension with
$\{v_j<x,x \hskip-.05in \not \hskip-.04in E v_j\}$.
Then extend $s_1$ to any coding node $c_k$.
Take $c_\ell$ to be a coding node extending $c_k\cup\{x<v_k, x E v_k\}$,
and define $c^{\bT}_m=c_{\ell}$ and $v^{\bT}_m=v_{\ell}$.
Let $t_0$ be the  $\prec$-leftmost extension of $s_0$
in $\bU(\ell)$,
let $t_2$ be the $\prec$-leftmost extension of
$c_k\cup\{v_k<x, x E v_k\}$ in $\bU(\ell)$,
and let $t_3$ be the $\prec$-leftmost extension of $s_3$ in $\bU(\ell)$.
Finally, define $\bT\re |c^{\bT}_m|$ to consist of
$\{t_0,
c^{\bT}_m, t_2,t_3\}$ along with
the leftmost extensions in $\bU(\ell)$ of the nodes in
$(\bT\re |c^{\bT}_{m-1}|)\setminus\{t_*\}$.
Let the nodes in $\bT\re |c^{\bT}_{m}+1|$
consist of $c^{\bT}_{m}\cup\{x<v^{\bT}_m,xEv^{\bT}_m\}$, along with the immediate successors  in $\bU(\ell+1)$
of
the rest of the nodes in $\bT\re |c^{\bT}_{m}|$
It is routine to check that, by transitivity of the relations $<$ and $E$,
the immediate successors

The idea for general $(\bQ_{\bQ})_n$ is similar.
Here we have a sequence of convex equivalence relations $\lgl E_i:i<n\rgl$, where for each $i<n-1$,
$E_{i+1}$ coarsens $E_i$.
Similarly to the above,  each coding node  $c_m$ has
$2(j+1)$ many immediate successors, for some $j\le n$.
The  immediate successors
run through all combinations of choices from among $\{x<v_m,v_m<x\}$
and $\{x  E_0 v_m\}\cup
\{(xE_{i+1} v_m\wedge
x \hskip-.05in \not \hskip-.04in E_{i} v_m) : i< j\}$.
When constructing skew splitting,
 in order to set up so that the desired passing types  are available  at the next coding node of $\bT$,
we start on the ``outside'' with  types containing
$(xE_{j} v_m\wedge
x \hskip-.05in \not \hskip-.04in E_{j-1} v_m)$
and work our way inward, with the increasingly finer equivalence relations,
analogously to how the case of four immediate successors was handled above for $\bQ_{\bQ}$.

The presence of any unary relations has no effect on the existence of diagonal coding trees.
\end{proof}

%%%%%%%%%%%%%%%%%%%%%%%%%%%%%%%%%%%%%%%%%%
%%%%%%%%%%%%%%%%%%%%%%%%%%%%%%%%%%%%%%%%%%
%%%%%%%%%%%%%%%%%%%%%%%%%%%%%%%%%%%%%%%%%%
%%%%%%%%%%%%%%%%%%%%%%%%%%%%%%%%%%%%%%%%%%

\section{Forcing exact upper bounds for big Ramsey degrees}\label{sec.FRT}

This section contains the Ramsey theorem
for colorings of copies of a given
finite substructure of a \Fraisse\ structure  satisfying
\EEAP$^+$.
Theorem
\ref{thm.onecolorpertype}
provides
upper bounds for the big Ramsey degrees
of such structures
when the language has relation symbols of arity at most two, and these
turn out to be exact.
The proof of exactness  will be given   in Section \ref{sec.brd}.

The key combinatorial content of Theorem
\ref{thm.onecolorpertype}
occurs in Theorem \ref{thm.matrixHL},
where
 we  use the technique of forcing to  essentially conduct an unbounded search for a finite object,
 achieving within ZFC
one color per level set extension of a given finite
tree.
It is important to
note
that we never actually go to a generic extension.
In fact,  the forced generic object  is very much {\em not} a coding tree.
Rather, we use the forcing to
do two things:
(1)
Find a good set of nodes from which we can start to build a subtree which can have the  desired homogeneity properties; and
(2)
Use the forcing to guarantee
 the existence of  a finite object with certain properties.
 Once found, this object, being finite, must exist in the ground model.

We take here a sort of amalgamation of techniques  developed in  \cite{DobrinenJML20}, \cite{DobrinenH_k19}, and \cite{DobrinenRado19}, making adjustments as necessary.
The main differences from previous work are the following:
The forcing poset is on trees of
 $1$-types; as such,
  we work
with the  general notion of  passing type, in place of  passing number used in the papers \cite{DobrinenRado19}, \cite{DobrinenH_k19}, \cite{DobrinenJML20}, and \cite{Zucker20} for  binary relational structures.
Moreover, Definition \ref{def.plussim} presents a stronger requirement  than just similarity.
This
addresses
both the fact that relations can be of any arity,
and the fact that we consider \Fraisse\  classes
which
have disjoint, but not necessarily free, amalgamation.

We now set up notation, definitions,  and assumptions for
Theorem \ref{thm.matrixHL}, beginning with the following convention.
We also define the Extension Property, which is one of the conditions for \EEAP$^+$ to hold.

\begin{convention}\label{conv.Gamma_ts}
Let $\mathcal{K}$ be a \Fraisse\ class in a language $\mathcal{L}$ and $\bK$ a \Fraisse\ limit of $\mathcal{K}$.
If
(a)
$\mathcal{K}$  satisfies \SFAP,
or
(b)
$\bK$
 satisfies \EEAP$^+$ and either
 has no unary relations or has no transitive relations, then
we  work inside a diagonal coding subtree $\bT$ of
$\bS$.
Otherwise, we work inside a diagonal coding subtree $\bT$ of $\bU$.
 \end{convention}

\begin{rem}
All proofs in this section could be done working inside $\bU$.
Then in the case
when $\mathcal{L}$ has unary relation symbols and $\bK$ has no transitive relations,
in order to obtain the optimal upper bounds,
one would need to  take a diagonal  antichain of coding nodes, $\bD$,  in
Lemma \ref{lem.bD}, which has
the following properties:
There exists a level $\ell$ such that
$\bD\re\ell$ has $|\Gamma|$-many nodes, labeled
$d_\gamma$ where $\gamma\in\Gamma$,
such that
for each coding  node $c^{\bD}_n$ in $\bD$ there is exactly one $\gamma\in\Gamma$, call it $\gamma^\ast$, such that $c^{\bD}_n$ extends
$d_{\gamma^\ast}$.  Further, for the vertex $v^{\bD}_n$ represented by $c^{\bD}_n$, $\gamma^\ast(v^{\bD}_n)$ holds in $\bK$.
The end result of this approach is equivalent to working in $\bS$.

The results in this section
 could also be attained working solely in $\bS$.
 However, in the case
  when
  $\mathcal{L}$ has unary relation symbols and $\bK$ has a transitive relation,
 $\bS$ will not contain  a diagonal coding subtree, so the proofs would have to be modified to allow for more than one splitting node of a given length (see for instance,  Figure \ref{fig.Q2tree} showing $\bS(\bQ_2)$).
Convention \ref{conv.Gamma_ts} is intended to give the reader the idea  of when each approach (working in $\bS$ or working in $\bU$)
is most natural.

We point out that  if
$\mathcal{L}$
has no unary
relation symbols,
then $\bS=\bU$.
  \end{rem}

Let $T$ be a diagonal coding tree for the \Fraisse\ limit $\bK$ of some \Fraisse\
class $\mathcal{K}$.
Recall that the tree ordering on $T$ is simply inclusion.
We recapitulate notation from Subsection \ref{subsec.3.1}:
Each $t \in T$ can be thought of as a sequence $\lgl t(i) : i < |t| \rgl$ where $t(i) = (t \re \bK_i) \!\setminus\! (t \re \bK_{i-1})$.
For
 $t\in T$ and $\ell\le |t|$,
$t\re \ell$ denotes
$\bigcup_{i<\ell}t(i)$, which we can
think of
as the sequence $\lgl t(i):i< \ell\rgl$,
the initial segment of $t$ with domain $\ell$.
Note that
$t\re\ell\in \bS(\ell-1)$
(or $t\re \ell\in \bU(\ell-1)$).
(We let $\bS(-1)=\bU(-1)$ denote the set containing the empty set, just so that we do not have to always write $\ell\ge 1$.)

The following extends Notation
\ref{notn.cong<}
to
subsets of trees.
For a  finite subset $A\sse\bT$,  let
\begin{equation}
 \ell_A=\max\{|t|:t\in A\}\mathrm{\ \ and\ \ } \max(A)=
\{s\in A: |s|=\ell_A\}.
 \end{equation}
For $\ell\le \ell_A$,
let
\begin{equation}
A\re \ell=\{t\re \ell : t\in A\mathrm{\ and\ }|t|\ge \ell\}
\end{equation}
 and let
\begin{equation}
A\rl \ell=\{t\in A:|t|< \ell\}\cup A\re \ell.
\end{equation}
Thus, $A\re \ell$ is a level set, while $A\rl \ell$ is the set of nodes in $A$ with length less than $\ell$ along with the truncation
to $\ell$ of the  nodes in $A$ of length at least
 $\ell$.
Notice that
$A\re \ell=\emptyset$ for $\ell>\ell_A$, and
 $A\rl \ell=A$  for  $\ell\ge \ell_A$.
 Given $A,B\sse T$, we say that $B$ is an {\em initial segment} of $A$ if   $B=A\rl \ell$
 for some $\ell$ equal to
   the length of some node in $A$.
   In this case, we also say that
   $A$ {\em end-extends} (or just {\em extends}) $B$.
If $\ell$ is not the length of any node in $A$, then
  $A\rl \ell$ is not a subset  of $A$, but  is  a subset of $\widehat{A}$, where
  $\widehat{A}$ denotes $\{t\re n:t\in A\mathrm{\ and\ } n\le |t|\}$.

Define $\max(A)^+$ to be the set of nodes
$t$ in $T\re (\ell_A+1)$ such that $t$ extends $s$ for some $s \in \max(A)$.
Given a node $t\in T$ at the level of a coding node in $T$, $t$ has exactly one immediate successor in $\widehat{T}$, which  we recall
from Notation \ref{notn.cong<}
is  denoted as
$t^+$.

\begin{defn}[$+$-Similarity]\label{def.plussim}
Let $T$ be a diagonal coding tree for
the \Fraisse\ limit $\bK$ of
a \Fraisse\
class $\mathcal{K}$, and
suppose $A$ and $B$
are finite subtrees of $T$.
We write  $A\plussim B$ and say that
 $A$ and $B$ are
 {\em $+$-similar} if and only if
  $A\sim B$ and
 one of the following two cases holds:
 \begin{enumerate}
 \item[]
   \begin{enumerate}
\item[\bf Case 1.]
 If $\max(A)$  has a splitting node in $T$,
 then so does $\max(B)$,
  and the similarity map from $A$ to $B$  takes the splitting node in $\max(A)$ to the splitting node in $\max(B)$.
    \end{enumerate}
     \end{enumerate}
       \begin{enumerate}
    \item[]
        \begin{enumerate}
  \item[\bf Case 2.]
If $\max(A)$ has a coding node, say
 $c^A_n$,
and  $f:A\ra B$ is  the similarity map,
then
  $s^+(n;A)\sim f(s)^+(n;B)$ for each $s\in \max(A)$.
 \end{enumerate}
    \end{enumerate}

Note that $\plussim$ is an  equivalence relation, and  $A\plussim B$ implies $A\sim B$.
When $A\sim B$ ($A\plussim B$), we say that they have the same {\em similarity type} ({\em $+$-similarity type}).
\end{defn}

\begin{rem}\label{rem.SplussimT}
For infinite trees $S$ and $T$ with no terminal  nodes, $S\sim T$ implies that for
each $n$, letting $d^S_n$ and $d^T_n$ denote the $n$-th critical nodes of $S$ and $T$, respectively,
$S\re|d_n^S|\plussim T\re|d_n^T|$.
\end{rem}

We adopt the following notation  from topological Ramsey space theory (see \cite{TodorcevicBK10}).
Given  $k<\om$,
we define
$r_k(T)$ to be  the
restriction of $T$ to the levels of the first $k$ critical nodes of $T$;
that is,
\begin{equation}
r_k(T)=\bigcup_{m<k}T(m),
\end{equation}
where $T(m)$ denotes the set of all nodes in $T$ with length equal to $|d^T_m|$.
It follows from Remark \ref{rem.SplussimT} that  for any
 $S,T\in \mathcal{T}$,
$r_k(S)\plussim r_k(T)$.
Define $\mathcal{AT}_k$ to be the set of {\em $k$-th approximations}
to members of $\mathcal{T}$;
that is,
\begin{equation}
\mathcal{AT}_k=\{r_k(T):T\in\mathcal{T}\}.
\end{equation}
For $D\in\mathcal{AT}_k$
and $T\in\mathcal{T}$,
define the set
\begin{equation}
[D,T]=\{S\in \mathcal{T}:r_k(S)=D\mathrm{\ and\ } S\le T\}.
\end{equation}
Lastly, given
 $T\in\mathcal{T}$,
 $D=r_k(T)$,  and $n>k$,
  define
\begin{equation}
r_n[D,T]=\{r_n(S):S\in [D,T]\}.
\end{equation}

More generally, given any $A\sse T$, we use $r_k(A)$ to denote the first $k$ levels of the tree induced by the meet-closure of $A$.
We now have the necessary ideas to define the Extension Property.

Recall from Convention \ref{conv.Gamma_ts} that $\bT$ is a fixed diagonal coding tree (in $\bS$ or in $\bU$) for an enumerated \Fraisse\ limit $\bK$ of
a \Fraisse\ class $\mathcal{K}$.

\begin{defn}[Extension Property]\label{defn.ExtProp}
We say that
$\bK$
has the {\em Extension Property} when
either (1) or   (2) holds:
\begin{enumerate}
\item
Suppose $A$ is a finite or infinite subtree of   some
$T\in\mathcal{T}$.
Let $k$ be given  and suppose
$\max(r_{k+1}(A))$ has a splitting node.
Suppose that $B$ is a $+$-similarity copy of $r_k(A)$ in $T$.
Let  $u$ denote the splitting node in $\max(r_{k+1}(A))$,
and let
 $s$ denote  the node in $\max(B)^+$ which must be extended to a splitting node in order to obtain a $+$-similarity copy of $r_{k+1}(A)$.
If $s^*$ is a splitting node  in $T$  extending $s$,
then  there are extensions of the rest of the nodes in $\max(B)^+$ to the same length as $s^*$ resulting in a $+$-similarity copy
of $r_{k+1}(A)$ which
can be extended to a copy of $A$.
\item
There
is some $2\le q<\om$,
and a function
 $\psi$ defined on the set of splitting nodes in $\bT$ and  having range $q$,
 such that  the following holds:
 \begin{enumerate}
 \item[(a)]
Suppose $A$ is a finite or infinite subtree of    some $T\in\mathcal{T}$.
Let $k$ be given  and suppose
$\max(r_{k+1}(A))$ has a splitting node.
Suppose that $B$ is a $+$-similarity copy of $r_k(A)$ in $T$ such that  the similarity map
$f:r_k(A)\ra B$
has the property that
for each splitting node $t\in r_k(A)$,
 $\psi(t)=\psi(f(t))$.
Let  $u$ denote the splitting node in $\max(r_{k+1}(A))$,
and let
 $s$ denote  the node in $\max(B)^+$ which must be extended to a splitting node in order to obtain a $+$-similarity copy of $r_{k+1}(A)$.
 Then  for each  $s'\contains s$ in $T$,
 there exists  a splitting node $s^*\in T$ extending $s'$ such that $\psi(s^*)=\psi(u)$.
 Moreover,
 given such an $s^*$,
 there are extensions of the rest of the nodes in $\max(B)^+$ to the same length as $s^*$ resulting in a $+$-similarity copy of $r_{k+1}(A)$ which  can be extended to a copy of $A$.
\item[(b)]
The language for $\bK$ has at least one binary relation symbol (besides equality), and the
value  of $\psi$ is determined  by some partition of all  pairs   of
 partial $1$-types  involving only binary
 relation symbols
 over a one-element structure into pieces
 $Q_0,\dots,Q_{q-1}$,
such that
whenever $s$ is a splitting node in $\bT$,
$\psi(s)=m$ if and only if the following hold:
whenever  $c^{\bT}_j,c^{\bT}_k$ are coding nodes in $\bT$ with $c^{\bT}_j\wedge c^{\bT}_k=s$,
then
the pair of partial $1$-types of $v^{\bT}_j$ and  $v^{\bT}_k$ over
$\bK\re\{v_i\}$  is in $Q_m$.
\end{enumerate}
\end{enumerate}
\end{defn}

\begin{rem}
The Extension Property  (1) easily holds  for \Fraisse\ limits of all \Fraisse\ classes satisfying \SFAP, as we show below in Lemma \ref{lem.SFAPEP};
and similarly  for their ordered
expansions.
The same is true for   \Fraisse\ limits of all unrestricted  \Fraisse\ classes and their ordered expansions.
In these cases, all splitting nodes   in $\bT$
allow for the
construction of a $+$-similarity copy of $A$.
\Fraisse\ limits of classes such as $\mathcal{P}_n$, as well as the
four non-trivial
reducts of the rationals, also trivially have the Extension Property.

The convexly ordered equivalence relations
$\mathcal{COE}_n$, as well as the more general classes $\mathcal{COE}_{n,p}$,
satisfy (2) of the Extension property with
$q=n+1$.
See Lemma \ref{lem.EPCOE} for a discussion.
Part (2a) of the Extension property is sufficient for the proof of Theorem
\ref{thm.matrixHL},
and (2b) is sufficent for the proof of Theorem
\ref{thm.persistence}.
\end{rem}

\begin{lem}\label{lem.SFAPEP}
\SFAP\ implies the Extension Property.
\end{lem}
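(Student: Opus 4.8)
The plan is to verify alternative (1) of the Extension Property (Definition \ref{defn.ExtProp}), which, as noted in the remark following that definition, is the relevant case for classes with \SFAP. By Convention \ref{conv.Gamma_ts}, since $\mathcal{K}$ satisfies \SFAP\ we may work throughout inside a diagonal coding subtree $\bT$ of $\bS$. Fix $T\in\mathcal{T}$, a finite or infinite subtree $A\sse T$, an index $k$ for which $\max(r_{k+1}(A))$ contains a splitting node $u$, a $+$-similarity copy $B$ of $r_k(A)$ in $T$, the node $s\in\max(B)^+$ that must be extended to a splitting node to obtain a $+$-similarity copy of $r_{k+1}(A)$, and a splitting node $s^*\supseteq s$ in $T$. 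We must extend the remaining nodes of $\max(B)^+$ up to length $|s^*|$ so as to produce a $+$-similarity copy of $r_{k+1}(A)$ that still extends to a copy of $A$ inside $T$.

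The core step is the extension. Let $f\colon r_k(A)\to B$ be the similarity map; extending it $\prec$-order-isomorphically identifies $\max(B)^+$ with $\max(r_k(A))^+$, and since level $k+1$ of $A$ adjoins only the splitting node $u$, this also identifies $\max(B)^+$ with $\max(r_{k+1}(A))$. Enumerate $\max(B)^+\setminus\{s\}$ in $\prec$-increasing order and treat the nodes one at a time. For the current node $t$, with corresponding target $w\in\max(r_{k+1}(A))$, we seek $t'\in\widehat{T}$ with $t'\supseteq t$, $|t'|=|s^*|$, and $t'(n;B)\sim w(n;r_{k+1}(A))$ at every coding node $c^B_n$ of $B$. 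Read as (partial) quantifier-free $1$-types, this is exactly an instance of the amalgamation freedom provided by \SFAP\ (equivalently, free amalgamation plus free $3$-amalgamation; cf.\ Remark \ref{rem.simple}): a vertex may realize an arbitrary realizable $1$-type over an enlarged structure regardless of how that structure sits inside $\bK$, so $t$ can be completed to a $1$-type over $\bK\re(|s^*|-1)$ prescribing exactly the required relations to the vertices coded below and adding no obstruction elsewhere; the prescribed passing types are realizable since $r_{k+1}(A)$ itself embeds into a coding tree of $\bK$. Concretely, the needed successors are available inside $T$ because $\bT$ was constructed (as in Theorem \ref{thm.SFAPimpliesDCT}) so that the $1$-types at each level are in one-to-one correspondence with the nodes of $\bU(n+1)$, cf.\ Definition \ref{defn.sct}(2). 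Choosing a single immediate successor at each intermediate length introduces no new splitting nodes, and the meets among the $t'$'s coincide with those among the $t$'s, so no new meets or critical levels appear below $|s^*|$.

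Let $B'$ be the resulting level set at length $|s^*|$, including $s^*$. Then $B\cup B'$, under the bijection onto $r_{k+1}(A)$ extending $f$ and sending $s^*$ to $u$, satisfies the clauses of Definition \ref{def.ssmap}: preservation of $\prec$, meets, relative lengths, initial segments, and coding nodes with their parameter-free formulas is inherited from $B\plussim r_k(A)$ together with the fact that no new critical nodes were created, while preservation of relative passing types at the coding nodes of $B$ is precisely what the choices above enforce; since $\max(B')$ contains the splitting node $s^*$ mapping to the splitting node $u$, Case 1 of Definition \ref{def.plussim} holds and $B\cup B'\plussim r_{k+1}(A)$. Finally, $B\cup B'$ extends to a copy of $A$ in $T$: one continues up the tree level by level following the similarity type of $A$, exactly as in the proof of Theorem \ref{thm.SFAPimpliesDCT} but relativized so that only the coding and splitting nodes dictated by $A$ are used; each finite stage succeeds by \SFAP, and when $A$ is infinite the construction runs in $\omega$ steps to yield a subtree of $T$ similar to $A$ whose portion above level $k+1$ is the prescribed $B\cup B'$.

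The step demanding the most care is this last one, together with the precise translation between \SFAP\ as a statement about amalgamating finite $\mathcal{L}$-structures and the tree-of-$1$-types form used above: one must check that the successive choices of immediate successors and coding nodes never clash with the finitely many relations already fixed in $B\cup B'$, so that the unbounded construction of the copy of $A$ truly meets no obstruction. This is exactly where the absence of witnessing-type requirements under \SFAP\ --- the feature emphasized after Definition \ref{defn.SFAP} --- enters, and it is also why \emph{any} splitting node $s^*$ works here, rather than only a $\psi$-coherent one as in the $\mathcal{COE}_{n,p}$ case covered by alternative (2) of the Extension Property.
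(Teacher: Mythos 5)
There is a genuine gap, and it sits exactly where the lemma's content lies: the clause that the $+$-similarity copy of $r_{k+1}(A)$ ``can be extended to a copy of $A$'' inside $T$. Your first two steps are fine but mostly concern things that are automatic: every node of $\max(B)^+$ already has length greater than every coding node of $B$, so its passing types at those coding nodes are fixed, unchanged by any further extension, and already matched with those of the corresponding nodes over $r_k(A)$ because $B\plussim r_k(A)$ (cf.\ Case 2 of Definition \ref{def.plussim}); hence for the splitting level one only needs arbitrary extensions of the remaining nodes to length $|s^*|$, with no appeal to \SFAP. The real work is the final clause, and there your argument is only an analogy: ``continue level by level as in Theorem \ref{thm.SFAPimpliesDCT}, relativized to $T$; each stage succeeds by \SFAP.'' Theorem \ref{thm.SFAPimpliesDCT} builds a subtree of $\bS$, where above any node every realizable $1$-type extension is available at the next level; inside an arbitrary $T\in\mathcal{T}$ this fails cone-wise: above a fixed node of $T$ only certain passing types at the next coding node of $T$ actually occur, and Definition \ref{defn.sct}(2), which you cite as the reason the needed successors exist in $T$, describes the spectrum of $1$-types across an entire level of $T$, not above a given node, so it does not justify the step.

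The paper's proof supplies precisely the missing mechanism, and it is the only place \SFAP\ is genuinely used. Working coding level by coding level of $A$: first extend the designated node to a coding node $c^T_{i'}$ of $T$ so that the structure represented by the coding nodes of $B$ together with $c^T_{i'}$ is the required one; then, for each remaining node $z$, apply \SFAP\ (in the form of Definition \ref{defn.SFAP}) together with the fact that the coding nodes of $T$ represent a copy of $\bK$ (Definition \ref{defn.sct}(1)) to obtain a coding node $c^T_{j'}$ of $T$ extending $z$ whose vertex realizes the prescribed $1$-type over that enlarged structure; truncating $c^T_{j'}$ to length $|c^T_{i'}|+1$ yields a node of $\widehat{T}$ above $z$ with the required passing type at $c^T_{i'}$. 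Iterating this produces $+$-similarity copies of $r_n(A)$ for all later $n$, hence a copy of $A$ above your $B\cup B'$. Without some version of this ``realize by a coding node of $T$ and truncate'' argument, the level-by-level continuation — which you yourself flag as the step demanding the most care — remains unproved.
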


\begin{proof}
We will actually prove a slightly stronger statement which implies
(1) of
 the Extension Property.
Let $A$ be a subtree of some $T\in\mathcal{T}$.
Without loss of generality,  we may assume that either $A$ is infinite  and has infinitely many coding nodes, or else $A$ is finite and the  node in $A$ of maximal length is a coding node.
Let $m$ either be $0$,
 or else  let $m$ be a positive integer such that $\max(r_m(A))$ has a coding node.
 Let $n>m$ be least above $m$ such that $\max(r_n(A))$ has a coding node; let $c^A_i$ denote this coding node.

 Now suppose that $B$ is a $+$-similarity copy of $r_m(A)$, and
suppose  $C$ is an extension of $B$ in $T$ such that $C$ is $+$-similar to $r_{n-1}(A)$.
(Such a $C$ is easy to construct since $\bS$ is a perfect tree whenever
$\bK$ has at least one non-trivial relation of arity greater than one.)
Let $X$ denote $\max(r_{n-1}(A))^+$,
let $Y$ denote $\max(C)^+$, and let
$\varphi$ be the $+$-similarity map from $X$ to $Y$.
Let $t$ denote the node in $X$ which extends to the coding node in $\max(r_n(A))$, and let
$y$  denote  $\varphi(t)$.
Extend $y$ to some coding node  $c^T_{i'}$ in $T$ such that the substructure  of $\bK$
represented by the coding nodes in $B$ along with $c^T_{i'}$ is isomorphic to the substructure of $\bK$ represented by the coding nodes in $r_n(A)$.

Fix any $u\in X$ such that $u\ne t$, and
let $z$ denote $\varphi(u)$.
Let $c^T_j$ denote the least coding node in $A$ extending $u$.
By \SFAP, there
is an extension  of $z$ to some coding node $c^T_{j'}$ representing a vertex $w'$ in $\bK$ such that
the substructure of $\bK$ represented by the coding nodes in $B$ along with $c^T_{i'}$ and $c^T_{j'}$
is isomorphic to  the substructure of $\bK$ represented by the coding nodes in $r_n(A)$ along with $c^T_j$.
Let $u'$ denote the unique extension of $u$ in $\max(r_n(A))$, and let $z'$ denote the
 truncation of $c^T_{j'}$ to the length $|c^T_{i'}|+1$.
 Then $(z')^+(c^T_{j'};B)\sim (u')^+(c^T_i;r_m(A))$.
  Therefore, the union of $C$ along with
 $\{u':u\in Y\setminus\{y\}\}\cup \{c^T_{i'}\}$ is $+$-similar to $r_n(A)$.
 It follows that the Extension Property holds.
\end{proof}

\begin{lem}\label{lem.EPCOE}
The
\Fraisse\ limits of the
\Fraisse\ classes
$\mathcal{COE}_{n,p}$
satisfy
the Extension Property.
\end{lem}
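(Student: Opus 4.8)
The plan is to verify clause (2) of the Extension Property (Definition \ref{defn.ExtProp}), which by the Remark following that definition is the relevant one here, with $q=n+1$. Fix an enumerated \Fraisse\ limit $\bK$ of a class in $\mathcal{COE}_{n,p}$ and a diagonal coding subtree $\bT$ of $\bU(\bK)$ as produced in Lemma \ref{lem.Q_Qbiskew}. Recall from that construction that every splitting node $s$ of $\bT$ is a coding node of $\bU$, has exactly two immediate successors in $\bT$, and that these two successors differ on exactly one atomic formula, necessarily of the form $R(x,v_i)$ with $R\in\{<,E_0,\dots,E_{n-1}\}$ and $v_i$ the vertex whose $1$-type sits at level $|s|$.

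First I would define $\psi$ on the splitting nodes of $\bT$. To a splitting node $s$ assign the index $m\le n$ of the coarsest ``scale'' at which its two branches diverge: with the convention that $<$ refines inside $E_0$-classes, $E_0$ inside $E_1$-classes, and so on, call a split \emph{at level $m$} when the two coding nodes of $\bT$ descending through $s$ are $E_m$-equivalent (reading $E_n$ as the trivial all-equivalence, so $\psi=n$ records an $E_{n-1}$-boundary or global-order split) but are not already forced to be $E_{m-1}$-equivalent by the pair of partial $1$-types they carry over $\bK\re\{v_i\}$. The content of clause (2b) is that this quantity depends only on that pair of partial $1$-types, which involve only the binary symbols $<,E_0,\dots,E_{n-1}$ over a one-element structure. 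I would make this precise by writing down the partition $Q_0,\dots,Q_n$ of such pairs directly --- for each $m$, a pair $(\sigma,\tau)$ lies in $Q_m$ iff $\sigma$ and $\tau$ agree on $E_{m'}v_i$ for every $m'\ge m$ but not on everything down to and including the level-$m$ refinement --- and then checking, using transitivity of the $E_m$ and convexity of each $E_m$ with respect to $<$, that whenever $c^{\bT}_j\wedge c^{\bT}_k=s$ the resulting pair of $1$-types lands in the same cell no matter which $c^{\bT}_j,c^{\bT}_k$ are chosen. This establishes (2b) and fixes $q=n+1$.

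Next I would prove the density half of (2a). Let $A,T,k,B,u,s$ be as in the hypothesis, so that $B\plussim r_k(A)$ via a map preserving $\psi$ on splitting nodes, $u$ is the splitting node in $\max(r_{k+1}(A))$, and $s\in\max(B)^+$ is the node to be grown to a splitting node of $\psi$-value $\psi(u)$. Given any $s'\contains s$ in $T$, coding nodes of $T$ are dense above $s'$ and $\bK\re T\cong\bK$, which is homogeneous; hence one can choose two coding nodes $c^T_j,c^T_{j'}$ of $T$, both extending $s'$ with $c^T_j\wedge c^T_{j'}\contains s'$, whose vertices realize over $\bK\re\{v\}$ (for $v$ the vertex at the level of their meet) a pair of partial $1$-types lying in $Q_{\psi(u)}$; such a configuration occurs in $\bK$, and by construction of $\bT$ every such meet is a splitting node of $T$. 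Setting $s^*=c^T_j\wedge c^T_{j'}$ yields a splitting node of $T$ with $s^*\contains s'$ and $\psi(s^*)=\psi(u)$, as required.

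Finally, for the ``moreover'' clause, given such an $s^*$ I would extend the remaining nodes of $\max(B)^+$ to length $|s^*|$ and then extend the resulting $+$-similarity copy of $r_{k+1}(A)$ to a copy of $A$ inside $T$. This is the argument of Lemma \ref{lem.SFAPEP}, except that each successive coding node is produced using \EEAP\ --- equivalently its coding-tree version, Definition \ref{def.EEAPCodingTree}, which a class in $\mathcal{COE}_{n,p}$ satisfies by Proposition \ref{prop.loe} --- in place of \SFAP: the enlargement ``$\bfA'$'' from Definition \ref{defn.EEAP_new} is supplied automatically by descending one more level in $\bT$, and the witnesses needed to force the order and the equivalence relations between newly added vertices are exactly those built into $\bT$ by taking $\prec$-least extensions of coding nodes. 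Thus every required passing type can be realized without obstruction and the copy of $A$ is completed. The main obstacle I anticipate is the bookkeeping in the first two paragraphs: checking that $\psi$ is \emph{well-defined by a partition} in the strict sense demanded by (2b), and that density above $s'$ can be attained while \emph{simultaneously} hitting the prescribed $\psi$-value and keeping the partial copy extendible to $A$ --- all of which must be reconciled with transitivity of the $E_m$ and convexity of each $E_m$ relative to $<$.
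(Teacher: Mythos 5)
Your overall strategy is the paper's: verify clause (2) of Definition \ref{defn.ExtProp} with $q=n+1$ by defining $\psi$ on the splitting nodes of $\bT$ and using transitivity of the $E_i$ together with convexity. But your $\psi$ is not the paper's. The paper reads $\psi(s)$ off the formulas of $s$ itself: $\psi(s)=n$ if $\neg E_{n-1}(x,v^{\bT}_m)$ is in $s$ for all represented vertices below, and otherwise $\psi(s)$ is the least $i$ with $E_i(x,v^{\bT}_m)\in s$ for some $m$ --- that is, the finest level at which $s$ is already tied to a previously represented vertex --- then records the monotonicity $\psi(s)\ge\psi(t)$ for $s\sse t$ and declares the verification of (2) routine. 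Your $\psi$ is instead the mutual-equivalence (divergence) level of coding nodes meeting at $s$. These genuinely disagree: at the innermost splitting node produced by the cascade in Lemma \ref{lem.Q_Qbiskew}, both kept branches contain $E_0(x,v_k)$ for an auxiliary vertex $v_k$ opening a brand-new class, so your $\psi$ assigns $0$ while the paper's assigns $n$ (the node is $E$-related to no previously represented vertex). Your version is arguably the one that matches clause (2b) literally, and your well-definedness check does go through, but it needs the specific feature of the construction of $\bT$ that the two kept branches at a splitting node agree on all $E_i$-formulas at the splitting vertex and differ on the order (so that convexity applies); ``transitivity and convexity'' invoked abstractly is not quite enough.

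The genuine gap is in your density step for (2a). You claim that for any $s'\contains s$ in $T$, density of coding nodes and homogeneity of $\bK$ allow one to choose two coding nodes of $T$ above $s'$ whose meet realizes a pair in $Q_{\psi(u)}$. This fails whenever $s'$ already contains $E_i(x,w)$ for some vertex $w$ with $i$ strictly below the target value: then every coding node of $T$ extending $s'$ lies in the $E_i$-class of $w$, so every pair of coding nodes meeting above $s'$ is $E_i$-equivalent and no splitting node above $s'$ can receive a coarser $\psi$-value --- exactly the monotonicity obstruction the paper records for its own $\psi$. Homogeneity of $\bK$ cannot rescue this, since the confinement is already encoded in the $1$-type $s'$; what is needed is an argument that engages with which $s'$ can actually arise (for instance, how the hypothesis that $B$ preserves the $\psi$-values of $r_k(A)$, the structure of $T$, and the way $\psi$ is used in Theorem \ref{thm.persistence} control the relevant extensions), and your proposal does not supply it. Your final ``moreover'' step, running the argument of Lemma \ref{lem.SFAPEP} with \EEAP\ in place of \SFAP, is in the right spirit, but the density step as written would fail.
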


\begin{proof}
Fix
$n, p$, with at least one  of $n,p$ greater than one,
 and denote
the \Fraisse\ limit of $\mathcal{COE}_{n,p}$ as $\bK$.
Recall
that the language of
$\mathcal{COE}_{n,p}$
has one binary relation symbol $<$, interpreted
in $\bK$
as a linear order;
 $p$-many unary relation symbols, $P_0,\dots, P_{p-1}$,
whose interpretations partition $\bK$ into $p$-many dense pieces;
and $n$-many binary relation symbols  $E_0,\dots,E_{n-1}$, each  interpreted
in $\bK$
as an equivalence relation
 with convexly ordered equivalence classes
 such that if $i<j<n$, then
 $E^\bK_j$ coarsens $E^\bK_i$.
 Let $v^{\bT}_m$ denote the vertex in $\bK$ represented by the $m$-th coding node, $c^{\bT}_m$, in $\bT$.
Given a splitting node $s$ in $\bT$,
define $\psi(s)=n$ if
$\neg E_{n-1}(x, v^{\bT}_m)$ is in  $s$, for all $m<|s|-1$.
Otherwise,
define $\psi(s)$ to be the least $i<n$ such that
$E_{i}(x, v^{\bT}_m)$ is in  $s$, for some $m<|s|-1$.
 In this second case, any coding node in $\bT$ extending $s$ represents a vertex which is in the same $E_{\psi(s)}$-equivalence class as some vertex $v^{\bT}_m$ for some $m<|s|-1$.
 Note that
  if $s\sse t$ are splitting nodes in $\bT$,
 then  $\psi(s) \ge \psi(t)$.
It is routine to check that
$\mathcal{COE}_{n,p}$
satisfies
(2) of the Extension Property
  with this $\psi$.
\end{proof}

By an {\em antichain} of coding nodes, we mean a set of  coding nodes  which is pairwise  incomparable with respect to  the tree partial order of inclusion.
\vskip.1in

\noindent\bf{Set-up for Theorem \ref{thm.matrixHL}.} \rm
Let $T$ be a diagonal coding tree  in $\mathcal{T}$.
Fix a finite antichain  of coding nodes  $\tilde{C}\sse T$.
We abuse notation and also write  $\tilde{C}$ to denote  the tree that its meet-closure induces in $T$.
Let $\tilde{A}$ be a fixed proper initial segment of
$\tilde{C}$, allowing for $\tilde{A}$ to be the empty set.
Thus, $\tilde{A}= \tilde{C}\rl \ell$, where $\ell$
is the length of some splitting or  coding node in
$\tilde{C}$
(let $\ell=0$ if $\tilde{A}$ is empty).
Let $\ell_{\tilde{A}}$ denote this $\ell$, and note that
any non-empty
 $\max(\tilde{A})$ either  has a coding node or a splitting node.
Let $\tilde{x}$ denote the shortest splitting or coding node in $\tilde{C}$ with length greater than $\ell_{\tilde{A}}$,
and define  $\tilde{X}=\tilde{C}\re |\tilde{x}|$.
Then $\tilde{A}\cup\tilde{X}$ is an initial segment of
 $\tilde{C}$; let $\ell_{\tilde{X}}$ denote $|\tilde{x}|$.
There are two cases:

\begin{enumerate}
\item[]
\begin{enumerate}
\item[\bf Case (a).]
$\tilde{X}$ has a splitting node.
\end{enumerate}
\end{enumerate}

\begin{enumerate}
\item[]
\begin{enumerate}
\item[\bf Case (b).]
$\tilde{X}$ has a coding node.
\end{enumerate}
\end{enumerate}

Let $d+1$ be the number of nodes in $\tilde{X}$ and  index these nodes as $\tilde{x}_i$, $i\le d$,
where $\tilde{x}_d$ denotes  the critical  node (recall that {\em critical node}  refers to  a splitting or  coding node).
Let
\begin{equation}\label{eq.tildeB}
\tilde{B}=\tilde{C} \re (\ell_{\tilde{A}}+1).
\end{equation}
Then
  $\tilde{X}$ is a level set   equal to or  end-extending  the level set $\tilde{B}$.
  For each $i\le d$, define
  \begin{equation}
  \tilde{b}_i=\tilde{x}_i\re \ell_{\tilde{B}}.
\end{equation}
Note that we
consider nodes in  $\tilde{B}$  as simply nodes to be extended; it
does not matter
whether the nodes in $\tilde{B}$ are coding, splitting, or neither in $T$.

\begin{defn}[Weak similarity]\label{defn.weaksim}
Given finite subtrees $S,T\in\mathcal{T}$ in which each coding node is terminal,
we say that  $S$ is {\em weakly similar} to $T$, and write $S\wsim T$,
if and only if
$S\setminus \max(S)\plussim T\setminus\max(T)$.
\end{defn}

\begin{defn}[$\Ext_T(B;\tilde{X})$]\label{defn.ExtBX}
Let $T\in\mathcal{T}$ be  fixed  and let $D=r_n(T)$ for some $n<\om$.
Suppose $A$ is a subtree of $D$ such that
$A \plussim \tilde{A}$ and
 $A$ is extendible to a similarity copy of $\tilde{C}$ in $T$.
Let $B$ be a subset of  the level set $\max(D)^+$ such that $B$ end-extends   (or equals)
$\max(A)^+$
and $A\cup B\wsim\tilde{A}\cup\tilde{B}$.
Let $X^*$ be a level set end-extending $B$ such that $A\cup X^*\plussim \tilde{A}\cup\tilde{X}$.  Let $U^*=T\rl (\ell_B-1)$.
Define
$\Ext_T(B;X^*)$ to be the collection of all level sets $X\sse T$
such that
\begin{enumerate}
\item
$X$ end-extends $B$;
\item
$U^*\cup X\plussim U^*\cup X^*$;
 and
\item
$A\cup X$ extends to a copy of $\tilde{C}$.
\end{enumerate}
\end{defn}

For Case (b), condition (3) follows from (2).
For Case (a),
the Extension Property
guarantees that
for any level set  $Y$ end-extending $B$,
there is a level set $X$ end-extending $Y$ such that $A\cup X$ satisfies
condition (3).
In both cases, condition (2) implies that $A\cup X\plussim \tilde{A}\cup\tilde{X}$.

The following theorem of \Erdos\  and Rado will  provide the pigeonhole principle for the forcing proof.

\begin{thm}[\Erdos-Rado, \cite{Erdos/Rado56}]\label{thm.ER}
For $r<\om$ and $\mu$ an infinite cardinal,
$$
\beth_r(\mu)^+\ra(\mu^+)_{\mu}^{r+1}.
$$
\end{thm}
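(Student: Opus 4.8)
The statement to prove is the \Erdos--Rado partition theorem $\beth_r(\mu)^+ \to (\mu^+)^{r+1}_\mu$. The plan is to induct on $r$. First I would settle the base case $r=0$: here $\beth_0(\mu)^+ = \mu^+$ and the claim $\mu^+ \to (\mu^+)^1_\mu$ is just the statement that if $\mu^+$ is partitioned into $\mu$ pieces, one piece has size $\mu^+$, which is immediate from the regularity of the successor cardinal $\mu^+$ (a union of $\mu$ sets each of size $\le\mu$ has size $\le\mu<\mu^+$).

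For the inductive step, assume the result for $r-1$, i.e.\ $\beth_{r-1}(\mu)^+ \to (\mu^+)^r_\mu$, and let $\kappa = \beth_r(\mu)^+$. Fix a colouring $c : [\kappa]^{r+1} \to \mu$. The key step is the standard ``building a $\mu$-branching tree of end-homogeneous-type approximations'' argument. For each ordinal $\alpha<\kappa$ and each $r$-element set $s \in [\alpha]^r$, the function $\xi \mapsto c(s \cup \{\xi\})$ for $\xi$ ranging over ordinals $\ge \alpha$ (or one fixes this more carefully) takes values in $\mu$. I would construct, by transfinite recursion of length $\beth_{r-1}(\mu)^+$, an increasing sequence $\langle \alpha_i : i < \beth_{r-1}(\mu)^+\rangle$ of ordinals below $\kappa$ such that the set $E = \{\alpha_i : i < \beth_{r-1}(\mu)^+\}$ is \emph{end-homogeneous}: for every $s \in [E]^r$ and all $\eta, \zeta \in E$ with $\max(s) < \eta, \zeta$, we have $c(s\cup\{\eta\}) = c(s\cup\{\zeta\})$. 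At each stage $i$, having chosen $\{\alpha_j : j<i\}$, there are at most $\beth_{r-1}(\mu)$ many $r$-subsets of this set, and for each the ``colour pattern as a function of the next element'' lives in a set of size $\le \mu^{\beth_{r-1}(\mu)}\cdot\ldots$; a cardinal arithmetic computation shows the number of possible such patterns is at most $\beth_r(\mu) < \kappa$, so one can always pick $\alpha_i < \kappa$ realizing a pattern already forced, keeping the construction going. (This is the cardinality bookkeeping that makes the particular value $\beth_r(\mu)^+$ appear.) Once $E$ is end-homogeneous, define an induced colouring $c' : [E]^r \to \mu$ by $c'(s) = c(s \cup \{\eta\})$ for any $\eta \in E$ above $\max(s)$; this is well-defined precisely by end-homogeneity. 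Since $|E| = \beth_{r-1}(\mu)^+$, the inductive hypothesis gives a subset $H \subseteq E$ with $|H| = \mu^+$ that is homogeneous for $c'$. This same $H$ is then homogeneous for the original $c$ on $[H]^{r+1}$: given any $t \in [H]^{r+1}$, write $t = s \cup \{\max t\}$ with $s \in [H]^r$; then $c(t) = c'(s)$, which is constant over $[H]^r$.

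The main obstacle I expect is the cardinal-arithmetic bookkeeping in the recursion that isolates exactly why the length $\beth_{r-1}(\mu)^+$ and the ambient bound $\beth_r(\mu)^+$ are the right ones — one must verify that at each of the $\beth_{r-1}(\mu)^+$ stages the number of ``types'' (colour patterns of candidate next ordinals relative to all $r$-subsets chosen so far) is at most $\beth_r(\mu)$, so that by the pigeonhole principle (there are $<\kappa$ types but $\kappa$ candidate ordinals below $\kappa$) a repeated type is available; getting the exponents right, using $\beth_{r}(\mu) = 2^{\beth_{r-1}(\mu)}$ and $\beth_{r-1}(\mu)^{<\beth_{r-1}(\mu)^+}$-style estimates, is the delicate part. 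Everything else — the base case, the definition of the induced colouring, and the final descent to $[H]^{r+1}$ — is routine. This is of course the classical \Erdos--Rado argument \cite{Erdos/Rado56}; I would present it in the compressed tree-free form above rather than via an explicit tree of approximations, since only the bound $\beth_r(\mu)^+ \to (\mu^+)^{r+1}_\mu$ is needed downstream in Theorem~\ref{thm.matrixHL}.
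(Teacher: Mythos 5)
The paper itself gives no proof of this statement --- it is quoted directly from \cite{Erdos/Rado56} and used only as a pigeonhole principle in Theorem \ref{thm.matrixHL} --- so your sketch has to be judged against the classical argument on its own terms. Your overall skeleton is the right one: induction on $r$, the base case $\mu^+\ra(\mu^+)^1_\mu$ by regularity, construction of an end-homogeneous set $E$ of size $\beth_{r-1}(\mu)^+$, the induced colouring $c'$ on $[E]^r$, the inductive hypothesis, and the final descent to $[H]^{r+1}$ are all correct and standard.

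The gap is in the one step that carries all the weight: the existence of the end-homogeneous set. At stage $i$ of your recursion the new ordinal $\alpha_i$ is not free to realize \emph{some} popular pattern; it must realize the \emph{specific} pattern already forced by the earlier choices, namely $c(s\cup\{\alpha_i\})=c(s\cup\{\alpha_j\})$ for every $s\in[\{\alpha_k:k<i\}]^r$ and every $j<i$ with $\alpha_j$ above $\max(s)$. Counting that there are at most $\mu^{\beth_{r-1}(\mu)}=\beth_r(\mu)<\kappa$ possible types and invoking pigeonhole only shows that some type is realized by $\kappa$ many ordinals; it gives no reason why any ordinal at all realizes the forced type, so the recursion can die. (The naive repair of carrying along a $\kappa$-sized reservoir of candidates of a fixed type also fails, since at limit stages an intersection of fewer than $\kappa$ sets of size $\kappa$ can be empty.) The classical proof supplies exactly the missing mechanism: either the ramification/tree argument, or the closure argument in which one first builds $M\sse\kappa$ with $|M|=\beth_r(\mu)$ closed under realizing every type over every subset of $M$ of size $\le\beth_{r-1}(\mu)$ that is realized in $\kappa$, then fixes a guide $\delta\in\kappa\setminus M$ and recursively chooses $a_i\in M$ realizing the type of $\delta$ over $\{a_j:j<i\}$ --- existence is now automatic because $\delta$ itself realizes that type, and $c'(s):=c(s\cup\{\delta\})$ is the induced colouring. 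Your ``compressed tree-free form'' omits precisely this device, so as written the key existence claim is unjustified; the cardinal arithmetic you flag as the delicate point is in fact the routine part.
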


We are now ready to prove the Ramsey theorem for level set extensions of a given finite tree.

\begin{thm}\label{thm.matrixHL}
Suppose that
$\mathcal{K}$
has \Fraisse\ limit $\bK$ satisfying
\EEAP$^+$, and
$T\in\mathcal{T}$ is given.
Let $\tilde{C}$ be a finite antichain of coding nodes in $T$,   $\tilde{A}$ be an initial segment of $\tilde{C}$,
and $\tilde{B}$ and $\tilde{X}$ be defined as above.
Suppose
 $D=r_n(T)$ for some $n<\om$,
 and
 $A\sse D$
and $B\sse \max(D^+)$
satisfy
$A\cup B\wsim \tilde{A}\cup\tilde{B}$.
Let  $X^*$ be a level set end-extending $B$ such that $A\cup X^*\plussim \tilde{A}\cup\tilde{X}$.
Then given any coloring
  $h:  \Ext_T(B;X^*)\ra 2$,
  there is a coding tree $S\in [D,T]$ such that
$h$ is monochromatic on $\Ext_S(B;X^*)$.
\end{thm}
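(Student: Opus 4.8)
The plan is to prove this by a fusion/forcing argument in the style of \cite{DobrinenJML20}, \cite{DobrinenH_k19}, and \cite{DobrinenRado19}, adapted to trees of $1$-types. The forcing poset $\mathbb{P}$ will consist of conditions that are finite trees which are $+$-similarity approximations to the desired homogeneous tree $S$, together with a large finite set of candidate level-set extensions of $B$ chosen to realize the $+$-similarity type $A\cup X^*\plussim \tilde{A}\cup\tilde{X}$; more precisely, a condition will specify a finite end-extension of $D$ living inside $T$, and a collection of ``potential'' members of $\Ext_T(B;X^*)$ placed above it, indexed by a sufficiently large finite set of nodes in $T$. The key point, as emphasized in the preamble to the section, is that we never pass to a generic extension: the generic object would not be a coding tree, and instead the forcing is used only (1) to locate, via genericity, a good infinite set of nodes in $T$ from which a homogeneous subtree can be built level by level, and (2) to guarantee that certain finite configurations with prescribed color behavior exist, which then, being finite, must already lie in the ground model.

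First I would set up $\mathbb{P}$ carefully so that each condition forces, for each of its candidate nodes $t$ (ranging over an appropriate level set), a particular extension of $t$ to a coding node, in such a way that the resulting level sets belong to $\Ext_T(B;X^*)$. The forcing is essentially the same in Case (a) (where $\tilde{X}$ has a splitting node) and Case (b) (where $\tilde{X}$ has a coding node), except that in Case (a) one must invoke the Extension Property to ensure condition (3) of Definition \ref{defn.ExtBX} can be met along the way, and in the case where $\bK$ realizes the second clause of the Extension Property (the $\mathcal{COE}_{n,p}$-type situation), one restricts attention to splitting nodes with a fixed $\psi$-value, so that $+$-similarity copies of $r_k(A)$ respecting $\psi$ can always be extended. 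Then I would take a sufficiently $\mathbb{P}$-generic filter $G$ in the ground model sense---i.e., meeting a specified countable collection of dense sets---extract from $G$ an infinite level-by-level approximation, and at each step apply the \Erdos--Rado theorem (Theorem \ref{thm.ER}) as the pigeonhole principle: the number of colors is $2$, but the relevant objects to be ``homogenized'' are finite tuples of candidate extensions, so one needs $\beth_r(\aleph_0)^+ \to (\aleph_1)^{r+1}_{\aleph_0}$ for the appropriate finite $r$ determined by $|\tilde{X}|$ and the number of nodes being extended simultaneously. This yields, at each level, a single color governing all $+$-similarity copies of $\tilde{A}\cup\tilde{X}$ extending the current approximation within a large subset.

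The construction then builds $S\in[D,T]$ recursively: having built $r_m(S)$ together with a large ``reservoir'' of nodes above it (obtained from the \Erdos--Rado homogenization), one chooses the next critical level of $S$ inside the reservoir, using the Extension Property to guarantee that any desired $+$-similarity copy of the next approximation of $\tilde{C}$ remains extendible, and one thins the reservoir to preserve monochromaticity for the next round. A standard bookkeeping argument ensures that $S$ so constructed is a genuine member of $\mathcal{T}$ (it is diagonal, codes a copy of $\bK$, and satisfies (1)--(3) of Definition \ref{defn.sct}, the last automatically by the remark following that definition) and that $h$ is constant on $\Ext_S(B;X^*)$: every member of $\Ext_S(B;X^*)$ arises at some finite stage of the construction and receives the single color fixed at that stage, which the fusion has kept constant. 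The main obstacle I anticipate is purely technical rather than conceptual: setting up the forcing poset so that the density arguments interface correctly with the general notion of passing type (rather than passing number) and with the $+$-similarity requirement of Definition \ref{def.plussim}, which is strictly stronger than similarity and is precisely what must be preserved to handle relations of arbitrary arity and disjoint-but-not-free amalgamation; getting the coupling between ``what the generic forces about coding-node extensions'' and ``membership in $\Ext_T(B;X^*)$'' exactly right, including the Extension-Property invocation in Case (a), is where the care is needed.
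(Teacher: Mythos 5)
Your toolkit is the right one (forcing used internally, \Erdos--Rado, a level-by-level fusion, the Extension Property in Case (a), the stronger $+$-similarity to handle passing types), and your remarks about never passing to a generic extension and about finite objects being absolute are accurate. But the mechanism you describe does not actually deliver the theorem. The paper's proof hinges on a specific architecture: the poset adds $\kappa$-many branches through each $T_i$ ($i<d$, where $d+1=|\tilde{X}|$) and one branch through $T_d$, with $\kappa$ chosen so that $\kappa\ra(\aleph_1)^{2d}_{\aleph_0}$; for \emph{every} $\vec\al\in[\kappa]^d$ one chooses a condition $p_{\vec\al}$ forcing the color of $\dot b_{\vec\al}\re\ell$ to be some $\varepsilon_{\vec\al}$ for $\dot{\mathcal U}$-many lengths $\ell$ (with $\dot{\mathcal U}$ a name for an ultrafilter on the generic branch's levels); then \Erdos--Rado is applied \emph{once}, to a countable coloring of $[\kappa]^{2d}$ recording the shape of these conditions, producing an uncountable homogeneous set from which one extracts a single color $\varepsilon^*$, a single frame $t^*_0,\dots,t^*_d$, and the compatibility lemmas making $\bigcup\{p_{\vec\al}:\vec\al\in\vec J\}$ a condition. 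It is this up-front uniformization that lets the fusion, at every later level, extend the union condition to an $r$ deciding a ground-model length $\ell_j$ at which \emph{all} the finitely many new members of $\Ext$ get color $\varepsilon^*$ --- the same $\varepsilon^*$ at every level. That is the Halpern--\Lauchli-type content of the theorem.

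Your proposal replaces this with two steps that cannot do the same work. First, you apply \Erdos--Rado ``at each step'' to ``finite tuples of candidate extensions'': but \Erdos--Rado needs a coloring of tuples from a set of size $\beth_r(\aleph_0)^+$, whereas the candidate extensions available above a finite approximation inside the countable tree $T$ form at most a countable set, and the finite index set you attach to a condition is smaller still; there is nothing of the right cardinality to homogenize unless you introduce the $\kappa$-indexed branches, which your poset does not have. Second, you take a single filter meeting countably many dense sets; but the argument requires an uncountable family of conditions $\{p_{\vec\al}:\vec\al\in[\kappa]^d\}$, each separately deciding color behavior on an ultrafilter-large set of levels --- these do not come from one filter, and no countable amount of genericity substitutes for them. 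Without the single global homogenization, a level-by-level pigeonhole gives you a color at each stage with no mechanism forcing it to be the same color across stages, and members of $\Ext_S(B;X^*)$ appearing at different levels of your fused tree could receive different colors; thinning to levels of a fixed color does not help, because the color of a level-set extension depends on which nodes you extend to, not merely on the level, which is exactly why the simultaneous (all-nodes-at-once) extension controlled by the compatible conditions $p_{\vec J}$ is needed. So the proposal, as written, has a genuine gap at its core rather than a merely technical one.
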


\begin{proof}
Enumerate the nodes in $B$
as $s_0,\dots,s_d$ so that  for
any $X\in\Ext_T(B;X^*)$,
the critical node in $X$ extends $s_d$.
Let $M$ denote the collection of all  $m\ge n$  for which  there is a member of
 $\Ext_T(B;X^*)$ with  nodes  in $T(m)$.
 Note that this set $M$ is the same for any $S\in\mathcal{T}$, since  $S\sim T$ for all
 $S,T\in \mathcal{T}$.
 Let $L=\{|t|:\exists m\in M\, (t\in T(m))\}$,
 the collection of lengths of nodes in the levels $T(m)$ for $m\in M$.

For
  $i\le d$,   let  $T_i=\{t\in T:t\contains s_i\}$.
Let $\kappa$ be large enough, so that the partition relation $\kappa\ra (\aleph_1)^{2d}_{\aleph_0}$ holds.
The following forcing notion $\bP$    adds $\kappa$ many paths through  each  $T_i$,  $i< d$,
and one path  through $T_d$.
\vskip.1in

In both   Cases (a) and (b), define
$\bP$ to be  the set of finite partial functions $p$   such that
$$
p:(d\times\vec{\delta}_p)\cup\{d\}\ra T(m_p),
$$
where
\begin{enumerate}
\item
  $m_p\in M$ and $\vec{\delta}_p$ is a finite subset of $\kappa$;
 \item
 $\{p(i,\delta) : \delta\in  \vec{\delta}_p\}\sse  T_i(m_p)$ for each $i<d$;
 \item
 $p(d)$ is the critical node in $T_d(m_p)$; and
\item
 For any choices of $\delta_i\in\vec{\delta}_p$,
 the level set $\{p(i,\delta_i):i<d\}\cup\{p(d)\}$ is a member of $\Ext_T(B;X^*)$.
 \end{enumerate}
Given $p\in\bP$,
 the {\em range of $p$} is defined as
$$
\ran(p)=\{p(i,\delta):(i,\delta)\in d\times \vec\delta_p\}\cup \{p(d)\}.
$$
Let $\ell_p$ denote the length of the nodes in $\ran(p)$.
If also $q\in \bP$ and $\vec{\delta}_p\sse \vec{\delta}_q$, then we let $\ran(q\re \vec{\delta}_p)$ denote
$\{q(i,\delta):(i,\delta)\in d\times \vec{\delta}_p\}\cup \{q(d)\}$.

In Case (a),
the partial
 ordering on $\bP$  is  simply reverse inclusion:
$q\le p$ if and only if
\begin{enumerate}
\item
$m_q\ge m_p$, $\vec{\delta}_q\contains \vec{\delta}_p$,
$q(d)\contains p(d)$;
and
\item
$q(i,\delta)\contains p(i,\delta)$ for each $(i,\delta)\in d\times \vec{\delta}_p$.
\end{enumerate}
In Case (b), we define
$q\le p$  if and only if (1) and (2) hold and additionally, the following third requirement holds:
\begin{enumerate}
\item[(3)]
Letting $U=T\re (\ell_p-1)$,
$U\cup\ran(p)\plussim
U\cup \ran(q\re\vec{\delta}_p)$.
\end{enumerate}
(Requirement (3)  is stronger than that which was used for the Rado graph in \cite{DobrinenRado19}, because
for relations of  arity three or more, the extension $q$ must preserve information about $1$-types over the fixed finite structure which we wish to extend.)
Then   $(\bP,\le)$ is a separative, atomless partial order.

The next part of the proof (up to and including Lemma \ref{lem.compat}) follows that of  \cite{DobrinenRado19} almost verbatim.
The key difference between the work here and in \cite{DobrinenRado19} is that here, information which makes the proof work for relations of any arity is
embedded  in the definition of $\Ext_T(B,X^*)$.
For  $(i,\al)\in d\times \kappa$,
 let
\begin{equation}
 \dot{b}_{i,\al}=\{\lgl p(i,\al),p\rgl:p\in \bP\mathrm{\  and \ }\al\in\vec{\delta}_p\},
\end{equation}
 a $\bP$-name for the $\al$-th generic branch through $T_i$.
Let
\begin{equation}
\dot{b}_d=\{\lgl p(d),p\rgl:p\in\bP\},
\end{equation}
a $\bP$-name for the generic branch through $T_d$.
 Given a generic filter   $G\sse \bP$, notice that
 $\dot{b}_d^G=\{p(d):p\in G\}$,
 which  is a cofinal path of  critical   nodes in $T_d$.
Let $\dot{L}_d$ be a $\bP$-name for the set of lengths of critical   nodes in $\dot{b}_d$, and note that
$\bP$ forces  that $\dot{L}_d\sse L$.
Let $\dot{\mathcal{U}}$ be a $\bP$-name for a non-principal ultrafilter on $\dot{L}_d$.
Given  $p\in \bP$, recall that  $\ell_p$ denotes the lengths of the nodes in $\ran(p)$,
and  notice  that
\begin{equation}
 p\forces  \forall
 (i,\al)\in d\times \vec\delta_p\, (\dot{b}_{i,\al}\re \ell_p= p(i,\al)) \wedge
( \dot{b}_d\re \ell_p=p(d)).
\end{equation}

We will write sets $\{\al_i:i< d\}$ in $[\kappa]^d$ as vectors $\vec{\al}=\lgl \al_0,\dots,\al_{d-1}\rgl$ in strictly increasing order.
For $\vec{\al}\in[\kappa]^d$,  let
\begin{equation}
\dot{b}_{\vec{\al}}=
\lgl \dot{b}_{0,\al_0},\dots, \dot{b}_{d-1,\al_{d-1}},\dot{b}_d\rgl.
\end{equation}
For  $\ell<\om$,
 let
 \begin{equation}
 \dot{b}_{\vec\al}\re \ell=
 \lgl \dot{b}_{0,\al_0}\re \ell,\dots, \dot{b}_{d-1,\al_{d-1}}\re \ell,\dot{b}_d\re \ell\rgl.
 \end{equation}
 One sees that
$h$  is a coloring on level sets  of the form $\dot{b}_{\vec\al}\re \ell$
whenever this is
 forced to be a member of $\Ext_T(B; X^*)$.
Given $\vec{\al}\in [\kappa]^d$ and  $p\in \bP$ with $\vec\al\sse\vec{\delta}_p$,
let
\begin{equation}
X(p,\vec{\al})=\{p(i,\al_i):i<d\}\cup\{p(d)\},
\end{equation}
recalling  that this level set
$X(p,\vec{\al})$
 is a member of
  $\Ext_T(B;X^*)$.

For each $\vec\al\in[\kappa]^d$,
choose a condition $p_{\vec{\al}}\in\bP$ satisfying the following:
\begin{enumerate}
\item
 $\vec{\al}\sse\vec{\delta}_{p_{\vec\al}}$.
\item
There is an $\varepsilon_{\vec{\al}}\in 2$
 such that
$p_{\vec{\al}}\forces$
``$h(\dot{b}_{\vec{\al}}\re \ell)=\varepsilon_{\vec{\al}}$
for $\dot{\mathcal{U}}$ many $\ell$ in $\dot{L}_d$''.
\item
$h(X(p_{\vec\al},\vec{\al}))=\varepsilon_{\vec{\al}}$.
\end{enumerate}
Such conditions can be found as follows:
Fix some $X\in \Ext_T(B;X^*)$ and let $t_i$ denote the node in $X$ extending $s_i$, for each $i\le d$.
For $\vec{\al}\in[\kappa]^d$,  define
$$
p^0_{\vec{\al}}=\{\lgl (i,\delta), t_i\rgl: i< d, \ \delta\in\vec{\al} \}\cup\{\lgl d,t_d\rgl\}.
$$
Then
 (1) will  hold for all $p\le p^0_{\vec{\al}}$,
 since $\vec\delta_{p_{\vec\al}^0}= \vec\al$.
 Next,
let  $p^1_{\vec{\al}}$ be a condition below  $p^0_{\vec{\al}}$ which
forces  $h(\dot{b}_{\vec{\al}}\re \ell)$ to be the same value for
$\dot{\mathcal{U}}$  many  $\ell\in \dot{L}_d$.
Extend this to some condition
 $p^2_{\vec{\al}}\le p_{\vec{\al}}^1$
 which
 decides a value $\varepsilon_{\vec{\al}}\in 2$
 so that
 $p^2_{\vec{\al}}$ forces
  $h(\dot{b}_{\vec{\al}}\re \ell)=\varepsilon_{\vec{\al}}$
for $\dot{\mathcal{U}}$ many $\ell$ in $\dot{L}_d$.
Then
 (2) holds for all $p\le p_{\vec\al}^2$.
If $ p_{\vec\al}^2$ satisfies (3), then let $p_{\vec\al}=p_{\vec\al}^2$.
Otherwise,
take  some  $p^3_{\vec\al}\le p^2_{\vec\al}$  which forces
$\dot{b}_{\vec\al}\re \ell\in \Ext_T(B;X^*)$ and
$h'(\dot{b}_{\vec\al}\re \ell)=\varepsilon_{\vec\al}$
for
some $\ell\in\dot{L}$
 with
$\ell_{p^2_{\vec\al}}< \ell\le \ell_{p^3_{\vec\al}}$.
Since $p^3_{\vec\al}$  forces  that $\dot{b}_{\vec\al}\re \ell$ equals
$\{p^3_{\vec\al}(i,\al_i)\re \ell:i<d\}\cup \{p^3_{\vec\al}(d)\re \ell\}$,
which is exactly
$X(p^3_{\vec\al}\re \ell,\vec\al)$,
and this level set is in the ground model, it follows that $h(X(p^3_{\vec\al}\re \ell,\vec\al))
=\varepsilon_{\vec\al}$.
Let
$p_{\vec\al}$ be $p^3_{\vec\al}\re \ell$.
Then $p_{\vec\al}$ satisfies (1)--(3).

Let $\mathcal{I}$ denote the collection of all functions $\iota: 2d\ra 2d$ such that
for each $i<d$,
$\{\iota(2i),\iota(2i+1)\}\sse \{2i,2i+1\}$.
For $\vec{\theta}=\lgl \theta_0,\dots,\theta_{2d-1}\rgl\in[\kappa]^{2d}$,
$\iota(\vec{\theta}\,)$ determines the pair of sequences of ordinals $\lgl \iota_e(\vec{\theta}\,),\iota_o(\vec{\theta}\,)\rgl$, where
\begin{align}
\iota_e(\vec{\theta}\,)&=
\lgl \theta_{\iota(0)},\theta_{\iota(2)},\dots,\theta_{\iota(2d-2))}\rgl\cr
\iota_o(\vec{\theta}\,)&=
 \lgl\theta_{\iota(1)},\theta_{\iota(3)},\dots,\theta_{\iota(2d-1)}\rgl.
 \end{align}

We now proceed to  define a coloring  $f$ on
$[\kappa]^{2d}$ into countably many colors.
Let $\vec{\delta}_{\vec\al}$ denote $\vec\delta_{p_{\vec\al}}$,
 $k_{\vec{\al}}$ denote $|\vec{\delta}_{\vec\al}|$,
$\ell_{\vec{\al}}$ denote  $\ell_{p_{\vec\al}}$, and let $\lgl \delta_{\vec{\al}}(j):j<k_{\vec{\al}}\rgl$
denote the enumeration of $\vec{\delta}_{\vec\al}$
in increasing order.
Given  $\vec\theta\in[\kappa]^{2d}$ and
 $\iota\in\mathcal{I}$,   to reduce subscripts
 let
$\vec\al$ denote $\iota_e(\vec\theta\,)$ and $\vec\beta$ denote $\iota_o(\vec\theta\,)$, and
define
\begin{align}\label{eq.fiotatheta}
f(\iota,\vec\theta\,)= \,
&\lgl \iota, \varepsilon_{\vec{\al}}, k_{\vec{\al}}, p_{\vec{\al}}(d),
\lgl \lgl p_{\vec{\al}}(i,\delta_{\vec{\al}}(j)):j<k_{\vec{\al}}\rgl:i< d\rgl,\cr
& \lgl  \lgl i,j \rgl: i< d,\ j<k_{\vec{\al}},\ \mathrm{and\ } \delta_{\vec{\al}}(j)=\al_i \rgl,\cr
&\lgl \lgl j,k\rgl:j<k_{\vec{\al}},\ k<k_{\vec{\beta}},\ \delta_{\vec{\al}}(j)=\delta_{\vec{\beta}}(k)\rgl\rgl.
\end{align}
Fix some ordering of $\mathcal{I}$ and define
\begin{equation}
f(\vec{\theta}\,)=\lgl f(\iota,\vec\theta\,):\iota\in\mathcal{I}\rgl.
\end{equation}

By the \Erdos-Rado Theorem  \ref{thm.ER},  there is a subset $K\sse\kappa$ of cardinality $\aleph_1$
which is homogeneous for $f$.
Take $K'\sse K$ so that between each two members of $K'$ there is a member of $K$.
Given  sets of ordinals $I$ and $J$,  we write $I<J$  to mean that  every member of $I$ is less than every member of $J$.
Take  $K_i\sse K'$  be  countably infinite subsets
satisfying
  $K_0<\dots<K_{d-1}$.

Fix some  $\vec\gamma\in \prod_{i<d}K_i$, and define
\begin{align}\label{eq.star}
&\varepsilon^*=\varepsilon_{\vec\gamma},\ \
k^*=k_{\vec\gamma},\ \
t_d=p_{\vec\gamma}(d),\cr
t_{i,j}&=p_{\vec{\gamma}}(i,\delta_{\vec{\gamma}}(j))\mathrm{\ for\ }
i<d,\ j<k^*.
\end{align}
We show that the values  in equation (\ref{eq.star}) are the same for any choice of
$\vec\gamma$.

\begin{lem}\label{lem.onetypes}
 For all $\vec{\al}\in \prod_{i<d}K_i$,
 $\varepsilon_{\vec{\al}}=\varepsilon^*$,
$k_{\vec\al}=k^*$,  $p_{\vec{\al}}(d)=t_d$, and
$\lgl p_{\vec\al}(i,\delta_{\vec\al}(j)):j<k_{\vec\al}\rgl
=
 \lgl t_{i,j}: j<k^*\rgl$ for each $i< d$.
\end{lem}

\begin{proof}
Let
 $\vec{\al}$ be any member of $\prod_{i<d}K_i$, and let $\vec{\gamma}$ be the set of ordinals fixed above.
Take  $\iota\in \mathcal{I}$
to be the identity function on $2d$.
Then
there are $\vec\theta,\vec\theta'\in [K]^{2d}$
such that
$\vec\al=\iota_e(\vec\theta\,)$ and $\vec\gamma=\iota_e(\vec\theta'\,)$.
Since $f(\iota,\vec\theta\,)=f(\iota,\vec\theta'\,)$,
it follows that $\varepsilon_{\vec\al}=\varepsilon_{\vec\gamma}$, $k_{\vec{\al}}=k_{\vec{\gamma}}$, $p_{\vec{\al}}(d)=p_{\vec{\gamma}}(d)$,
and $\lgl \lgl p_{\vec{\al}}(i,\delta_{\vec{\al}}(j)):j<k_{\vec{\al}}\rgl:i< d\rgl
=
\lgl \lgl p_{\vec{\gamma}}(i,\delta_{\vec{\gamma}}(j)):j<k_{\vec{\gamma}}\rgl:i< d\rgl$.
\end{proof}

Let $l^*$ denote the length of the  node $t_d$, and notice that
the  node
 $t_{i,j}$ also has length $l^*$,  for each   $(i,j)\in d\times k^*$.

\begin{lem}\label{lem.j=j'}
Given any $\vec\al,\vec\beta\in \prod_{i<d}K_i$,
if $j,k<k^*$ and $\delta_{\vec\al}(j)=\delta_{\vec\beta}(k)$,
 then $j=k$.
\end{lem}

\begin{proof}
Let $\vec\al,\vec\beta$ be members of $\prod_{i<d}K_i$   and suppose that
 $\delta_{\vec\al}(j)=\delta_{\vec\beta}(k)$ for some $j,k<k^*$.
For  $i<d$, let  $\rho_i$ be the relation from among $\{<,=,>\}$ such that
 $\al_i\,\rho_i\,\beta_i$.
Let   $\iota$ be the member of  $\mathcal{I}$  such that for each $\vec\theta\in[K]^{2d}$ and each $i<d$,
$\theta_{\iota(2i)}\ \rho_i \ \theta_{\iota(2i+1)}$.
Fix some
$\vec\theta\in[K']^{2d}$ such that
$\iota_e(\vec\theta)=\vec\al$ and $\iota_o(\vec\theta)= \vec\beta$.
Since between any two members of $K'$ there is a member of $K$, there is a
 $\vec\zeta\in[K]^{d}$ such that  for each $i< d$,
 $\al_i\,\rho_i\,\zeta_i$ and $\zeta_i\,\rho_i\, \beta_i$.
Let   $\vec\mu,\vec\nu$ be members of $[K]^{2d}$ such that $\iota_e(\vec\mu)=\vec\al$,
$\iota_o(\vec\mu)=\vec\zeta$,
$\iota_e(\vec\nu)=\vec\zeta$, and $\iota_o(\vec\nu)=\vec\beta$.
Since $\delta_{\vec\al}(j)=\delta_{\vec\beta}(k)$,
the pair $\lgl j,k\rgl$ is in the last sequence in  $f(\iota,\vec\theta)$.
Since $f(\iota,\vec\mu)=f(\iota,\vec\nu)=f(\iota,\vec\theta)$,
also $\lgl j,k\rgl$ is in the last  sequence in  $f(\iota,\vec\mu)$ and $f(\iota,\vec\nu)$.
It follows that $\delta_{\vec\al}(j)=\delta_{\vec\zeta}(k)$ and $\delta_{\vec\zeta}(j)=\delta_{\vec\beta}(k)$.
Hence, $\delta_{\vec\zeta}(j)=\delta_{\vec\zeta}(k)$,
and therefore $j$ must equal $k$.
\end{proof}

For each $\vec\al\in \prod_{i<d}K_i$, given any   $\iota\in\mathcal{I}$, there is a $\vec\theta\in[K]^{2d}$ such that $\vec\al=\iota_o(\vec\al)$.
By the second line of equation  (\ref{eq.fiotatheta}),
there is a strictly increasing sequence
$\lgl j_i:i< d\rgl$  of members of $k^*$ such that
$\delta_{\vec\gamma}(j_i)=\al_i$.
By
homogeneity of $f$,
this sequence $\lgl j_i:i< d\rgl$  is the same for all members of $\prod_{i<d}K_i$.
Then letting
 $t^*_i$ denote $t_{i,j_i}$,
 one sees that
\begin{equation}
p_{\vec\al}(i,\al_i)=p_{\vec{\al}}(i, \delta_{\vec\al}(j_i))=t_{i,j_i}=t^*_i.
\end{equation}
Let $t_d^*$ denote $t_d$.

\begin{lem}\label{lem.compat}
For any finite subset $\vec{J}\sse \prod_{i<d}K_i$,
$p_{\vec{J}}:=\bigcup\{p_{\vec{\al}}:\vec{\al}\in \vec{J}\,\}$
is a member of $\bP$ which is below each
$p_{\vec{\al}}$, $\vec\al\in\vec{J}$.
\end{lem}

\begin{proof}
Given  $\vec\al,\vec\beta\in \vec{J}$,
if
 $j,k<k^*$ and
 $\delta_{\vec\al}(j)=\delta_{\vec\beta}(k)$, then
 $j$ and $k$ must be equal, by
 Lemma  \ref{lem.j=j'}.
Then  Lemma \ref{lem.onetypes} implies
that for each $i<d$,
\begin{equation}
p_{\vec\al}(i,\delta_{\vec\al}(j))=t_{i,j}=p_{\vec\beta}(i,\delta_{\vec\beta}(j))
=p_{\vec\beta}(i,\delta_{\vec\beta}(k)).
\end{equation}
Hence,
 for all
$\delta\in\vec{\delta}_{\vec\al}\cap
\vec{\delta}_{\vec\beta}$
and  $i<d$,
$p_{\vec\al}(i,\delta)=p_{\vec\beta}(i,\delta)$.
Thus,
$p_{\vec{J}}:=
\bigcup \{p_{\vec{\al}}:\vec\al\in\vec{J}\}$
is a  function with domain $\vec\delta_{\vec{J}}\cup\{d\}$, where
$\vec\delta_{\vec{J}}=
\bigcup\{
\vec{\delta}_{\vec\al}:
\vec\al\in\vec{J}\,\}$; hence
, $p_{\vec{J}}$ is a member of $\bP$.
Since
for each $\vec\al\in\vec{J}$,
$\ran(p_{\vec{J}}\re \vec{\delta}_{\vec\al})=\ran(p_{\vec\al})$,
it follows that
$p_{\vec{J}}\le p_{\vec\al}$ for each $\vec\al\in\vec{J}$.
\end{proof}

This ends the material drawn directly from \cite{DobrinenRado19}.

We now proceed to  build a (diagonal coding)  tree $S\in [D,T]$ so that  the coloring
$h$ will be  monochromatic on $\Ext_S(B;X^*)$.
Recall that $n$ is the integer such that $D=r_n(T)$.
Let $\{ m_j:j<\om\}$ be the strictly increasing enumeration of $M$, noting that $m_0\ge n$.
For each $i\le d$,
extend the node $s_i\in B$ to the node $t^*_i$.
Extend each node $u$
 in $\max(D)^+\setminus B$  to some  node  $u^*$ in $T\re \ell^*$.
 If $X^*$ has a coding node and $m_0=n$,
 require also that
 $(u^*)^+(u^*;D)\sim u^+(u;D)$;
 \EEAP\ ensures that such $u^*$ exist.
 Set
\begin{equation}
U^*=\{t^*_i:i\le d\}\cup\{u^*:u\in \max(D)^+\setminus B\}
\end{equation}
and note that $U^*$
end-extends   $\max(D)^+$.

If $m_0=n$,
then
$D\cup U^*$ is a member of $r_{m_0+1}[D,T]$.
In
 this case, let $U_{m_0+1}=D\cup U^*$, and
 let $U_{m_1}$ be any member of $r_{m_1}[U_{m_0+1},T]$.
Note  that   $U^*$ is  the only member of $\Ext_{U_{m_1}}(B;X^*)$, and it has $h$-color $\varepsilon^*$.
Otherwise,
 $m_0>n$.
 In this case,
 take some  $U_{m_0}\in r_{m_0}[D,T]$ such that
 $\max(U_{m_0})$ end-extends $U^*$,
 and
 notice that   $\Ext_{U_{m_0}}(B;X^*)$ is empty.

Now assume that  $j<\om$ and
 we have constructed $U_{m_j}\in r_{m_j}[D,T]$
  so that every member of $\Ext_{U_{m_j}}(B;X^*)$
 has $h$-color $\varepsilon^*$.
Fix some  $V\in r_{m_j+1}[U_{m_j} ,T]$ and let $Y=\max(V)$.
We will extend the nodes in $Y$  to construct
$U_{m_j+1}\in r_{m_j+1}[U_{m_j},T]$
with the property that all members of $\Ext_{U_{m_j+1}}(B;X^*)$ have the same
 $h$-value $\varepsilon^*$.
This will be achieved  by constructing
 the condition $q\in\bP$, below, and then extending it to some condition $r \le q$ which decides that  all members of
 $\Ext_T(B;X^*)$ coming from the nodes in $\ran(r)$ have $h$-color $\varepsilon^*$.

Let $q(d)$ denote  the  splitting node or coding  node in $Y$ and let $\ell_q=|q(d)|$.
For each $i<d$,
let  $Y_i$ denote  $Y\cap T_i$.
For each $i<d$,
take  a set $J_i\sse K_i$ of  size card$(Y_i)$
and label the members of $Y_i$ as
$\{z_{\al}:\al\in J_i\}$.
Let $\vec{J}$ denote $\prod_{i<d}J_i$.
By   Lemma \ref{lem.compat},
the set $\{p_{\vec\al}:\vec\al\in\vec{J}\}$ is compatible,  and
$p_{\vec{J}}:=\bigcup\{p_{\vec\al}:\vec\al\in\vec{J}\}$ is a condition in $\bP$.

Let
 $\vec{\delta}_q=\bigcup\{\vec{\delta}_{\vec\al}:\vec\al\in \vec{J}\}$.
For $i<d$ and $\al\in J_i$,
define $q(i,\al)=z_{\al}$.
It follows  that for each
$\vec\al\in \vec{J}$ and $i<d$,
\begin{equation}
q(i,\al_i)\contains t^*_i=p_{\vec\al}(i,\al_i)=p_{\vec{J}}(i,\al_i),
\end{equation}
and
\begin{equation}
q(d)\contains t^*_d=p_{\vec\al}(d)=p_{\vec{J}}(d).
\end{equation}

For   $i<d$ and $\delta\in\vec{\delta}_q\setminus
J_i$,
we need to extend each node $p_{\vec{J}}(i,\delta)$ to some  node of length $\ell_q$ in order to construct a condition $q$ extending $p_{\vec{J}}$.
These nodes will not be a part of the construction of $U_{m_j+1}$, however; they only are only a technicality allowing us to find some $r\le q\le p_{\vec{J}}$ from which we will build $U_{m_j+1}$.
In Case (a),
let
$q(i,\delta)$ be  any  extension
 of $p_{\vec{J}}(i,\delta)$ in $T$ of length $\ell_q$.
 In Case (b),
 let $q(i,\delta)$ be  any  extension
 of $p_{\vec{J}}(i,\delta)$ in $T$ of length $\ell_q$  with
 \begin{equation}
 q(i,\delta)^+(q(d); T\re(\ell^*-1))
 \sim
 p_{\vec{J}}(i,\delta)^+(p_{\vec{J}}(d); T\re(\ell^*-1)).
 \end{equation}
 The \EEAP\ guarantees the existence of such $q(i,\delta)$.
Define
\begin{equation}
q=\{q(d)\}\cup \{\lgl (i,\delta),q(i,\delta)\rgl: i<d,\  \delta\in \vec{\delta}_q\}.
\end{equation}
This $q$ is a condition in $\bP$, and $q\le p_{\vec{J}}$.

Now take an $r\le q$ in  $\bP$ which  decides some $\ell_j$ in $\dot{L}_d$ for which   $h(\dot{b}_{\vec\al}\re \ell_j)=\varepsilon^*$, for all $\vec\al\in\vec{J}$.
This is possible since for all $\vec\al\in\vec{J}$,
$p_{\vec\al}$ forces $h(\dot{b}_{\vec\al}\re \ell)=\varepsilon^*$ for $\dot{\mathcal{U}}$ many $\ell\in \dot{L}_d$.
By the same argument as in creating the conditions $p_{\vec\al}$,
 we may assume that
 the nodes in the image of $r$ have length  $\ell_j$.
Since
$r$ forces $\dot{b}_{\vec{\al}}\re \ell_j=X(r,\vec\al)$
for each $\vec\al\in \vec{J}$,
and since the coloring $h$ is defined in the ground model,
it follows that
$h(X(r,\vec\al))=\varepsilon^*$ for each $\vec\al\in \vec{J}$.
Let
\begin{equation}
Y_0=\{q(d)\}\cup \{q(i,\al):i<d,\ \al\in J_i\},
\end{equation}
 and let
\begin{equation}
Z_0=\{r(d)\}\cup \{r(i,\al):i<d,\ \al\in J_i\}.
\end{equation}

Now we consider the two cases separately.
In Case (a),
let $Z$ be the level set
consisting of  the nodes in $Z_0$ along with
a  node  $z_y$ in $T\re \ell_j$ extending  $y$, for each
 $y\in
Y\setminus Y_0$.
Then $Z$ end-extends $Y$.
By \EEAP, it does not matter how the nodes  $z_y$ are chosen.
Letting $U_{m_j+1}=U_{m_j}\cup Z$,
we see that $U_{m_j+1}$ is a member of $r_{m_j+1}[U_{m_j},T]$ such that
$h$ has value $\varepsilon^*$ on $\Ext_{U_{m_j+1}}(B;X^*)$.

In Case (b),
 $r(d)$ is a coding node.
 Since $r\le q$,
the nodes in $\ran(r\re\delta_q)$ have the same passing types over $T\rl \ell_q$
as the nodes in $\ran(q)$ have over $T\rl \ell_q$.
We now need to extend all the other members of
$Y\setminus Y_0$
 to nodes with the required
 passing types at $r(d)$.
 For each
$y\in Y\setminus Y_0$,
 choose
a  member  $z_y\supset y$  in $T_d\re \ell_j$
so that
\begin{equation}
z_y^+(r(d);U_{m_j})\sim y^+(q(d); U_{m_j}).
\end{equation}
\EEAP\ ensures the existence of  such $z_y$.
Let
$Z$ be the level set
consisting of the nodes in $Z_0$ along with
the nodes $z_y$ for
 $y\in Y\setminus Y_0$.
Then $Z$ end-extends $Y$ and moreover,
$U_{m_j}\cup Z\plussim V$.
Letting $U_{m_j+1}=U_{m_j}\cup Y$,
we see that $U_{m_j+1}$ is a member of $r_{m_j+1}[U_{m_j},T]$  and
$h$ has value $\varepsilon^*$ on $\Ext_{U_{m_j+1}}(B;X^*)$.

Now that we have constructed $U_{m_j+1}$,
let $U_{m_{j+1}}$ be any member of
$r_{m_{j+1}}[U_{m_j+1},T]$.
This completes the inductive construction.
Let $S=\bigcup_{j<\om}U_{m_j}$.
Then $S$ is a member of $[D,T]$ and
 for each $X\in\Ext_{S}(B)$,  $h(X)=\varepsilon^*$.
Thus, $S$ satisfies the theorem.
\end{proof}

Indivisibility follows immediately from the previous theorem.
\vskip.1in

\noindent{\em Proof of Theorem \ref{thm.indivisibility}}.
Let $\bfC$ be a singleton structure in $\mathcal{K}$, and suppose $h$ is a coloring of all copies of $\bfC$ inside $\bK$ into two colors.
For the proof of indivisibility,  it suffices   to work in diagonal coding subtrees of $\bU(\bK)$.
Given a diagonal coding tree $T$, let $X^*$ be the coding node in $T$ of least length representing a copy of $\bfC$.
Let $A=D=r_0(T)$ be the empty sequence,  let $B$ be the initial segment of $X^*$ of length one.
Then
Theorem \ref{thm.matrixHL} provides us with a coding tree $S\in [D,T]$ such  that $h$ is monochromatic on $\Ext_S(B;X^*)$.
Since $D=r_0(T)$, every coding node in $S$ representing a copy of
$\bfC$
is a member of  $\Ext_S(B;X^*)$.
Thus, $\bK$ is indivisible.
 \hfill $\square$
\vskip.1in

\begin{rem}\label{rem.plussimfornoncn}
By  the construction in the previous proof, in Case (b) the coding nodes in any member $X\in\Ext_S(B;X^*)$ extend the coding node $t^*_d$.
It then follows from
(3) in Definition \ref{defn.sct}
that for every level set $X\sse S$ with $A\cup X\sim\tilde{A}\cup X^*$, the coding node $c$ in $X$  automatically satisfies
$c^+(c;A)\sim  (t^*_d)^+(t^*_d;A)
\sim \tilde{x}_d^+(\tilde{x}_d;\tilde{A})$, where $\tilde{x}_d$ denotes the  coding node in $X^*$.
Thus, $A\cup X\plussim \tilde{A}\cup X^*$ if and only if the
non-coding nodes in $X$ have immediate successors with similar passing types over $A\cup\{c\}$ as their counterparts in $X^*$ have over $\tilde{A}\cup\{\tilde{x}_d\}$.

Moreover, for languages with only unary and binary relations, in Case (b)  the set
$\Ext_T(B;X^*)$ is exactly the set of all end-extensions $X$ of $B$ such that $A\cup X\plussim \tilde{A}\cup\tilde{X}$.
These observations
will be useful in the proof of  next theorem.
\end{rem}

Recall that two antichains of coding nodes are considered similar if the trees induced by their meet-closures are similar.

\begin{thm}\label{thm.onecolorpertreetype}
Suppose that $\mathcal{K}$
is a \Fraisse\ class in a language with
relation symbols  of arity at most two,
and suppose that $\mathcal{K}$
has a \Fraisse\ limit satisfying
\EEAP$^+$.
Let
$\bT$ be  a diagonal coding subtree of $\bU(\bK)$,
let
$\tilde{C}\sse\bT$ be an antichain of coding nodes,
 and let $T\in\mathcal{T}$ be fixed.
Given any coloring of the set $\{C\sse T:C\sim \tilde{C}\}$,
there is an $S\le T$ such that all  members  of
$\{C\sse S:C\sim \tilde{C}\}$ have the same color.
\end{thm}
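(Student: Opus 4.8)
The plan is to obtain Theorem \ref{thm.onecolorpertreetype} from Theorem \ref{thm.matrixHL} by a fusion argument along all the critical levels of a fixed copy of $\tilde C$. First I would fix a diagonal coding tree $\bT$ and a representative antichain $\tilde C \subseteq \bT$; by replacing $\tilde C$ with the tree induced by its meet-closure, I may and will treat $\tilde C$ as a finite subtree of $\bT$ in which every coding node is terminal. Let $\tilde d_0, \dots, \tilde d_{K-1}$ enumerate the critical nodes of $\tilde C$ in strictly increasing length; these partition $\tilde C$ into initial segments $\tilde A_k := \tilde C \rl |\tilde d_k|$, so $\tilde A_0 = \emptyset$ and $\tilde A_K = \tilde C$ together with the level set $\max(\tilde C)$ of terminal coding nodes. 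At stage $k$, $\tilde A_{k+1}$ is obtained from $\tilde A_k$ by extending a level set $\tilde B_k$ (end-extending or equal to $\max(\tilde A_k)^+$) to a level set $\tilde X_k$ having exactly one critical node, namely $\tilde d_k$ — this is precisely the data $(\tilde A, \tilde B, \tilde X)$ feeding Theorem \ref{thm.matrixHL}.

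The main construction is a recursion producing a decreasing sequence $T = S_0 \ge S_1 \ge \cdots \ge S_K$ in $\mathcal{T}$ and, at each stage, stabilizing the color on one more critical level. More carefully, I would run a simultaneous recursion over both the stage $k < K$ and over the finitely many $+$-similarity copies of $r_k(\tilde C)$ inside the current tree, together with the finitely many admissible level-set extensions $B$ of each such copy: for each such pair $(A, B)$ sitting in the current tree with $A \cup B \wsim \tilde A_k \cup \tilde B_k$, and each $X^*$ with $A \cup X^* \plussim \tilde A_k \cup \tilde X_k$, define $h_{A,B,X^*}: \Ext_T(B; X^*) \to (\text{colors})$ by $h_{A,B,X^*}(X)$ equals the color of the copy $C \sim \tilde C$ that a fixed end-extension of $A \cup X$ determines — here one uses that, having already stabilized the lower critical levels, the lower part of the copy is pinned down up to similarity, so this color is well-defined once we also fix how the finitely many remaining critical levels above will be extended; to handle that, I would process critical levels from the top down, or alternatively color by the $+$-similarity type of the full extension and appeal to the fact (Fact \ref{fact.simsamestructure}) that similar antichains represent $\cong^\om$ structures. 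Apply Theorem \ref{thm.matrixHL} once for each of these finitely many colorings in turn, shrinking the tree each time; since there are only finitely many $+$-similarity copies of $r_k(\tilde C)$, finitely many admissible $B$, and finitely many $X^*$ up to $+$-similarity, stage $k$ uses finitely many applications of Theorem \ref{thm.matrixHL}, and the resulting $S_{k+1} \le S_k$ has the property that the color of a copy $C \sim \tilde C$ in $S_{k+1}$ depends only on the $+$-similarity types of its first $k+1$ critical levels.

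After stage $K-1$ we have $S := S_K \le T$ in which the color of any $C \sim \tilde C$, $C \subseteq S$, depends only on the $+$-similarity type of the sequence of its critical levels $r_K(C)$. But by Remark \ref{rem.SplussimT} (and the remarks following Definition \ref{def.EEAPCodingTree} on diagonal coding trees, in particular condition (3) of Definition \ref{defn.sct} which forces the passing types at coding nodes), any two antichains $C, C' \subseteq S$ with $C \sim \tilde C \sim C'$ in fact satisfy $r_K(C) \plussim r_K(C')$: similarity of the induced trees already determines the $+$-similarity types of all critical levels in a diagonal coding tree, because at each splitting level Case 1 of Definition \ref{def.plussim} is automatic, and at each coding level Remark \ref{rem.plussimfornoncn} together with property (3) pins the passing type of the coding node, while the passing types of the non-coding nodes are recorded by $\sim$ itself. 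Hence all copies $C \sim \tilde C$ in $S$ receive the same color, which is the conclusion. The step I expect to be the main obstacle is the bookkeeping in the second paragraph: making the colorings $h_{A,B,X^*}$ genuinely well-defined — i.e. ensuring that the color of a copy is determined by the already-stabilized lower critical levels together with the level-set extension currently being considered — without circularity between different critical levels. Processing the critical levels in an order compatible with the tree structure (top-down on the portion above $B$, or packaging the whole extension as a single color and then unwinding with Fact \ref{fact.simsamestructure}) is what makes this go through, and stating it cleanly is the delicate part; the rest is a routine finite fusion.
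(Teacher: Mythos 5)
Your overall plan (reduce to Theorem \ref{thm.matrixHL} by an induction over the critical levels of $\tilde C$, then argue that in the final tree the color depends only on similarity type) is in the right spirit, but as written it has a genuine gap at exactly the point you flag, plus a quantitative error that hides the real work. First, the claim that ``stage $k$ uses finitely many applications of Theorem \ref{thm.matrixHL}, since there are only finitely many $+$-similarity copies of $r_k(\tilde C)$'' in the current tree is false: the current tree is an infinite member of $\mathcal{T}$, and it contains infinitely many $+$-similarity copies of $r_k(\tilde C)$; the auxiliary colorings $h_{A,B,X^*}$ depend on the actual copy $A$, not just its similarity type, so they cannot be collapsed to finitely many. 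What is finite is the number of such copies $A$ whose maximal level sits inside a \emph{fixed} level $r_m(T)$ of the ambient tree. The paper's proof therefore runs, within each level of $\tilde C$, an infinite fusion along the levels of $T$: it sets $D_{k+1}=r_{m_{k+1}}(T_k)$, handles the finitely many relevant $A$ with $\max(A)\sse\max(D_{k+1})$ by finitely many applications of Theorem \ref{thm.matrixHL}, obtaining $T_{k+1}\in[D_{k+1},T_k]$, and then takes $\bigcup_k D_k$, which lies in $\mathcal{T}$ precisely because each $T_{k+1}$ preserves the initial segment $D_{k+1}$. Without this diagonalization your construction of $S_{k+1}$ does not go through.

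Second, the well-definedness problem you identify as the main obstacle is not resolved by your hedges. Coloring ``by the $+$-similarity type of the full extension'' cannot work, since the coloring being stabilized is an arbitrary finite coloring of copies of $\tilde C$, not one determined by similarity types (that is the content of the theorem), and fixing ``a fixed end-extension of $A\cup X$'' makes the auxiliary coloring depend on choices about levels you have not yet stabilized, which is the circularity you worry about. The paper removes the circularity by committing to the top-down order: it first applies the fusion just described with $\tilde X=\max(\tilde C)$ (a single coding node) and $\tilde A=\tilde C\setminus\tilde X$, producing $S_1\le T$ in which \emph{every} extension of a fixed copy of $\tilde A$ to a copy of $\tilde C$ gets the same color; only then is there a genuinely well-defined induced coloring on copies of $\tilde A$ in $S_1$, to which the same argument is applied with $\tilde C_{n-1}=\tilde A$, and so on by reverse induction down to $S=S_n$. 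So the two missing ingredients are the level-by-level fusion inside each stage (using that Theorem \ref{thm.matrixHL} returns a tree in $[D,T]$) and the top-down induction that makes the induced colorings well defined without any auxiliary choices; your closing observation that similar antichains in a diagonal coding tree automatically have $+$-similar critical levels is fine and matches Remark \ref{rem.plussimfornoncn}, but it does not substitute for these two steps.
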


\begin{proof}
The proof is by reverse induction on the levels in $\tilde{C}$.
Suppose that $\tilde{C}$ has $n\ge 1$ levels.
If $n=1$, then by Theorem \ref{thm.indivisibility}, we are done.
Otherwise, assume $n\ge 2$ and
 let
$\tilde{X}$ denote $\tilde{C}\re \ell_{\tilde{C}}$, the maximum level of $\tilde{C}$.
Then $\tilde{X}$ is a single coding node.
Let $\tilde{A}$ denote $\tilde{C}\setminus\tilde{X}$; that is, $\tilde{A}$ is the initial segment of  all but the maximum level of $\tilde{C}$.

Let $m_0$ be the least integer  such that  $r_{m_0}(T)$ contains a $+$-similarity copy of $\tilde{A}$ extending to a copy of $\tilde{C}$,
 and
 let $D_0=r_{m_0}(T)$.
Let  $A^0,\dots,A^j$ list
  those   $A\sse T$ such that
  $\max(A)\sse\max(D_0)$ and
 $A$ extends to a similarity copy of $\tilde{C}$.
 For $i\le j$, let $B^i$ denote $(A^i)^+$,
 which we recall is the tree consisting of the nodes in $A^i$ along with all immediate successors of nodes in $\max(A^i)$.
 (These immediate successors are the same whether we consider them in $\bT$ or in $T$.)
 Each $B^j$ is a subtree of $(D_0)^+$.
  Apply Theorem \ref{thm.matrixHL}
   to obtain a $T_{0}^0\in [D_0,T]$ such that $h$ is monochromatic on $\Ext_{T_0^0}(B^0;\tilde{X})$.
Repeat this process,  each time thinning the previous tree
to obtain $T_0^{i+1}\in [D_0,T_0^i]$  so that
for each $i\le j$,  $\Ext_{T_0^j}(B^i;\tilde{X})$ is monochromatic.
Let $T_0$ denote $T_0^j$.
Then for each $A^i$, $i\le j$,
every extension of $A^i$ to a similarity copy of $\tilde{A}\cup\tilde{X}$ inside $T_0$ has the same color.

Given $k<\om$ and $T_k$, let $m_{k+1}$ be the least integer greater than $m_k$
 such that $r_{m_{k+1}}(T_k)$ contains a $+$-similarity copy of $\tilde{A}$  extending to a copy of $\tilde{C}$.
 Let $D_{k+1}=r_{m_{k+1}}(T_k)$, and
 index those $A$  with $\max(A)\sse \max(D_{k+1})$
such that
 $A$ extends to a similarity copy of $\tilde{C}$
  as $A^i$, $i\le j$ for some $j$.
 Repeat the above process applying  Theorem \ref{thm.matrixHL}  finitely many times to obtain
 a $T_{k+1}\in
[D_{k+1},T_k]$  with the property that
for each  $i\le j$,
all similarity copies of  $\tilde{A}\cup\tilde{X}$ in $T_{k+1}$
 extending $A^i$
have the same color.

Since each $T_{k+1}$ is a member of $ [D_{k+1}, T_k]$,  the union $\bigcup_{k<\om} D_k$ is a member of $\mathcal{T}$, call it $S_1$.
This induces a well-defined coloring of the copies of $\tilde{A}$ in $S_1$ as follows:
Given
$A\sse S_1$ a similarity copy of $\tilde{A}$ extending to a copy of $\tilde{C}$,
let $k$ be least such that $A$ is contained in $r_{m_k}(S_1)$.
Then $\max(A)$ is contained in $\max(D_k)$,
and   $S_1\in [D_k,T_k]$ implies that
for each level set extension $X$ of $A$ in $S_1$ such that
$A\cup X\sim \tilde{A}\cup\tilde{X}$,
these similarity copies of $\tilde{C}$ have the same color.

This now induces a coloring on $+$-similarity copies of
$\tilde{A}$ inside $S_1$.
Let $\tilde{C}_{n-1}$ denote this $\tilde{A}$,
$\tilde{X}_{n-1}$ denote $\max(\tilde{C}_{n-1})$,
and
 $\tilde{A}_{n-1}$ denote $\tilde{C}_{n-1}\setminus
 \tilde{X}_{n-1}$.
Repeat the  argument in the previous
three paragraphs
to obtain $S_2\le S_1$ such that
for each $+$-similarity copy of $\tilde{A}_{n-1}$ in $S_2$,
all extensions to $+$-similarity copies of $\tilde{C}_{n-1}$ in $S_2$ have the same color.

At the end of the reverse induction, we obtain an $S:=S_n\le T$
such that all similarity copies of $\tilde{C}$ in $S$
 have the same color.
\end{proof}

\begin{rem}\label{rem.Gamma_ts}
Recall Convention \ref{conv.Gamma_ts} that if
(a)
$\mathcal{K}$  satisfies \SFAP,
or
(b)
$
\bK$ satisfies \EEAP$^+$ and either has no  unary relations or has no transitive relations, then
we work inside a diagonal coding subtree  $\bT$ of
$\bS$.
Otherwise, we
have a transitive relation as well as unary relations
and we work inside $\bU$.
In this case, any subset $D\sse\bU$ for which $\bK\re D$ contains a copy of $\bK$ will contain a subset $D'\sse D$ such that
$\bK\re D'\cong \bK$ and
for each non-terminal node $s\in D'$ and each $\gamma\in\Gamma$,
there is a coding node $c\in D'$ extending $s$
 such that $\gamma(v)$ holds in $\bK$, where $v$ is the vertex of $\bK$ represented by $c$.
\end{rem}

The next lemma shows that
if $\mathcal{K}$
has \Fraisse\ limit $\bK$ satisfying
\EEAP$^+$, then
within any diagonal coding tree, there is an antichain of coding nodes representing a copy of $\bK$.

\begin{lem}\label{lem.bD}
Suppose \Fraisse\ class $\mathcal{K}$
has \Fraisse\ limit $\bK$ satisfying
\EEAP$^+$.
If $\mathcal{K}$ satisfies \SFAP\, or
 $\bK$ either has no transitive relation or  has no  unary relations, let $T$ be
 a diagonal coding subtree of $\bS(\bK)$; otherwise, let $T$ be a  diagonal
 coding subtree of $\bU(\bK)$.
Then  there is an infinite  antichain of coding nodes  $\bD\sse T$  so that $\bK\re \bD\cong^{\om}\bK$.
\end{lem}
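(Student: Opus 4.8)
The goal is to build, inside a fixed diagonal coding tree $T$, an infinite antichain of coding nodes $\bD$ whose represented substructure is isomorphic to $\bK$ via an order-preserving isomorphism. The plan is to construct $\bD$ recursively, one coding node at a time, maintaining at each finite stage a $+$-similarity copy of an initial segment of a diagonal coding tree together with the requirement that the coding nodes chosen so far form an antichain. The key point that drives the whole argument is the definition of diagonal coding subtree (Definition \ref{defn.sct}), especially conditions (2) and (3): condition (2) guarantees that the $1$-types realized by one-level extensions of a level set of $T$ are in bijection with the $1$-types in $\bU(n+1)$ (equivalently $\bS(n+1)$), so that every quantifier-free $1$-type that we need to realize in order to build a copy of $\bK$ is available; condition (3) guarantees that passing types over earlier initial segments propagate correctly, so that nodes in the same orbit over a finite initial segment of the represented structure are genuinely in the same orbit over the corresponding initial segment of $\bK$. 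Fact \ref{fact.simsamestructure} is then the bookkeeping tool that translates ``correct passing types at every coding node'' into ``$\cong^\om$''.

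\textbf{The construction.} First I would fix an enumeration $\langle u_i : i<\om\rangle$ of the vertices of $\bK$, so that $\bK$ is the increasing union of the finite structures $\bK\re\{u_j : j<i\}$. I will recursively choose coding nodes $c^{\bD}_0 \prec\!\!\!\!\!\!\!\!\!\text{(length)}\ c^{\bD}_1, \ldots$ in $T$, in order of strictly increasing length, together with, at stage $i$, a level set $W_i$ of $\widehat{T}$ end-extending the previous stage, such that: (i) $\{c^{\bD}_j : j<i\}$ is an antichain; (ii) the substructure of $\bK$ represented by $\{c^{\bD}_j : j < i\}$ is isomorphic, via the map $c^{\bD}_j \mapsto u_j$, to $\bK\re\{u_j : j<i\}$, with matching parameter-free formulas and matching relative passing types at each coding node; and (iii) $W_i$ contains, for every vertex $u_k$ with $k<i$ and every $1$-type $\tau$ over $\bK\re\{u_j : j<i\}$ realized in $\bK$ by some $u_m$ with $m\ge i$, a node extendible inside $T$ to a coding node whose passing type over $\{c^{\bD}_j : j<i\}$ is similar to $\tau$. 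To go from stage $i$ to stage $i+1$: the vertex $u_i$ has some quantifier-free $1$-type $\tau_i$ over $\bK\re\{u_j : j<i\}$; by the inductive hypothesis (iii), there is a node $w\in W_i$ that can be extended inside $T$ to a coding node $c$ whose passing type over $\{c^{\bD}_j : j<i\}$ is similar to $\tau_i$, and which moreover satisfies the same $\gamma\in\Gamma$ (parameter-free $1$-type) as $u_i$ — here, when unary relations and a transitive relation are both present, I invoke Remark \ref{rem.Gamma_ts} (passing to a subtree $D'$ of $T$ if necessary) to ensure every needed $\gamma$ is available cofinally; I set $c^{\bD}_i = c$. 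I then pass to the level of $c^{\bD}_i$, extend all other nodes of $W_i$ to that level (avoiding $c^{\bD}_i$ so as to keep the antichain property), and then extend one level further, using condition (2) of Definition \ref{defn.sct} to restore property (iii) for the new finite structure $\bK\re\{u_j : j\le i\}$; condition (3) ensures the passing types just introduced are consistent with those that will be demanded at later coding nodes. This produces $W_{i+1}$.

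\textbf{Verification.} Since the lengths $|c^{\bD}_i|$ are strictly increasing and at each stage we deliberately route the non-chosen nodes of the level set away from $c^{\bD}_i$, the set $\bD := \{c^{\bD}_i : i<\om\}$ is an infinite antichain of coding nodes in $T$. By construction, for every $i$ the map $c^{\bD}_i\mapsto u_i$ preserves parameter-free formulas (we matched the $\gamma\in\Gamma$ at each step) and, for all $i<j$, we have $c^{\bD}_j(i;\bD)\sim c^{\bU\re\bK}_j(i)$ (or the analogue in $\bS$), because at stage $j$ we chose $c^{\bD}_j$ precisely so that its passing type over $\{c^{\bD}_k : k<j\}$ was similar to $\tau_j\re \bK\re\{u_k : k<i\}$ for each $i<j$ — condition (3) of Definition \ref{defn.sct} being what makes the restrictions of this single passing type to all earlier initial segments simultaneously correct. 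By Fact \ref{fact.simsamestructure}, $\bK\re\bD \cong^\om \bK\re\{u_i : i<\om\} = \bK$, which is the assertion of the lemma. \textbf{The main obstacle} I anticipate is bookkeeping property (iii): one must phrase the inductive invariant strongly enough that the one-level extension guaranteed by condition (2) of Definition \ref{defn.sct} actually furnishes, at the next stage, \emph{every} $1$-type over the enlarged finite structure that could be needed later — this is exactly where the definition of diagonal coding subtree was engineered to help, and the transitive-plus-unary case (handled by Remark \ref{rem.Gamma_ts}) is the delicate sub-case, since there one cannot freely choose the unary relation at a coding node and must instead have arranged the ambient tree $T$ so that all $\gamma\in\Gamma$ occur cofinally above every node.
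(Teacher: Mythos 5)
Your overall strategy — build $\bD$ recursively, one coding node per stage, matching parameter-free types and passing types at every earlier coding node, and then conclude $\bK\re\bD\cong^{\om}\bK$ via Fact \ref{fact.simsamestructure} — is the same shape as the paper's construction. But there is a genuine gap at the one step where the hypothesis \EEAP\ actually has to be used, and your proposal never invokes it. Since each $c^{\bD}_i$ must be terminal in the antichain, every future coding node of $\bD$ whose restriction to the structure built so far extends the type of $c^{\bD}_i$ (in particular, the analogue of the cone above the coding node in $T$) must be re-routed through a node \emph{incomparable} with $c^{\bD}_i$, and that node must have passing type at $c^{\bD}_i$ similar to $(c_i)^+(c_i)$. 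Such a node can only arise from a splitting node strictly below $c^{\bD}_i$ lying above the same node $w\in W_i$ that was extended to $c^{\bD}_i$; extensions of the \emph{other} nodes of $W_i$ cannot supply it, because their restrictions to the previously built structure are different $1$-types. Your step ``extend all other nodes of $W_i$ to that level (avoiding $c^{\bD}_i$) \dots using condition (2) of Definition \ref{defn.sct} to restore property (iii)'' therefore does not close the induction: condition (2) is a statement about the full level set $T\re(\ell^T_m+1)$ over the structure represented by \emph{all} of $T$'s coding nodes up to that level, and the unique node of that level set realizing the ``continuation through the coding node'' type is $(c^{\bD}_i)^+$ itself, which is useless for an antichain. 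Condition (3) is a coherence condition along chains of coding nodes of $T$ and likewise does not produce the required sibling.

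This is precisely the point where the paper's proof does real work: it extends $w$ to a splitting node $s'$ long enough that $\bK\re(T\re|s'|)$ can serve as the structure $\bfA'$ in Definition \ref{defn.EEAP_new}, takes $\sigma=\tau=s'$, lets one immediate successor of $s'$ lead to the (terminal) coding node $c^{\bD}_i$, and uses \EEAP\ to find, above the other immediate successor, a node $z$ of length $|c^{\bD}_i|+1$ with $z(c^{\bD}_i)\sim (c_i)^+(c_i)$; \EEAP\ is then used again to give the remaining nodes of the level set whatever passing types at $c^{\bD}_i$ are needed. A correct write-up of your plan must include this splitting-node-plus-\EEAP\ argument (or an equivalent use of the coding-tree version of \EEAP$^+$); without it, invariant (iii) cannot be restored after a coding node is declared terminal, and the claimed isomorphism with $\bK$ fails to be verifiable. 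The rest of your outline (handling of $\Gamma$ via Remark \ref{rem.Gamma_ts}, strictly increasing lengths, the final appeal to Fact \ref{fact.simsamestructure}) is consistent with the paper.
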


\begin{proof}
We will use
$c^{\bD}_n$ to denote the $n$-th coding node in $\bD$,
and $v^{\bD}_n$ to denote the vertex in $\bK$ coded by
$c^{\bD}_n$.
The antichain $\bD$ will look almost exactly like $T$ in the following sense:
For each $n$,
the level set of $\bD$ containing the $n$-th coding node,
denoted $\bD\re |c^{\bD}_n|$,
will have exactly one more node than $T\re |c^T_n|$, and the $\prec$-preserving bijection between $T\re |c^T_n|$ and
$(\bD\re |c^{\bD}_n|)\setminus\{c^{\bD}_n\}$ will preserve passing types of the immediate successors.
(This is not necessary to the results on big Ramsey degrees, but since we can do this, we will.)
Moreover, letting $T'$ be the coding tree
obtained by
deleting the coding nodes in $\bD$ and declaring
 the node  $t$ in   $(\bD\re |c^{\bD}_n|)\setminus\{c^{\bD}_n\}$ which has  $t\wedge c^{\bD}_n$ of maximal length  to be the $n$-th coding node in $T'$,
 then $T'\sim T$.

Let  $m_n$ denote the integer such that the $n$-th coding node in $T$ is in the $m_n$-th level of $T$;
that is, $c^T_n$ is in the maximal level of $r_{m_n+1}(T)$.
To construct $\bD$, begin by
taking  the first $m_0$ levels of $\bD$ to equal those of $T$; that is, let $r_{m_0}(\bD)= r_{m_0}(T)$.
Each of these  levels contains a splitting node.
Let $X$ denote the set of immediate successors in
$\widehat{T}$  of
the maximal nodes in
$r_{m_0}(\bD)$.
By \EEAP,
whatever we choose to be $c^{\bD}_0$,
each node in $X$ can extend to a node in $T$ with  the desired passing type at  $c^{\bD}_0$.

Let $s$ denote the node in $X$ which extends to $c^T_0$.
It only remains to  find a splitting node  extending $s$
whose immediate successors
can be   extended to a coding node
$c^{\bD}_0$ (which will be terminal in $\bD$)
 and another node $z$ of length $|c^{\bD}_0|+1$
satisfying
\begin{equation}
z(c^{\bD}_0)
\sim
c_0^+(c_0);
\end{equation}
or in other words,
$z\re (\bK\re \{v^{\bD}_0\})$ is the same as the type of $c^+_0$ over $\bK\re \{v_0\}$.

To do this, we  utilize  \EEAP:
In this
application of \EEAP,
  $\bfA$ is the empty structure and
$\bfC$ is the structure
$\bK\re \{v^T_0,v^T_i\}$ for any $i>0$ such that
 $c^T_i$ extends $c^T_0$.
Extend $s$ to some splitting node $s'\in T$ long enough so that
the structure $\bK\re (T\re|s'|)$
acts as $\bfA'$ as in the set-up of (B) in \EEAP.
In (B1), we take $\bfC'$ to be a copy of $\bfC$ represented by some coding nodes $c^T_j,c^T_k$, where
 $s'\sse c^T_j\sse c^T_k$.
 In (B2), we let $\bfB=\bfA'$,
 and take $\sigma=\tau=s'$.
Let $t_0,t_1$ denote the immediate successors of $s'$ in $\widehat{T}$,
and take a coding node in $T$, which we denote $c^{\bD}_0$,  extending $t_0$.
(The vertex $v^{\bD}_0$ which
$c^{\bD}_0$
 represents is the $v''$ in (B3).)
Then by  \EEAP,
there is a coding node $c^T_m$ extending $t_1$ such that
\begin{equation}
c^T_m(c^{\bD}_0)
\sim
c_0^+(c_0).
\end{equation}
We let $y=c^T_m\re|c^{\bD}_0|$ and $z=c^T_m\re(|c^{\bD}_0|+1)$.
The passing type of $z$ at $c^{\bD}_0$ is the desired passing type.
We let $\bD\re(|c^{\bD}_0|+1)$ consist of the node
$z$ along with  extensions of the nodes in
$X\setminus \{s\}$ to the length
of $z$ so that their passing types at $c^{\bD}_0$
are as desired;
that is, the $\prec$-preserving bijection between
$T\re (|c^T_0|+1)$ and
$\bD\re (|c^{\bD}_0|+1)$
preserves passing types at $c^T_0$ and $c^{\bD}_0$, respectively.
We let
$\bD\re |c^{\bD}_0|$ equal $\{c^{\bD}_0\}\cup \bD\re |c^{\bD}_0|$.

For the general construction stage, given $\bD$ up to the level of $|c^{\bD}_n|+1$,
let
$X$ denote the level set  $\bD\re(|c^{\bD}_n|+1)$.
Extend the nodes in $X$ in the same way that the nodes in $T\re |c^T_{n+1}|$ extend the nodes in
$T\re (|c^T_{n}|+1)$.
Let
$s$ denote the node in $X$ which needs to be extended to the next coding node $c^{\bD}_{n+1}$, and repeat the argument above find a suitable  splitting node and extensions to a coding node $c^{\bD}_{n+1}$ as well as a non-coding node of the same height with the desired passing type at $c^{\bD}_{n+1}$ over $\{c^{\bD}_i:i\le n\}$.
By \EEAP, the other nodes in $X$ extend to have the desired passing types.
\end{proof}

By Remark \ref{rem.plussimfornoncn}, given two antichains of coding nodes $C$ and $C'$,
it follows that
$C\sim C'$ if and only if for any $k$, the first $k$ levels of the trees induced by $C$ and $C'$, respectively, are $+$-similar.

Let $\mathcal{K}$ be a \Fraisse\ class
with \Fraisse\ limit $\bK$
satisfying \EEAP$^+$.
If $\mathcal{K}$ satisfies \SFAP, or
$\bK$
either has no transitive relation or has no unary relations,
then let $\bT$ be a diagonal coding subtree of $\bS(\bK)$.
Otherwise, let $\bT$ be a diagonal coding subtree of
$\bU(\bK)$.
Recalling Definition \ref{def.ssmap} and recalling that we may identify a subset of $\bT$ with the subtree it induces,
given an antichain of coding nodes $C\sse \bT$,
we  let $\Sim(C)$ denote the set of all antichains $C'$ of coding nodes in $\bT$ such that $C'\sim C$.
Thus, $\Sim(C)$ is a $\sim$-equivalence class, and  we call $\Sim(C)$ a {\em similarity type}.
For $S\sse \bT$, we write $\Sim_S(C)$ for the set of $C'\sse S$ such that $C'\sim C$.

\begin{defn}\label{defn.SimTC}
We say that $C$ {\em represents} a copy of a structure  $\bG\in\mathcal{K}$ when
$\bK\re C\cong \bG$.
Given $\bG\in\mathcal{K}$,
let  $\Sim(\bG)$ denote a set consisting of
one representative  from each
 similarity type
 $\Sim(C)$ of diagonal  antichains of coding nodes $C\sse\bT$ representing
a copy of
$\bG$.
\end{defn}

The next theorem  providing upper bounds follows immediately from  Theorem \ref{thm.onecolorpertreetype}
and  Lemma \ref{lem.bD}.

\begin{thm}[Upper Bounds]\label{thm.onecolorpertype}
Suppose $\mathcal{K}$
is a \Fraisse\ class
with relations of arity at most two and
 with Fraisse\ limit $\bK$ satisfying
\EEAP$^+$.
Then for each  $\bG\in \mathcal{K}$,
the big Ramsey degree of $\bG$ in $\bK$ is bounded by the  number of similarity types of diagonal antichains of coding nodes representing $\bG$; that is,
 $$T(\bG,\bK)\le |\Sim(\bG)|.
$$
Moreover, given any finite collection  $\mathcal{G}$
 of structures in $\mathcal{K}$ and any
 coloring of all copies of   each $\bG\in\mathcal{G}$  in $\bK$ into finitely many colors,
  there is a substructure $\bJ$ of $\bK$
  such that
  $\bJ\cong^{\om}\bK$ and
  each
  $\bG\in\mathcal{G}$ takes at most  $|\Sim(\bG)|$ many colors in $\bJ$.
\end{thm}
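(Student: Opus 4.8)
The plan is to combine the combinatorial Ramsey theorem for similarity types (Theorem \ref{thm.onecolorpertreetype}) with the bridge lemma (Lemma \ref{lem.bD}) that produces an antichain of coding nodes representing a full copy of $\bK$ inside any diagonal coding tree. Concretely, first I would fix a diagonal coding subtree $\bT$ of $\bU(\bK)$ (or of $\bS(\bK)$, according to Convention \ref{conv.Gamma_ts}) witnessing the Diagonal Coding Tree Property, and set $\mathcal{T}=\mathcal{T}(\bT)$. For a single structure $\bG\in\mathcal{K}$: by Lemma \ref{lem.bD} there is an antichain $\bD\sse\bT$ of coding nodes with $\bK\re\bD\cong^\om\bK$, so the set $\Sim(\bG)$ of similarity types of diagonal antichains representing $\bG$ is nonempty and finite (finiteness because each such antichain, viewed through the tree its meet-closure induces, has a bounded number of levels and each level has boundedly many nodes, so there are only finitely many $+$-similarity types of the associated finite trees; this is where the ``arity at most two'' hypothesis is used, to keep the branching of $\bT$ bounded). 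Enumerate $\Sim(\bG)=\{C^1,\dots,C^N\}$ where $N=|\Sim(\bG)|$.

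Next, given a coloring $\gamma$ of ${\bK\choose\bG}$ into finitely many colors, I would transfer it to a coloring of similarity copies of $C^i$ inside $\bT$ as follows. Every $C'\sse\bT$ with $C'\sim C^i$ represents a copy $\bK\re C'$ of $\bG$ in $\bK$ (using Fact \ref{fact.simsamestructure} and the observation after Definition \ref{def.ssmap} that $C'\sim C^i$ forces $\bK\re C'\cong^\om\bG$), so $\gamma$ restricted to such copies induces a coloring $h_i$ of $\{C'\sse\bT: C'\sim C^i\}$. Apply Theorem \ref{thm.onecolorpertreetype} to $h_1$ inside $\bT$ to obtain $S_1\le\bT$ on which all copies of $C^1$ get a single color; then apply it again to $h_2$ inside $S_1$ to get $S_2\le S_1$, and so on through all $N$ similarity types, arriving at $S:=S_N\le\bT$ with the property that for each $i\le N$, all $C'\sse S$ with $C'\sim C^i$ receive a single $\gamma$-color. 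Now inside $S$, run Lemma \ref{lem.bD} once more (noting $S\in\mathcal{T}$, so the lemma applies to $S$) to obtain an antichain $\bD'\sse S$ of coding nodes with $\bJ:=\bK\re\bD'\cong^\om\bK$. Since $\bD'\sse S$, every copy of $\bG$ inside $\bJ$ arises from some diagonal antichain $C'\sse S$ with $C'\sim C^i$ for exactly one $i$ (because $\bU\re\bJ$ is a diagonal coding tree and any subcopy of $\bG$ in $\bJ$ is represented by coding nodes whose meet-closure is a diagonal tree, hence realizing one of the finitely many similarity types); thus $\gamma$ takes at most $N=|\Sim(\bG)|$ colors on ${\bJ\choose\bG}$. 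This proves $T(\bG,\bK)\le|\Sim(\bG)|$.

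For the ``moreover'' clause with a finite collection $\mathcal{G}=\{\bG_1,\dots,\bG_k\}$ and a coloring of the copies of each $\bG_j$: I would do the same but interleave. Let $\Sim(\bG_j)=\{C^{j,1},\dots,C^{j,N_j}\}$ with $N_j=|\Sim(\bG_j)|$, and list all pairs $(j,i)$ with $1\le j\le k$, $1\le i\le N_j$ in a single finite sequence. Starting from $\bT$, apply Theorem \ref{thm.onecolorpertreetype} successively once for each pair $(j,i)$ in the list—each application thinning the previous tree to stabilize the color on similarity copies of $C^{j,i}$—to obtain a single $S\le\bT$ that is monochromatic for every similarity type of every $\bG_j\in\mathcal{G}$ simultaneously. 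Then Lemma \ref{lem.bD} applied to $S$ yields $\bJ\cong^\om\bK$ inside $S$, and by the same argument as above each $\bG_j$ takes at most $|\Sim(\bG_j)|$ colors in $\bJ$.

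The main obstacle I anticipate is bookkeeping rather than a deep new idea: one must verify carefully that (i) every subcopy of $\bG$ inside $\bJ=\bK\re\bD'$ really is represented by a \emph{diagonal} antichain of coding nodes of $S$ realizing one of the enumerated similarity types—this uses that $\bU\re\bJ=\bD'$ (the meet-closure) is itself a diagonal coding tree, so its coding nodes split nowhere, making any sub-antichain automatically diagonal, together with Fact \ref{fact.simsamestructure} to match isomorphism types with $+$-similarity types; and (ii) that finitely many applications of Theorem \ref{thm.onecolorpertreetype}, each producing $S_{r+1}\le S_r$, can be chained because $\le$ is transitive on $\mathcal{T}$ and each $S_r$ is again a legitimate member of $\mathcal{T}$ on which Lemma \ref{lem.bD} and Theorem \ref{thm.onecolorpertreetype} continue to apply. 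Both points are routine given the machinery already set up, so the proof is essentially an assembly of Theorem \ref{thm.onecolorpertreetype} and Lemma \ref{lem.bD}.
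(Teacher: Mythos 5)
Your proposal is correct and follows essentially the same route as the paper: finitely many applications of Theorem \ref{thm.onecolorpertreetype} (one per similarity type, chained by thinning within $\mathcal{T}$) to stabilize colors, followed by Lemma \ref{lem.bD} to extract an antichain $\bD\sse S$ with $\bJ=\bK\re\bD\cong^{\om}\bK$, so that each $\bG$ takes at most $|\Sim(\bG)|$ colors in $\bJ$. The extra bookkeeping you flag (finiteness of $\Sim(\bG)$ and the fact that any copy of $\bG$ inside $\bJ$ is represented by a diagonal antichain of coding nodes of $S$, hence realizes one of the enumerated similarity types) is exactly the routine verification the paper leaves implicit.
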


\begin{proof}
Let $\mathcal{G}$ be a finite collection of structures in $\mathcal{K}$.
 Given any $T\in\mathcal{T}$,
 apply
Theorem  \ref{thm.onecolorpertreetype}
finitely many times to obtain a coding subtree $S\le T$ such that the coloring  takes one color on
the set
$\Sim_S(C)$,
 for each $C\in \bigcup\{\Sim(\bG):\bG\in\mathcal{G}\}$.
Then apply  Lemma \ref{lem.bD} to take an antichain of coding nodes, $\bD\sse S$, such that $\bK\re \bD\cong^{\om}\bK$.
Letting $\bJ=\bK\re\bD$, we see that
there are at most $|\Sim(\bG)|$ many colors  on the copies of $\bG$ in $\bJ$.
\end{proof}

In the next section, we will show that these bounds are exact.

%%%%%%%%%%%%%%%%%%%%%%
%%%%%%%%%%%%%%%%%%%%%%
%%%%%%%%%%%%%%%%%%%%%%
%%%%%%%%%%%%%%%%%%%%%%
%%%%%%%%%%%%%%%%%%%%%%
%%%%%%%%%%%%%%%%%%%%%%

\section{Simply characterized  big Ramsey degrees and  structures}\label{sec.brd}

In this section we prove
that
if a  \Fraisse\ limit $\bK$ of a \Fraisse\ class $\mathcal{K}$
with relations of arity at most two
satisfies \EEAP$^+$, then we can characterize the exact big Ramsey degrees of $\bK$; furthermore, $\bK$ admits a big Ramsey structure.
We first show, in Theorem \ref{thm.persistence}, that each of the similarity types in
Theorem \ref{thm.onecolorpertype} persists, and hence these similarity types form canonical partitions.  From this, we obtain
a succinct characterization of the exact big Ramsey degrees of $\bK$. We then prove, in Theorem \ref{thm.apply}, that
canonical partitions
characterized via similarity types
satisfy a condition of Zucker
(\cite{Zucker19})
guaranteeing
the existence of big Ramsey structures.
This involves showing how
Zucker's condition, which is
phrased in terms of colorings of embeddings of a given structure, can be
met
by
canonical partitions
that are
in terms of
colorings of copies of a given structure.
The big Ramsey structure for $\bK$ thus obtained also has a simple characterization.
From these
results,
 we deduce Theorem \ref{thm.main}.

\begin{rem}
We point out that Theorem \ref{thm.persistence} also provides lower bounds for  the big Ramsey degrees in a
 \Fraisse\ limit $\bK$ of a \Fraisse\ class $\mathcal{K}$
with relations of any arity.
\end{rem}

Recall from Definition \ref{defn.cp} the notion of {\em persistence}.
We first show, in Theorem \ref{thm.persistence}, that
given $\bG\in\mathcal{K}$,
each of the similarity types in
 $\Sim(\bG)$ persists in any subcopy of $\bK$.
From this, it will follow
  that
  the big Ramsey degree
 $T(\bG,\mathcal{K})$ is exactly the cardinality of
  $\Sim(\bG)$ (Theorem \ref{thm.bounds}).
The proof of  Theorem  \ref{thm.persistence} follows the outline and many ideas of the proof of
Theorem 4.1 in
 \cite{Laflamme/Sauer/Vuksanovic06},
where Laflamme, Sauer, and Vuksanovic
proved persistence of diagonal antichains for
unrestricted
binary relational structures.

Recall that $\Gamma$ denotes  the set of all complete
$1$-types
of elements of $\bK$
over the empty
set.
For $\gamma\in\Gamma$,
we  let $\bC_\gamma$ denote the set of coding nodes $c_n$ in $\bS$
such that
$\gamma(v_n)$ holds in $\bK$, where $v_n$ is the vertex of $\bK$ represented by $c_n$;
 let $\gamma_{c_n}$ denote this  $\gamma$.
The next definition extends the notion of ``passing number preserving map'' from
Theorem 4.1 in \cite{Laflamme/Sauer/Vuksanovic06}.

\begin{defn}\label{defn.ptp}
Given two subsets $S,T\sse \bS$ with
coding nodes
$\lgl c^S_n:n<M\rgl$ and $\lgl c^T_n:n<N\rgl$, respectively, where $M\le N\le\om$,
 we say that
a map $\varphi:S\ra T$ is {\em passing type preserving (ptp)} if and only if  the following hold:
\begin{enumerate}
\item
$|s|<|t|$ implies  that $|\varphi(s)|<|\varphi(t)|$.
\item
$\varphi$ takes each coding node in $S$ to a  coding node in $T$,
and $\gamma_{\varphi(c^S_n)}=\gamma_{c^S_n}$ for each $n\le M$.
\item
$\varphi$  preserves passing types:
For any  $s\in S$  and
 $m<M$ with $|c^S_{m-1}|<|s|$,\\
$\varphi(s)(\varphi(c^S_m); \{\varphi(c^S_0),\dots \varphi(c^S_{m-1})\})
\sim
s(c^S_m;\{c^S_0,\dots, c^S_{m-1}\})$.
\end{enumerate}
\end{defn}

\begin{thm}[Persistence]\label{thm.persistence}
Let $\mathcal{K}$ be a \Fraisse\ class
and $\bK$ an enumerated \Fraisse\ structure for
$\mathcal{K}$.
Suppose that $\bK$ satisfies \EEAP$^+$. Let $\bT$ be
 a diagonal coding tree representing a copy of $\bK$,
 let
$\bD\sse \bT$  be any antichain of coding nodes  representing $\bK$,
and let
 $A$ be any    antichain of coding nodes in $\bD$.
Then for any subset $D\sse\bD$ representing a copy of $\bK$,
there is a similarity copy of $A$ in $D$; that is,
$A$ persists in $D$.
\end{thm}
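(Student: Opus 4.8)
The goal is to show that any antichain $A$ of coding nodes in $\bD$ reappears, up to similarity, inside every subcopy $D\sse\bD$ of $\bK$. The plan is to reduce the problem to a finite configuration and then feed it into the machinery already developed: the coding-tree version of \EEAP$^+$ (Definition \ref{def.EEAPCodingTree}), the Extension Property, and crucially the observation in Remark \ref{rem.plussimfornoncn} that, for languages with relations of arity at most two, membership in $\Ext_T(B;X^*)$ is \emph{exactly} the set of level-set extensions $X$ of $B$ with $A\cup X\plussim\tilde A\cup\tilde X$. First I would fix the subcopy $D\sse\bD$ and let $T=\bU\re(\bK\re D)$ (or $\bS\re(\bK\re D)$, per Convention \ref{conv.Gamma_ts}) be the diagonal coding tree induced by $D$; since $\bK\re D\cong\bK$ and $D$ is diagonal, $T$ is again a diagonal coding subtree, so $T\in\mathcal{T}$ up to the choice of ambient $\bT$. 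The task then becomes: find inside $T$ an antichain of coding nodes similar to $A$.

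\textbf{Main steps.} The argument proceeds by induction on the number $n$ of levels of (the tree induced by) $A$, mirroring the reverse-induction structure of Theorem \ref{thm.onecolorpertreetype}. The base case $n=1$ is immediate: $A$ is a single coding node representing some singleton structure $\bfC$, and indivisibility (Theorem \ref{thm.indivisibility}), or more directly the fact that coding nodes are dense in $T$ and realize every $\gamma\in\Gamma$ cofinally (Lemma \ref{lem.bD} style density), gives a coding node in $T$ with the same $\gamma$, hence a similarity copy of $A$. For the inductive step, write $A=\tilde A\cup\tilde X$ where $\tilde X$ is the top coding node and $\tilde A$ the initial segment below it; set $\tilde B=A\re(\ell_{\tilde A}+1)$ and let $B$ be the corresponding initial level-set to be extended. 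By the inductive hypothesis applied to $\tilde A$, there is a similarity copy $A'$ of $\tilde A$ inside $T$ extending to (a copy of) $A$'s shape; one must check $A'$ actually extends to a full copy of $\tilde A\cup\tilde X$, and this is precisely where the coding-tree \EEAP$^+$ condition and the Extension Property enter: starting from $A'$ and the appropriate $B'=\max(A')^+$, one locates $s',t'\in T$ and then a coding node $c^T_{j'}$ so that $A'\cup\{c^T_{j'}\}$ together with $B'$ realizes $\tilde A\cup\tilde X$. Using Remark \ref{rem.plussimfornoncn}, the non-coding nodes of the level-set extension can be chosen via \EEAP\ to have the right passing types over $A'\cup\{c^T_{j'}\}$, upgrading the $\plussim$-match to a genuine similarity copy of $A$ inside $T$, hence inside $D$.

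\textbf{Where care is needed.} The main obstacle is the bookkeeping ensuring that the copy of $\tilde A$ produced by the inductive hypothesis is \emph{extendible} to a copy of $\tilde C$ (equivalently $A$) — this extendibility is exactly the hypothesis carried through $\Ext_T(B;X^*)$ in Theorem \ref{thm.matrixHL}, and one must verify it is preserved at each inductive stage. In Case (a) (top node is a splitting node in the ambient coding tree, relevant only if we are matching internal levels), the Extension Property guarantees that any level-set end-extending $B'$ can be pushed to a splitting node yielding the needed $+$-similarity; in Case (b) (top node a coding node), one uses the coding-tree \EEAP$^+$ clause together with clause (3) of Definition \ref{defn.sct} to get the passing type of the coding node's immediate successor correct automatically. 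A secondary subtlety, when $\mathcal{L}$ has both unary and transitive relations, is that one must work in $\bU$ and, as in Remark \ref{rem.Gamma_ts}, first thin $D$ to $D'$ so that every non-terminal node has coding-node extensions realizing each $\gamma\in\Gamma$; this is harmless since $\bK\re D'\cong\bK$ still. Once these extendibility and passing-type matters are handled level by level, concatenating the finitely many inductive extensions yields a similarity copy of $A$ inside $D$, completing the proof.
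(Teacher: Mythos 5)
There is a genuine gap at the very first step, and it undermines the rest of the argument. You set $T=\bU\re(\bK\re D)$ and assert that, because $\bK\re D\cong\bK$ and $D$ is diagonal, $T$ is again a diagonal coding subtree, ``so $T\in\mathcal{T}$ up to the choice of ambient $\bT$.'' This is not justified and is false in general. The tree induced by an arbitrary sub-antichain $D\sse\bD$ coding $\bK$ is diagonal, but it need not satisfy condition (2) of Definition \ref{defn.sct}: its nodes at the level of $c^D_n$ are restrictions of later coding nodes of $D$, and these are $1$-types over \emph{all} of $\bK_{m}$ (where $m=|c^D_n|$), not over $\bK\re\{v^D_i:i\le n\}$, so two later coding nodes realizing the same type over the structure coded by $D$ can restrict to distinct nodes; the widths, the alignment of splitting and coding levels, and the passing-type pattern of the induced tree therefore need not match $\bT$. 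In fact, if your claim were true the whole theorem would be immediate (push $A$ through a similarity map $\bT\to T$), and Corollary \ref{cor.presbD} -- which says only that some \emph{further} subset $D'\sse D$ is similar to $\bD$ -- is itself a consequence of the persistence theorem, not something available beforehand. Because $T$ is not known to be in $\mathcal{T}$ (nor even to satisfy Definition \ref{defn.sct}), your subsequent appeals to the coding-tree version of \EEAP$^+$, the Extension Property, and $\Ext_T(B;X^*)$ inside $T$ are not licensed: inside an arbitrary subcopy you cannot freely extend a node to a prescribed passing type or produce a splitting node where you want one; you only know such configurations occur ``somewhere cofinally'' because $D$ codes a copy of $\bK$.

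This is exactly the difficulty the paper's proof is built to overcome, and your proposal has no substitute for that machinery. The paper defines the passing-type-preserving map $\varphi:\bC\ra D$, $c_n\mapsto c^D_n$, extends it to $\bar\varphi$ on $\bU$, introduces the notion of a \emph{large} subset of $D$ (one whose $\varphi$-preimage is cofinal above some node of $\bU$), and proves two largeness lemmas (a pigeonhole lemma and a lemma guaranteeing that, above any large cone, every realizable $1$-type over the relevant finite substructure of $\bK\re D$ is realized on a large set). It then builds a similarity copy of the meet-closure of $A$ inside $\cl(D)$ by recursion on levels, handling splitting nodes (finding two incomparable extensions with large cones realizing distinct types, and matching the $\psi$-values from clause (2) of the Extension Property, e.g.\ for $\mathcal{COE}_{n,p}$) and coding nodes (choosing a coding node of $D$ with the correct unary type and extending the remaining nodes to large cones with the required passing types) separately. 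Your induction on the number of levels of $A$ could in principle be organized this way, but without the largeness/cofinality apparatus (or an equivalent density argument carried out relative to $D$ rather than relative to a tree in $\mathcal{T}$), the inductive step does not go through. A further, smaller issue: your use of Remark \ref{rem.plussimfornoncn} ties the argument to arity at most two, whereas the theorem (and the paper's proof) holds for relations of arbitrary arity.
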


\begin{proof}
We shall be working under the assumption  that either (a) there is an  antichain  of coding nodes $\bD\sse\bT\sse\bS$
such that $\bK\re\bD\cong \bK$,
or
(b)
that for every  antichain  of coding nodes $\bD\sse\bT\sse \bU$
such that $\bK\re \bD\cong\bK$,
there is a subset $D$ also coding $\bK$
with
the property that for each non-terminal node $t\in D$
 and for each $\gamma\in \Gamma$,
 there is a coding node  in $D\cap\bC_\gamma$ extending $t$.
(Recall
Remark \ref{rem.Gamma_ts}.)
In either case, we  let   $\bD$ be an antichain of coding nodes  in $\bT$ representing a copy of $\bK$,
where $\bD$ is constructed as in Lemma \ref{lem.bD}.
Throughout, we shall use the notation $\bU$, but keep in mind that if (a) above holds, then we are working in $\bS$.

Without loss of generality, we may assume that
 $\bK\re \bD \cong^{\om}\bK$,
 by thinning $\bD$ if necessary.
Let $D\sse\bD$ be any subset
 such that $\bK\re D\cong\bK$;  let
 $\bJ$ denote $\bK\re D$.
Again, without loss of generality, we may assume that
$\bJ\cong^{\om}\bK$.
Let $\bC=\{c_n:n<\om\}$ denote the set of all coding nodes in $\bU$, and note that $D\sse\bD\sse\bC$.
Then  the map $\varphi:\bC\ra  D$ via
$\varphi(c_n)=c^{D}_n$ is passing type preserving, where $\lgl c^{D}_n:n<\om\rgl$ is the enumeration of the nodes in $D$ in order of increasing length.

Define
 \begin{equation}
 \overline{D}
 =\{c^{D}_n\re |c^{D}_m|: m\le n<\om\}.
 \end{equation}
 Then $\overline{D}$ is a union of level sets  (but is not meet-closed).
 We extend the map $\varphi$ to a map $\bar{\varphi}:\bU\ra\overline{D}$ as follows:
 Given $s\in \bU$, let $n$ be least such that $c_n\contains s$ and   $m$ be the integer such that
 $|s|=|c_m|$,
  and define $\bar{\varphi}(s)=\varphi(c_n)\re|\varphi(c_m)|$;
in other words, $\bar{\varphi}(s)=c^{D}_n\re|c^{D}_m|$.

\begin{lem}\label{lem.barvarphiptp}
$\bar{\varphi}$ is passing type preserving.
\end{lem}

\begin{proof}
For $s\in \bU(m)$,
let $n>m$ be  least such that $s=c_n\re |c_m|$.
Then for any
for $i<m$,
\begin{align}
\bar{\varphi}(s)(\varphi(c_i); \{\varphi(c_0),\dots,\varphi(c_{i-1})\})
&=(\varphi(c_n)\re |\varphi(c_m)|)(\varphi(c_i); \{\varphi(c_0),\dots,\varphi(c_{i-1})\})\cr
&=(c^{D}_n\re|c^{D}_m|)(c^{D}_i; \{c^{D}_0,\dots,c^{D}_{i-1}\})\cr
&=c^{D}_n(c^{D}_i; \{c^{D}_0,\dots,c^{D}_{i-1}\})\cr
&\sim c_n(c_i; \{c_0,\dots,c_{i-1}\})\cr
&= (c_n\re|c_m|)(c_i; \{c_0,\dots,c_{i-1}\})\cr
&=s(c_i; \{c_0,\dots,c_{i-1}\})
\end{align}
where the $\sim$ holds
since $\varphi:\bC\ra D$ is ptp.
Therefore, $\bar{\varphi}$ is ptp.
\end{proof}

Given a fixed subset $S\sse \bU$ and $s\in S$, we let $\widehat{s}$  denote the set of all $t\in S$ such that $t\contains s$.
The ambient set $S$ will either be
$\bU$
 or $\overline{D}$,  and will  be clear from the context.
We say that a set $X$ is {\em cofinal} in $\widehat{s}$  (or {\em cofinal} above $s$)  if and only if
for each $t\in\widehat{s}$, there is some  $u\in X$ such that $u\contains t$.
 A subset $L\sse D$ is  called {\em large} if and only if there is some $s\in\bU$ such that
 $\varphi^{-1}[L]$ is cofinal in
 $\widehat{s}$.
 We point out that since $D$ is a set of coding nodes,
 for any $L\sse D$,
 $\varphi^{-1}[L]$ is a subset of $\bC$.

\begin{lem}\label{lem.lemma1}
Let $n<\om$
and $L\sse D$
be given.
Suppose  $L=\bigcup_{i<n}L_i$ for some $L_i\sse D$.  If $L$ is large,
then there is an $i<n$ such that $L_i$ is large.
\end{lem}

\begin{proof}
Suppose not.
Since $L$ is large, there is some $t\in\bU$ such that $\varphi^{-1}[L]$ is cofinal above $t$.
Since $L_0$ is not large,
there  is some $s_0\contains t$ such that $\varphi^{-1}[L_0]\cap \widehat{s_0}=\emptyset$.
Given $i<n-1$ and $s_i$, since $L_{i+1}$ is not large, there is some $s_{i+1}\contains s_i$ such that
$\varphi^{-1}[L_{i+1}]\cap \widehat{s_{i+1}}=\emptyset$.
At the end of this recursive construction, we obtain an $s_{n-1}\in\bS$ such that for all $i<n$,
$\varphi^{-1}[L_{i}]\cap \widehat{s_{n-1}}=\emptyset$.
Hence, $\varphi^{-1}[L]\cap \widehat{s_{n-1}}=\emptyset$, contradicting that
$\varphi^{-1}[L]$ is cofinal above $t$.
\end{proof}

Thus,  any partition of a large set into finitely many pieces
contains
at least one  piece which is large.

Given a subset $I\sse\om$,
let $\bK\re I$ denote the
substructure  of $\bK$ on vertices $\{v_i:i\in I\}$.
Recalling that $\bJ$ denotes $\bK\re D$, we let  $\bJ\re I$ denote the
substructure of $\bJ$ on
vertices $\{v^{D}_i:i\in I\}$, where $v^D_i$ is the vertex  represented by the coding node $c^D_i$.
The next lemma  will be applied in two important ways.
First, it will
aid in  finding  splitting nodes  in the meet-closure of $D$ (denoted by $\cl(D)$)
as  needed   to construct a similarity copy of a given antichain of coding nodes   $A$ inside $D$.
 Second,
 it
 will guarantee that we can find nodes in $D$ which have the needed passing types in order to continue building a similarity copy of $A$ in
 $D$.

Given a subset $L\sse\overline{D}$,
we say that $L$ is {\em large}
exactly when $L\cap D$ is large.
 Note that since $\varphi$ has range $D$,
 $\varphi^{-1}[L]$ is always a subset of $\bC$.
Given  a finite set
$I\sse\om$
and $1$-types $\sigma,\tau$
over $\bJ\re I$ and $\bK\re I$, respectively,
 we write $\sigma\sim\tau$ exactly when
 for each $i\in I$,
 $\sigma(v^D_i; \bJ\re I_i)\sim
 \tau(v_i;\bK\re I_i)$,
 where $I_i=\{j\in I:j<i\}$.

\begin{lem}\label{lem.lemma2}
Suppose  $t$ is in $\overline{D}$
 and     $\widehat{t}$  is   large.
 Let $s_*\in\bU$ be such that
 $\varphi^{-1}[\,\widehat{t}\,]$ is cofinal in $\widehat{s_*}$.
 Let  $i$ be    the index such that
$|s_*|=|c_{i}|$,  and
let  $I\sse i$,
 $n\ge i$,
 $I'=I\cup\{n\}$, and
$\ell=|c^{D}_n|$ be given.
For any
complete
 $1$-type $\sigma$ over
 $\bJ\re I'$ such that $\sigma\re (\bJ\re I)\sim s_*\re (\bK\re  I)$,
let
\begin{equation}
L_{\sigma}=
\bigcup\{\widehat{u}:u\in \widehat{t}\re \ell\mathrm{\ and\ }
u\re (\bJ\re I')\sim\sigma\}.
\end{equation}
Then
$L_{\sigma}$ is large.
\end{lem}

\begin{proof}
Fix an $s\contains s_*$ with $|s|>|c_n|$ such that
 $s\re(\bK\re I')\sim \sigma$ holds.
 Suppose towards a contradiction that
$L_{\sigma}$ is not large, and
fix  an extension  $s'\contains s$ such that
$\varphi^{-1}[L_{\sigma}]
\cap \widehat{s'}=\emptyset$.
Since $\varphi^{-1}[\,\widehat{t}\,]$ is cofinal in $\widehat{s_*}$,
there is a coding node $c_j$
in  $\varphi^{-1}[\,\widehat{t}\,]
$
extending $s'$.
Notice that $c_j$ being in $\varphi^{-1}[\,\widehat{t}\,]$ implies that $\varphi(c_j)$ extends $t$.
Moreover,
since
$c_j$ extends $s$
and  $\varphi$ is passing type preserving,
it follows that
$\varphi(c_j)\re (\bJ\re I')\sim\sigma$.
Thus,
$\varphi(c_j)$
 is in $L_{\sigma}$
 and hence,
$c_j$ is in $\varphi^{-1}[L_{\sigma}]$.
But  then
$c_j\in \varphi^{-1}[L_{\sigma}]
\cap \widehat{s'}$, a contradiction.
\end{proof}

For the remainder of the proof,
fix  a diagonal  antichain of coding nodes $A\sse\bD$.
Let $\lgl c^A_i:i<p\rgl$  enumerate the  nodes in $A$ in order of increasing length, where $p\le \om$, noting that  each $c^A_i$ is a coding node.
For each $i<p$,
let $\gamma_i$ denote $\gamma_{c^A_i}$.

Let $B$ denote the meet-closure of $A$;
 label the nodes of $B$ as $\lgl b_i:i<q\rgl$ in increasing order of length, where $q\le \om$.
Thus, each node in  $B$ is either a member of $A$ (hence, a coding node) or else a splitting node of degree two which is the meet of two  nodes in $A$.
Our goal is to build a similarity copy of $B$ inside the meet-closure of $D$, denoted $\cl(D)$;
that is, we aim to build a  similarity map $f$  from $B$ into $\cl(D)$ so that  $f[B]\sim B$.
Now the map
$\varphi$ is already passing type preserving.
The challenge is to get a $\prec$- and meet- preserving map which is still passing type preserving from  $B$ into $\cl(D)$.

First notice that  $B\re 1=A\re 1$.
If we are working in $\bS$, then $B\re 1$ is a subset of
$D\re 1=\bS(0)=\Gamma$ with  possibly more than one node.
If we are working in $\bU$, then $B\re 1$ is the singleton $D\re 1$.
Without loss of generality, we may assume
that $|c^A_0|>1$.
Let
$f_{-1}$ be the  empty map,
let
$T_{-1}$ denote $B\re 1$,
 let $N_{-1}=1$, and
let $\psi_{-1}$ be the identity map on
$T_{-1}$.
Let $\widehat{D}$ be the tree induced by cl$(D)$.
Let $M_{-1}=1$, and for each $k<q$, let
$M_k=|b_{k-1}|+1$, where we make the convention $|b_{-1}|=0$.

For each $k<q$
we will recursively define meet-closed sets $T_k\sse\widehat{D}$, maps $f_k$ and $\psi_k$, and $N_k<\om$ such that the following hold:
\begin{enumerate}
\item
$f_k$ is a $+$similarity embedding of $\{b_i:i<k\}$ into $T_k$.
\item
$|t|\le N_k$ for all $t\in T_k$.

\item
All maximal nodes of $T_k$ are either in $T_k\re N_k$, or else in the range of $f_k$.

\item
$\theta_k$ is a $\prec$ and passing type preserving  bijection of
$B\re M_k$ to $T_k\re N_k$.

\item
$T_{k-1}\sse T_k$, $f_{k-1}\sse f_k$, and $N_{k-1}<N_k$.
\end{enumerate}

The idea behind $T_k$ is that it will contain a similarity  image of $\{b_i:i<k\}\cup (B\re M_k)$,
the nodes in the image of $ B\re M_k$ being the ones we need to continue extending in order to build a similarity copy of $B$ in $\cl(D)$
If   $p>1$ in the  Extension Property,
then in (1) we further assume that $f_k$  preserves
 the $\psi$ value of the splitting node.

Assume  now that  $k<q$, and (1)--(6) hold for all $k'<k$.
We have two cases.
\vskip.1in

\noindent\bf Case I. \rm
$b_k$ is a splitting node.
\vskip.1in

Let $i<j<p$ be such that  $b_k=c^A_i\wedge c^A_j$.
Let $t_k=\theta_k( b_k\re M_k)$,
recalling that by (5),
 $t_k$ is a member of $T_k\re N_k$.
By (3), $\widehat{t_k}$ is large,
so we can fix  a coding node $c_{n}\in \varphi^{-1}[\widehat{t_k}]$.
Then
 $c^{D}_n=\varphi(c_n)\contains t_k$.
Let $N_{k+1}=|c^{D}_{n+1}|$.

Our goal is to find two incomparable nodes which extend $t_k$
and have cones which are
 large.
Recalling that $N_k=|t_k|$,
 let
 \begin{equation}
 I=\{i<\om: |c_i^{D}|< N_k\},
 \end{equation}
 and let $I'=I\cup\{n\}$.
Let $\sigma$ and $\tau$ be distinct $1$-types over
 $\bJ( I')$
 such that
 both
  $\sigma\re  \bJ( I)$
  and
    $\tau\re \bJ( I)$
 equal  $t_k\re  \bJ( I)$.
For each    $\mu\in\{\sigma,\tau\}$,
let
\begin{equation}
L_{\mu}=\bigcup\{\widehat{u}:u\in\widehat{t_k}\re N_{k+1}\mathrm{\  and \ }
u(c_n^{D}; \bJ( I))=\mu\}.
\end{equation}
By Lemma \ref{lem.lemma2},
both
$L_\sigma$ and  $L_\tau$ are large.
It then follows from Lemma \ref{lem.lemma1} that
there are
 $t_{\sigma},t_{\tau}\in\widehat{t_k}\re N_{k+1}$
such that
$t_{\sigma}\in L_\sigma$ and
$t_{\tau}\in L_\tau$,
and both
$\widehat{t_{\sigma}}$ and $\widehat{t_{\tau}}$  are
large.
Since $\sigma\ne\tau$, it follows that $t_\sigma\ne t_\tau$.
Hence, $t_\sigma$ and $t_\tau$ are incomparable, since they have the same length, $N_{k+1}$.
Since
both $t_\sigma\contains t_k$ and $t_\tau\contains t_k$,
we have $t_\sigma\wedge t_\tau\contains t_k$.

As $(B\cap \widehat{b_k})\re M_k$ has size exactly two,
define $\theta_{k+1}$
on  $(B\cap \widehat{b_k})\re M_{k+1}$
to be the unique $\prec$-preserving map
onto $\{t_\sigma,t_\tau\}$.
Let $E_k$ denote $(B\setminus\widehat{b_k})\re M_k$.
For $s\in E_k$, choose some
$t_s\in\widehat{\theta_k(s)}\re N_{k+1}$ such that
$\widehat{t_s}$ is large.
This is possible by Lemma \ref{lem.lemma1}, since
$\bigcup\{\widehat{t}:t\in \widehat{\theta_k(s)}\re N_{k+1}\}$
is large.
Every $s\in E_k$ has a unique extension  $s'\in B\re M_{k+1}$.
Define $\theta_{k+1}(s')=t_s$.
Let $f_{k+1}$ be the extension of $f_k$ which sends
$b_k$ to $t_\sigma\wedge t_\tau$,
 and let
\begin{equation}
T_{k+1}=T_k\cup \{t_\sigma,t_\tau, t_\sigma\wedge t_\tau\}\cup
\{t_s:s\in E_k\}.
\end{equation}

For
$\mathcal{COE}_{m,p}$,
if $\psi(b_k)=m$, then we require
 $\sigma$ and $\tau$ to  both  include
$\neg E_{m-1}(x, v^D_n)$;
if $\psi(b_k)<m$, then
we require
 $\sigma$ and $\tau$ to  both  include
$E_{\psi(b_k)}(x, v^D_n)$
and
$\neg E_{i}(x, v^D_n)$ for all $i<\psi(b_k)$.
In general,  if
$\bK$ satisfies
(2) of the Extension Property,
and if $\psi(b_k)=m$,
then we take $\sigma$ and $\tau$ above
so that the pair $\{\sigma\re (\bK\re \{v^D_n\}),
\tau\re (\bK\re \{v^D_n\})  \}$
is in $Q_m$ in (2b) of the Extension Property.

This completes Case I.
\vskip.1in

\noindent\bf Case II. \rm
$b_k$ is a coding  node.
\vskip.1in

In this case, $b_k=c^A_j$ for some $j<p$.
By the Induction Hypothesis,
for each $t\in T_k\re N_k$,
$\widehat{t}$ is large;
so we can choose  some $s_t\in\bU$ such that $\varphi^{-1}[\,\widehat{t}\,]$ is  cofinal above $s_t$.
Fix
$t_*=\theta_k(b_k\re  M_k)\in T_k\re N_k$.
Choose a coding node $c_n\contains s_{t_*}$ in $\bU$ such that $|c_n| >\max\{|s_t|:t\in T_k\re N_k\}$
and $\gamma_{c_n}=\gamma_j$, the $\gamma\in\Gamma$ which the vertex $v^A_j$ satisfies.
(In the case that $\bD\sse\bS$, this $\gamma_j$ is
already guaranteed  since $c_n\contains s_{t_*}\contains \gamma_j$.
If $\bD\sse\bU$, there are cofinally many coding nodes extending $s_{t_*}$ which satisfy $\gamma_j$.)
Let
 $d_k$ denote $c^{D}_n=\varphi(c_n)$,
 noting that $\gamma_{d_k}=\gamma_j$.
 Extend $f_k$ by defining
 $f_{k+1}(b_k)=d_k$,
 and
let $N_{k+1}=|c^D_{n+1}|$.
If $q<\om$ and $k=q-1$, we are done.
Otherwise,
we must extend the other members of
$(T_k\re N_k)\setminus\{t_*\}$  to nodes in
$\widehat{D}\re N_{k+1}$ so as to satisfy (1)--(6).

For each  $i\in\{k,k+1\}$,
let
$E_i=(B\re M_i)\setminus \{b_i\re M_i\}$.
Fix an   $s\in E_k$
and let $t=\theta_k(s)$, which is a node in $T_k\re N_k$.
Note that there is a unique $s'\in  E_{k+1}$
such that $s'\contains s$.
Let $A\rl j$ denote $\{c^A_i:i\le j\}$,
$\sigma$ denote $s'\re (A\rl j)$, and
$f_k[A\rl j]$ denote $\{f_k(c^A_i):i<j\}$.
Let
 $I=\{i<\om:c^{D}_{i}\in
f_k[A\rl j]\}$.
Our goal is to find a $t'\contains t$ with $|t'|>|d_k|$
such that $t'(d_k;\bJ(I))\sim \sigma$.

Take  $c_m$ to be any coding node in $\bS$  extending
$s$
such that
$|c_m|>|c_n|$ and $c_m(c_n;A\rl j)\sim\sigma$.
Such a  $c_m$  exists by  \EEAP.
Then  $ \varphi(c_m) (d_k; \bJ(I))\sim \sigma$,
 since $\varphi$ is passing type preserving.
By Lemma \ref{lem.lemma2},
\begin{equation}
L_\sigma:=
\bigcup\{\widehat{u}:u\in\widehat{t}\re N_{k+1}\mathrm{\ and\ }
u(d_k;f_k[A\rl j])
\sim
\sigma\}
\end{equation}
is
large.
Thus, by Lemma \ref{lem.lemma1},
there is some $u_s\in\widehat{t}\re N_{k+1}$ such that $\widehat{u}_s$ is large.
Define $\theta_{k+1}(s)=u_s$.
This builds
\begin{equation}
T_{k+1}=
T_k\cup\{d_k\}\cup\{\psi_{k+1}(s):s\in E_k\}
\end{equation}
and concludes the construction in Case II.
\vskip.1in

Finally, let $f=\bigcup_k f_k$.
Then $f$ is a similarity map from $B$ to $f[B]$,
and thus,
 the antichain of coding nodes in  $f[A]$ is similar to $A$.
Therefore, all similarity types of  diagonal  antichains of coding nodes  persist in $\bJ$.
\end{proof}

As the antichain in the previous theorem can be infinite, we immediately obtain the following corollary.

\begin{cor}\label{cor.presbD}
Suppose $\bK$
satisfies \EEAP$^+$.
Given
$D$ a subset of $\bD$ which represents a copy of $\bK$,
there is a subset $D'$ of $D$ such that $D'\sim \bD$,
assuming
$\bK$
satisfies \EEAP$^+$.
\end{cor}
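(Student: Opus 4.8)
The plan is to deduce Corollary~\ref{cor.presbD} from Theorem~\ref{thm.persistence} by taking the antichain $A$ in that theorem to be all of $\bD$ itself. First I would recall that $\bD$ is constructed as in Lemma~\ref{lem.bD}, so that $\bD$ is an infinite antichain of coding nodes in the diagonal coding tree $\bT$ (sitting in $\bS$ or in $\bU$ according to Convention~\ref{conv.Gamma_ts} and Remark~\ref{rem.Gamma_ts}) with $\bK\re\bD\cong^{\om}\bK$. Since $\bD$ is itself an antichain of coding nodes contained in $\bD$, it is a legitimate choice for the ``$A$'' appearing in the hypotheses of Theorem~\ref{thm.persistence}: the theorem explicitly allows the antichain $A$ to be infinite, as emphasized in the sentence immediately preceding the corollary.

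Next I would invoke Theorem~\ref{thm.persistence} directly. Given any subset $D\sse\bD$ representing a copy of $\bK$, the theorem (applied with $A=\bD$) produces a similarity copy of $\bD$ inside $D$; that is, a subset $D'\sse D$ with $D'\sim\bD$. Since $D\sse\bD$, the set $D'$ is in particular a subset of $\bD$ as required by the statement of the corollary. The only point that needs a brief remark is that the conclusion of Theorem~\ref{thm.persistence} yields $D'$ as a set of coding nodes that is similar to $\bD$, which is exactly the assertion $D'\sim\bD$ in the notation set up before Definition~\ref{defn.SimTC}; and because similarity of antichains of coding nodes preserves the induced ordered structure (Fact~\ref{fact.simsamestructure}), one also has $\bK\re D'\cong^{\om}\bK\re\bD\cong^{\om}\bK$, so $D'$ again represents a copy of $\bK$ if that is desired downstream.

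There is essentially no obstacle here beyond bookkeeping: the corollary is a verbatim specialization of the main persistence theorem to the infinite antichain $\bD$, relying only on the fact (already noted in the excerpt) that Theorem~\ref{thm.persistence} does not require the antichain $A$ to be finite. The one mild care point is making sure the standing assumptions of Theorem~\ref{thm.persistence} are met, namely that $\bK$ satisfies \EEAP$^+$ and that $\bT$ and $\bD$ are chosen per Convention~\ref{conv.Gamma_ts} and Lemma~\ref{lem.bD}; these are precisely the hypotheses under which $\bD$ was produced in the first place, so nothing new must be verified.
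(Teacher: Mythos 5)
Your proposal is correct and follows exactly the paper's own route: the corollary is obtained by applying Theorem~\ref{thm.persistence} with the antichain $A$ taken to be $\bD$ itself, using that the theorem places no finiteness restriction on $A$. Your additional bookkeeping remarks (hypotheses from Lemma~\ref{lem.bD} and Convention~\ref{conv.Gamma_ts}, and the consequence via Fact~\ref{fact.simsamestructure}) are consistent with, and slightly more explicit than, the paper's one-line derivation.
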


Combining the previous
results,
we obtain canonical partitions for \Fraisse\ classes
with relations of arity at most two
that have  \Fraisse\ limits
satisfying \EEAP$^+$;
these canonical partitions
are simply described by similarity types.

\begin{thm}[Simply characterized big Ramsey degrees]\label{thm.bounds}
Let
$\bK$ be an enumerated \Fraisse\ structure for a
 \Fraisse\ class $\mathcal{K}$
with relations of arity  at most two
 such that $\bK$ satisfies
 \EEAP$^+$.
Given  $\bG\in\mathcal{K}$,
the partition $\{\Sim(C):C\in\Sim(\bG)\}$ is
 a canonical partition of
 the copies of $\bG$ in $\bK$.
 It follows that
the big Ramsey degree $T(\bG,\bK)$
equals the  number of similarity types of antichains of coding nodes in $\bT$ representing $\bG$.
That is,
$$
T(\bG,\bK)=|\Sim(\bG)|.
$$
\end{thm}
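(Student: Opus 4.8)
The plan is to combine the two halves of the work already done: the upper bound from Theorem~\ref{thm.onecolorpertype} and the persistence (lower bound) result from Theorem~\ref{thm.persistence}. First I would recall the set-up: fix an enumerated \Fraisse\ structure $\bK$ for $\mathcal{K}$ satisfying \EEAP$^+$, and fix a diagonal coding subtree $\bT$ of $\bS(\bK)$ (or of $\bU(\bK)$, following Convention~\ref{conv.Gamma_ts}) which is perfect, as provided by the Diagonal Coding Tree Property. By Lemma~\ref{lem.bD} there is an antichain of coding nodes $\bD\sse\bT$ with $\bK\re\bD\cong^{\om}\bK$, and for each $\bG\in\mathcal{K}$ the set $\Sim(\bG)$ is a set of representatives, one per similarity type, of diagonal antichains of coding nodes in $\bT$ representing $\bG$. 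The claim to prove is that $\{\Sim(C):C\in\Sim(\bG)\}$ is a canonical partition of ${\bK\choose\bG}$, from which $T(\bG,\bK)=|\Sim(\bG)|$ follows by Definition~\ref{defn.cp} and Definition~\ref{defn.bRd}.

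The first step is to check that $\{\Sim(C):C\in\Sim(\bG)\}$ really is a partition of ${\bK\choose\bG}$. Identify $\bK$ with $\bK\re\bD$ and transport everything into $\bD$: every copy $\bG'$ of $\bG$ inside $\bK$ corresponds to the antichain of coding nodes in $\bD$ representing its vertices, which induces a diagonal subtree of $\bT$; by Fact~\ref{fact.simsamestructure} and the remarks on $\cong^{\om}$, this antichain represents $\bG$, so it is similar to exactly one $C\in\Sim(\bG)$ (similarity being an equivalence relation, by Definition~\ref{def.ssmap}). Hence the classes $\Sim(C)$, restricted to antichains inside $\bD$, cover ${\bK\choose\bG}$ and are pairwise disjoint. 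The second step is persistence: given any subcopy $\bJ$ of $\bK$ and any $C\in\Sim(\bG)$, Theorem~\ref{thm.persistence} applied to the antichain $A=C$ (viewed inside $\bD$) and to $D\sse\bD$ representing $\bJ$ yields a similarity copy of $C$ inside $D$, i.e.\ $\Sim(C)\cap{\bJ\choose\bG}\neq\emptyset$. This is condition (1) of Definition~\ref{defn.cp}. The third step is the coloring condition (2): given a finite coloring $\gamma$ of ${\bK\choose\bG}$, apply Theorem~\ref{thm.onecolorpertreetype} finitely many times (once for each $C\in\Sim(\bG)$) to thin $\bD$ to a coding subtree $S\le\bT$ on which $\gamma$ is constant on $\Sim_S(C)$ for every $C\in\Sim(\bG)$; then apply Lemma~\ref{lem.bD} (or Corollary~\ref{cor.presbD}) to extract from $S$ an antichain $\bD'$ of coding nodes with $\bK\re\bD'\cong^{\om}\bK$, and set $\bJ=\bK\re\bD'$. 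For each $C\in\Sim(\bG)$, every member of $\Sim(C)\cap{\bJ\choose\bG}$ is a member of $\Sim_S(C)$, hence all receive the same $\gamma$-color. This is precisely condition (2).

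Having established that $\{\Sim(C):C\in\Sim(\bG)\}$ is a canonical partition, the equality $T(\bG,\bK)=|\Sim(\bG)|$ is a formal consequence: by the coloring condition, any finite coloring is constant on each of the $|\Sim(\bG)|$ many classes within a suitable subcopy, so $T(\bG,\bK)\le|\Sim(\bG)|$ (this also re-derives Theorem~\ref{thm.onecolorpertype}); and by persistence, every class is non-empty in every subcopy, so no subcopy can realize fewer than $|\Sim(\bG)|$ colors when $\gamma$ is chosen to separate the classes, giving $T(\bG,\bK)\ge|\Sim(\bG)|$. The main obstacle in writing this cleanly is bookkeeping around the identification of copies of $\bG$ in $\bK$ with diagonal antichains of coding nodes in $\bT$: one must be careful that the correspondence respects the $\cong^{\om}$-structure (so that the induced subtree is genuinely diagonal and genuinely represents $\bG$), and that the similarity type is well-defined independently of which copy $\bD$ or $\bD'$ of $\bK$ one works inside. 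All the real combinatorial and forcing content is already contained in Theorems~\ref{thm.matrixHL}, \ref{thm.onecolorpertreetype}, and \ref{thm.persistence}, so this final theorem is essentially an assembly argument, and I would keep the write-up to roughly the three steps above plus the concluding cardinality computation.
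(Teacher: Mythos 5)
Your proposal is correct and follows essentially the same route as the paper: the paper's proof of Theorem \ref{thm.bounds} is exactly the assembly you describe, citing Theorem \ref{thm.onecolorpertype} (which itself is Theorem \ref{thm.onecolorpertreetype} applied finitely many times followed by Lemma \ref{lem.bD}) for the coloring condition and Theorem \ref{thm.persistence} for persistence, then reading off $T(\bG,\bK)=|\Sim(\bG)|$ from Definition \ref{defn.cp}. Your extra care about the copies-to-antichains correspondence and the explicit lower-bound coloring separating the classes only spells out details the paper leaves implicit.
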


\begin{proof}
Let   $\bG\in\mathcal{K}$ be given, and suppose $h$ is a coloring of all copies of $\bG$ in $\bK$ into finitely many colors.
By Theorem
\ref{thm.onecolorpertype},
there is an antichain of coding nodes $\bD\sse\bT$ which codes a copy of $\bK$, and moreover, for each $C\in\Sim(\bG)$,
$h$ is constant on $\Sim_{\bD}(C)$.
Let $\bJ=\bK\re\bD$.

Given any
subcopy
$\bJ'$ of $\bJ$,
Theorem   \ref{thm.persistence}
implies that $\Sim_D(C)\ne\emptyset$ for each $C\in\Sim(\bG)$,
where $D=\bS\re \bJ'$.
Thus,
$\{\Sim(C):C\in\Sim(\bG)\}$ is
 a canonical partition of
 the copies of $\bG$ in $\bK$.
It follows that
$T(\bG,\mathcal{K})= |\Sim(\bG)|$.
 \end{proof}

We now  apply
Theorem \ref{thm.bounds}
to show that \Fraisse\ structures
with \EEAP$^+$
satisfy the conditions of Zucker's Theorem 7.1 in \cite{Zucker19},  yielding
 Theorem \ref{thm.main}.
Zucker used colorings of embeddings
rather than colorings of copies throughout \cite{Zucker19}.
Our task now is to translate
Theorem \ref{thm.bounds},
which uses
colorings of copies of a given structure, into the setting of \cite{Zucker19}.
To do so, we need to review the following  notions from
\cite{Zucker19}.

Let $\bK$ be an enumerated \Fraisse\ structure for a \Fraisse\ class $\mathcal{K}$.
An {\em exhaustion} of $\bK$ is a sequence
$\{\bfA_n:n<\om\}$ with each $\bfA_n\in\mathcal{K}$, $\bfA_n\sse\bfA_{n+1}\sse\bK$,  such that $\bK=\bigcup_{n<\om}\bfA_n$.
Given $m\le n$, write $H_m:=\Emb(\bfA_m,\bK)$ and $H^n_m:=\Emb(\bfA_m,\bfA_n)$.
For $f\in H^n_m$, the function $\hat{f}: H_n\ra H_m$ is defined by $\hat{f}(s)=s\circ f$, for each $s\in H_n$.
(Here we are using Zucker's notation, so $s$ is denoting an embedding rather than a node in $\bU$.)

The following  terminology is  found in
Definition 4.2  in \cite{Zucker19}.
A set $S\sse H_m$ is {\em unavoidable} if for each embedding $\eta:\bK\ra\bK$, we have $\eta^{-1}(S)\ne\emptyset$.
Fix $k\le r<\om$ and let
 $\gamma: H_m\ra r$  be a coloring.
 We call
 $\gamma$  an
{\em unavoidable $k$-coloring}  if
the image of $\gamma$, written $\mathrm{Im}(\gamma)$, has cardinality $k$,
and for each $i<r$,
we have $\gamma^{-1}(\{i\})\sse H_m$ is either empty
or unavoidable.
Thus, an unavoidable coloring is essentially the same concept as persistence, with the addition that attention is also given to the embedding.

The following is taken from Definition 4.7 in \cite{Zucker19}:
Let $\gamma$ and $\delta$ be colorings of $H_m$.
We say that $\delta$ {\em refines} $\gamma$ and write $\gamma\le\delta$ if whenever $f_0,f_1\in H_m$ and $\delta(f_0)=\delta(f_1)$,
then $\gamma(f_0)=\gamma(f_1)$.
For $m\le n<\om$, $\gamma$ a coloring of $H_m$, and $\delta$ a coloring of $H_n$,
we say that $\delta$ {\em strongly refines} $\gamma$ and write $\gamma\ll\delta$ if for every $f\in H^n_m$, we have that $\gamma\circ \hat{f}\le \delta$.

Theorem 7.1 in \cite{Zucker19}, which we state next,  provides conditions for showing that a \Fraisse\ limit admits a big Ramsey structure.
We will then apply this theorem to show that all \Fraisse\
structures
with relations of arity at most two that have
\EEAP$^+$
admit big Ramsey structures, thus
giving
Theorem \ref{thm.main}.

\begin{thm}[Zucker,  \cite{Zucker19}]\label{thm.Zucker7.1}
Let $\bK=\bigcup_{n<\om}\bfA_n$ be a \Fraisse\ structure,
where $\{ \bA_n : n < \om\}$ is an exhaustion of $\bK$,
and suppose each $\bfA_n$ has
finite big Ramsey degree
$R_n$
in $\bK$.
Assume that for each $m<\om$, there is an unavoidable $R_m$-coloring $\gamma_m$ of $H_m$ so that
$\gamma_m\ll\gamma_n$ for each $m\le n<\om$.
Then $\bK$ admits a big Ramsey structure.
\end{thm}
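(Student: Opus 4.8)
The plan is to build the expansion $\bK^*$ directly from the given colorings, following Zucker \cite{Zucker19}, with the main work concentrated in a preliminary step that is not explicit in the hypothesis. First I would note that \emph{every} finite structure in $\mathcal{K}$ has a finite big Ramsey degree: since the $\bfA_n$ are cofinal under embeddings, any $\bfB\in\mathcal{K}$ embeds in some $\bfA_n$, and a finite coloring of $\Emb(\bfB,\bK)$ can be pushed forward to a finite coloring of $\Emb(\bfA_n,\bK)$ by recording, for an embedding $s$ of $\bfA_n$, the colors of the finitely many embeddings $s\circ g$ with $g\in\Emb(\bfB,\bfA_n)$; reducing the latter to $R_n$ colors on a subcopy bounds the original coloring there, using that in any subcopy every copy of $\bfB$ extends to a copy of $\bfA_n$. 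Write $R(\bfB)$ for the big Ramsey degree of $\bfB$ computed with colorings of embeddings; by Remark \ref{rem.embvscopy}, $R(\bfB)=T(\bfB,\bK)\cdot|\mathrm{Aut}(\bfB)|$, and $R(\bfA_n)=R_n$.

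The key step (I) is to extend the given coherent family $\{\gamma_m:m<\om\}$, indexed by the exhaustion, to a family $\{\delta_\bfB:\bfB\in\mathcal{K}\}$ indexed by all isomorphism types of $\mathcal{K}$, where each $\delta_\bfB$ is a surjective unavoidable coloring $\Emb(\bfB,\bK)\to R(\bfB)$, $\delta_{\bfA_n}$ agrees with $\gamma_n$ up to relabeling of colors, and the family is coherent: $\delta_\bfA\circ\hat g\le\delta_\bfB$ for all $\bfA,\bfB\in\mathcal{K}$ and $g\in\Emb(\bfA,\bfB)$. Granting (I), step (II) assembles $\bK^*$ as follows. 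Fix an enumeration of the universe of each $\bfB$, so that embeddings are coded by their value tuples; let $\mathcal{L}^*=\mathcal{L}\cup\{P^\bfB_i:\bfB\in\mathcal{K},\ i<R(\bfB)\}$ with $P^\bfB_i$ of arity $|\bfB|$; and let $\bK^*$ have the same universe and $\mathcal{L}$-reduct as $\bK$, interpreting $P^\bfB_i$ so that $P^\bfB_i(\bar b)$ holds precisely when $\bar b$ enumerates an embedding $s\in\Emb(\bfB,\bK)$ (this is decided by the $\mathcal{L}$-structure that $\bK$ induces on the entries of $\bar b$) with $\delta_\bfB(s)=i$. Condition (1) of Definition \ref{defn.bRs} is immediate, and condition (2) holds because on any finite set $F$ only the finitely many relations $P^\bfB_i$ with $|\bfB|\le|F|$ can be non-vacuous. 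For $\iota\in\Emb(\bfB,\bK)$, the expansion $\bK^*\cdot\iota$ records exactly the values $\delta_\bfA(\iota\circ g)$ over all $\bfA\in\mathcal{K}$ and $g\in\Emb(\bfA,\bfB)$; taking $\bfA=\bfB$ with $g$ the identity shows it determines $\delta_\bfB(\iota)$, and the coherence $\delta_\bfA\circ\hat g\le\delta_\bfB$ shows conversely that $\delta_\bfB(\iota)$ determines all of those values. Hence $\iota\mapsto\bK^*\cdot\iota$ is, up to relabeling, the coloring $\delta_\bfB$, so $|\bK^*(\bfB)|=|\mathrm{Im}(\delta_\bfB)|=R(\bfB)$, which is condition (3); and since $\delta_\bfB$ is unavoidable, each fibre of $\iota\mapsto\bK^*\cdot\iota$ meets $\Emb(\bfB,\bK')$ for every subcopy $\bK'$ of $\bK$, which is the final (witnessing) clause of Definition \ref{defn.bRs}. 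Thus $\bK^*$ is a big Ramsey structure for $\bK$.

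What remains, and what I expect to be the main obstacle, is step (I): producing the coherent family of optimal unavoidable colorings indexed by all finite structures. I would carry this out by recursion along an enumeration of $\mathcal{K}$ that refines the exhaustion: with $\delta_\bfA$ already defined for the structures $\bfA$ treated earlier, transport these colorings to $\Emb(\bfB,\bK)$ along all embeddings $g$ via $\hat g$, pass to their common refinement, and then use Ramsey's theorem together with the finite big Ramsey degree of $\bfB$ to refine this further into an unavoidable coloring $\delta_\bfB$ with exactly $R(\bfB)$ colors. The delicate points are that the common refinement be achievable while keeping the resulting $\delta_\bfB$ \emph{optimal}, i.e.\ that $\delta_\bfB$ simultaneously refines every $\delta_\bfA\circ\hat g$ for already-treated $\bfA$ and yet has only $R(\bfB)$ colors, and that $\delta_\bfA\circ\hat g\le\delta_\bfB$ continue to hold for structures $\bfA$ treated \emph{after} $\bfB$; it is precisely here that the given coherence $\gamma_m\ll\gamma_n$ on the exhaustion is used to anchor the recursion (so the enlarged family is forced to be consistent with the data we started from). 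Once (I) is in hand, the construction of step (II) completes the proof.
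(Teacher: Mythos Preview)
The paper does not prove this theorem: it is stated as a black-box citation of Theorem~7.1 in Zucker \cite{Zucker19} and is then \emph{applied} (in Theorem~\ref{thm.apply}) rather than reproved. So there is no proof in the paper to compare your proposal against.

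As for your sketch itself, the overall architecture---extend the coherent family $\{\gamma_m\}$ from the exhaustion to an optimal coherent family $\{\delta_\bfB\}$ indexed by all of $\mathcal{K}$, then encode these colorings as new relations to build $\bK^*$---is indeed the shape of Zucker's argument. Your step~(II) is fine and matches the standard construction. However, your step~(I) is where the actual content lies, and you correctly flag it as incomplete: you have not explained why the common refinement of the $\delta_\bfA\circ\hat g$ can be made into an unavoidable coloring with \emph{exactly} $R(\bfB)$ colors, nor why coherence propagates to structures treated later in the recursion. These are not minor bookkeeping issues; they are the substance of Zucker's Sections~4--7, which develop a careful theory of minimal unavoidable colorings and their refinements. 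In particular, the claim that one can always refine down to exactly $R(\bfB)$ colors while maintaining all the required coherences uses that $R(\bfB)$ is the big Ramsey degree together with a diagonalization argument, and the hypothesis $\gamma_m\ll\gamma_n$ enters to guarantee that the colorings on the exhaustion already sit compatibly inside this refinement lattice. Your sketch gestures at this but does not carry it out; as written, step~(I) is a plan rather than a proof.
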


Now we show how to translate our results so as to apply Theorem \ref{thm.Zucker7.1}.
Given an enumerated \Fraisse\ structure $\bK$,  we point out that  $\{\bK_n:n<\om\}$ is an exhaustion of $\bK$.
Theorem \ref{thm.bounds} shows that
 $\bK_n$ has finite big Ramsey degree
 $T(\bK_n,\bK)=|\Sim(\bK_n)|$ for colorings of {\em copies} of $\bK_n$ in  $\bK$.
Recalling Remark \ref{rem.embvscopy},
 the big Ramsey degree for {\em embeddings} of $\bK_n$ into $\bK$ is
 $T(\bK_n,\bK)\cdot|\mathrm{Aut}(\bK_n)|$.

\begin{thm}\label{thm.apply}
Suppose $\mathcal{K}$ is a
\Fraisse\ class  with
\Fraisse\ limit $\bK$ and with
canonical partitions characterized via diagonal antichains of coding nodes in a coding tree of $1$-types.
 Then the conditions of Theorem \ref{thm.Zucker7.1} are satisfied.
\end{thm}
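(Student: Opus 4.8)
The plan is to convert the canonical partition statement of Theorem~\ref{thm.bounds} into the language of unavoidable colorings and strong refinement used in Zucker's Theorem~\ref{thm.Zucker7.1}. Fix an enumerated \Fraisse\ limit $\bK$ and use the exhaustion $\{\bK_n : n<\om\}$, so that $H_m = \Emb(\bK_m,\bK)$. For each $m<\om$, Theorem~\ref{thm.bounds} gives a canonical partition $\{\Sim(C) : C\in\Sim(\bK_m)\}$ of the copies of $\bK_m$ in $\bK$, indexed by the $R_m := |\Sim(\bK_m)|$ many similarity types of diagonal antichains of coding nodes representing $\bK_m$. The first step is to define a coloring $\gamma_m$ of $H_m$: given an embedding $f:\bK_m\to\bK$, its image $f[\bK_m]$ is a copy of $\bK_m$, hence lies in exactly one class $\Sim(C)$; set $\gamma_m(f)$ to be (an index for) that similarity type. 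Then $\mathrm{Im}(\gamma_m)$ has size $R_m$. I would make the codomain $r := R_m\cdot|\mathrm{Aut}(\bK_m)|$ if one wants to match the embedding-count bookkeeping of Remark~\ref{rem.embvscopy}, but the essential point is that $\gamma_m$ is an $R_m$-coloring; the extra $|\mathrm{Aut}(\bK_m)|$ factor is absorbed because different embeddings with the same image get the same $\gamma_m$-value, so $\gamma_m$ is genuinely a coloring of copies pulled back to embeddings.

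The second step is to verify that each $\gamma_m$ is an \emph{unavoidable} $R_m$-coloring. This is exactly persistence: by Lemma~\ref{lem.bD} there is an antichain $\bD\sse\bT$ of coding nodes with $\bK\re\bD\cong^{\om}\bK$, and by Theorem~\ref{thm.persistence} every similarity type $C\in\Sim(\bK_m)$ persists in every subcopy of $\bK$. Translating: for each embedding $\eta:\bK\to\bK$, the image $\eta[\bK]$ is a subcopy of $\bK$, so each class $\Sim(C)$ meets ${\eta[\bK]\choose\bK_m}$; pulling back through $\eta$ shows $\eta^{-1}(\gamma_m^{-1}(\{i\}))\ne\emptyset$ for every $i\in\mathrm{Im}(\gamma_m)$. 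Hence each nonempty fiber $\gamma_m^{-1}(\{i\})$ is unavoidable, as required by Definition~4.2 of \cite{Zucker19}.

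The third and most delicate step is the strong refinement $\gamma_m\ll\gamma_n$ for $m\le n$, i.e.\ for every $f\in H^n_m = \Emb(\bK_m,\bK_n)$ one needs $\gamma_m\circ\hat f\le\gamma_n$, where $\hat f(s) = s\circ f$. Unpacking: if $s_0,s_1\in H_n$ have $\gamma_n(s_0)=\gamma_n(s_1)$, then the copies $s_0[\bK_n]$ and $s_1[\bK_n]$ lie in the same similarity class, so by Remark~\ref{rem.plussimfornoncn} / Fact~\ref{fact.simsamestructure} the two antichains of coding nodes representing them are similar, meaning there is a similarity map between the induced trees. One must check that a similarity of the big antichains restricts to a similarity of the sub-antichains determined by the common substructure $f[\bK_m]$, so that $s_0\circ f$ and $s_1\circ f$ land in the same $\gamma_m$-class. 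The key is that the similarity type of a substructure's antichain is determined by (restrictions of) the passing types and parameter-free formulas of its coding nodes (Fact~\ref{fact.simsamestructure}), and these restrict coherently: deleting coding nodes and taking meet-closures commutes, up to similarity, with the similarity map. This is essentially the content already exploited in the reverse-induction proof of Theorem~\ref{thm.onecolorpertreetype}, so I expect to assemble it from there rather than prove it from scratch. The main obstacle will be handling the bookkeeping between \emph{copies} and \emph{embeddings} carefully enough that the refinement really is \emph{strong} (quantified over all $f\in H^n_m$, not just one), and in checking that passing over to a substructure never merges two distinct similarity types of the ambient structure into one of the substructure in a way that breaks the $\le$ relation --- but since the substructure's similarity type is a function of the ambient one, this direction of the inequality is the easy one, and the real work is just making the restriction map explicit.

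Once these three steps are in place, Theorem~\ref{thm.Zucker7.1} applies directly: each $\bK_n$ (hence each $\bfA_n$ in any exhaustion, since big Ramsey degrees are isomorphism invariants) has finite big Ramsey degree $R_n$, and the unavoidable $R_m$-colorings $\gamma_m$ strongly refine each other. Therefore $\bK$ admits a big Ramsey structure, which is the assertion of Theorem~\ref{thm.apply}. I would close by remarking that the big Ramsey structure so produced is the one described in the introduction --- obtained by adding two relation symbols coding, respectively, the well-ordering of the universe and the similarity-type data --- but that identification is a separate (and already-sketched) matter and need not be part of this proof.
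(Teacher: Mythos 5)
There is a genuine gap in your first step, and it propagates into your third. Zucker's Theorem \ref{thm.Zucker7.1} asks for an unavoidable $R_m$-coloring of $H_m=\Emb(\bK_m,\bK)$ where $R_m$ is the big Ramsey degree in the \emph{embedding} sense, i.e.\ $R_m=T(\bK_m,\bK)\cdot|\mathrm{Aut}(\bK_m)|$. Your coloring $\gamma_m(f)=$ ``similarity type of $f[\bK_m]$'' is a coloring of copies pulled back to embeddings, so its image has only $T(\bK_m,\bK)=|\Sim(\bK_m)|$ values; the automorphism factor cannot be ``absorbed'', because distinct embeddings with the same image must in general receive distinct colors for the count to come out to $R_m$. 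The paper's proof fixes this by coloring each $s\in H_n$ with the \emph{pair} $\gamma_n(s)=\lgl t,p_s\rgl$, where $t$ indexes the similarity type of the antichain $\bD\re s[\bK_n]$ and $p_s$ is the permutation of $n$ recording how $s$ maps the enumerated vertices of $\bK_n$ onto the increasing enumeration of its image. Unavoidability of each such pair is again persistence (Theorem \ref{thm.persistence}) together with the observation that every permutation in the appropriate coset is realized inside any similarity copy, so your second step survives essentially unchanged once the colors are enriched.

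The permutation component is not bookkeeping: it is exactly what makes the strong refinement $\gamma_m\ll\gamma_n$ true. If $\gamma_n(s)=\gamma_n(t)$ only tells you that the two antichains $A=\bD\re s[\bK_n]$ and $B=\bD\re t[\bK_n]$ are similar, then composing with $f\in H^n_m$ selects the coding nodes of $A$ at positions $p_s\circ f[m]$ and those of $B$ at positions $p_t\circ f[m]$; when $p_s\neq p_t$ (which your coloring does not exclude), these position sets can differ, the selected sub-antichains $A'$ and $B'$ need not be similar, and $\gamma_m(\hat f(s))\neq\gamma_m(\hat f(t))$ is entirely possible --- so $\gamma_m\circ\hat f\le\gamma_n$ fails. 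In the paper's argument the equality $p_s=p_t$ (forced by the enriched color) is used twice: to conclude that the index set $\bar m$ is the same for both embeddings, giving $A'\sim B'$ via the restriction of the similarity map, and to conclude $p_{\hat f(s)}=p_{\hat f(t)}$, so that the full pairs agree. Your remark that ``the substructure's similarity type is a function of the ambient one'' is only true once the positions of the selected coding nodes are pinned down, which is precisely the data your coloring discards.
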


\begin{proof}
Recalling that $\bD$ denotes the diagonal antichain of coding nodes constructed in Lemma \ref{lem.bD},
we shall abuse notation and use
$\bK$  to denote the  structure $\bK\re \bD$.
Thus, the universe of $\bK$ will (without loss of generality) be $\om$,
and embeddings  $s$ of initial segments $\bK_n$ into $\bK$ will produce diagonal antichains
$\bD\re s[\bK_n]\sse \bD$.
Given $n<\om$,
let $T_n:=T(\bK_n,\bK)$, and
let $\lgl C^n_0,\dots, C^n_{T_n-1}\rgl$ be an enumeration of $\Sim(\bK_n)$, a set of representatives of the similarity types of diagonal antichains of coding nodes representing a copy of $\bK_n$.
Let Aut$(\bK_n)$ denote the set of automorphisms of $\bK_n$.

As $\bK_n$ has vertex set $n=\{0,\dots,n-1\}$,
its vertex set is linearly ordered.
Given $s\in H_n$,
let $\bfA:=s[\bK_n]$, with vertex set
$\lgl \mathrm{a}_0,\dots, \mathrm{a}_{n-1}\rgl$ written in increasing order as a subset of $\om$.
Let $p_s$ denote  the permutation of $n$
defined by $s(j)=\mathrm{a}_{p_s(j)}$, for $j<n$.
Given  $\ell<T_n$,
let $\bfC^n_\ell$ denote the structure $\bK\re C^n_\ell$,
and
 let  $\lgl v^\ell_0,\dots, v^\ell_{n-1}\rgl$  denote the vertex set  of $\bfC^n_\ell$
  in increasing order as a subset of $\om$.
  Let $P_\ell$
 be the set of permutations $p$ of $n$ such that
 the map $j \mapsto v^\ell_{p(j)}$, $j<n$,
 induces an isomorphism from $\bK_n$ to $\bfC^n_\ell$.
Note that   $|P_\ell|=|$Aut$(\bK_n)|$.

Letting $R_n=T(\bK_n,\bK)\cdot|\mathrm{Aut}(\bK_n)|$, we define an unavoidable coloring $\gamma_n:H_n\ra R_n$ as follows:
For $s\in H_n$,
define $\gamma_n(s)=\lgl t,p_s\rgl$,
where
$t<T_n$ is the index satisfying
$\bD\re \bfB_s\sim C^n_t$.
Then $\gamma_n$ is an unavoidable coloring, by Theorem \ref{thm.persistence}.

Let $m\le n<\om$.
To show that $\gamma_m\ll \gamma_n$, we start by fixing  $f\in H^n_m$ and $s,t\in H_n$ such that
$\gamma_n(s)=\gamma_n(t)$.
Note that $f:\bK_m\ra\bK_n$ is completely determined by
its behavior on the sets of vertices.
Thus, we equate $f$ with its induced injection from $m$ into $n$.
Let $\bfA,\bfB$ denote the structures $s[\bK_n],t[\bK_n]$, respectively.
Let  $A=\bD\re\bfA$ and $B=\bD\re \bfB$, the diagonal antichains of coding nodes representing the structures $\bfA,\bfB$, respectively.
Since $\gamma_n(s)=\gamma_n(t)$, it follows that
$A\sim B$ and $p_s=p_t$.
It follows that $p_s\circ f=p_t\circ f$.

Our task is to show that $\gamma_m(\hat{f}(s))=\gamma_m(\hat{f}(t))$.
Letting $\lgl \mathrm{a}_0,\dots,\mathrm{a}_{n-1}\rgl$ denote the increasing enumeration of the vertices in $\bfA$,
we see that $s\circ f$ is an injection from $m$ into
$\{\mathrm{a}_j:j<n\}$.
Letting $\bar{m}=\{j<n:\exists i<m\, (\mathrm{a}_j=s\circ f(i))\}$,
and letting $\mu$ be the strictly increasing injection from $\bar{m}$ into $m$,
we see that
$p_{\hat{f}(s)}$ is the permutation of $m$ given by
$p_{\hat{f}(s)}(i)=
\mu\circ f\circ p_s(i)$.
Likewise,  $t\circ f$ is an injection from $m$ into
$\{\mathrm{b}_j:j<n\}$, where
$\lgl \mathrm{b}_0,\dots,\mathrm{b}_{n-1}\rgl$ denotes the increasing enumeration of the vertices in $\bfB$.
Since $p_s=p_t$,
we see that $f\circ p_s=f\circ  p_t$,
and hence, the set of indices
$\{j<n:\exists i<m\, (\mathrm{b}_j=t\circ f(i))\}$
equals $\bar{m}$.
Thus, $p_{\hat{f}(t)}(i)=
\mu\circ f\circ p_t(i)$ for each $i<m$.
Hence,
$p_{\hat{f}(s)}=p_{\hat{f}(t)}$.

$\hat{f}\circ s$ maps $\bK_m$ to the substructure $\bfA'$ of  $\bfA$ on vertices $\{\mathrm{a}_{p_s\circ f(i)}:i<m\}$.
This substructure induces the antichain of coding nodes $A':=\{c^A_{p_s\circ f (i)}:i<m\}\sse A$;
that is, $A'=A\re \bfA'$.
Similarly,
$t\circ \hat{f}$ maps $\bK_m$ to the  substructure
$\bfB'$ of  $\bfB$ on vertices $\{\mathrm{b}_{p_t\circ f(i)}:i<m\}$;
this induces  the antichain of coding nodes  $B':=B\re\bfB'=\{c^B_{ p_s\circ f(i)}:i<m\}\sse B$.
Since $p_s=p_t$,
we have $p_s\circ f= p_t\circ f$, and since $A\sim B$, it follows that $A'\sim B'$.
Let $\ell<T_m$ be the index such that $A'\sim B'\sim C^m_{\ell}$.
Then   $\gamma_m(\hat{f}(s))=(\ell,p_{\hat{f}(s)})=\gamma_m(\hat{f}(t))$, since
$p_{\hat{f}(s)}=p_{\hat{f}(t)}$.
Therefore, $\gamma_m\ll \gamma_n$.
\end{proof}

\begin{rem}
We point out that Theorem \ref{thm.apply} holds for \Fraisse\ classes  with relations  of any arity.
However, it looks unlikey that any \Fraisse\ classes with non-trivial relations of arity at least three will satisfy the hypthesis of that theorem.
\end{rem}

For languages with relations of arity at most two,
the big Ramsey structure  of a  \Fraisse\
limit $\bK $
with \EEAP$^+$ is obtained simply  by  expanding the language $\mathcal{L}$ of $\bK$ to the language
$\mathcal{L}^*=\mathcal{L}\cup\{\triangleleft,\mathscr{Q}\}$,
where $\triangleleft$ and $\mathscr{Q}$ are not in $\mathcal{L}$, $\triangleleft$ is a binary relation symbol, and $\mathscr{Q}$ is a quaternary relation symbol.
In fact, by
Theorem \ref{thm.apply},
this will be the case for any \Fraisse\ class with canonical partitions characterized via diagonal antichains of coding nodes in a  coding tree of $1$-types.
The big Ramsey $\mathcal{L}^*$-structure $\bK^*$ for $\bK$ is described as follows.

Let $\bD$ be the diagonal antichain of coding nodes from the proof of Theorem \ref{thm.apply}, and
recall the linear order $\prec$ on $\bS$ described in Subsection \ref{subsec.3.2}
(see paragraph following Fact \ref{fact.simsamestructure}).
Note that $(\bD,\prec)$
is isomorphic to the rationals as a linear order.
Following Zucker
in Section 6 of \cite{Zucker19},
let $R$ be the quaternary relation on $\bD$ given by:
For
$p\preccurlyeq q\preccurlyeq r\preccurlyeq s\in \bD$,
set
\begin{equation}
R(p,q,r,s)\Longleftrightarrow |p\wedge q|\le |r\wedge s|,
\end{equation}
where $p\preccurlyeq q$ means either $p\prec q$ or $p=q$.
Without loss of generality, we may
use $\bK$ to denote $\bK\re \bD$.
Define $\bK^*$
be
the expansion of $\bK$ to the
language
$\mathcal{L}^*$ in which
$\triangleleft$ is interpreted as $\prec$ and $\mathscr{Q}$ is interpreted as $R$.
Then we have the following.

\begin{thm}\label{thm.BRS}
Let $\mathcal{K}$ be a \Fraisse\ class in language $\mathcal{L}$
with relation symbols of arity at most two
and $\bK$ a \Fraisse\ limit of $\mathcal{K}$.
Suppose that $\bK$
satisfies
\EEAP$^+$,
 and let   $\mathcal{L}^*=\mathcal{L}\cup\{\triangleleft,\mathscr{Q}\}$, where
 $\triangleleft$ is a binary relation symbol and $\mathscr{Q}$ is a quaternary relation symbol.
Then  the  $\mathcal{L}^*$-structure
$\bK^*$
is a big Ramsey structure for $\bK$.
\end{thm}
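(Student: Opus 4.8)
The plan is to verify that the $\mathcal{L}^*$-structure $\bK^*$ satisfies the four conditions in Definition \ref{defn.bRs}. Condition (1) is immediate by construction: the reduct of $\bK^*$ to $\mathcal{L}$ is precisely $\bK$ (identified with $\bK\re\bD$, without loss of generality). For condition (2), I would fix $\bfA\in\mathcal{K}$ and observe that an $\mathcal{L}^*$-expansion $\bfA^*$ of $\bfA$ lying in $\mathrm{Age}(\bK^*)$ is determined by choosing a linear order ($\triangleleft$) on the universe of $\bfA$ together with the induced $\mathscr{Q}$-relation, and then realizing $(\bfA^*,\triangleleft,\mathscr{Q})$ inside $(\bD,\prec,R)$. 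Since $(\bD,\prec)$ is isomorphic to the rationals, any finite linear order can be realized; the point is that the possible $\mathscr{Q}$-patterns compatible with a given ordered copy of $\bfA$ correspond exactly to the similarity types of diagonal antichains of coding nodes representing $\bfA$, of which there are only finitely many by Theorem \ref{thm.bounds}. Hence $\bK^*(\bfA)$ is finite.

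The heart of the argument is condition (3), namely $T(\bfA,\bK)\cdot|\mathrm{Aut}(\bfA)| = |\bK^*(\bfA)|$. I would establish this by exhibiting an explicit bijection between $\bK^*(\bfA)$ and the set of pairs $(C,\,p)$, where $C$ ranges over $\Sim(\bfA)$ (a representative from each similarity type of diagonal antichain of coding nodes representing a copy of $\bfA$) and $p$ ranges over the permutations of the universe of $\bfA$ inducing an isomorphism of $\bfA$ onto $\bK\re C$ — which is to say, over a set of size $|\mathrm{Aut}(\bfA)|$. The key observation is the content of Fact \ref{fact.simsamestructure} together with the definition of $R$: a similarity type of diagonal antichain of coding nodes, once its nodes are listed in $\prec$-increasing order, records exactly (i) the parameter-free formulas of each coding node — hence the $\mathcal{L}$-structure — and (ii) the relative lengths of the meets of pairs of coding nodes, which is precisely the data encoded by $\mathscr{Q}=R$; and the $\prec$-order itself is the data encoded by $\triangleleft=\prec$. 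So an $\mathcal{L}^*$-expansion $\bfA^*$ together with a choice of which vertex of $\bfA$ maps to which element of an ordered copy inside $\bD$ is the same thing as a similarity type plus a permutation. By Theorem \ref{thm.bounds}, $|\Sim(\bfA)| = T(\bfA,\bK)$, giving $|\bK^*(\bfA)| = T(\bfA,\bK)\cdot|\mathrm{Aut}(\bfA)|$.

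For condition (4), I would argue that for any subcopy $\bK'$ of $\bK$ the map $\gamma:\Emb(\bfA,\bK')\to\bK^*(\bfA)$, $\iota\mapsto\bK^*\cdot\iota$, is surjective. This is exactly persistence: given an expansion $\bfA^*\in\bK^*(\bfA)$, it corresponds under the bijection above to a similarity type $\Sim(C)$ (with a permutation), and Theorem \ref{thm.persistence} guarantees that $\Sim(C)$ persists in every subcopy of $\bK$, i.e.\ there is a diagonal antichain $C'\subseteq\bD\re\bK'$ with $C'\sim C$; the associated embedding of $\bfA$ into $\bK'$ then has $\gamma$-image $\bfA^*$. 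Since $\gamma$ always has image of size $|\bK^*(\bfA)|$ and, by condition (3), $T(\bfA,\bK)\cdot|\mathrm{Aut}(\bfA)|=|\bK^*(\bfA)|$, the degree count matches and condition (4) holds. The main obstacle I anticipate is the bookkeeping in the bijection for condition (3): one must be careful that the $\mathscr{Q}$-relation $R$ on $(\bD,\prec)$ captures neither more nor less information than the meet-length data in a similarity type — in particular that distinct similarity types of antichains coding the \emph{same} ordered copy of $\bfA$ give rise to distinct $R$-patterns, and conversely — and that the permutation factor is correctly accounted for without double-counting automorphisms of $\bfA$. Once this correspondence is pinned down precisely, the remaining conditions follow formally from Theorems \ref{thm.bounds} and \ref{thm.persistence}.
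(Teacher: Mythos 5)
Your proposal is correct, but it takes a more direct route than the paper. The paper's own proof is two sentences: it invokes Theorem \ref{thm.apply}, whose proof builds the colorings $\gamma_n(s)=\lgl t,p_s\rgl$ (similarity-type index together with a permutation), shows they are unavoidable via Theorem \ref{thm.persistence} and strongly coherent ($\gamma_m\ll\gamma_n$), and then appeals to Zucker's Theorem \ref{thm.Zucker7.1} for existence of a big Ramsey structure, asserting that the same proof shows the specific structure $\bK^*$ works. You instead verify Definition \ref{defn.bRs} clause by clause: clause (1) by construction, clauses (2)--(3) via the bijection between $\mathcal{L}^*$-expansions of $\bfA$ and pairs (similarity type in $\Sim(\bfA)$, permutation) together with $T(\bfA,\bK)=|\Sim(\bfA)|$ from Theorem \ref{thm.bounds}, and clause (4) via persistence (Theorem \ref{thm.persistence}); you never need the strong-refinement condition $\gamma_m\ll\gamma_n$ at all. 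What the paper's route buys is brevity and a packaged appeal to Zucker's machinery; what yours buys is self-containedness relative to the definition and an explicit account of the correspondence that the paper leaves implicit. The ``main obstacle'' you flag does go through, and it is worth recording why: with repeated arguments, $R(p,p,r,r)$ compares lengths of coding nodes and $R(p,q,q,r)$ compares meets of $\prec$-consecutive pairs, so $\prec$ together with $R$ recovers the meet-tree and all relative lengths (meets of non-consecutive pairs being minima of consecutive ones); the parameter-free formulas and the pairwise passing types of coding nodes at earlier coding nodes are exactly the data of the ordered $\mathcal{L}$-reduct by Fact \ref{fact.simsamestructure}; and passing types of meet nodes at shorter coding nodes coincide with those of any coding node extending them, since a passing type at $c_n$ depends only on the restriction to length $|c_n|+1$. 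Hence two antichains in $\bD$ induce $\mathcal{L}^*$-isomorphic structures (via the $\prec$-preserving map) if and only if they are similar, which is precisely the bijection your counting and your surjectivity argument for clause (4) require.
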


\begin{proof}
Theorems
\ref{thm.bounds} and
 \ref{thm.apply}
  imply the existence of a big Ramsey structure for $\bK$.
  Moreover, the proof of  Theorem
 \ref{thm.apply} shows that $\bK^*$ satisfies
  Definition \ref{defn.bRs}
  of a big Ramsey structure.
\end{proof}

%%%%%%%%%%%%%%%%%%%%%%
%%%%%%%%%%%%%%%%%%%%%%
%%%%%%%%%%%%%%%%%%%%%%
%%%%%%%%%%%%%%%%%%%%%%
%%%%%%%%%%%%%%%%%%%%%%
%%%%%%%%%%%%%%%%%%%%%%

We
 now
can quickly deduce Theorem \ref{thm.SESAPimpliesORP}
below:
The ordered expansion of the age of any \Fraisse\ structure
 with relations of arity at most two
satisfying \EEAP$^+$ is a Ramsey class.
This theorem offers  a new approach
 for  proving that
 such
\Fraisse\ classes have
ordered expansions which are Ramsey,
 complementing the much more general,
famous partite construction method of \Nesetril\ and \Rodl\ (see \cite{Nesetril/Rodl77} and \cite{Nesetril/Rodl83}) which is  at the heart  of   finite  structural  Ramsey  theory.

For the rest of this section, we work only with
\Fraisse\ classes in a finite relational  language $\mathcal{L}$
with relation symbols of arity at most two.
Let  $<$ be an additional binary relation symbol not in $\mathcal{L}$, and
let $\mathcal{L}'=\mathcal{L}\cup\{<\}$.
Let $\mathcal{K}^{<}$ denote
the class of all ordered expansions of structures in $\mathcal{K}$, namely,
the  collection of
all
$\mathcal{L}'$-structures in which
$<$ is interpreted as a linear order and whose reducts to the language $\mathcal{L}$  are members of $\mathcal{K}$.
Since  $\mathcal{K}$ has disjoint amalgamation by assumption, $\mathcal{K}^<$ will be a \Fraisse\ class
with disjoint amalgamation.
 We denote the \Fraisse\ limit of $\mathcal{K}^<$ by $\bK^<$, and note that $\bK^<$ is universal for all countable
 $\mathcal{L}'$-structures in which the relation symbol $<$ is interpreted as a linear order.
 We shall write
$\bfM' := \lgl \bfM,<'\rgl$
for any $\mathcal{L}'$-structure interpreting $<$  as a linear order; it will be understood that $\bfM$ is an $\mathcal{L}$-structure and that  $<'$ is
   the linear order on
   ${\rm M}$ interpreting $<$.

\begin{defn}\label{defn.comb}
Given   a \Fraisse\ class $\mathcal{K}$ and an enumerated \Fraisse\ structure $\bK$,
let $\bU$ be the unary-colored coding tree of $1$-types for $\bK$.
We call  a finite  antichain $C$ of coding nodes in $\bU$
 a {\em  comb} if and only if
for any two coding nodes $c,c'$ in $C$,
\begin{equation}
|c|<|c'|\ \Longleftrightarrow   \   c\prec c',
\end{equation}
where $\prec$ is the lexicographic order on $T$.
\end{defn}

\begin{thm}\label{thm.SESAPimpliesORP}
Let $\mathcal{K}$ be a \Fraisse\ class in a finite relational language $\mathcal{L}$
with relation symbols of arity at most two, and suppose that the \Fraisse\ limit of
$\mathcal{K}$ has \EEAP$^+$.
Then the ordered expansion $\mathcal{K}^<$ of $\mathcal{K}$ has the Ramsey property.
 \end{thm}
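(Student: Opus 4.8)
The plan is to derive the finite Ramsey property for $\mathcal{K}^<$ from the big Ramsey degree characterization of $\bK$ (Theorem~\ref{thm.bounds}) via a compactness argument, exploiting the fact that when one passes to colorings of \emph{finite} structures, the hidden tree structure collapses. Fix $\bfA'\le\bfB'$ in $\mathcal{K}^<$ and $\ell\ge 2$; by the equivalent formulation of the Ramsey property (equation~(\ref{eq.RP})), it suffices to show $\bK^<\ra(\bfB')^{\bfA'}_\ell$. First I would fix an enumerated \Fraisse\ limit $\bK$ of $\mathcal{K}$ and, using \EEAP$^+$ together with Theorem~\ref{thm.bounds}, note that the big Ramsey degrees of structures in $\mathcal{K}$ are witnessed by similarity types of diagonal antichains of coding nodes in a diagonal coding tree $\bT$ (in $\bU(\bK)$ or $\bS(\bK)$ as appropriate). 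The key observation is that the linear order $\prec$ on $\bT$ from Subsection~\ref{subsec.3.2} restricts to a linear order on any antichain of coding nodes, so an antichain $C$ of coding nodes representing a copy $\bfA$ of some $\mathrm{A}\in\mathcal{K}$ carries a canonical linear order, turning $\bfA$ into an $\mathcal{L}'$-structure $\bfA'$; and a comb (Definition~\ref{defn.comb}) is precisely the similarity type in which this induced order agrees with the length order.

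Next I would argue that, given a coloring $\gamma$ of $\binom{\bK^<}{\bfA'}$ into $\ell$ colors, one transfers it to a coloring of antichains of coding nodes in $\bT$ representing $\mathrm{A}$ that are \emph{combs}: each comb representing $\mathrm{A}$, with its $\prec$-induced order, is an ordered copy of some ordered expansion of $\mathrm{A}$, and conversely every isomorphism type of ordered expansion of $\mathrm{A}$ arises from some comb (by constructing combs with arbitrary prescribed passing-type data, using that $\bT$ is perfect and the Diagonal Coding Tree Property). One then applies Theorem~\ref{thm.onecolorpertreetype} finitely many times — once for each similarity type of comb representing $\mathrm{A}$ or representing $\mathrm{B}$ — to obtain $S\le\bT$ on which $\gamma$ is constant on each such similarity class. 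Finally, Lemma~\ref{lem.bD} and Corollary~\ref{cor.presbD} (persistence) produce an antichain $\bD\sse S$ with $\bK\re\bD\cong^\om\bK$; since we need a copy of the \emph{finite} ordered structure $\bfB'$ rather than all of $\bK^<$, we further use persistence of combs (a special case of Theorem~\ref{thm.persistence}) to find inside $\bD$ a comb $\bfB^*$ representing a copy of $\mathrm{B}$ whose induced order is order-isomorphic to $<'$ on $\mathrm{B}$, and whose sub-combs representing $\mathrm{A}$ realize exactly the ordered type $\bfA'$. On $\bfB^*$, all copies of $\bfA'$ lie in a single similarity class of combs, hence receive a single $\gamma$-color, giving $\bK^<\ra(\bfB')^{\bfA'}_\ell$ and, via Definition~\ref{defn.RP}, the Ramsey property for $\mathcal{K}^<$.

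The main obstacle I anticipate is the \emph{comb realization} step: one must verify that for a given ordered expansion $\bfA'$ (respectively $\bfB'$) of a structure in $\mathcal{K}$, there genuinely is a comb of coding nodes in $\bT$ whose $\prec$-order matches the prescribed linear order while simultaneously realizing the prescribed $\mathcal{L}$-relations among the vertices. This requires showing that along a comb — where $\prec$-order equals length-order, so each new coding node sits to the $\prec$-right of all previous ones — one has enough freedom in choosing passing types to impose any $1$-type over the previously-placed vertices. For classes satisfying \SFAP\ this freedom is immediate; for the \EEAP$^+$ classes with a transitive relation (such as the $\mathcal{COE}_{n,p}$), one must check that the Extension Property, working inside $\bU$ with the correct $\psi$-values (as in Lemma~\ref{lem.EPCOE}), still permits comb construction — essentially that the interaction of the transitive relation with the $\prec$-order along a comb does not obstruct realizing an arbitrary ordered expansion. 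A secondary technical point is keeping track of $|\mathrm{Aut}(\mathrm{A})|$ versus copies-vs-embeddings bookkeeping (Remark~\ref{rem.embvscopy}), but this only affects constants and not the existence of a monochromatic $\bfB'$.
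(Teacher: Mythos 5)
Your proposal is correct and takes essentially the same route as the paper's proof: fix a comb representing $\bfA'$ in a diagonal coding tree, use Theorem \ref{thm.onecolorpertreetype} to make the induced coloring constant on the relevant similarity type(s) of combs, pass to the antichain of coding nodes from Lemma \ref{lem.bD}, and invoke persistence (Theorem \ref{thm.persistence}) to find a comb representing $\bfB'$ inside which every copy of $\bfA'$ is again a comb, hence monochromatic. The only differences are minor: the paper colors copies of $\bfA'$ directly in the enumerated ordered structure $\lgl \bK,\in\rgl$ and concludes via its embedding into the \Fraisse\ limit of $\mathcal{K}^<$ (rather than transferring a coloring from $\bK^<$ to the coding tree, the bridging step you leave implicit), it applies Theorem \ref{thm.onecolorpertreetype} just once for the single comb type, and the comb-realization issue you flag as your main anticipated obstacle is precisely the step the paper also leaves unspelled-out when it simply takes a comb representing $\bfA'$ and persists one representing $\bfB'$.
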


\begin{proof}
Let $\bK$ be any enumerated \Fraisse\ limit of $\mathcal{K}$.
Then
$\bK$ has universe $\om$, and may be
regarded  as a linearly ordered structure in order-type $\om$,
that is, as an $\mathcal{L}'$-structure $\lgl \bK,\in\rgl$
in which the relation symbol $<$ is interpreted as the order inherited from $\om$.
Let $\bU$ be the coding tree of $1$-types associated with $\bK$.

Let $\bfA',\bfB'$ be members of $\mathcal{K}^{<}$ such that
$\bfA'$ embeds into $\bfB'$.
Fix  a finite coloring $f$  of all
copies of
$\bfA'$ in $\lgl \bK,\in\rgl$.
Note that in this context,
a substructure $\lgl \bfA^*,\in\rgl$ of $\lgl \bK,\in\rgl$
is a
copy
of $\bfA'$
 when there is an
 $\mathcal{L}'$-isomorphism between $\lgl \bfA, <' \rgl$ and $\lgl \bfA^*, \in \rgl$.

Let $\bT$ be a diagonal coding subtree of $\bU$, and let $A\sse \bT$ be a comb representing $\bfA'$.
Thus,  if $\lgl c^A_i:i<m\rgl$ is the enumeration of $A$ in order of increasing length,
then  the coding node  $c^A_i$ represents the  $i$-th vertex  of $\bfA'$
(according to its linear ordering $<'$).
Let $f^*$ be the coloring  on $\Sim(A)$ induced by $f$.
By Theorem \ref{thm.onecolorpertreetype},
there is a diagonal coding subtree $T\sse \bT$ in which all
similarity copies of $A$ have the same $f^*$ color.

Let $D\sse T$ be an antichain of coding nodes
 representing  a copy of $\bK$.
(This is guaranteed by
Lemma \ref{lem.bD}.)
By Theorem \ref{thm.persistence},
there is a subset $B^*\sse  D$ such that $B^*$ is a comb representing a copy of  $\bfB'$ in the order inherited on the coding nodes in $B^*$.
Then every copy of $\bfA'$ represented by a set of coding nodes in $B^*$
is represented by a comb, and hence
 has the same $f$-color.
 Since
 $\lgl \bK,\in\rgl$
 is an $\mathcal{L}'$-structure interpreting the relation symbol $<$ as a linear order,  $\lgl \bK,\in\rgl$
 embeds into the \Fraisse\ limit of $\mathcal{K}^{<}$, and so
 it follows from
 Definition
 \ref{defn.RP}
that $\mathcal{K}^{<}$ has the Ramsey property.
\end{proof}

\begin{rem}
It is impossible for any comb   to represent a copy of
a
\Fraisse\ structure $\bK$
satisfying \EEAP$^+$
when
$\bK$ has at least one non-trivial relation
of arity at least two.
The contrast  between similarity types of diagonal  antichains of $1$-types persisting in every copy of $\bK$
in a coding tree
 and  combs (or any other fixed similarity type)
 being sufficient
 to prove
 the Ramsey property for the ordered expansion of its age
lies at the heart of the difference between big Ramsey degrees for $\bK$ and the Ramsey property for $\mathcal{K}^<$.
\end{rem}

In the paper \cite{Hubicka/Nesetril19},
\Hubicka\ and \Nesetril\ prove general theorems
 from
which the majority of  Ramsey classes can be deduced.
In particular,
Corollary 4.2 of \cite{Hubicka/Nesetril19} implies that every relational \Fraisse\ class with  free amalgamation  has
an ordered expansion with the Ramsey property.
So  for \Fraisse\ classes satisfying \SFAP,
 Theorem \ref{thm.SESAPimpliesORP} provides a new proof of special case of  a known result.
However, we  are not aware  of a  prior  result implying
Theorem \ref{thm.SESAPimpliesORP}
in its full generality.

A different approach to recovering the  ordered Ramsey property is given in  \cite{Hubicka_CS20}.
In that paper, \Hubicka's  results on big Ramsey degrees via the Ramsey theory  of   parameter spaces
recover
a special case of
the \Nesetril-\Rodl\ theorem \cite{Nesetril/Rodl77},
that the class of finite  ordered triangle-free graphs has the Ramsey property.

These approaches to proving the Ramsey property for  ordered \Fraisse\ classes may seem at first glance very different from the partite construction method.
However,
the  methods  must be  related at some fundamental level, similarly to the relationship between the  Halpern-\Lauchli\  and
Hales-Jewett theorems.
It will be interesting to see  if this could lead to new  Hales-Jewett theorems corresponding to the various forcing constructions
(in \cite{DobrinenJML20}, \cite{DobrinenH_k19}, \cite{Zucker19}, and this paper)
which have been used to determine finite and exact big Ramsey degrees.

%%%%%%%%%%%%%%%%%%%%%%
%%%%%%%%%%%%%%%%%%%%%%
%%%%%%%%%%%%%%%%%%%%%%
%%%%%%%%%%%%%%%%%%%%%%
%%%%%%%%%%%%%%%%%%%%%%
%%%%%%%%%%%%%%%%%%%%%%

\section{Concluding remarks and open problems}\label{sec.infdiml}

In Section \ref{sec.EEAPClasses},
we gave examples of \Fraisse\ classes with \Fraisse\ limits satisfying \EEAP$^+$.
By
Theorem  \ref{thm.indivisibility},
any  such \Fraisse\ limit is indivisible, and by
Theorem \ref{thm.main},
 any such
\Fraisse\ limit
with relations of arity at most two
has finite big Ramsey degrees and admits a big Ramsey structure that has a simple characterization.

\begin{question}
Which other \Fraisse\ classes either satisfy \SFAP, or more generally, have \Fraisse\ limits satisfying \EEAP$^+$?
\end{question}

\Fraisse\ structures consisting of finitely many independent linear orders
present an interesting case as we do not know whether they satisfy the
Diagonal Coding Property, and hence whether they have \EEAP$^+$, but their
ages do have \EEAP,
and their coding trees have bounded branching.
This motivates the formulation of the following properties:
For $k\ge 2$,  we say that
the \Fraisse\ limit $\bK$ of
a \Fraisse\ class $\mathcal{K}$ satisfies
{\em $k$-\EEAP$^+$}
if
$\bK$
satisfies \EEAP; there is a perfect  subtree $\bT$ of the coding tree of $1$-types $\bU$
for $\bK$
such that $\bT$ represents a copy of $\bK$ and $\bT$ has splitting nodes with degree $\le k$; and the appropriately formulated Extension Property holds.
Note that here, we are not requiring $\bT$ to be a skew tree:  $\bT$ is allowed to have more than one splitting node on any given level.
 We let
 \BEEAP$^+$
 stand for
 {\em Bounded \EEAP$^+$},
meaning that there is a $k \ge 2$ such that
$k$-\EEAP$^+$
holds.
This brings us to the following implications.

\begin{fact}\label{SFAPEEAP}
\SFAP\ $\Lra$ \EEAP$^+$ $\Lra$ \TwoEEAP$^+$
$\Lra$ \BEEAP$^+$ $\Lra$ \EEAP.
\end{fact}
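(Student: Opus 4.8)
The plan is to verify each of the four implications, all of which are essentially bookkeeping once the definitions in this section are in hand. For the first implication, $\SFAP \Lra \EEAP^+$, I would simply invoke Theorem \ref{thm.SFAPimpliesDCT}, which is precisely this statement (it is proved above, modulo the deferred Lemma \ref{lem.SFAPEP} on the Extension Property); no further work is needed here. For the last implication, $\BEEAP^+ \Lra \EEAP$, this is immediate from the definitions: $\BEEAP^+$ means $k$-$\EEAP^+$ holds for some $k \ge 2$, and the very first clause in the definition of $k$-$\EEAP^+$ requires that $\bK$ satisfy $\EEAP$.

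The middle two implications require checking that one bounded-branching hypothesis specializes to another. For $\EEAP^+ \Lra \TwoEEAP^+$, recall that $\EEAP^+$ (Definition \ref{defn_EEAP_newplus}) asserts $\EEAP$ together with the Diagonal Coding Tree Property and the Extension Property. The Diagonal Coding Tree Property (Definition \ref{defn.DCTP}) furnishes a \emph{diagonal} coding subtree $T$ of $\bS$ or $\bU$ that is perfect; by Definition \ref{def.diagskew} a diagonal tree has every splitting node of degree exactly two, and by Definition \ref{defn.sct}(1) it satisfies $\bK \re T \cong \bK$. Thus $T$ is a perfect subtree representing a copy of $\bK$ with splitting degree $\le 2$, which is exactly what $2$-$\EEAP^+$ demands of $\bT$; the Extension Property is inherited verbatim. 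One minor point to address: the definition of $k$-$\EEAP^+$ allows $\bT$ to be non-skew, whereas a diagonal tree is skew, so the witness from $\EEAP^+$ is in fact a \emph{more} restrictive object --- this only makes the implication easier, since skew trees form a subclass of the trees permitted by $k$-$\EEAP^+$. For $\TwoEEAP^+ \Lra \BEEAP^+$, this is the trivial observation that $2$-$\EEAP^+$ is the instance $k = 2$ of the existential statement ``$\exists k \ge 2\ (k\text{-}\EEAP^+)$'' defining $\BEEAP^+$.

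The one genuinely substantive issue --- and the step I expect to be the main obstacle --- is making sure the ``appropriately formulated Extension Property'' in the definition of $k$-$\EEAP^+$ is literally the same object as the Extension Property appearing in $\EEAP^+$ (Definition \ref{defn.ExtProp}), so that it can be transported without modification in the $\EEAP^+ \Lra \TwoEEAP^+$ step. Since the Extension Property is phrased in terms of $+$-similarity copies inside members of $\mathcal{T}$ and splitting nodes of a diagonal coding tree, one must confirm that in the bounded (possibly non-skew, degree-$\le k$) setting the statement is stated so as to restrict to the skew degree-two case correctly --- i.e. that the $k$-$\EEAP^+$ version is a common generalization and coincides with Definition \ref{defn.ExtProp} when $k = 2$ and the tree is skew. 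Assuming that (as the text intends), the chain of implications follows, and I would record the whole argument in a single short paragraph citing Theorem \ref{thm.SFAPimpliesDCT} for the first arrow, Definitions \ref{defn_EEAP_newplus}, \ref{defn.DCTP}, \ref{def.diagskew} for the second, the definition of $\BEEAP^+$ for the third, and the definition of $k$-$\EEAP^+$ for the fourth.
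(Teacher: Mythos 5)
Your proposal is correct and follows essentially the same route as the paper: the first arrow is Theorem \ref{thm.SFAPimpliesDCT}, and the remaining three are read off directly from Definitions \ref{defn_EEAP_newplus}, \ref{defn.DCTP}, \ref{def.diagskew} and the (informal) definitions of $k$-\EEAP$^+$ and \BEEAP$^+$, exactly as the paper does ``by definition.'' Your worry about the ``appropriately formulated Extension Property'' is reasonable but does not change anything, since the paper intends the $k$-\EEAP$^+$ formulation to specialize to Definition \ref{defn.ExtProp} in the diagonal case, so the witness provided by \EEAP$^+$ serves verbatim.
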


Theorem \ref{thm.SFAPimpliesDCT} showed
that
\SFAP\  implies  \EEAP$^+$.
By definition, \EEAP$^+$\ implies
\TwoEEAP$^+$,
which in turn implies
\BEEAP$^+$.
 Each of these
 properties implies
 \EEAP, again by definition.
 The example of finitely many independent linear orders shows that
 \BEEAP$^+$ does not imply \EEAP$^+$.
On the other hand,
all examples considered in this paper satisfying \EEAP\ also satisfy
\BEEAP$^+$.
 It could well be the case   that \EEAP\   is equivalent to
  \BEEAP$^+$.
 The methods in this paper can be adjusted  to handle structures with
  \BEEAP$^+$,
 so the following question becomes interesting.

\begin{question}
Are \EEAP, \BEEAP$^+$, and \TwoEEAP$^+$ equivalent?
In other words, does  \EEAP\ imply \BEEAP$^+$, and does \BEEAP$^+$ imply \TwoEEAP$^+$?
\end{question}

Throughout this paper, we have mentioned  known results
regarding   finite big Ramsey degrees.
Actual calculations of  big Ramsey degrees, however, are still sparse, and have only been found
for the rationals by Devlin in \cite{DevlinThesis},
the Rado graph by Larson
 in
\cite{Larson08},
 the structures $\bQ_n$  and $\mathbf{S}(2)$ by Laflamme, Nguyen Van Th\'{e}, and Sauer in
 \cite{Laflamme/NVT/Sauer10},
 and the rest of the circular digraphs $\mathbf{S}(n)$, $n\ge 3$, by Barbosa in
\cite{Barbosa20}.
The canonical partitions in Theorem \ref{thm.bounds}
 provide a template for calculating the big Ramsey degrees for all \Fraisse\ structures satisfying
 \EEAP$^+$.

\begin{problem}
Calculate the big Ramsey degrees $T(\bfA,\bK)$,
$\bfA\in\mathcal{K}$,
 for each \Fraisse\ class
 $\mathcal{K}$
with relations of arity at most two and
 with a \Fraisse\ limit
 satisfying
 \EEAP$^+$.
\end{problem}

Lastly,
it is our hope that using combs in trees of $1$-types might lead to smaller bounds  for the ordered  Ramsey property.

\begin{problem}\label{prob.combbetterbound}
Suppose $\mathcal{K}$
is a \Fraisse\ class
with relations of arity at most two and
with \Fraisse\ limit satisfying
\EEAP$^+$.
Use Theorem \ref{thm.SESAPimpliesORP} to  find  better bounds for the smallest size of a structure $\bfC\in\mathcal{K}^{<}$
such that
\begin{equation}
\bfC\ra (\bfB)^{\bfA}
\end{equation}
for any given $\bfA\le\bfB$ inside $\mathcal{K}^{<}$.
\end{problem}

\bibliographystyle{amsplain}
\bibliography{references}

\end{document}